\renewcommand*{\nompreamble}{\begin{multicols}{2}}
\renewcommand*{\nompostamble}{\end{multicols}}
\numberwithin{equation}{section}
\titleformat*{\section}{\Large \scshape\center}
\titleformat*{\subsection}{\fontsize{14}{14} \sffamily}
\theoremstyle{plain}
\newtheorem{theorem}{Theorem}[section]
\newtheorem*{theorem*}{Theorem}
\newtheorem{lemma}[theorem]{Lemma}
\newtheorem{proposition}[theorem]{Proposition}
\newtheorem{thm*}{Theorem}
\newtheorem{prop*}{Proposition}
\newtheorem{corollary}[theorem]{Corollary}
\theoremstyle{definition}
\newtheorem{definition}[theorem]{Definition}
\newtheorem*{definition*}{Definition}
\newtheorem{example}[theorem]{Example}
\theoremstyle{remark}
\newtheorem{remark}{Remark}
\newtheorem{question}{Question}
\newcommand{\BUC}{\operatorname{BUC}}
\DeclareMathOperator{\tr}{tr}
\newcommand{\C}{\mathbb{C}}
\newcommand{\N}{\mathbb{N}}
\newcommand{\R}{\mathbb{R}}
\newcommand{\1}{\mathbbm{1}}
\newcommand{\Ac}{\mathcal{A}}
\newcommand{\Cc}{\mathcal{C}}
\newcommand{\Dc}{\mathcal{D}}
\newcommand{\Fc}{\mathcal{F}}
\newcommand{\Hc}{\mathcal{H}}
\newcommand{\Kc}{\mathcal{K}}
\newcommand{\Lc}{\mathcal{L}}
\newcommand{\Sc}{\mathcal{S}}
\newcommand{\Tc}{\mathcal{T}}
\newcommand{\Af}{\mathfrak{A}}
\DeclareMathOperator{\Real}{Re}
\renewcommand{\Re}{\Real}
\DeclareMathOperator{\Imag}{Im}
\renewcommand{\Im}{\Imag}
\DeclareMathOperator{\BO}{BO}
\DeclareMathOperator{\BDO}{BDO}
\DeclareMathOperator{\VO}{VO}
\DeclareMathOperator{\EssCom}{EssCom}
\DeclareMathOperator{\dist}{dist}
\DeclareMathOperator{\spann}{span}
\DeclareMathOperator{\supp}{supp}
\newcommand{\from}{\colon}
\newcommand{\scpr}[2]{\left\langle #1, #2 \right\rangle}
\renewcommand{\sp}{\scpr}
\newcommand{\abs}[1]{\left\lvert#1\right\rvert}
\newcommand{\norm}[1]{\left\lVert#1\right\rVert}
\newcommand{\set}[1]{\left\{ #1\right\}}
\renewcommand{\epsilon}{\varepsilon}
\begin{document}
\pagenumbering{gobble}
\title{Quantum harmonic analysis for polyanalytic Fock spaces}
\author{Robert Fulsche and Raffael Hagger}
\date{}
\maketitle
\pagenumbering{arabic}

\begin{abstract}
    We develop the quantum harmonic analysis framework in the reducible setting and apply our findings to polyanalytic Fock spaces. In particular, we explain some phenomena observed in recent work by the second author and answer a few related open questions. For instance, we show that there exists a symbol such that the corresponding Toeplitz operator is unitary on the analytic Fock space but vanishes completely on one of the true polyanalytic Fock spaces. This follows directly from an explicit characterization of the kernel of the Toeplitz quantization, which we derive using quantum harmonic analysis. Moreover, we show that the Berezin transform is injective on the set of of Toeplitz operators. Finally, we provide several characterizations of the $\Cc_1$-algebra in terms of integral kernel estimates and essential commutants.
\end{abstract}

\section{Introduction}

The study of linear operators acting on reproducing kernel spaces is a recurring theme in operator theory. Among the big class of reproducing kernel Hilbert spaces, the Fock space (also called Segal--Bargmann--Fock space or any combination of these names) is one of the more popular ones to study; see \cite{Bauer_Isralowitz2012, Fulsche2020,hagger2021, XiaZheng, Zhu} for just a handful of the many works related to operators on the Fock space. An interesting variation of the standard Fock spaces is the class of \emph{polyanalytic Fock spaces}, which have received quite some attention in mathematical physics lately \cite{Abreu_Balazs_deGosson_Mouayn2015,Abreu_Groechenig_Romero2019,Behrndt_Holzmann_Lotoreichik_Raikov2022,Constales_Faustino_Krausshar2011,Haimi_Hedenmalm2013}, see also \cite{Balk1991} for a general account on polyanalytic functions. Their physical importance stems from the fact that the \emph{true} polyanalytic Fock spaces (to be defined below) are exactly the eigenspaces of the Landau operator $\mathcal L_{z, \overline{z}} := -\partial_z \partial_{\overline{z}} + \overline{z}\partial_{\overline{z}}$ acting on $L^2(\mathbb C, e^{-|z|^2})$, which has first been observed in \cite{Abreu2010b} and independently in \cite{Mouayn2011}. The true polyanalytic Fock spaces also appear very naturally in time-frequency analysis as the image of the Gabor transform with a Hermite window, whereas the full polyanalytic Fock spaces appear as the image of vectorial Gabor transforms with the window vector consisting of the first $n$ Hermite functions  \cite{Abreu2010,Abreu2010b,Keller_Luef2021,Luef_Skrettingland2021}. Both of these, apparently independent, features seemingly root on discoveries made in \cite{Grochenig_Lyubarskii2009}. Moreover, as for every reproducing kernel Hilbert space, Toeplitz operators are a natural class of operators to study on these spaces; see \cite{Arroyo_Sanchez_Hernandez_Lopez2021,Maximenko_Telleria2020,Rozenblum_Vasilevski2019} for some recent contributions. Even more recently, the second author investigated properties such as compactness and the Fredholm property of operators acting on these spaces including Toeplitz and Hankel operators \cite{hagger2023}. However, standard techniques such as the limit operator method were not quite strong enough to answer all relevant questions. For instance, in \cite[Theorem 23]{hagger2023} it was shown that if $f$ is a symbol such that the Toeplitz operator with symbol $f$ is compact on the analytic Fock space, it must be compact on every true polyanalytic Fock space. The second author then asked the natural question about the converse (see \cite[Question 31]{hagger2023}). The approach in \cite{hagger2023} was quite ad hoc and did not provide enough insight to construct a counterexample. This is where \emph{quantum harmonic analysis (QHA)} comes in. Writing Toeplitz operators as convolutions makes it quite obvious which symbols one has to choose to let it go wrong. Similarly, writing the Berezin transform as a convolution, makes it much easier to identify its properties (cf. \cite[Question 32]{hagger2023}). We provide some more details in the main results section below.

Quantum harmonic analysis, in the sense we understand it in this paper, goes back to the work of Werner \cite{werner84} in the early eighties. Quite surprisingly, it then took almost four decades until these methods were used to study operators on the Fock space \cite{Fulsche2020}. As it turns out, several deep theorems in this area can be proven in a much simpler way (and even extended) using QHA.  It is also worth mentioning that Werner's ideas have since been generalized to the affine group \cite{Berge_Berge_Luef_Skrettingland2022} and locally compact abelian groups \cite{Fulsche_Galke2023,Halvdansson2023}.

One of the main topics of classical harmonic analysis is the study of the convolution algebra $L^1(\mathbb R^d)$. In a similar vein, quantum harmonic analysis can be used to study the convolution algebra $L^1(\mathbb R^{2d}) \oplus \mathcal T^1(\mathcal H)$, where $\mathcal T^1(\mathcal H)$ is the ideal of trace class operators on a Hilbert space $\mathcal H$. \label{trace_class}\nomenclature{$\mathcal T^1(\mathcal H)$}{Trace class operators on a Hilbert space $\mathcal H$, p.~\pageref{trace_class}} Similarly, we will write $\mathcal T^2(\mathcal H)$\nomenclature{$\mathcal T^2(\mathcal H)$}{Hilbert--Schmidt operators on a Hilbert space $\mathcal H$, p.~\pageref{trace_class}}, $\mathcal K(\mathcal H)$\nomenclature{$\mathcal K(\mathcal H)$}{Compact operators on a Hilbert space $\mathcal H$, p.~\pageref{trace_class}}, $\mathcal L(\mathcal H)$\nomenclature{$\mathcal L(\mathcal H)$}{Bounded linear operators on a Hilbert space $\mathcal H$, p.~\pageref{trace_class}} and $\mathcal U(\mathcal H)$\nomenclature{$\mathcal U(\mathcal H)$}{Unitary operators on a Hilbert space $\mathcal H$, p.~\pageref{trace_class}} for the classes of Hilbert--Schmidt, compact, bounded and unitary operators on $\mathcal H$. Moreover, we will use $\mathcal T^1(\mathcal H_1, \mathcal H_2)$, $\mathcal K(\mathcal H_1, \mathcal H_2)$ and so on in case our operators act between two different Hilbert spaces $\Hc_1$ and $\Hc_2$.

In order to define a convolution on $L^1(\mathbb R^{2d}) \oplus \mathcal T^1(\mathcal H)$, which we shall recall later in detail, we rely on a family of unitary operators $W_z$, $z \in \mathbb R^{2d}$, satisfying the \emph{exponentiated canonical commutation relations (CCR)}:
\begin{align*}
    W_z W_w = e^{-i\sigma(z, w)/2}W_{z+w}.
\end{align*}
Here, $\sigma$ is a symplectic form on $\mathbb R^{2d}$. Now, if $\mathcal H$ is equal to the Fock space $F^2$ over $\C$, which consists of all entire functions on $\mathbb C$ that are square integrable with respect to the Gaussian measure $\mathrm{d}\mu(z) = \pi^{-1}\exp(-|z|^2)~\mathrm{d}z$, then such a family is given by the so-called \emph{Weyl operators}. For $z \in \C$ and $f \in F^2$ they are defined by
\[W_z f(w) = e^{w\overline{z} - \frac{|z|^2}{2}} f(w-z),\]
where we identified $\C \cong \R^2$ and used the symplectic form given by $\sigma(z,w) := 2\Imag(z\overline{w})$. In \cite{Fulsche2020}, the first author made use of the algebra structure that the Weyl operators induce on $L^1(\mathbb C) \oplus \mathcal T^1(F^2)$ to obtain a collection of results. Most critically, Werner's \emph{correspondence theorem} \cite{werner84} was used in these investigations to show, for example, that the Toeplitz algebra is linearly generated and equal to the $\Cc_1$-algebra, which will be defined below.

The main idea of the present paper is that the Weyl operators induce, in essentially the same way, a product structure on $L^1(\mathbb C) \oplus \mathcal T^1(\mathcal H)$, where $\mathcal H$ is now either one of the \emph{true polyanalytic Fock spaces} or one of the \emph{(full) polyanalytic Fock spaces}, two notions that we will recall in the main part of this paper. The true polyanalytic setting still very much encourages the use of QHA methods. Indeed, concrete objects such as \emph{Toeplitz operators} or the \emph{Berezin transform} can be written as a convolution of the symbol with a trace class operator just like on the analytic Fock space \cite{Fulsche2020}. But there is one crucial difference: When working on the analytic Fock space, the Toeplitz operator and the Berezin transform are given by convolution with a \emph{regular} operator (in the sense of Wiener's approximation theorem; see \cite{werner84} or Theorem \ref{thm:Correspondence_2} below), whereas on true polyanalytic Fock spaces the operator by which one has to convolve is no longer regular. This explains, from an QHA point of view, why the Toeplitz quantization $f \mapsto T_f$ and the Berezin transform $A \mapsto \widetilde{A}$ have non-trivial kernels in this setting. This has already been observed in \cite{Luef_Skrettingland2021}, for example. As we will show in this paper, these kernels can actually be computed explicitly.

For the full polyanalytic Fock spaces there is another significant difference in the quantum harmonic analysis. While the Weyl operators still satisfy the usual relations, they no longer act irreducibly on the Hilbert space. While this has only little influence on certain parts of the quantum harmonic analysis, the correspondence theory significantly changes. Therefore, we start our work with a general part on quantum harmonic analysis, which investigates the features of correspondence theory in the reducible case. This also means that we have to deal with operators between different Hilbert spaces. Luckily, this turns out to be straightforward, which allows us to skip some of the proofs.

\subsection{Main results}

We will start our discussion by an investigation of the correspondence theory in the case that the Weyl operators $W_z$ act reducibly on the Hilbert space $\Hc$. This will lead to several new results that generalize the correspondence theorem \cite[Theorem 4.1]{werner84}. These will then be used to study operators on the true polyanalytic Fock spaces $F_{(k)}^2$ and the (full) polyanalytic Fock spaces $F_n^2$. As our first main result we show that the kernel of the Toeplitz quantization on $L^{\infty}(\C)$ can be written as the weak$^*$ closure of
\[\spann\set{w \mapsto e^{2i\Imag(\xi\overline{w})} : \xi \in \Sigma_k},\]
where $\Sigma_k$ is a finite union of circles determined by the zeros of certain Laguerre polynomials (see Theorem \ref{theorem:toeplitz_quant} below). Similarly, the kernel of the Berezin transform on $\Lc(F^2_{(k)})$ can be written as the weak$^*$ closure of
\[\spann\set{W_{\xi} : \xi \in \Sigma_k}\]
as shown in Theorem \ref{thm:kernel_berezin_trafo}. Somewhat surprisingly, however, both the Toeplitz quantization and the Berezin transform remain injective if considered on $L^1(\mathbb C)$ or $\mathcal T^1(F_{(k)}^2)$, respectively. That is, the following statements hold (see Proposition \ref{prop:toeplitz_quant_wstar_dense} below):
\begin{itemize}
    \item[(i)] The map $L^1(\mathbb C) \ni f \mapsto T_{f, (k)} \in \Tc^1(F_{(k)}^2)$ is injective for every $k \geq 1$.
    \item[(ii)] The map $\mathcal T^1(F_{(k)}^2) \ni A \mapsto \widetilde{A} \in L^1(\C)$ is injective for every $k \geq 1$.
\end{itemize}
Of course, the dual statements also hold, that is, the ranges of $L^\infty(\Xi) \ni f \mapsto T_{f, (k)} \in \mathcal L(F_{(k)}^2)$ and $\mathcal L(F_{(k)}^2) \ni A \mapsto \widetilde{A} \in L^\infty(\mathbb C)$ are weak$^*$ dense. To extend this even further, we show that the Berezin transform is in fact injective on the set of Toeplitz operators with bounded symbols. This follows from a simple argument involving the Fourier transform and spectral synthesis.

Besides these facts, we also derive certain compactness results for operators acting on the polyanalytic Fock spaces, which are based on our general discussion of the correspondence theory. In particular, we answer \cite[Question 31]{hagger2023} in the negative and a variation of \cite[Question 32]{hagger2023} in the positive. That is, we show that for each $k \geq 2$ there exist bounded symbols $f$ such that the Toeplitz operator $T_{f,(k)}$ is compact but $T_{f,(1)}$ is not, and we show that $T_{f,(k)}$ is compact if and only if the Berezin transform vanishes at infinity. The latter is known for the generalized Berezin transform introduced in \cite{hagger2023} and, after observing that the usual Berezin transform is still injective on the Toeplitz operators, maybe not as surprising anymore.

As our final topic, we turn towards operator algebras on the polyanalytic Fock spaces. Similarly to the case of the standard Fock space, we consider the class of sufficiently and weakly localized operators, the algebra of band-dominated operators and the class $\mathcal C_1$ of operators on which the phase space shifts act continuously in operator norm. Our main result of this section is that these $C^\ast$ algebras of operators all coincide. Further, we show that this algebra can also be characterized as the essential commutant of the set of all Toeplitz operators with symbols of vanishing oscillation. Our findings in this part can be viewed as a generalization of results in \cite{hagger2021,Xia2015}, where, especially in the latter reference, different methods were used.

We conclude this introduction with a brief summary: After this introduction, we add a list with important notations used throughout the paper for the reader's convenience. Section 2 is dedicated to the discussion of general facts about quantum harmonic analysis, most notably the correspondence theory for the case when the Weyl system does not act irreducibly. Section 3 describes the applications of quantum harmonic analysis to the mapping properties of the Toeplitz quantization and the Berezin transform, as well as compactness properties of operators acting on the polyanalytic Fock spaces. In Section 4 we present our results about the previously mentioned operator algebras. Finally, Section 5 contains a short discussion including two open questions, which are related to our findings in Sections 3 and 4.

\printnomenclature

\section{Quantum harmonic analysis}\label{sec:qha}

In the following, we fix a finite dimensional symplectic vector space $(\Xi, \sigma)$ over $\R$. The real dimension of $\Xi$ will always be $2d$ in this section. Later on we will apply the concepts described here to the case $d = 1$, but considering the more general situation helps to understand our results in the context of quantum harmonic analysis and may also be useful for future reference.

We recall that a $\sigma$-representation $(\mathcal H, W)$ consists of an (infinite-dimensional) Hilbert space $\mathcal H$ together with a map
\[W \from \Xi \to \mathcal U(\mathcal H), \quad z \mapsto W_z,\]
which is assumed to be continuous with respect to the strong operator topology and satisfies the following composition formula:
\begin{equation} \label{eq:Weyl_operators}
W_z W_w = e^{-i\sigma(z, w)/2}W_{z+w}
\nomenclature{$W_z$}{Weyl operators, Eq.~\eqref{eq:Weyl_operators} on p.~\pageref{eq:Weyl_operators} and Eq.~\eqref{eq:Weyl_operators_concrete} on p.~\pageref{eq:Weyl_operators_concrete} for the concrete version on $L^2(\C,\mu)$}
\end{equation}
for $w,z \in \Xi$. The operators $W_z$ are usually referred to as \emph{Weyl operators}.

\subsection{Two $\sigma$-representations}\label{sec:2.1}

In \cite{werner84}, the concepts of QHA were developed with respect to one such $\sigma$-representation. Here, we will generalize these concepts to capture the interplay between two different $\sigma$-representations. Hence, we let $(\mathcal H_j, W^j)$ and $(\mathcal H_k, W^k)$ be two $\sigma$-representations. The notation of course already reveals that we will consider a (finite) family $(\mathcal H_j, W^j)_{j \in J}$ of $\sigma$-representations later on. Until further notice, we assume that the $\sigma$-representations are irreducible, that is, there is no proper subspace which reduces all the $W_z^j$ or $W_z^k$, respectively.

The possibly most important result on $\sigma$-representations is the Stone--von Neumann Theorem \cite[Theorem 1.50]{Folland1989}: For any two infinite-dimensional irreducible $\sigma$-representations $(\mathcal H_j, W^j)$ and $(\mathcal H_k, W^k)$ there exists a unitary operator \label{def:operator_Akj}\nomenclature{$\mathfrak A_{k, j}$}{Operator intertwining two irreducible $\sigma$-representations, p.~\pageref{def:operator_Akj}} $\mathfrak A_{k, j}: \mathcal H_j \to \mathcal H_k$ such that $\mathfrak A_{k,j} W_z^j = W_z^k \mathfrak A_{k,j}$ for every $z \in \Xi$. Further, this operator $\mathfrak A_{k,j}$ is unique up to a multiplicative constant of absolute value one. We always choose these constants such that $\mathfrak A_{k,j}^\ast = \mathfrak A_{j,k}$.
Due to this result, it seems pointless at first glance to consider two different $\sigma$-representations simultaneously. Nevertheless, for operator theoretical considerations it will turn out to be useful to follow this path. Our motivation for considering multiple $\sigma$-representations simultaneously arises from the study of polyanalytic Fock spaces, which will be explained later on.

For any (infinite-dimensional and irreducible) $\sigma$-representation $(\mathcal H, W)$, we can obtain a new $\sigma$-representation $(\mathcal H, V)$ by defining $V_z := W_{-z}$. Hence, by the Stone--von Neumann Theorem, there exists a unitary operator $U$ such that $W_{-z} = V_z = U W_z U^\ast$, that is, $W_{-z} U = U W_z$ for all $z \in \Xi$. Since the operator $U^2$ commutes with every $W_z$, Schur's lemma shows that $U^2$ is a constant multiple of the identity. Adjusting all the constants correctly, we can choose $U$ to be self-adjoint, and this $U$ is well-defined up to a multiplicative constant of $\pm 1$. For the two different $\sigma$-representations $(\mathcal H_j, W^j)$ and $(\mathcal H_k, W^k)$ we denote the respective operators by $U_j$ and $U_k$. Note that, as another immediate consequence of the Stone--von Neumann Theorem, $\mathfrak A_{k,j} U_j = \pm U_k \mathfrak A_{k,j}$. Upon choosing the constants $\pm 1$ in the definition of $U_j, U_k$ correctly, we can enforce $\mathfrak A_{k,j}U_j = U_k \mathfrak A_{k,j}$.

We now introduce the main objects of QHA. The standard $L^p$-spaces with respect to the Lebesgue measure on $\Xi$ will be denoted by $L^p(\Xi)$ and we will use $\|\cdot\|_p$ for the corresponding norm. For $f \in L^1(\Xi)$, we denote by
\begin{align}\label{eq:simpl_Fouriertrafo}
\nomenclature{$\mathcal F_\sigma(f)$}{Symplectic Fourier transform of $f$, Eq.~\eqref{eq:simpl_Fouriertrafo} on p.~\pageref{eq:simpl_Fouriertrafo}} \mathcal F_\sigma(f)(\xi) = c_{\sigma}\int_{\Xi} e^{-i\sigma(\xi, z)} f(z) \, \mathrm{d}z
\end{align}
the symplectic Fourier transform of $f$. Note that $c_\sigma$ is a non-negative constant which only depends on the symplectic form. $c_\sigma$ has to be arranged such that $\mathcal F_\sigma^2 = Id$. For the standard symplectic form $\sigma((x, \xi), (y, \eta)) = y\xi - x\eta$ on $\mathbb R^{2d}$ we have $c_\sigma = (2\pi)^{-d}$. For the symplectic form used in Section \ref{sec:3} and onward, we have $c_\sigma = \pi^{-1}$. A similar notion is the Fourier--Weyl transform for trace class operators on $\Hc_j$. Here, the integral is replaced by the trace and the characters are replaced by the Weyl operators, that is,
\begin{align}\label{eq:Fourier_Weyl_trafo}
\nomenclature{$\mathcal F_W(A)$}{Fourier-Weyl transform of $A$, Eq.~\eqref{eq:Fourier_Weyl_trafo} on p.~\pageref{eq:Fourier_Weyl_trafo}}\Fc_W(A)(\xi) := \tr(AW_{\xi}^j)
\end{align}
for any $A \in \Tc^1(\Hc_j)$.

Given $A \in \mathcal L(\mathcal H_j, \mathcal H_k)$ and $z \in \Xi$, we denote the \emph{shift of $A$ by $z$} by
\begin{align}\label{def:operator_shift}
\nomenclature{$\alpha_z(A), ~\alpha_z^{k,j}(A)$}{Shift of operator $A$, Eq.~\eqref{def:operator_shift} on p.~\pageref{def:operator_shift} and also p.~\pageref{def:operator_shift2}}\alpha_z(A) := \alpha_z^{k,j}(A) := W_z^{k} A W_{-z}^j.
\end{align}
Of course, we have $\| \alpha_z(A)\|_{op} = \| A\|_{op}$ for the operator norm $\|\cdot\|_{op}$. If $A \in \mathcal T^{1}(\mathcal H_j, \mathcal H_k)$, then we also have $\| \alpha_z(A)\|_{\mathcal T^1} = \| A\|_{\mathcal T^1}$ for the trace norm.

Further, we will denote
\begin{align}\label{def:parity_action}
\nomenclature{$\beta_-(A), ~\beta_-^{k,j}(A)$}{Adjoining $A$ by parity operator, Eq.~\eqref{def:parity_action} on p.~\pageref{def:parity_action}}\beta_{-}(A) := \beta_{-}^{k,j}(A) := U_k A U_j.
\end{align}
Note that this is independent of the factors $\pm 1$ which can be chosen in the construction of $U_j,U_k$. These actions are the operator analogues of the following standard actions on functions $f \in L^p(\Xi)$:
\begin{align}\label{def:shift_function}
\nomenclature{$\alpha_z(f)$}{Shift of $f$, Eq.~\eqref{def:shift_function} on p.~\pageref{def:shift_function}}\alpha_z(f)(w) := f(w-z), \quad \nomenclature{$\beta_-(f)$}{Parity operator applied to $f$, Eq.~\eqref{def:shift_function} on p.~\pageref{def:shift_function}} \beta_{-}(f)(w) = f(-w).
\end{align}

Since the maps $z \mapsto W_z^j$ and $z \mapsto W_z^k$ are continuous with respect to the strong operator topology, it is an easy exercise to prove that for $A \in \mathcal T^1(\mathcal H_j, \mathcal H_k)$, $z \mapsto \alpha_z(A)$ is a continuous map from $\Xi$ to $\mathcal T^1(\mathcal H_j, \mathcal H_k)$ (prove this for rank one operators first, then approximate). Hence, given $f \in L^1(\Xi)$ and $A \in \mathcal T^1(\mathcal H_j, \mathcal H_k)$, the expression
\begin{align}\label{def:conv_function_operator1}
\nomenclature{$f \ast A$}{Convolution of function and operator, Eqs.~\eqref{def:conv_function_operator1} and \eqref{def:conv_function_operator1} on p.~\pageref{def:conv_function_operator1}}f \ast A := A \ast f := \int_{\Xi} f(z) \alpha_z(A) \, \mathrm{d}z
\end{align} 
is well-defined as a Bochner integral in $\mathcal T^1(\mathcal H_j, \mathcal H_k)$, and is therefore contained in $\mathcal T^1(\mathcal H_j, \mathcal H_k)$ again. Moreover,
\begin{align} \label{eq:convolution_estimate_T^1}
    \norm{f \ast A}_{\Tc^1} \leq \norm{f}_1\norm{A}_{\Tc^1}
\end{align}
for all $f \in L^1(\Xi)$, $A \in \Tc^1(\Hc_j,\Hc_k)$. Similarly, if $f \in L^1(\Xi)$ and $A \in \Lc(\Hc_j,\Hc_k)$, we define
\begin{align}\label{def:conv_function_operator2}
f \ast A := \int_{\Xi} f(z)\alpha_z(A) \, \mathrm{d}z,
\end{align}
which is now an operator in $\Lc(\Hc_j,\Hc_k)$ and it is clear that $\norm{f \ast A}_{op} \leq \norm{f}_1\norm{A}_{op}$. It is also easy to see that $\mathfrak A_{j,k}(f \ast A) = f \ast (\mathfrak A_{j,k} A)$ and $(f \ast A)\mathfrak A_{j,k} = f \ast (A \mathfrak A_{j,k})$, where the latter are the usual convolutions of QHA with respect to only one $\sigma$-representation that are defined in the same way. 

Given $A \in \mathcal T^1(\mathcal H_j, \mathcal H_k)$ and $B \in \mathcal T^1(\mathcal H_j, \mathcal H_k)$, we define the convolution $A \ast B \from \Xi \to \C$ as
\begin{align}\label{def:conv_operators}
\nomenclature{$A \ast B$}{Convolution of operators, Eq.~\eqref{def:conv_operators} on p.~\pageref{def:conv_operators} and Eq.~\eqref{def:conv_operator2} on p.~\pageref{def:conv_operator2}}A \ast B(z) = \tr( A W_z^j U_j \mathfrak A_{j,k} B \mathfrak A_{j,k} U_k W_{-z}^k ).
\end{align}
Note that
\begin{align*}
A \ast B(z) &= \tr(A \mathfrak A_{j,k} \mathfrak A_{k,j} W_z^j U_j \mathfrak A_{j,k}B \mathfrak A_{j,k} U_k W_{-z}^k )\\
&= (A \mathfrak A_{j,k}) \ast (B\mathfrak A_{j,k})(z),
\end{align*}
where $A\mathfrak A_{j,k}, B\mathfrak A_{j,k} \in \mathcal T^1(\mathcal H_k)$ and $\| A\mathfrak A_{j,k}\|_{\mathcal T^1} = \| A\|_{\mathcal T^1}$, $\| B\mathfrak A_{j,k} \|_{\mathcal T^1} = \| B\|_{\mathcal T^1}$. Further, as the trace is invariant under cyclic permutations, we have $A \ast B = B \ast A$. Thus, we obtain from \cite[Lemma 3.1]{werner84}:

\begin{lemma}\label{lem:conv_estimate_1}
Let $A, B \in \mathcal T^1(\mathcal H_j, \mathcal H_k)$. Then $A \ast B \in L^1(\Xi)$ with
\begin{align*}
\| A \ast B\|_1 &\leq c_\sigma^{-1} \| A\|_{\mathcal T^1} \| B\|_{\mathcal T^1},\\
\int_{\Xi} A \ast B(z) \, \mathrm{d}z &= c_\sigma^{-1}\tr(A\mathfrak A_{j,k}) \tr(B\mathfrak A_{j,k}).
\end{align*}
\end{lemma}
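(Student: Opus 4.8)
The plan is to reduce the two-representation statement to the single-representation case, which is exactly \cite[Lemma 3.1]{werner84}, by invoking the intertwining identity already established in the excerpt. Recall that for $A, B \in \mathcal T^1(\mathcal H_j, \mathcal H_k)$ we have computed
\begin{align*}
A \ast B = (A\mathfrak A_{j,k}) \ast (B\mathfrak A_{j,k}),
\end{align*}
where now $A\mathfrak A_{j,k}$ and $B\mathfrak A_{j,k}$ both lie in $\mathcal T^1(\mathcal H_k)$ and the right-hand convolution is the ordinary QHA convolution for the single $\sigma$-representation $(\mathcal H_k, W^k)$. Moreover, since $\mathfrak A_{j,k}$ is unitary, the trace norm is preserved: $\|A\mathfrak A_{j,k}\|_{\mathcal T^1} = \|A\|_{\mathcal T^1}$ and likewise for $B$. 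Therefore the first displayed inequality follows immediately by applying the norm estimate of \cite[Lemma 3.1]{werner84} to $A\mathfrak A_{j,k}$ and $B\mathfrak A_{j,k}$:
\begin{align*}
\|A \ast B\|_1 = \|(A\mathfrak A_{j,k}) \ast (B\mathfrak A_{j,k})\|_1 \leq c_\sigma^{-1}\|A\mathfrak A_{j,k}\|_{\mathcal T^1}\|B\mathfrak A_{j,k}\|_{\mathcal T^1} = c_\sigma^{-1}\|A\|_{\mathcal T^1}\|B\|_{\mathcal T^1}.
\end{align*}
In particular $A \ast B \in L^1(\Xi)$.

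For the integral formula, I would argue the same way: by \cite[Lemma 3.1]{werner84} applied in $(\mathcal H_k, W^k)$ we have
\begin{align*}
\int_{\Xi} A \ast B(z)\,\mathrm{d}z = \int_{\Xi} (A\mathfrak A_{j,k}) \ast (B\mathfrak A_{j,k})(z)\,\mathrm{d}z = c_\sigma^{-1}\tr(A\mathfrak A_{j,k})\tr(B\mathfrak A_{j,k}),
\end{align*}
which is exactly the claimed identity. The only point requiring a word of care is to confirm that the ``$\beta_-$-twist'' by $U_j, U_k$ that appears in our definition of $A \ast B$ matches the twist in Werner's single-representation convolution after the substitution $A \mapsto A\mathfrak A_{j,k}$; this is precisely what the normalization $\mathfrak A_{k,j}U_j = U_k\mathfrak A_{k,j}$ was arranged to guarantee, and it was already used silently in deriving the identity $A \ast B = (A\mathfrak A_{j,k}) \ast (B\mathfrak A_{j,k})$ above. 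So once that identity is granted, nothing further is needed.

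The main (and essentially only) obstacle is bookkeeping rather than mathematics: one must be sure that the definition of the ordinary QHA convolution $C \ast D$ for $C, D \in \mathcal T^1(\mathcal H_k)$ as used in \cite{werner84} really does coincide with $\tr(C W_z^k U_k D U_k W_{-z}^k)$ under our conventions, so that Werner's lemma applies verbatim. Since the excerpt explicitly states that ``the latter are the usual convolutions of QHA with respect to only one $\sigma$-representation that are defined in the same way,'' this consistency is part of the setup and the proof is then a one-line citation. I would present it in exactly that compressed form, i.e.\ the two displays above plus the remark about the $U_j, U_k$ normalization.
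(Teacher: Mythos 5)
Your proposal is correct and follows essentially the same route as the paper: the authors likewise reduce to the single-representation case via the identity $A \ast B = (A\mathfrak{A}_{j,k}) \ast (B\mathfrak{A}_{j,k})$ (which rests on the intertwining relations $\mathfrak{A}_{k,j}W_z^j = W_z^k\mathfrak{A}_{k,j}$ and $\mathfrak{A}_{k,j}U_j = U_k\mathfrak{A}_{k,j}$), use unitarity of $\mathfrak{A}_{j,k}$ to preserve trace norms, and then quote Werner's Lemma 3.1.
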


Clearly, we can replace one of the factors of this convolution by a bounded operator, with the convolution defined by the same formula. We obtain:
\begin{lemma} \label{lem:convolution_estimate}
Let $A \in \mathcal T^1(\mathcal H_j, \mathcal H_k)$ and $B \in \mathcal L(\mathcal H_j, \mathcal H_k)$. Then $A \ast B \in L^\infty(\Xi)$ with
\begin{align*}
\| A \ast B\|_\infty \leq \| A\|_{op} \| B\|_{\mathcal T^1}.
\end{align*}
The analogous statement holds if $B$ is trace class and $A$ is just bounded.
\end{lemma}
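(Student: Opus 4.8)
The plan is to bound $\abs{A \ast B(z)}$ uniformly in $z \in \Xi$, since $\norm{A \ast B}_\infty = \sup_{z}\abs{A \ast B(z)}$ and we only need this supremum to be finite and suitably controlled. The sole analytic input is the elementary trace inequality $\abs{\tr(XY)} \le \norm{X}_{\Tc^1}\norm{Y}_{op}$ (valid whenever $X$ is trace class, $Y$ is bounded and $XY$ is an operator on a single Hilbert space), together with the fact that composing an operator with a unitary on the left or on the right changes neither its operator norm nor its trace norm.

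Fix $z$ and suppose first that $B \in \Tc^1(\Hc_j, \Hc_k)$ while $A \in \Lc(\Hc_j, \Hc_k)$. Starting from the definition $A \ast B(z) = \tr\bigl( A W_z^j U_j \mathfrak A_{j,k} B \mathfrak A_{j,k} U_k W_{-z}^k \bigr)$ and using the cyclicity of the trace exactly as in the computation preceding Lemma~\ref{lem:conv_estimate_1}, I would rewrite this as $\tr(B R)$ with $R := \mathfrak A_{j,k} U_k W_{-z}^k A W_z^j U_j \mathfrak A_{j,k}$. A bookkeeping check of domains shows $R \colon \Hc_k \to \Hc_j$, so that $BR$ is a genuine operator on $\Hc_k$ and the trace is meaningful; moreover $R$ is obtained from $A$ by composing with the unitaries $\mathfrak A_{j,k}, U_k, W_{-z}^k, W_z^j, U_j$ on either side, whence $\norm{R}_{op} = \norm{A}_{op}$. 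The trace inequality then gives $\abs{A \ast B(z)} = \abs{\tr(BR)} \le \norm{B}_{\Tc^1}\norm{R}_{op} = \norm{A}_{op}\norm{B}_{\Tc^1}$, and taking the supremum over $z$ yields $A \ast B \in L^\infty(\Xi)$ with $\norm{A \ast B}_\infty \le \norm{A}_{op}\norm{B}_{\Tc^1}$. The case in which $A$ is trace class and $B$ is merely bounded then follows immediately from the symmetry $A \ast B = B \ast A$ established above. Alternatively, the whole lemma can be deduced from the single-$\sigma$-representation case through the identity $A \ast B = (A\mathfrak A_{j,k}) \ast (B\mathfrak A_{j,k})$, since $A\mathfrak A_{j,k}$ and $B\mathfrak A_{j,k}$ have the same operator and trace norms as $A$ and $B$ — this is precisely how Lemma~\ref{lem:conv_estimate_1} was obtained from \cite[Lemma~3.1]{werner84}.

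I do not anticipate a genuine obstacle here: the argument is a two-line estimate once the conventions are in place. The only point requiring real attention is the bookkeeping of which of the Hilbert spaces $\Hc_j$, $\Hc_k$ each factor maps between, so that the operator products and the trace are well-defined, and the observation that every factor flanking the trace-class operator in the defining formula is unitary and hence leaves the trace norm invariant.
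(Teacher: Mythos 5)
Your proof is correct and is essentially the argument the paper leaves implicit: the lemma is stated there as an immediate consequence of the defining trace formula, precisely via the estimate $\abs{\tr(XY)} \le \norm{X}_{\Tc^1}\norm{Y}_{op}$ together with the unitarity of the flanking factors $W_z^j, U_j, \Af_{j,k}, U_k, W_{-z}^k$ (equivalently, via $A \ast B = (A\Af_{j,k}) \ast (B\Af_{j,k})$ and the one-representation lemma of Werner, your alternative route). The only caveat is in the statement itself, not in your argument: the printed hypotheses ($A$ trace class, $B$ bounded) are mismatched with the printed bound $\norm{A}_{op}\norm{B}_{\Tc^1}$ — an evident typo — and your reading, with the trace norm on the trace-class factor and the operator norm on the bounded factor, plus the swapped case obtained from commutativity (which indeed extends to the mixed case by the same cyclicity-and-intertwining argument), is the intended content.
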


We also want to define the convolution of $f \in L^\infty(\Xi)$ with $A \in \mathcal T^1(\mathcal H_j, \mathcal H_k)$ as an element of $\mathcal L(\mathcal H_j, \mathcal H_k)$. This is done weakly by considering $\mathcal L(\mathcal H_j, \mathcal H_k)$ as the dual space of $\mathcal T^1(\mathcal H_k, \mathcal H_j)$. More precisely, the operator $A \ast f \in \mathcal L(\mathcal H_j, \mathcal H_j)$ is defined by the relation
\begin{align*}
\tr((A \ast f) B ) = \int_{\Xi} f(z) (\beta_{-}(A) \ast B)(z) \, \mathrm{d}z, \quad B \in \mathcal T^1(\mathcal H_k, \mathcal H_j).
\end{align*}
By Lemma \ref{lem:conv_estimate_1}, it is clear that
\begin{align} \label{eq:convolution_estimate_L^inf}
    \norm{A \ast f}_{op} \leq c_\sigma^{-1} \norm{A}_{\Tc^1}\norm{f}_{\infty}
\end{align}
for $A \in \Tc^1$, $f \in L^{\infty}(\Xi)$. We will occasionally also use the notation $S \ast T$ for sets $S \subseteq \mathcal T^1(\mathcal H_j, \mathcal H_k)$ and $T \subseteq L^\infty(\Xi)$, by which we mean the set obtained from all possible convolutions of elements in $S$ and $T$: $S \ast T = \{ A \ast f: ~A \in S, f \in T\}$. Similarly, one defines the convolution of subsets of $\mathcal T^1(\mathcal H_j, \mathcal H_k)$ and $\mathcal L(\mathcal H_j, \mathcal H_k)$ and other combinations of sets, for which the convolution is well-defined.

All these convolutions interact with the shifts as expected, with proofs just as in the case of one $\sigma$-representation by direct verification: 
\begin{align}
\alpha_z(A \ast f) &= \alpha_z(A) \ast f = A \ast \alpha_z(f), \label{eq:shiftconv1}\\
\alpha_z(A \ast B) &= \alpha_z(A) \ast B = A \ast \alpha_z(B).\label{eq:shiftconv2}
\end{align}
In particular, if we define
\begin{align}\label{def:c1}
\nomenclature{$\mathcal C_1^{k,j}, ~\mathcal C_1(\mathcal H_j, \mathcal H_k)$}{Shift-continuous operators, Eq.~\eqref{def:c1} on p.~\pageref{def:c1}}\mathcal C_1^{k,j} := \mathcal C_1(\mathcal H_j, \mathcal H_k) := \{ A \in \mathcal L(\mathcal H_j, \mathcal H_k): ~\| \alpha_z(A) - A\|_{op} \to 0 \text{ as } |z| \to 0\},
\end{align}
we get that $A \ast f \in \mathcal C_1^{k,j}$ for $f \in L^\infty(\Xi)$ and $A \in \mathcal T^1(\mathcal H_j, \mathcal H_k)$ or $f \in L^1(\Xi)$ and $A \in \mathcal L(\mathcal H_j, \mathcal H_k)$. Let us emphasize the following: Since the convolution is commutative and associative, there seems to be no natural order in which we write the factors of the convolution. When one of the factors is not in $L^1$ or $\mathcal T^1$, we will try to adopt the convention that the convolution is formally a map $\ast: (L^1(\Xi) \oplus \mathcal T^1(\mathcal H)) \times (L^\infty(\Xi) \oplus \mathcal L(\mathcal H)) \to (L^\infty(\Xi) \oplus \mathcal L(\mathcal H))$, where the parts that are not needed are usually omitted.

Here are some of the main facts of QHA for the case of two representations:

\begin{theorem} \label{thm:Correspondence_2}
For $A \in \mathcal T^1(\mathcal H_j, \mathcal H_k)$ the following statements are equivalent:
\begin{enumerate}
\item The map $L^1(\Xi) \ni f \mapsto A \ast f \in \mathcal T^1(\mathcal H_j, \mathcal H_k)$ has dense range.
\item The map $\mathcal T^1(\mathcal H_j, \mathcal H_k) \ni B \mapsto A \ast B \in L^1(\Xi)$ has dense range.
\item The map $\mathcal L(\mathcal H_j, H_k) \ni B \mapsto A \ast B \in L^\infty(\Xi)$ is injective.
\item The map $L^\infty(\Xi) \ni f \mapsto A \ast f \in \mathcal L(\mathcal H_j, \mathcal H_k)$ is injective.
\item $A\mathfrak A_{j,k}$ is a regular operator in the sense of \cite{werner84}, i.e. $\mathcal F_W(A\mathfrak A_{j,k})(\xi) = \tr(A\mathfrak{A}_{j,k} W_\xi^k) \neq 0$ for all $\xi \in \Xi$.
\item $\mathfrak A_{j,k}A$ is a regular operator in the sense of \cite{werner84}, i.e. $\mathcal F_W(\mathfrak A_{j,k}A)(\xi) = \tr(\mathfrak A_{j,k} A W_\xi^j) \neq 0$ for all $\xi\in \Xi$.
\item $A \ast A$ is a regular function, i.e. $\mathcal F_\sigma(A \ast A)(\xi) \neq 0$ for every $\xi \in \Xi$.
\item The linear span of $\set{\alpha_z(A) : z \in \Xi}$ is dense in $\mathcal T^1(\mathcal H_j, \mathcal H_k)$.
\end{enumerate}
\end{theorem}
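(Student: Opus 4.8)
The plan is to reduce the entire equivalence to Werner's correspondence theorem \cite[Theorem 4.1]{werner84} for a single $\sigma$-representation by conjugating $A$ with the intertwining unitary $\mathfrak A_{j,k}$. Since $\mathcal H_j$ and $\mathcal H_k$ are infinite-dimensional and irreducible, the Stone--von Neumann theorem supplies $\mathfrak A_{j,k}$, and the map $R \from \mathcal L(\mathcal H_j,\mathcal H_k) \to \mathcal L(\mathcal H_k)$, $R(X) := X\mathfrak A_{j,k}$, is an isometric bijection (with inverse $Y \mapsto Y\mathfrak A_{k,j}$) that restricts to an isometric bijection $\mathcal T^1(\mathcal H_j,\mathcal H_k) \to \mathcal T^1(\mathcal H_k)$. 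The first step is to record the identities $W_z^j\mathfrak A_{j,k} = \mathfrak A_{j,k}W_z^k$ and $U_j\mathfrak A_{j,k} = \mathfrak A_{j,k}U_k$, which follow at once from $\mathfrak A_{k,j}W_z^j = W_z^k\mathfrak A_{k,j}$ and $\mathfrak A_{k,j}U_j = U_k\mathfrak A_{k,j}$ by multiplying with $\mathfrak A_{j,k}$ on both sides. From these one reads off $R(\alpha_z(A)) = \alpha_z(R(A))$ and $R(\beta_-(A)) = \beta_-(R(A))$, where on the right the shift and parity are those of the single representation $(\mathcal H_k, W^k)$.

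The second step is to transport every convolution in the statement through $R$. For $f \in L^1(\Xi)$ one has $R(f \ast A) = f \ast R(A)$; this is precisely the identity $(f\ast A)\mathfrak A_{j,k} = f \ast (A\mathfrak A_{j,k})$ already recorded in the text, valid both for $A \in \mathcal T^1$ and for $A \in \mathcal L$. For $A \in \mathcal T^1(\mathcal H_j,\mathcal H_k)$ and $B \in \mathcal T^1(\mathcal H_j,\mathcal H_k)$ (or $B$ merely bounded) one has $A \ast B = R(A) \ast R(B)$ as functions on $\Xi$, which is the computation $A \ast B = (A\mathfrak A_{j,k}) \ast (B\mathfrak A_{j,k})$ from the text; in particular $\mathcal F_\sigma(A \ast A) = \mathcal F_\sigma(R(A) \ast R(A))$. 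For $f \in L^\infty(\Xi)$ one has $R(A \ast f) = R(A) \ast f$, obtained by pairing the defining relation $\tr((A\ast f)B) = \int_{\Xi} f(z)\,(\beta_-(A)\ast B)(z)\,\mathrm{d}z$ with test operators $B$ and using the Step-1 identities together with cyclicity of the trace. Finally, $\mathcal F_W(A\mathfrak A_{j,k})(\xi) = \tr(A\mathfrak A_{j,k}W_\xi^k)$ is by definition the regularity condition in (5). Since $R$ is an isometric isomorphism of the relevant Banach spaces and carries each convolution operation to its single-representation analogue for $A\mathfrak A_{j,k} \in \mathcal T^1(\mathcal H_k)$, each of the statements (1), (2), (3), (4), (5), (7) and (8) is equivalent to the corresponding statement for $A\mathfrak A_{j,k}$ in $(\mathcal H_k, W^k)$; these are precisely the assertions that Werner's correspondence theorem shows to be equivalent (see also the reformulation in \cite{Fulsche2020}), so (1)--(5), (7) and (8) are all equivalent.

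It then remains to insert (6), which I would do by observing that, by cyclicity of the trace and $W_\xi^j\mathfrak A_{j,k} = \mathfrak A_{j,k}W_\xi^k$,
\begin{align*}
\mathcal F_W(\mathfrak A_{j,k}A)(\xi) &= \tr(\mathfrak A_{j,k}AW_\xi^j) = \tr(AW_\xi^j\mathfrak A_{j,k})\\
&= \tr(A\mathfrak A_{j,k}W_\xi^k) = \mathcal F_W(A\mathfrak A_{j,k})(\xi),
\end{align*}
so (5) and (6) are literally the same condition, which closes the chain of equivalences.

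The deep content is entirely contained in Werner's single-representation theorem, which we are free to invoke; everything else is bookkeeping with the unitaries $\mathfrak A_{\cdot,\cdot}$ and $U_\cdot$ and with the sign conventions fixed for them in Section~\ref{sec:2.1}. The one place that calls for a little care is the $L^\infty$-convolution in the second step, where $A \ast f$ is defined only weakly: one has to check that the duality pairing between $\mathcal L(\mathcal H_j,\mathcal H_k)$ and $\mathcal T^1(\mathcal H_k,\mathcal H_j)$ used to define it is compatible with $R$ (equivalently, that $R$ is the adjoint of the corresponding multiplication map on the predual), which again reduces to the identities collected in the first step. I do not anticipate a genuine obstacle.
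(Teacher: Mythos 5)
Your proposal is correct and follows essentially the same route as the paper: the paper's proof is exactly the reduction to Werner's single-representation results via the identities $(f \ast A)\mathfrak A_{j,k} = f \ast (A\mathfrak A_{j,k})$, $\mathfrak A_{j,k}(f \ast A) = f \ast (\mathfrak A_{j,k}A)$ and $A \ast B = (A\mathfrak A_{j,k}) \ast (B\mathfrak A_{j,k})$, combined with standard duality arguments. Your additional bookkeeping (the intertwining of $\alpha_z$, $\beta_-$ with $R(X)=X\mathfrak A_{j,k}$ and the trace computation showing (5) and (6) coincide) just spells out what the paper leaves implicit.
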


\begin{proof}
Equivalence of these statement follows easily from some standard duality arguments, the identities $\mathfrak A_{j,k} (f \ast A) = f \ast (\mathfrak A_{j,k}A)$, $(f \ast A) \mathfrak A_{j,k} = f \ast (A\mathfrak A_{j,k})$, $A \ast B = (A\mathfrak A_{j,k}) \ast (B\mathfrak A_{j,k})$ and the results of \cite{werner84}.
\end{proof}

An operator $A \in \mathcal T^1(\mathcal H_j, \mathcal H_k)$ satisfying the above equivalent properties is of course also called \emph{regular}. Note that $A$ is regular if any only if $A^\ast \in \mathcal T^1(\mathcal H_k, \mathcal H_j)$ is regular. Moreover, we observe that regular operators always exist. This can be seen as follows for $\Hc_j = \Hc_k$ (for the general case just multiply with $\Af_{k,j}$). Choose $f \from \Xi \to \R$, $f(z) := e^{-\frac{1}{2}|z|^2}$, where $\abs{\cdot}$ denotes the Euclidean norm on $\Xi \cong \R^{2d}$, and $A := \Fc_W^{-1}(f)$. Then $A^2 \in \Tc^1(\Hc_j)$ is trace class and $\Fc_W(A^2)$ is equal to the twisted convolution $f \ast_{\sigma} f$ (cf.\ \cite[p. 26]{Folland1989} or \cite[Corollary 5.21]{Fulsche_Galke2023}). A straightforward computation then shows that $f \ast_{\sigma} f$ does not vanish on $\Xi$.

Here is another important fact, where we denote by $g_t$ any positive $\delta$-sequence in $L^1(\Xi)$ for $t \to 0$. We will usually make the choice\footnote{Here, $|z|^2$ has to be interpreted as $|z|^2 = \sigma(z, Jz)$, where $J$ is a fixed complex structure on $\Xi$ such that $\sigma(Jz,Jw) = \sigma(z,w)$ and $\sigma(z,Jz) > 0$ for $z \neq 0$.}
\begin{align*}
g_t(z) = \frac{1}{(\pi t)^d} e^{-\frac{|z|^2}{t}},
\end{align*} 
but this is not necessary.
\begin{lemma}\label{lem:approximation}
Let $A \in \mathcal L(\mathcal H_j, \mathcal H_k)$. Then, $A \in \mathcal C_1(\mathcal H_j, \mathcal H_k)$ if any only if $g_t \ast A \to A$ in operator norm as $t \to 0$.
\end{lemma}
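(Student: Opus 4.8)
The plan is to prove the two implications separately, with the ``only if'' direction being the essentially routine one and the ``if'' direction being where the structure of $\Cc_1$ really enters.

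First I would handle the ``if'' direction. Suppose $g_t \ast A \to A$ in operator norm as $t \to 0$. By \eqref{eq:shiftconv1} we have $\alpha_z(g_t \ast A) = g_t \ast \alpha_z(A) = \alpha_z(g_t) \ast A$, and more to the point each $g_t \ast A$ lies in $\Cc_1(\Hc_j,\Hc_k)$ — this was already noted in the excerpt, since $g_t \in L^1(\Xi)$ and $A \in \Lc(\Hc_j,\Hc_k)$. Now I would invoke that $\Cc_1(\Hc_j,\Hc_k)$ is operator-norm closed: indeed, if $A_n \to A$ in operator norm with $A_n \in \Cc_1$, then from $\norm{\alpha_z(A) - A} \leq \norm{\alpha_z(A-A_n)} + \norm{\alpha_z(A_n) - A_n} + \norm{A_n - A} = 2\norm{A - A_n} + \norm{\alpha_z(A_n) - A_n}$ and the fact that the first term is small uniformly in $z$ while the second tends to $0$ as $\abs z \to 0$, we get $A \in \Cc_1$. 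Hence $A$, being an operator-norm limit of the $g_t \ast A \in \Cc_1$, is itself in $\Cc_1$.

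Next, the ``only if'' direction: assume $A \in \Cc_1(\Hc_j,\Hc_k)$, i.e. $\norm{\alpha_z(A) - A}_{op} \to 0$ as $\abs z \to 0$. I would estimate directly, using that $g_t$ is a positive function with $\int_\Xi g_t(z)\,\mathrm dz = 1$ (this normalization is why the convention $g_t(z) = (\pi t)^{-d} e^{-\abs z^2/t}$ was chosen, and it is the only property of $g_t$ we need beyond being a $\delta$-sequence). Then
\begin{align*}
\norm{g_t \ast A - A}_{op} = \norm*{\int_\Xi g_t(z)\,\bigl(\alpha_z(A) - A\bigr)\,\mathrm dz}_{op} \leq \int_\Xi g_t(z)\,\norm{\alpha_z(A) - A}_{op}\,\mathrm dz.
\end{align*}
Given $\epsilon > 0$, choose $\delta > 0$ so that $\norm{\alpha_z(A) - A}_{op} < \epsilon$ whenever $\abs z < \delta$; split the integral into the region $\abs z < \delta$, where the integrand is bounded by $\epsilon\, g_t(z)$ and hence contributes at most $\epsilon$, and the region $\abs z \geq \delta$, where $\norm{\alpha_z(A) - A}_{op} \leq 2\norm A_{op}$ and $\int_{\abs z \geq \delta} g_t(z)\,\mathrm dz \to 0$ as $t \to 0$ since $g_t$ is a $\delta$-sequence. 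Thus $\limsup_{t \to 0}\norm{g_t \ast A - A}_{op} \leq \epsilon$ for every $\epsilon > 0$, giving convergence.

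The only genuine subtlety — and the step I would flag as the main obstacle to watch — is that the Bochner-integral manipulation in the ``only if'' direction requires $z \mapsto \alpha_z(A)$ to be measurable (even continuous) as a map into $\Lc(\Hc_j,\Hc_k)$, which need not hold in the operator-norm topology for general bounded $A$; the weak convergence $g_t \ast A \to A$ is fine, but to get the norm estimate I have written I should either restrict attention to $A \in \Cc_1$ (where the map \emph{is} norm-continuous, precisely by hypothesis, so $g_t \ast A$ is an honest Bochner integral) or interpret $g_t \ast A$ weakly and pass the estimate through testing against $B \in \Tc^1(\Hc_k,\Hc_j)$ using Lemma~\ref{lem:convolution_estimate}. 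Since in the ``only if'' direction we are assuming $A \in \Cc_1$ anyway, the norm-continuity is available for free and the argument goes through cleanly; this is essentially why the lemma is phrased as an ``if and only if'' rather than an unconditional convergence statement.
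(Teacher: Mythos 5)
Your proof is correct, but it takes a different route from the paper. The paper's argument is a two-line reduction to the already-known single-representation case: since $A \in \Cc_1(\Hc_j,\Hc_k)$ if and only if $A\Af_{j,k} \in \Cc_1(\Hc_k,\Hc_k)$, and $(g_t \ast A)\Af_{j,k} = g_t \ast (A\Af_{j,k})$, the statement follows immediately from \cite[Prop.~2.16]{Fulsche2020}. You instead reprove that proposition from scratch in the two-space setting: for the ``if'' direction you combine the fact (already recorded in the paper) that $g_t \ast A \in \Cc_1^{k,j}$ for any bounded $A$ with the operator-norm closedness of $\Cc_1^{k,j}$, which your three-term estimate establishes correctly because each $\alpha_z$ is isometric; for the ``only if'' direction you run the standard approximate-identity estimate $\norm{g_t \ast A - A}_{op} \leq \int_\Xi g_t(z)\norm{\alpha_z(A)-A}_{op}\,\mathrm{d}z$ and split at $\abs{z} = \delta$. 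Your closing remark on the integral is also handled properly: since $\alpha_z \circ \alpha_w = \alpha_{z+w}$, the hypothesis $A \in \Cc_1$ makes $z \mapsto \alpha_z(A)$ norm-continuous, so the Bochner-integral manipulation is legitimate exactly where you use it (and could alternatively be run weakly against $\Tc^1(\Hc_k,\Hc_j)$). What your approach buys is self-containedness and the explicit observation that only positivity, normalization and the $\delta$-sequence property of $g_t$ are used, in line with the paper's remark that the Gaussian choice is inessential; what the paper's approach buys is brevity, by outsourcing all analysis to the irreducible case via the intertwiner $\Af_{j,k}$.
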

\begin{proof}
Note that $A \in \mathcal C_1(\mathcal H_j, \mathcal H_k)$ if and only if $A\mathfrak A_{j,k} \in \mathcal C_1(\mathcal H_k, \mathcal H_k)$. Now, one derives the lemma from the analogous statement for $\mathcal C_1(\mathcal H_k, \mathcal H_k)$, \cite[Prop.\ 2.16]{Fulsche2020}, and the identity $(g_t \ast A)\mathfrak A_{j,k} = g_t \ast (A \mathfrak A_{j,k})$. 
\end{proof}
Here is the $\mathcal H_j - \mathcal H_k$ version of the correspondence theorem. The algebra of bounded, uniformly continuous functions $f \from \Xi \to \C$ is henceforth denoted by $\BUC(\Xi)$. The proof of the theorem is, up to some straightforward modifications, the same as for the case $\mathcal H_j = \mathcal H_k$ presented in \cite{werner84}.

\begin{theorem}\label{thm:corr_1}
Let $A \in \mathcal T^1(\mathcal H_j, \mathcal H_k)$ be regular. For any closed, $\alpha$-invariant subspace $\mathcal D_1$ of $\mathcal C_1^{k,j}$ there is a unique closed, $\alpha$-invariant subspace $\mathcal D_0$ of $\operatorname{BUC}(\Xi)$ such that the following holds true: Given $B \in \mathcal C_1^{k,j}$, it holds:
\begin{align*}
B \in \mathcal D_1 \Leftrightarrow A \ast B \in \mathcal D_0.
\end{align*}
The corresponding spaces satisfy
\begin{align*}
\mathcal D_0 = \overline{\mathcal T^1(\mathcal H_j, \mathcal H_k) \ast \mathcal D_1}, \quad \mathcal D_1 = \overline{\mathcal T^1(\mathcal H_j, \mathcal H_k) \ast \mathcal D_0}.
\end{align*}
\end{theorem}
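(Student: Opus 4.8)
The plan is to reduce the two-representation statement to the single-representation correspondence theorem of Werner by means of the intertwining unitary $\mathfrak{A}_{j,k}$, exactly as was done for Lemmas \ref{lem:approximation} and \ref{thm:Correspondence_2}. First I would fix a regular $A \in \mathcal{T}^1(\mathcal{H}_j,\mathcal{H}_k)$ and set $\widetilde{A} := A\mathfrak{A}_{j,k} \in \mathcal{T}^1(\mathcal{H}_k)$, which by Theorem \ref{thm:Correspondence_2} is a regular operator in the sense of \cite{werner84}. I would then observe that right multiplication by $\mathfrak{A}_{j,k}$ is an isometric bijection $\mathcal{C}_1^{k,j} \to \mathcal{C}_1(\mathcal{H}_k,\mathcal{H}_k)$ that intertwines the shift actions ($\alpha_z(B)\mathfrak{A}_{j,k} = \alpha_z(B\mathfrak{A}_{j,k})$, which follows from $\mathfrak{A}_{k,j}W_z^j = W_z^k\mathfrak{A}_{k,j}$ and $\mathfrak{A}_{j,k}^\ast = \mathfrak{A}_{k,j}$) and satisfies $A \ast B = \widetilde{A}\ast(B\mathfrak{A}_{j,k})$ by the computation already displayed in Section \ref{sec:2.1}. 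Hence a closed $\alpha$-invariant subspace $\mathcal{D}_1 \subseteq \mathcal{C}_1^{k,j}$ corresponds bijectively to a closed $\alpha$-invariant subspace $\mathcal{D}_1\mathfrak{A}_{j,k} \subseteq \mathcal{C}_1(\mathcal{H}_k,\mathcal{H}_k)$.

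Next I would invoke the correspondence theorem in the form proved by Werner \cite[Theorem 4.1]{werner84} (the $\mathcal{H}_k = \mathcal{H}_k$ case): for the regular operator $\widetilde{A}$ and the closed $\alpha$-invariant subspace $\mathcal{D}_1\mathfrak{A}_{j,k}$ there is a unique closed $\alpha$-invariant $\mathcal{D}_0 \subseteq \BUC(\Xi)$ with $C \in \mathcal{D}_1\mathfrak{A}_{j,k} \Leftrightarrow \widetilde{A}\ast C \in \mathcal{D}_0$ for all $C \in \mathcal{C}_1(\mathcal{H}_k,\mathcal{H}_k)$, and with $\mathcal{D}_0 = \overline{\mathcal{T}^1(\mathcal{H}_k)\ast(\mathcal{D}_1\mathfrak{A}_{j,k})}$, $\mathcal{D}_1\mathfrak{A}_{j,k} = \overline{\mathcal{T}^1(\mathcal{H}_k)\ast\mathcal{D}_0}$. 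Writing an arbitrary $B \in \mathcal{C}_1^{k,j}$ as $B = C\mathfrak{A}_{k,j}$ with $C = B\mathfrak{A}_{j,k}$, the equivalence $B \in \mathcal{D}_1 \Leftrightarrow A\ast B \in \mathcal{D}_0$ is then immediate from $A\ast B = \widetilde{A}\ast C$ and $B \in \mathcal{D}_1 \Leftrightarrow C \in \mathcal{D}_1\mathfrak{A}_{j,k}$. For the formulas I would use $\mathcal{T}^1(\mathcal{H}_k)\ast(\mathcal{D}_1\mathfrak{A}_{j,k}) = \mathcal{T}^1(\mathcal{H}_j,\mathcal{H}_k)\ast\mathcal{D}_1$ — since every $S \in \mathcal{T}^1(\mathcal{H}_k)$ is of the form $S'\mathfrak{A}_{j,k}$ for some $S' \in \mathcal{T}^1(\mathcal{H}_j,\mathcal{H}_k)$ and $S'\ast B$ only depends on $S'\mathfrak{A}_{j,k}$ and $B\mathfrak{A}_{j,k}$ — together with the analogous identity for $\mathcal{D}_1 = \overline{\mathcal{T}^1(\mathcal{H}_j,\mathcal{H}_k)\ast\mathcal{D}_0}$, which follows from right-multiplying $\mathcal{D}_1\mathfrak{A}_{j,k} = \overline{\mathcal{T}^1(\mathcal{H}_k)\ast\mathcal{D}_0}$ by $\mathfrak{A}_{k,j}$ and using that $(f\ast S')\mathfrak{A}_{k,j}$ ranges over $\mathcal{T}^1(\mathcal{H}_j,\mathcal{H}_k)\ast\mathcal{D}_0$. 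Uniqueness of $\mathcal{D}_0$ transfers directly since the translation dictionary is a bijection.

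The only real content beyond bookkeeping is making sure Werner's original argument genuinely applies verbatim after the reduction — in particular that regularity of $\widetilde{A}$ (guaranteed by Theorem \ref{thm:Correspondence_2}) is exactly the hypothesis Werner needs, and that the $\delta$-approximation $g_t \ast C \to C$ used there is available, which it is by Lemma \ref{lem:approximation}. I therefore expect no substantive obstacle: the main point to be careful about is tracking the sidedness of the multiplications by $\mathfrak{A}_{j,k}$ versus $\mathfrak{A}_{k,j}$ and checking at each step that the shift-intertwining and the convolution identities from Section \ref{sec:2.1} are being used on the correct side, so that $\alpha$-invariance is preserved under the dictionary in both directions. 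Given this, I would simply state that "the proof is, up to replacing $A$ by $A\mathfrak{A}_{j,k}$ and $B$ by $B\mathfrak{A}_{j,k}$ throughout, identical to the one in \cite{werner84}," and include the short verification that the relevant maps are shift-equivariant isometric isomorphisms as the one explicit lemma needed to justify this.
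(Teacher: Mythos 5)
Your reduction to Werner's one-representation correspondence theorem via right multiplication by the intertwiner $\mathfrak{A}_{j,k}$ is correct, and it is exactly the route the paper takes: the paper only remarks that the proof is "up to some straightforward modifications, the same as for $\mathcal H_j = \mathcal H_k$ in \cite{werner84}", and your dictionary (shift-equivariance of $B \mapsto B\mathfrak{A}_{j,k}$, $A \ast B = (A\mathfrak{A}_{j,k}) \ast (B\mathfrak{A}_{j,k})$, $(f \ast A)\mathfrak{A}_{j,k} = f \ast (A\mathfrak{A}_{j,k})$) is precisely the same translation trick the paper uses in Lemma \ref{lem:approximation} and Theorem \ref{thm:Correspondence_2}. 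No gaps; your explicit verification of the sidedness of the multiplications is exactly the "straightforward modification" the paper leaves to the reader.
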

\begin{remark}
    Let $A \in \mathcal T^1(\mathcal H_j, \mathcal H_k)$ be regular.  As a direct consequence of Theorem \ref{thm:Correspondence_2}, the corresponding spaces are equally well given by $\mathcal D_0 = \overline{A \ast \mathcal D_1}$ and $\mathcal D_1 = \overline{A \ast \mathcal D_0}$. Of course, by the previous theorem, these relations are independent of the particular choice of the regular operator $A$.
\end{remark}

The standard correspondences are established as in \cite{werner84}. In particular, we obtain the correspondences $\operatorname{BUC}(\Xi) \leftrightarrow \mathcal C_1^{k,j}$, $C_0(\Xi) \leftrightarrow \mathcal K(\mathcal H_j, \mathcal H_k)$, where $C_0(\Xi) \subseteq \BUC(\Xi)$ denotes the ideal of functions vanishing at infinity. Due to its significance for our paper, we give the quick proof of the latter correspondence (see also \cite{werner84}).

\begin{lemma}\label{lem:correspondence_compact}
If $\mathcal D_0 = C_0(\Xi)$, then $\mathcal D_1 = \mathcal K(\mathcal H_j, \mathcal H_k)$.
\end{lemma}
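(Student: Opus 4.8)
The plan is to feed the pair into the correspondence theorem, Theorem \ref{thm:corr_1}: if $\mathcal D_0 = C_0(\Xi)$ is the subspace associated to some closed, $\alpha$-invariant $\mathcal D_1 \subseteq \mathcal C_1^{k,j}$, then the formula supplied by that theorem gives $\mathcal D_1 = \overline{\mathcal T^1(\mathcal H_j, \mathcal H_k) \ast C_0(\Xi)}$, the closure being taken in operator norm inside $\mathcal C_1^{k,j}$ (and, by the subsequent remark, one may equally well use $\overline{A \ast C_0(\Xi)}$ for a fixed regular $A$). It then remains only to identify this space with $\mathcal K(\mathcal H_j, \mathcal H_k)$, which I would do via two inclusions.

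For ``$\subseteq$'': since $\mathcal K(\mathcal H_j, \mathcal H_k)$ is norm closed, it suffices to show $A \ast f \in \mathcal K(\mathcal H_j, \mathcal H_k)$ for every $A \in \mathcal T^1(\mathcal H_j, \mathcal H_k)$ and $f \in C_0(\Xi)$. If $f$ is continuous with compact support, then $z \mapsto f(z)\,\alpha_z(A)$ is a continuous, compactly supported map into $\mathcal T^1(\mathcal H_j, \mathcal H_k)$, so $A \ast f = \int_\Xi f(z)\,\alpha_z(A)\,\mathrm{d}z$ exists as a Bochner integral in $\mathcal T^1(\mathcal H_j, \mathcal H_k)$ and is in particular compact (one checks that this $\mathcal T^1$-valued integral does represent the weakly defined operator $A \ast f$). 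For a general $f \in C_0(\Xi)$, approximate it uniformly by compactly supported continuous functions $f_n$ and invoke \eqref{eq:convolution_estimate_L^inf} to get $A \ast f_n \to A \ast f$ in operator norm; hence $A \ast f \in \mathcal K(\mathcal H_j, \mathcal H_k)$. This yields $\mathcal T^1(\mathcal H_j, \mathcal H_k) \ast C_0(\Xi) \subseteq \mathcal K(\mathcal H_j, \mathcal H_k)$ and therefore $\mathcal D_1 \subseteq \mathcal K(\mathcal H_j, \mathcal H_k)$.

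For ``$\supseteq$'': the finite rank operators lie in $\mathcal T^1(\mathcal H_j, \mathcal H_k)$ and are norm dense in $\mathcal K(\mathcal H_j, \mathcal H_k)$, so it suffices to prove $\mathcal T^1(\mathcal H_j, \mathcal H_k) \subseteq \mathcal D_1$. Let $B \in \mathcal T^1(\mathcal H_j, \mathcal H_k)$. Since the orbit map $z \mapsto \alpha_z(B)$ is continuous even into $\mathcal T^1(\mathcal H_j, \mathcal H_k)$, in particular $B \in \mathcal C_1^{k,j}$, and Lemma \ref{lem:approximation} gives $g_t \ast B \to B$ in operator norm as $t \to 0$. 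Each Gaussian $g_t$ belongs to $C_0(\Xi)$, so $g_t \ast B \in \mathcal T^1(\mathcal H_j, \mathcal H_k) \ast C_0(\Xi) \subseteq \mathcal D_1$, and since $\mathcal D_1$ is closed we conclude $B \in \mathcal D_1$. Combining the two inclusions gives $\mathcal D_1 = \mathcal K(\mathcal H_j, \mathcal H_k)$, as claimed.

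The only delicate point is the compatibility of the two conventions for the convolution $A \ast f$ — the $\mathcal T^1$-valued Bochner integral for $f \in L^1(\Xi)$ versus the weakly defined bounded operator for $f \in L^\infty(\Xi)$ — on the overlap $L^1(\Xi) \cap L^\infty(\Xi)$; this is used both to see that $A \ast f$ is trace class for compactly supported $f$ and, in the last step, to realize $g_t \ast B$ as an element of $\mathcal T^1(\mathcal H_j, \mathcal H_k) \ast C_0(\Xi)$. This compatibility is a routine computation from the definitions together with cyclicity of the trace, exactly as in the single-representation case treated in \cite{werner84, Fulsche2020}; everything else in the argument is soft functional analysis.
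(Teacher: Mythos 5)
Your proof is correct and follows essentially the same route as the paper: both start from $\mathcal D_1 = \overline{\mathcal T^1(\mathcal H_j,\mathcal H_k) \ast C_0(\Xi)}$ and obtain the two inclusions by convolving with a dense subclass of $C_0(\Xi)$ for which the convolution lands in $\mathcal T^1(\mathcal H_j,\mathcal H_k)$, using the estimates \eqref{eq:convolution_estimate_T^1} and \eqref{eq:convolution_estimate_L^inf} — the paper simply takes $L^1(\Xi)\cap C_0(\Xi)$ where you take compactly supported continuous functions, and uses density of $\mathcal T^1 \ast L^1$ in $\mathcal T^1$ where you use Lemma \ref{lem:approximation} together with finite-rank density in $\mathcal K(\mathcal H_j,\mathcal H_k)$. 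The compatibility of the Bochner-integral and weak definitions of $A \ast f$ on $L^1(\Xi)\cap L^\infty(\Xi)$ that you flag is indeed routine and is implicitly relied upon in the paper's proof as well.
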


\begin{proof}
We know that $\mathcal D_1 = \overline{\mathcal T^1(\mathcal H_j, \mathcal H_k) \ast C_0(\Xi)}$. Since $L^1(\Xi) \cap C_0(\Xi)$ is dense in $L^1(\Xi)$, and using Eq.~\eqref{eq:convolution_estimate_T^1}, we obtain that $\mathcal T^1(\mathcal H_j, \mathcal H_k) \ast (L^1(\Xi) \cap C_0(\Xi))$ is dense in $\mathcal T^1(\mathcal H_j, \mathcal H_k) \ast L^1(\Xi) = \mathcal T^1(\mathcal H_j, \mathcal H_k)$ with respect to the trace norm. Taking the closure in operator norm, we obtain that $\mathcal D_1 \supseteq \mathcal K(\mathcal H_j, \mathcal H_k)$.

On the other hand, $\mathcal T^1(\mathcal H_j, \mathcal H_k) \ast (L^1(\Xi) \cap C_0(\Xi)) \subseteq \mathcal T^1(\mathcal H_j, \mathcal H_k)$. Since $L^1(\Xi) \cap C_0(\Xi)$ is also dense in $C_0(\Xi)$, and using Eq.~\eqref{eq:convolution_estimate_L^inf}, we obtain that $\mathcal T^1(\mathcal H_j, \mathcal H_k) \ast (L^1(\Xi) \cap C_0(\Xi))$ is dense in $\mathcal D_1$ with respect to the operator norm. Taking the closure, we obtain that $\mathcal D_1 = \mathcal K(\mathcal H_j, \mathcal H_k)$.
\end{proof}

In our investigations of operators on polyanalytic Fock spaces, we will also have to deal with operators which are not regular but merely $\infty$-regular. An operator $A \in \mathcal T^1(\mathcal H_j)$ is called $\infty$-regular (cf.\ \cite{Kiukas_Lahti_Schultz_Werner2012}) if the set $\{ \xi \in \Xi: \tr(AW_\xi^j) = 0\}$ has dense complement. Similarly, one may consider the notion of $\infty-$regularity for $A \in \mathcal T^1(\mathcal H_j, \mathcal H_k)$ by considering the set of zeros of $\tr(A\mathfrak A_{j,k}W_\xi^k)$. While $\infty$-regular operators do not give rise to the full correspondence theory anymore, they still have some related properties. Again, the proof is essentially the same as for $\mathcal H_j = \mathcal H_k$, which was given in \cite{Kiukas_Lahti_Schultz_Werner2012}.
\begin{theorem}[{\cite[Propositions 3 and 4]{Kiukas_Lahti_Schultz_Werner2012}}]\label{thm:inftyregular}
    For $A \in \mathcal T^1(\mathcal H_j,\mathcal H_k)$ the following statements are equivalent:
    \begin{enumerate}
        \item $A$ is $\infty$-regular.
        \item $\{ \alpha_z(A): ~z \in \Xi\}$ spans a weak$^\ast$ dense subspace of $\mathcal L(\mathcal H_j,\mathcal H_k)$.
        \item $L^1(\Xi) \ni f \mapsto f \ast A \in \mathcal T^1(\mathcal H_j,\mathcal H_k)$ is injective.
        \item $\mathcal T^1(\mathcal H_j,\mathcal H_k) \ni B \mapsto A \ast B \in L^1(\Xi)$ is injective.
        \item $L^\infty(\Xi) \ni f \mapsto A \ast f \in \mathcal L(\mathcal H_j,\mathcal H_k)$ has weak$^\ast$ dense range.
        \item $\mathcal L(\mathcal H_j,\mathcal H_k) \ni B \mapsto A \ast B \in L^\infty(\Xi)$ has weak$^\ast$ dense range.
        \item $A \ast C_0(\Xi) = \{ A \ast f: ~f\in C_0(\Xi)\}$ is dense in $\mathcal K(\mathcal H_j,\mathcal H_k)$.
        \item $A \ast \mathcal K(\mathcal H_j,\mathcal H_k) = \{ A \ast B: ~B \in \mathcal K(\mathcal H_j,\mathcal H_k)\}$ is dense in $C_0(\Xi)$.
    \end{enumerate}
\end{theorem}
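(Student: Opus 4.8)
The plan is to reduce everything to the single-representation version of the theorem, which is \cite[Propositions 3 and 4]{Kiukas_Lahti_Schultz_Werner2012}, exactly as was done for Theorem~\ref{thm:Correspondence_2} and Lemma~\ref{lem:approximation}. The key observation is that the unitary $\mathfrak A_{j,k}\from \mathcal H_k \to \mathcal H_j$ (or rather $\mathfrak A_{j,k}\from \mathcal H_k \to \mathcal H_j$ together with $\mathfrak A_{k,j} = \mathfrak A_{j,k}^\ast$) implements isometric isomorphisms $\mathcal T^1(\mathcal H_j,\mathcal H_k) \ni A \mapsto A\mathfrak A_{j,k} \in \mathcal T^1(\mathcal H_k)$ and $\mathcal L(\mathcal H_j,\mathcal H_k) \ni B \mapsto B\mathfrak A_{j,k} \in \mathcal L(\mathcal H_k)$, which are weak$^\ast$-weak$^\ast$ continuous, intertwine the shifts via $\alpha_z^{k,k}(A\mathfrak A_{j,k}) = \alpha_z^{k,j}(A)\mathfrak A_{j,k}$, and are compatible with all the convolutions: $f \ast (A\mathfrak A_{j,k}) = (f\ast A)\mathfrak A_{j,k}$ for $f \in L^1(\Xi)$ or $f \in L^\infty(\Xi)$, and $(A\mathfrak A_{j,k})\ast(B\mathfrak A_{j,k}) = A \ast B$ as functions on $\Xi$ (this last identity was already recorded in Section~\ref{sec:2.1}). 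Finally, $\infty$-regularity of $A \in \mathcal T^1(\mathcal H_j,\mathcal H_k)$ is by definition the statement that the zero set of $\xi \mapsto \tr(A\mathfrak A_{j,k}W_\xi^k) = \mathcal F_W(A\mathfrak A_{j,k})(\xi)$ has dense complement, i.e.\ that $A\mathfrak A_{j,k} \in \mathcal T^1(\mathcal H_k)$ is $\infty$-regular in the sense of \cite{Kiukas_Lahti_Schultz_Werner2012}.

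With these dictionary entries in place, each of the eight conditions for $A$ translates verbatim into the corresponding condition for $A\mathfrak A_{j,k}$ in the single-representation setting. Concretely: condition (1) is literally the definition; conditions (3) and (4) transform by $f \ast A \mapsto (f \ast A)\mathfrak A_{j,k} = f \ast (A\mathfrak A_{j,k})$ and $A \ast B \mapsto (A\mathfrak A_{j,k})\ast(B\mathfrak A_{j,k})$, using that $B \mapsto B\mathfrak A_{j,k}$ is a bijection of $\mathcal T^1(\mathcal H_j,\mathcal H_k)$ onto $\mathcal T^1(\mathcal H_k)$; conditions (5) and (6) transform the same way together with the weak$^\ast$-continuity of $B \mapsto B\mathfrak A_{j,k}$ on $\mathcal L$ and the fact that a weak$^\ast$-continuous isomorphism maps weak$^\ast$-dense sets to weak$^\ast$-dense sets; condition (2) transforms using that $\alpha_z^{k,j}(A)\mathfrak A_{j,k} = \alpha_z^{k,k}(A\mathfrak A_{j,k})$ and that $B \mapsto B\mathfrak A_{j,k}$ is a homeomorphism for the weak$^\ast$ topology; and conditions (7)--(8) transform using additionally that $B \mapsto B\mathfrak A_{j,k}$ restricts to an isometric isomorphism $\mathcal K(\mathcal H_j,\mathcal H_k) \to \mathcal K(\mathcal H_k)$ (norm limits of finite-rank operators are preserved) and that convolution with $f \in C_0(\Xi)$ is likewise compatible. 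So all eight conditions for $A$ are equivalent to the eight conditions for $A\mathfrak A_{j,k}$, which are mutually equivalent by \cite[Propositions 3 and 4]{Kiukas_Lahti_Schultz_Werner2012}; hence the eight conditions for $A$ are mutually equivalent.

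I do not expect a serious obstacle here — this is the reason the authors remarked that the proof is "essentially the same" and that they "skip some of the proofs". The one point that deserves a word of care is condition (2), the weak$^\ast$-density of $\operatorname{span}\{\alpha_z(A): z \in \Xi\}$ in $\mathcal L(\mathcal H_j,\mathcal H_k)$: one must check that right multiplication by the fixed unitary $\mathfrak A_{j,k}$ is not only a linear bijection $\mathcal L(\mathcal H_j,\mathcal H_k) \to \mathcal L(\mathcal H_k)$ but is continuous for the respective weak$^\ast$ topologies (as duals of $\mathcal T^1(\mathcal H_k,\mathcal H_j)$ and $\mathcal T^1(\mathcal H_k)$), which follows because its preadjoint is left multiplication by $\mathfrak A_{j,k}$ on trace class operators. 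The other mild point is that one should fix the constants in $\mathfrak A_{j,k}$, $\mathfrak A_{k,j}$ as in Section~\ref{sec:2.1} so that the identity $A \ast B = (A\mathfrak A_{j,k})\ast(B\mathfrak A_{j,k})$ holds exactly; but this was already arranged. Everything else is a routine transcription of the cited single-representation argument, so I would simply state the reduction and refer to \cite{Kiukas_Lahti_Schultz_Werner2012} for the remaining details.
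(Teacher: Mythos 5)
Your reduction via right multiplication by the unitary intertwiner $\mathfrak A_{j,k}$, using the identities $(f\ast A)\mathfrak A_{j,k} = f\ast(A\mathfrak A_{j,k})$, $A\ast B = (A\mathfrak A_{j,k})\ast(B\mathfrak A_{j,k})$, the intertwining of the shifts, and the weak$^\ast$ homeomorphism property, is correct and is exactly the route the paper intends: it gives no proof beyond remarking that the argument is "essentially the same" as in \cite{Kiukas_Lahti_Schultz_Werner2012}, and it handles the analogous Theorem \ref{thm:Correspondence_2} and Lemma \ref{lem:approximation} by precisely this $\mathfrak A_{j,k}$-transfer. No gaps; the points you flag (weak$^\ast$ continuity via the preadjoint, the choice of constants for $\mathfrak A_{j,k}$) are the right ones and are already settled in Section \ref{sec:2.1}.
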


\subsection{Sums of irreducible $\sigma$-representations}\label{sec:reducible}
In this section we assume that $(\mathcal H, W)$ is a $\sigma$-representation, which is no longer irreducible, but the sum of finitely many irreducible $\sigma$-representations. More precisely, we assume
\begin{align*}
\mathcal H = \bigoplus\limits_{j=1}^N \mathcal H_j
\end{align*}
for closed subspaces $\Hc_j \subseteq \Hc$ that are invariant under every $W_z$, $z \in \Xi$. Regarding the operators $U_j$ and $\Af_{j,k}$, we adopt the conventions from the previous section in the obvious way. We emphasize that it is possible to choose the constants from the discussions at the beginning of Section \ref{sec:2.1} consistently. Let us first recall that we asked for the conditions $\mathfrak A_{k, j}^\ast = \mathfrak A_{j, k}$, $U_j$ to be self-adjoint and $\mathfrak A_{k, j} U_j = U_k \mathfrak A_{k, j}$. These properties can be enforced by fixing the operators in the correct order: First, pick $U_1$ self-adjoint. Then, fix some choice of $\mathfrak A_{2, 1}$. From these two, we fix the choice of $U_2$ such that $\mathfrak A_{2, 1} U_1 = U_2 \mathfrak A_{2, 1}$. Next, we fix some choice of $\mathfrak A_{3,2}$ and then $U_3$ such that $\mathfrak A_{3,2} U_2 = U_3 \mathfrak A_{3,2}$. We continue this scheme until we have all $\mathfrak A_{k+1, k}$ and $U_k$ fixed. Now, for $j < k$ we let $\mathfrak A_{k, j} = \mathfrak A_{k, k-1} \mathfrak A_{k-1, k-2} \dots \mathfrak A_{j+1, j}$. Finally, for $j > k$ we let $\mathfrak A_{k, j} = \mathfrak A_{j, k}^\ast$. With these choices, all the conventions in Section \ref{sec:2.1} are satisfied.

We will often write operators $A \in \mathcal L(\mathcal H)$ in their matrix representation \label{def:matrix_repr}\nomenclature{$(A_{k,j})_{j,k=1, \dots, N}, A_{k,j}$}{Matrix representation and matrix entries of operator $A$, p.~\pageref{def:matrix_repr}}$(A_{k,j})_{j,k=1, \dots, N}$, where $A_{k,j} = P_{(k)} A|_{\mathcal H_j} \in \mathcal L(\mathcal H_j,\mathcal H_k)$. Here, \label{def:Pk}\nomenclature{$P_{(k)}$}{Orthogonal projection on subspace $\mathcal H_k$ of $\mathcal H$, p.~\pageref{def:Pk}}$P_{(k)}$ is the orthogonal projection onto $\mathcal H_k$. As in the irreducible case, we can define \label{def:operator_shift2}$\alpha_z(A) := W_z A W_{-z}$, which implies $\alpha_z(A)_{k,j} = \alpha_z(A_{k,j})$ for all $j,k = 1,\ldots,N$. We further define 
\begin{align}\label{def:C1_2}
\nomenclature{$\mathcal C_1, ~\mathcal C_1(\mathcal H)$}{Shift-continuous operators, Eq.~\eqref{def:C1_2} on p.~\pageref{def:C1_2}}\mathcal C_1 := \mathcal C_1(\mathcal H) := \{ A \in \mathcal L(\mathcal H): \| \alpha_z(A) - A\|_{op} \to 0 \text{ as } |z| \to 0\}.
\end{align}
Since
\begin{align*}
\| A_{k,j}\| \leq \| A\| \leq N\max_{ j, k = 1, \dots, N} \| A_{k,j}\|,
\end{align*}
we obtain that $A \in \mathcal C_1$ if and only if $A_{k,j} \in \mathcal C_1^{k,j}$ for all $j,k = 1,\ldots,N$. Moreover, $A \in \mathcal T^1(\mathcal H)$ if and only if $A_{k,j} \in \mathcal T^1(\mathcal H_j, \mathcal H_k)$ for all $j, k$. Given $A \in \mathcal T^1(\mathcal H)$ and $B \in \mathcal L(\mathcal H)$, we set
\begin{align}\label{def:conv_operator2}
A \ast B(z) := (A_{k,j} \ast B_{k,j}(z))_{j,k = 1, \dots, N},
\end{align}
which is an $N \times N$ matrix with entries in $\operatorname{BUC}(\Xi)$. If $B$ is also a trace class operator, the matrix entries are additionally in $L^1(\Xi)$. Further, given $F = (F_{k,j})_{j,k = 1, \dots, N} \in L^\infty(\Xi)^{N \times N}$ and $A \in \mathcal T^1(\mathcal H)$, then we define
\begin{align}\label{def:conv_operator_function_matrix}
\nomenclature{$(A \ast F)_{k,j}$}{Convolution of operator and matrix-valued function, Eq.~\eqref{def:conv_operator_function_matrix} on p.~\pageref{def:conv_operator_function_matrix}}(A \ast F)_{k,j} := A_{k,j} \ast F_{k,j}.
\end{align}
Analogously we define $F \ast A$ for $F_{k,j} \in L^1(\Xi)$ and $A \in \mathcal L(\mathcal H)$. To stay in the same formalism with the convolution of functions-valued matrices, we define $F \ast G \in L^p(\Xi)^{N \times N}$ by
\begin{align}\label{def:conv_function_function_matrix}
    \nomenclature{$(F \ast G)_{k,j}$}{Convolution of matrix-valued functions, Eq.~\eqref{def:conv_function_function_matrix} on p.~\pageref{def:conv_function_function_matrix}}(F \ast G)_{k,j} = F_{k,j} \ast G_{k,j}
\end{align}
for $F\in L^1(\Xi)^{N \times N}$ and $G \in L^p(\Xi)^{N \times N}$, $1 \leq p \leq \infty$.

Within this framework of a finite sum of irreducible $\sigma$-representations, we are interested in the analogous results to the Correspondence Theorem \ref{thm:corr_1}. The first idea one could come up with is an entrywise version of this theorem, which would read as follows:

\begin{theorem}\label{cor:maxpart}
 For each $j, k = 1, \dots, N$ let $\mathcal D_1^{k,j} \subseteq \mathcal C_1^{k, j}$ be a closed, $\alpha$-invariant subspace. Consider
    \begin{align*}
        \mathcal D_1 := \{ B = (B_{k,j})_{j, k = 1, \dots, N} \in \mathcal C_1(\mathcal H): B_{k,j} \in \mathcal D_1^{k,j}\} \cong \bigoplus_{j,k = 1, \dots, N} \mathcal D_1^{k,j},
    \end{align*}
    which is an $\alpha$-invariant closed subspace of $\mathcal C_1(\mathcal H)$. Then, there are unique $\alpha$-invariant closed subspaces $\mathcal D_0^{k,j} \subseteq \BUC(\Xi)$, $j, k = 1, \dots, N$, such that with 
    \begin{align*}
        \mathcal D_0 := \{ F = (F_{k,j})_{j,k = 1,\dots, N} \in \BUC(\Xi)^{N \times N}: F_{k,j} \in \mathcal D_0^{k,j}\} \cong \bigoplus_{j,k = 1, \dots, N} \mathcal D_0^{k,j}
    \end{align*}
    the following holds true: Given $B \in \mathcal C_1(\mathcal H)$ and $A \in \mathcal T^1(\mathcal H)$ such that every matrix entry $A_{k,j}$ is regular, it holds:
    \begin{align*}
        B \in \mathcal D_1 \Leftrightarrow A \ast B \in \mathcal D_0.
    \end{align*}
    Moreover,
    \begin{align*}
       \mathcal D_0 = \overline{A \ast \mathcal D_1} = \overline{\mathcal T^1(\mathcal H) \ast \mathcal D_1} \quad \text{and} \quad \mathcal D_1 = \overline{A \ast \mathcal D_0} = \overline{\mathcal T^1(\mathcal H) \ast \mathcal D_0}, 
    \end{align*}
    which is equivalent to
    \begin{align*}
        \mathcal D_0^{k,j} = \overline{A_{k,j} \ast \mathcal D_1^{k, j}} = \overline{\mathcal T^1(\mathcal H_j, \mathcal H_k) \ast \mathcal D_1^{k, j}} \quad \text{and} \quad \mathcal D_1^{k,j} = \overline{A_{k,j} \ast \mathcal D_0^{k, j}} = \overline{\mathcal T^1(\mathcal H_j, \mathcal H_k) \ast \mathcal D_0^{k, j}}
    \end{align*}
    for all $j,k = 1,\ldots,N$. 
\end{theorem}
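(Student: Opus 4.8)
The plan is to reduce Theorem~\ref{cor:maxpart} entirely to the two-representation correspondence theorem (Theorem~\ref{thm:corr_1}) applied $N^2$ times, once for each pair $(j,k)$, and then to assemble the pieces and check that ``assembling'' is compatible with all the closures and equivalences. The starting observation is the one already recorded in the text: because $\|A_{k,j}\| \le \|A\| \le N \max_{j,k}\|A_{k,j}\|$, membership in $\Cc_1(\Hc)$, in $\Tc^1(\Hc)$, and convergence in operator norm, all decouple into the matrix entries; and by definition the convolution $A \ast B$, $A \ast F$, etc.\ are defined entrywise. Hence for the given entrywise-regular $A \in \Tc^1(\Hc)$, the biconditional $B \in \Dc_1 \Leftrightarrow A \ast B \in \Dc_0$ is literally the conjunction over $j,k$ of the biconditionals $B_{k,j} \in \Dc_1^{k,j} \Leftrightarrow A_{k,j} \ast B_{k,j} \in \Dc_0^{k,j}$, each of which is exactly Theorem~\ref{thm:corr_1} for the pair $(\Hc_j,\Hc_k)$ with the fixed regular operator $A_{k,j}$. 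So the only real content beyond Theorem~\ref{thm:corr_1} is (a) producing the $\Dc_0^{k,j}$ and verifying uniqueness, and (b) matching up the various descriptions of $\Dc_0$ and $\Dc_1$ as closures of convolutions of the \emph{full} algebras with the \emph{full} subspaces.

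First I would apply Theorem~\ref{thm:corr_1} to each pair $(j,k)$: for the closed $\alpha$-invariant subspace $\Dc_1^{k,j} \subseteq \Cc_1^{k,j}$ it yields a unique closed $\alpha$-invariant $\Dc_0^{k,j} \subseteq \BUC(\Xi)$ with $\Dc_0^{k,j} = \overline{A_{k,j} \ast \Dc_1^{k,j}} = \overline{\Tc^1(\Hc_j,\Hc_k) \ast \Dc_1^{k,j}}$ and $\Dc_1^{k,j} = \overline{A_{k,j} \ast \Dc_0^{k,j}} = \overline{\Tc^1(\Hc_j,\Hc_k) \ast \Dc_0^{k,j}}$, and with the biconditional for elements of $\Cc_1^{k,j}$. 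Define $\Dc_0$ as the corresponding block space. Uniqueness of $\Dc_0$ is immediate: if $\widetilde{\Dc_0}$ is another block subspace doing the job, testing the biconditional on operators $B$ that are supported in a single block forces $\widetilde{\Dc_0}^{k,j} = \Dc_0^{k,j}$ for each $(j,k)$ by the uniqueness clause of Theorem~\ref{thm:corr_1} (given a fixed $\Dc_0^{k,j}$, the set of $B_{k,j} \in \Cc_1^{k,j}$ with $A_{k,j}\ast B_{k,j} \in \Dc_0^{k,j}$ is precisely $\Dc_1^{k,j}$, which determines $\Dc_0^{k,j}$). $\alpha$-invariance and closedness of $\Dc_0$ follow from the entrywise statements together with the norm comparison above.

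Then I would prove the ``global'' identities $\Dc_0 = \overline{A \ast \Dc_1} = \overline{\Tc^1(\Hc)\ast \Dc_1}$ and the symmetric pair for $\Dc_1$, and show they are equivalent to the entrywise ones. One inclusion is easy: since $(A\ast B)_{k,j} = A_{k,j}\ast B_{k,j}$, the block structure gives $A \ast \Dc_1 \subseteq \Dc_0$ and $\Tc^1(\Hc)\ast\Dc_1 \subseteq \Dc_0$ entrywise, hence after closure $\overline{A\ast\Dc_1} \subseteq \Dc_0$ and likewise for $\Tc^1(\Hc)$. For the reverse, I use that $\Dc_0^{k,j} = \overline{A_{k,j}\ast\Dc_1^{k,j}}$ together with the fact that an element of $\Dc_1$ may be built block by block: given $F \in \Dc_0$ and $\epsilon > 0$, for each $(j,k)$ pick $B^{(j,k)}_{k,j} \in \Dc_1^{k,j}$ with $\|A_{k,j}\ast B^{(j,k)}_{k,j} - F_{k,j}\| < \epsilon/N^{?}$, let $B$ be the operator whose $(k,j)$ block is $B^{(j,k)}_{k,j}$ (this $B$ lies in $\Dc_1$ because membership is entrywise), and use the norm comparison to bound $\|A\ast B - F\|$. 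This shows $\Dc_0 \subseteq \overline{A\ast\Dc_1}$, and the same argument with $\Tc^1(\Hc_j,\Hc_k)$ in place of $A_{k,j}$ gives $\Dc_0 \subseteq \overline{\Tc^1(\Hc)\ast\Dc_1}$; the symmetric statements for $\Dc_1$ are identical with the roles of $\Dc_0,\Dc_1$ swapped. Finally, the stated equivalence ``$\Dc_0 = \overline{A\ast\Dc_1}$ (etc.) $\Leftrightarrow$ $\Dc_0^{k,j} = \overline{A_{k,j}\ast\Dc_1^{k,j}}$ (etc.)\ for all $j,k$'' is just the remark that the block-diagonal identification $\Dc_0 \cong \bigoplus_{j,k}\Dc_0^{k,j}$ is isometric in the relevant topologies and that closure, convolution and the $\ast$-operations all respect blocks.

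The main obstacle — really the only non-bookkeeping point — is making sure the single-pair applications of Theorem~\ref{thm:corr_1} are legitimate and that the hypotheses of that theorem are genuinely met for every block: each $A_{k,j}$ must be regular (this is exactly the hypothesis imposed on $A$), and each $\Dc_1^{k,j}$ must be a closed $\alpha$-invariant subspace of $\Cc_1^{k,j}$ (this is the hypothesis of the present theorem). Given those, the rest is the routine verification that the norm equivalence $\|A_{k,j}\| \le \|A\| \le N\max\|A_{k,j}\|$ lets one pass freely between ``entrywise'' and ``global'' statements for all of closedness, $\alpha$-invariance, density of ranges, and the equivalence $B \in \Dc_1 \Leftrightarrow A\ast B \in \Dc_0$. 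I would therefore present the proof as: (1) invoke Theorem~\ref{thm:corr_1} blockwise to get the $\Dc_0^{k,j}$ and the blockwise identities and biconditionals; (2) assemble and note closedness/$\alpha$-invariance via the norm comparison; (3) derive the global identities from the blockwise ones using the block-building argument above; (4) uniqueness by testing on block-supported operators. One could honestly compress this to a few lines, since ``everything decouples,'' but I would spell out step (3) since that is where a reader might want reassurance.
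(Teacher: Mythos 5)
Your proposal is correct and is essentially the paper's own argument: the paper proves this theorem in one line by entrywise application of Theorem \ref{thm:corr_1}, which is precisely your reduction, with the block-assembly and norm-comparison bookkeeping spelled out in more detail.
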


\begin{proof}
The statement follows immediately from entrywise applications of Theorem \ref{thm:corr_1}.
\end{proof}

\begin{remark}
Let us elaborate on the notation $\mathcal D_1 \cong \bigoplus\limits_{j, k = 1, \dots, N} \mathcal D_1^{k, j}$ in the formulation of the previous theorem, which might seem odd. It should be understood as an isomorphism of topological vector spaces, which is due to the following: The left-hand side comes endowed with the operator norm of $\mathcal L(\mathcal H)$, while the right-hand side is naturally endowed with the supremum of the operator norms of $\mathcal L(\mathcal H_j, \mathcal H_k)$. While these are certainly different norms, this is also no issue: The key property of the space is its closedness in $\mathcal C_1(\mathcal H)$, which is only a property of the underlying linear topology. Since both norms in question are equivalent, they of course induce the same topology. The same comment also applies to $\mathcal D_0 \cong \bigoplus\limits_{j,k = 1, \dots, N} \mathcal D_0^{k,j}$, of course.
\end{remark}

Later, we will need the following corollary to the previous result.

\begin{corollary}\label{cor:equalspaces}
Let $\mathcal D_0^{k, j}, \mathcal D_1^{k, j}$ be as in the previous theorem. If $\mathcal D_0^{k, j} = \mathcal D_0^{k', j'}$ for some tuples $(j, k), (j', k')$ with $1 \leq j, j', k, k' \leq N$, then $\mathcal D_1^{k, j} = \mathfrak A_{k,k'}\mathcal D_1^{k', j'} \mathfrak A_{j', j}$. 
\end{corollary}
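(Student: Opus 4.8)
The plan is to read both spaces off from the explicit formula of Theorem~\ref{cor:maxpart} and to transport one onto the other along the intertwiners $\mathfrak A$. Recall from that theorem that $\mathcal D_1^{k,j} = \overline{\mathcal T^1(\mathcal H_j,\mathcal H_k) \ast \mathcal D_0^{k,j}}$ and $\mathcal D_1^{k',j'} = \overline{\mathcal T^1(\mathcal H_{j'},\mathcal H_{k'}) \ast \mathcal D_0^{k',j'}}$, the closures being taken in operator norm. I would introduce the map $\Phi \from \mathcal L(\mathcal H_{j'},\mathcal H_{k'}) \to \mathcal L(\mathcal H_j,\mathcal H_k)$, $\Phi(T) := \mathfrak A_{k,k'} T \mathfrak A_{j',j}$. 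Since $\mathfrak A_{k,k'}$ and $\mathfrak A_{j',j}$ are unitaries (with inverses $\mathfrak A_{k',k}$ and $\mathfrak A_{j,j'}$), $\Phi$ is an isometric linear bijection, it restricts to an isometric bijection of $\mathcal T^1(\mathcal H_{j'},\mathcal H_{k'})$ onto $\mathcal T^1(\mathcal H_j,\mathcal H_k)$, and, being a homeomorphism for the operator-norm topology, it commutes with operator-norm closures. The assertion of the corollary is precisely that $\Phi(\mathcal D_1^{k',j'}) = \mathcal D_1^{k,j}$.

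The one ingredient that requires a computation is the compatibility of $\Phi$ with the convolution by $L^\infty$-functions: for $S \in \mathcal T^1(\mathcal H_{j'},\mathcal H_{k'})$ and $g \in L^\infty(\Xi)$ one has $\Phi(S \ast g) = \Phi(S) \ast g$ (note that $S \ast g$, although defined weakly, is a genuine bounded operator, so left and right multiplication by the $\mathfrak A$'s makes sense). I would prove this by the same kind of direct verification that underlies the identities $\mathfrak A_{j,k}(f \ast A) = f \ast (\mathfrak A_{j,k}A)$ and $(f \ast A)\mathfrak A_{j,k} = f \ast (A \mathfrak A_{j,k})$ recorded in Section~\ref{sec:2.1}: treat the left factor $\mathfrak A_{k,k'}$ and the right factor $\mathfrak A_{j',j}$ separately, unwind the weak definition of the $\mathcal T^1$--$L^\infty$ convolution, and push the $\mathfrak A$'s through using the intertwining relations $\mathfrak A_{k,k'} W_z^{k'} = W_z^k \mathfrak A_{k,k'}$, $\mathfrak A_{k,k'} U_{k'} = U_k \mathfrak A_{k,k'}$ together with their $j$-analogues and the cyclicity of the trace; the composition conventions fixed at the start of Section~\ref{sec:reducible} (e.g.\ $\mathfrak A_{j',j}\mathfrak A_{j,k} = \mathfrak A_{j',k}$) guarantee that no spurious unimodular constants survive.

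Granting this, the corollary follows by a short chain of equalities: using first that $\Phi$ commutes with closures, then the compatibility above (so that $\Phi$ applied to $\{S \ast g\}$ yields $\{\Phi(S) \ast g\}$), then that $\Phi$ maps $\mathcal T^1(\mathcal H_{j'},\mathcal H_{k'})$ onto $\mathcal T^1(\mathcal H_j,\mathcal H_k)$ and the hypothesis $\mathcal D_0^{k',j'} = \mathcal D_0^{k,j}$, and finally Theorem~\ref{cor:maxpart} once more,
\begin{align*}
\mathfrak A_{k,k'}\,\mathcal D_1^{k',j'}\,\mathfrak A_{j',j}
&= \Phi\big(\overline{\mathcal T^1(\mathcal H_{j'},\mathcal H_{k'}) \ast \mathcal D_0^{k',j'}}\big)
 = \overline{\Phi\big(\mathcal T^1(\mathcal H_{j'},\mathcal H_{k'})\big) \ast \mathcal D_0^{k',j'}}\\
&= \overline{\mathcal T^1(\mathcal H_j,\mathcal H_k) \ast \mathcal D_0^{k,j}}
 = \mathcal D_1^{k,j}.
\end{align*}

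The only place where actual work is hidden is the convolution-compatibility $\Phi(S \ast g) = \Phi(S) \ast g$, and I expect that to be the main (though still routine) obstacle, mainly because one has to be careful with the weak definition of the $L^\infty$-convolution and with the composition identities among the various $\mathfrak A_{k,j}$; everything else is bookkeeping with unitaries and closures. As an alternative one could run the same argument with a fixed regular $A \in \mathcal T^1(\mathcal H)$, using $\mathcal D_1^{k,j} = \overline{A_{k,j} \ast \mathcal D_0^{k,j}}$ and the fact that $\Phi(A_{k',j'})$ is again regular, but that version forces one to verify regularity and is not obviously cleaner.
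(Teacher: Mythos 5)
Your argument is correct and is essentially the paper's proof: the paper simply runs your noted alternative, fixing a regular $A_{k,j}$, setting $A_{k',j'} := \mathfrak A_{k',k} A_{k,j}\mathfrak A_{j,j'}$, and transporting $\overline{A_{k',j'} \ast \mathcal D_0^{k',j'}}$ by the intertwiners, which rests on exactly the same convolution-compatibility and closure bookkeeping you describe. So the proposal is fine; the only substantive step in either version is the identity $\bigl(\mathfrak A_{k,k'} S \mathfrak A_{j',j}\bigr) \ast g = \mathfrak A_{k,k'}\bigl(S \ast g\bigr)\mathfrak A_{j',j}$, which follows from the intertwining relations as you indicate.
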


\begin{proof}
Choose a regular $A_{k,j} \in \Tc^1(\Hc_j,\Hc_k)$. Then $A_{k',j'} := \mathfrak A_{k',k} A_{k,j} \mathfrak A_{j,j'}$ is also regular and we get
\[\mathcal D_1^{k,j} = \overline{A_{k,j} \ast \mathcal D_0^{k,j}} = \overline{(\mathfrak A_{k,k'} A_{k',j'} \mathfrak A_{j',j}) \ast \mathcal D_0^{k',j'}} = \mathfrak A_{k,k'} \overline{A_{k',j'} \ast \mathcal D_0^{k',j'}} \mathfrak A_{j',j} = \mathfrak A_{k,k'} \mathcal D_1^{k',j'} \mathfrak A_{j',j}.\qedhere\]
\end{proof}
Especially when considering Toeplitz operators on polyanalytic Fock spaces, which we do in Section \ref{sec:3} below, it turns out that this is not the kind of spaces one would like to consider. Indeed, in this case, all of the $F_{k,j}$ are usually the same. Even though the following result is not quite what we need for Toeplitz operators later on, such assumptions directly lead to another version of the correspondence theorem. For a refined version that is more useful for Toeplitz operators we refer to Theorem \ref{thm:refined_correspondence_theorem} below.

\begin{theorem}\label{cor:minpart}
Let $\mathcal D_1^{1,1} \subseteq \mathcal C_1^{1,1}$ be $\alpha$-invariant and closed. Consider 
\begin{align*}
    \mathcal D_1 &:= \{ B = (B_{k,j})_{j, k = 1, \dots, N} \in \mathcal C_1(\mathcal H): B_{1,1} \in \mathcal D_1^{1,1}, \\
    &\quad \quad B_{k,j} = \mathfrak A_{k,1} B_{1,1} \mathfrak A_{1,j} \text{ for all } j, k = 1, \dots, N\}.
\end{align*}
Then there exists a unique $\alpha$-invariant closed subspace $\mathcal D_0^{1,1} \subseteq \BUC(\Xi)$ such that with
\begin{align*}
    \mathcal D_0 &:= \{ F = (F_{k,j})_{j, k = 1, \dots, N} \in \BUC(\Xi)^{N \times N}: F_{1,1} \in \mathcal D_0^{1,1}, \\
    &\quad \quad F_{k,j} = F_{1,1} \text{ for all } j, k = 1, \dots N\}
\end{align*}
the following holds: If $B \in \mathcal C_1(\mathcal H)$ and $A \in \mathcal T^1(\mathcal H)$ such that $A_{1,1}$ is regular and $A_{k,j} = \mathfrak A_{k,1} A_{1,1} \mathfrak A_{1,j}$ for all $j, k = 1, \dots N$, then
\begin{align} \label{eq:minpart}
    B \in \mathcal D_1 \Leftrightarrow A \ast B \in \mathcal D_0.
\end{align}
The spaces $\mathcal D_0$ and $\mathcal D_0^{1,1}$ satisfy
\begin{align*}
    \mathcal D_0 = \overline{\{ C \in \mathcal T^1(\mathcal H):~ C_{k,j} = \mathfrak A_{k,1} C_{1,1} \mathfrak A_{1,j} \text{ for all } j, k = 1, \dots, N\} \ast \mathcal D_1},
\end{align*}
and $\mathcal D_0^{1,1} = \overline{\mathcal T^1(\mathcal H_1) \ast \mathcal D_1^{1,1}}$, respectively.
\end{theorem}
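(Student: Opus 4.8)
The plan is to reduce everything to the irreducible correspondence theorem (Theorem~\ref{thm:corr_1}) on $\mathcal H_1$, together with the entrywise Theorem~\ref{cor:maxpart}, using one \enquote{transfer lemma} as the computational bridge. Introduce the diagonalization map $\iota\from\mathcal L(\mathcal H_1)\to\mathcal L(\mathcal H)$ by $\iota(X)_{k,j}:=\mathfrak A_{k,1}X\mathfrak A_{1,j}$ (so $\iota(X)_{1,1}=X$). By the consistent choice of constants from the start of Section~\ref{sec:reducible}, $\iota$ is injective, $\alpha$-equivariant, and maps $\mathcal T^1(\mathcal H_1)$ into $\mathcal T^1(\mathcal H)$ and $\mathcal C_1^{1,1}$ into $\mathcal C_1(\mathcal H)$; moreover $\mathcal D_1=\iota(\mathcal D_1^{1,1})$, the set $\Sigma:=\{C\in\mathcal T^1(\mathcal H): C_{k,j}=\mathfrak A_{k,1}C_{1,1}\mathfrak A_{1,j}\}$ equals $\iota(\mathcal T^1(\mathcal H_1))$, and the operator $A$ from the statement is $\iota(A_{1,1})$. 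The transfer lemma I will establish is: for $P\in\mathcal T^1(\mathcal H_1)$ and $Q\in\mathcal L(\mathcal H_1)$ (or vice versa), one has $(\mathfrak A_{k,1}P\mathfrak A_{1,k})\ast(\mathfrak A_{k,1}Q\mathfrak A_{1,k})=P\ast Q$, where the left-hand side is a single-representation convolution on $\mathcal H_k$ and the right-hand side one on $\mathcal H_1$. This is proved by writing out the defining trace and pushing the intertwiners through using $\mathfrak A_{k,1}W_z^1=W_z^k\mathfrak A_{k,1}$, $\mathfrak A_{k,1}U_1=U_k\mathfrak A_{k,1}$, $\mathfrak A_{1,k}\mathfrak A_{k,1}=\mathrm{id}$, $\mathfrak A_{1,j}\mathfrak A_{j,k}=\mathfrak A_{1,k}$ and cyclicity of the trace. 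Combined with the identity $R\ast S=(R\mathfrak A_{j,k})\ast(S\mathfrak A_{j,k})$, which reduces convolutions of $\mathcal H_j\to\mathcal H_k$ operators to convolutions on $\mathcal H_k$, this shows that $\iota(P)\ast\iota(Q)$ is the constant $N\times N$ matrix each of whose entries equals $P\ast Q$. I expect this transfer lemma to be the technical heart of the argument, though each individual verification is only a two-line trace manipulation.

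Now set $\mathcal D_0^{1,1}:=\overline{\mathcal T^1(\mathcal H_1)\ast\mathcal D_1^{1,1}}$, i.e.\ the unique closed, $\alpha$-invariant subspace of $\BUC(\Xi)$ associated to $\mathcal D_1^{1,1}$ by Theorem~\ref{thm:corr_1}. For the forward implication of \eqref{eq:minpart}: if $B=\iota(B_{1,1})\in\mathcal D_1$, then $A\ast B=\iota(A_{1,1})\ast\iota(B_{1,1})$ is the constant matrix with all entries $A_{1,1}\ast B_{1,1}\in\mathcal D_0^{1,1}$ (using that $A_{1,1}$ is regular), so $A\ast B\in\mathcal D_0$. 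For the backward implication: each $A_{k,j}=\mathfrak A_{k,1}A_{1,1}\mathfrak A_{1,j}$ is regular (a short $\mathcal F_W$-computation using the criterion in Theorem~\ref{thm:Correspondence_2}), each $\mathcal D_1^{k,j}:=\mathfrak A_{k,1}\mathcal D_1^{1,1}\mathfrak A_{1,j}$ is closed and $\alpha$-invariant in $\mathcal C_1^{k,j}$, and the transfer lemma gives $\overline{\mathcal T^1(\mathcal H_j,\mathcal H_k)\ast\mathcal D_1^{k,j}}=\mathcal D_0^{1,1}$; thus Theorem~\ref{cor:maxpart} applies with every $\mathcal D_0^{k,j}$ equal to $\mathcal D_0^{1,1}$. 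Hence $A\ast B\in\mathcal D_0\subseteq\{F: F_{k,j}\in\mathcal D_0^{1,1}\}$ yields $B_{k,j}\in\mathfrak A_{k,1}\mathcal D_1^{1,1}\mathfrak A_{1,j}$ for all $k,j$, in particular $B_{1,1}\in\mathcal D_1^{1,1}$. Writing $B_{k,j}=\mathfrak A_{k,1}X_{k,j}\mathfrak A_{1,j}$ with $X_{k,j}\in\mathcal D_1^{1,1}$, the transfer lemma gives $(A\ast B)_{k,j}=A_{1,1}\ast X_{k,j}$, while $A\ast B\in\mathcal D_0$ forces all entries to coincide, so $A_{1,1}\ast(X_{k,j}-B_{1,1})=0$; since $A_{1,1}$ is regular, $Y\mapsto A_{1,1}\ast Y$ is injective on $\mathcal L(\mathcal H_1)$ (Theorem~\ref{thm:Correspondence_2}), whence $X_{k,j}=B_{1,1}$ and $B=\iota(B_{1,1})\in\mathcal D_1$.

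It remains to verify the displayed formulas and uniqueness. The identity $\mathcal D_0^{1,1}=\overline{\mathcal T^1(\mathcal H_1)\ast\mathcal D_1^{1,1}}$ holds by construction, and $\mathcal D_0=\overline{\Sigma\ast\mathcal D_1}$ because $\Sigma\ast\mathcal D_1=\iota(\mathcal T^1(\mathcal H_1))\ast\iota(\mathcal D_1^{1,1})$ is, by the transfer lemma, exactly the set of constant matrices with entries in $\mathcal T^1(\mathcal H_1)\ast\mathcal D_1^{1,1}$, whose closure is $\mathcal D_0$. For uniqueness: if $\mathcal D_0^{1,1\prime}$ is any closed, $\alpha$-invariant subspace of $\BUC(\Xi)$ whose associated constant-matrix space $\mathcal D_0'$ satisfies \eqref{eq:minpart}, then applying \eqref{eq:minpart} to $A=\iota(A_{1,1})$ with $A_{1,1}$ a fixed regular operator and $B=\iota(B_{1,1})$, and using that $A\ast B$ is the constant matrix with entries $A_{1,1}\ast B_{1,1}$, we obtain $B_{1,1}\in\mathcal D_1^{1,1}\Leftrightarrow A_{1,1}\ast B_{1,1}\in\mathcal D_0^{1,1\prime}$ for every $B_{1,1}\in\mathcal C_1^{1,1}$; this is precisely the characterizing property of the irreducible correspondence for the pair $(\mathcal D_1^{1,1},\mathcal H_1)$, so the uniqueness clause of Theorem~\ref{thm:corr_1} gives $\mathcal D_0^{1,1\prime}=\mathcal D_0^{1,1}$. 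The main obstacle is the transfer lemma and keeping the bookkeeping of the operators $\mathfrak A_{k,j}$ straight; once it is in place, the remainder is a combination of Theorems~\ref{thm:corr_1} and~\ref{cor:maxpart} together with the injectivity of convolution against a regular operator.
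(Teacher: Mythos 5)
Your proof is correct. The existence part follows the paper's argument almost verbatim: the choice $\mathcal D_0^{1,1}=\overline{\mathcal T^1(\mathcal H_1)\ast\mathcal D_1^{1,1}}$, the reduction of the entrywise convolutions to convolutions on $\mathcal H_1$ by pushing the intertwiners $\mathfrak A_{k,j}$ through the trace (your \enquote{transfer lemma} is exactly the inline computation $(A\ast B)_{k,j}=A_{1,1}\ast(\mathfrak A_{1,k}B_{k,j}\mathfrak A_{j,1})$ in the paper), the injectivity of $Y\mapsto A_{1,1}\ast Y$ on $\mathcal L(\mathcal H_1)$ coming from regularity (Theorem \ref{thm:Correspondence_2}) to force $\mathfrak A_{1,k}B_{k,j}\mathfrak A_{j,1}=B_{1,1}$, and the $(1,1)$-correspondence from Theorem \ref{thm:corr_1}. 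Your detour through Theorem \ref{cor:maxpart} to first obtain $B_{k,j}\in\mathfrak A_{k,1}\mathcal D_1^{1,1}\mathfrak A_{1,j}$ is harmless but unnecessary: for arbitrary $B\in\mathcal C_1(\mathcal H)$ you may simply set $X_{k,j}:=\mathfrak A_{1,k}B_{k,j}\mathfrak A_{j,1}$ and run the injectivity argument directly, as the paper does. Where you genuinely diverge is the uniqueness step: the paper takes an arbitrary closed $\alpha$-invariant $\tilde{\mathcal D}_0$ satisfying the correspondence and uses Wiener's approximation theorem applied to the regular $L^1$-function $A\ast A$ to force $\tilde{\mathcal D}_0=\overline{\mathcal T^1(\mathcal H)_{M_{min}}\ast\mathcal D_1}$, whereas you test \eqref{eq:minpart} only on diagonalized operators $B=\iota(B_{1,1})$, reduce to the equivalence $B_{1,1}\in\mathcal D_1^{1,1}\Leftrightarrow A_{1,1}\ast B_{1,1}\in\mathcal D_0^{1,1\prime}$, and invoke the uniqueness clause of Theorem \ref{thm:corr_1} for a fixed regular $A_{1,1}$. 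Your route is shorter and fully adequate for the theorem as stated, since uniqueness is only claimed for $\mathcal D_0^{1,1}$, i.e.\ among spaces $\mathcal D_0$ of the constant-matrix form; the paper's Wiener argument buys slightly more, namely uniqueness of $\mathcal D_0$ among all closed $\alpha$-invariant subspaces of $\BUC(\Xi)^{N\times N}$, not only those of this special form.
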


\begin{proof}
We first show \eqref{eq:minpart} in case $\mathcal D_0^{1,1} = \overline{\mathcal T^1(\mathcal H_1) \ast \mathcal D_1^{1,1}}$, which proves the existence of such a corresponding space. Uniqueness will be proven at the end.

The implication ``$B \in \mathcal D_1 \Rightarrow A \ast B \in \mathcal D_0$'' follows directly from the definition of $\mathcal D_0$ and $\mathcal D_1$. On the other hand, for $B \in \mathcal C_1(\mathcal H)$, we have
\begin{align*}
    (A \ast B)_{k,j} = A_{k,j} \mathfrak A_{j,k} \ast B_{k,j} \mathfrak A_{j,k} = (\mathfrak A_{j,1} A_{1,1} \mathfrak A_{1,k}) \ast (B_{k,j} \mathfrak A_{j,k}) = A_{1,1} \ast (\mathfrak A_{1,k} B_{k,j} \mathfrak A_{j,1}).
\end{align*}
Hence, $A \ast B \in \mathcal D_0$ implies $A_{1,1} \ast (\mathfrak A_{1,k} B_{k,j} \mathfrak A_{j,1}) = A_{1,1} \ast B_{1,1} \in \mathcal D_0^{1,1}$ for every pair $(j,k)$. Since $A_{1,1}$ is regular, this shows $\mathfrak A_{1,k} B_{k,j} \mathfrak A_{j,1} = B_{1,1}$. Now, using the Correspondence Theorem \ref{thm:corr_1} in case $j = k = 1$, we get $B \in \mathcal D_1$ by observing that $B_{1,1} \in \mathcal D_1^{1,1}$ if and only if $A_{1,1} \ast B_{1,1} \in \mathcal D_0^{1,1}$. This shows \eqref{eq:minpart}.

Let
\begin{align*}
    \mathcal T^1(\mathcal H)_{M_{min}} := \{ C \in \mathcal T^1(\mathcal H): ~C_{k,j} = \mathfrak A_{k,1} C_{1,1} \mathfrak A_{1,j} \text{ for all } j, k = 1, \dots, N\}.
\end{align*}
The subscript notation we use here will be explained later on. Since $A \ast B \in \mathcal D_0$ whenever $B \in \mathcal D_1$, we clearly have $\mathcal D_0 \supseteq \overline{A \ast \mathcal D_1}$. By Theorem \ref{thm:Correspondence_2} and the $\alpha$-invariance of $\mathcal D_1^{1,1}$, we obtain $\mathcal D_0 \supseteq \overline{\mathcal T^1(\mathcal H)_{M_{min}} \ast \mathcal D_1}$. On the other hand, if $F \in \Dc_0$, then $F_{1,1} \in \Dc_0^{1,1} = \overline{\mathcal T^1(\mathcal H_1) \ast \mathcal D_1^{1,1}}$ and $F_{k,j} = F_{1,1}$ for all $j,k = 1,\ldots,N$. It follows $F \in \overline{\mathcal T^1(\mathcal H)_{M_{min}} \ast \mathcal D_1}$ and hence
\[\Dc_0 = \overline{\mathcal T^1(\mathcal H)_{M_{min}} \ast \mathcal D_1}.\]

To show uniqueness, assume that $\tilde{\Dc}_0$ is also an $\alpha$-invariant closed subspace that satisfies the correspondence property, that is, $B \in \mathcal D_1$ if and only if $A \ast B \in \tilde{\mathcal D}_0$. By the same argument as above, $\tilde{\Dc}_0$ must contain $\overline{\mathcal T^1(\mathcal H)_{M_{min}} \ast \mathcal D_1} = \mathcal D_0$. So let $F \in \tilde{\mathcal D}_0$. Then also $A \ast A \ast F \in \tilde{\mathcal D}_0$ because $\tilde{\Dc}_0$ is $\alpha$-invariant and closed. It follows that $A \ast F \in \mathcal D_1$, which shows $A \ast A \ast F \in \overline{\mathcal T^1(\mathcal H)_{M_{min}} \ast \mathcal D_1}$. But, since $A \ast A$ is a matrix the entries of which are identical regular functions in $L^1(\Xi)$, Wiener's approximation theorem implies that $F$ can be approximated by matrix-valued functions of the form $\sum\limits_{\ell} c_\ell \alpha_{z_\ell}(A \ast A \ast F)$ with $c_{\ell} \in \C$, $z_{\ell} \in \Xi$. Since $\overline{\mathcal T^1(\mathcal H)_{M_{min}} \ast \mathcal D_1}$ is $\alpha$-invariant and closed, this yields $F \in \overline{\mathcal T^1(\mathcal H)_{M_{min}} \ast \mathcal D_1}$. Hence $\tilde{\Dc}_0 = \Dc_0$, which also implies that $\Dc_0^{1,1}$ must be unique.
\end{proof}

Both Theorem \ref{cor:maxpart} and Theorem \ref{cor:minpart} are instances of a more general result. In order to formalize this, we introduce and fix a partition $M$ of $\{ (k, j): ~1 \leq j, k \leq N\}$ in the following. There are two particular partitions that we will sometimes encounter (or have already encountered in the proof of the previous theorem), the minimal one $M_{min}$ with $|M_{min}| = 1$ and the maximal one $M_{max}$ with $|M_{max}| = N^2$. We define
\begin{align*}
    \mathcal T^1(\mathcal H)_M = \{ A \in \mathcal T^1(\mathcal H):\ A_{k,j} = \mathfrak A_{k, k'} A_{k', j'}\mathfrak A_{j', j} \text{ for all } (k, j), (k', j') \in m, m \in M\}.
\end{align*}
Given an operator $A \in \mathcal L(\mathcal H)$ and $m \in M$, we define $A_m  \in \mathcal L(\mathcal H)$ by:
\begin{align*}
    (A_m)_{k, j} = \begin{cases}
    A_{k, j}, \quad &(k, j) \in m,\\
    0, \quad &(k, j) \not \in m.
    \end{cases}
\end{align*}
Similarly, for $F \in L^1(\Xi)^{N \times N}$ or $F \in L^\infty(\Xi)^{N \times N}$, we define $F_m$ by:
\begin{align*}
    (F_m)_{k, j} = \begin{cases}
    F_{k, j}, \quad &(k, j) \in m,\\
    0, \quad &(k, j) \not \in m.
    \end{cases}
\end{align*}
\begin{definition}\label{def:respectspartititon}
Let $M$ be a partition of $\{ (k, j): ~1 \leq j, k \leq N\}$.
\begin{enumerate}[(i)]
    \item Let $X$ be a vector space consisting of matrices $F$ for which the entries $F_{k, j}$ are functions $F_{k, j}: \Xi \to \mathbb C$. We say that $X$ \emph{respects the partition $M$} if the following two properties are satisfied:
    \begin{itemize}
        \item For every $m \in M$ and $(k, j), (k', j') \in m$: $F_{k, j} = F_{k', j'}$.
        \item For every $F \in X$ and $m \in M$ we have $F_m \in X$.
    \end{itemize} 
    \item Let $Y$ be a vector space consisting of linear operators on $\mathcal H$. We say that $Y$ \emph{respects the partition $M$} if the following two properties are satisfied:
    \begin{itemize}
        \item For every $m \in M$ and $(k, j), (k', j') \in m$: $A_{k,j} = \mathfrak A_{k, k'} A_{k', j'} \mathfrak A_{j', j}$.
        \item For every $A \in Y$ and $m \in M$ it is $A_m \in Y$. 
    \end{itemize}
\end{enumerate}
\end{definition}

The notion of regularity that we will need for the correspondence theory affiliated to a partition $M$ is the following:
\begin{definition}
Let $M$ be a partition of $\{ (j, k): ~1\leq j, k \leq N\}$. We say that $A \in \mathcal T^1(\mathcal H)$ is $M$-regular if every matrix entry $A_{j,k}$ of $A$ is regular and if they satisfy for every $m \in M$ that $\mathfrak A_{k',k} A_{k,j} \mathfrak A_{j, j'} = A_{k', j'}$ whenever $(k,j), (k', j') \in m$.
\end{definition}

In particular, $A \in \mathcal T^1(\mathcal H)$ is $M_{max}$-regular if and only if every matrix entry $A_{k,j}$ of $A$ is regular. $M_{max}$-regularity is therefore the weakest of these notions.

Regarding the following lemma, we note that the closed, $\alpha$-invariant subspace of $\mathcal T^1(\mathcal H)$ respecting the partition $M$, which is generated by some $A \in \mathcal T^1(\mathcal H)$, is identical with the closed, $\alpha$-invariant subspace generated by the collection $\{ A_m: ~m \in M\}$.
\begin{lemma}\label{lemma:l1reg}
Let $M$ be a partition of $\{ (k,j): ~1\leq j, k \leq N\}$. Further, let $A \in \mathcal T^1(\mathcal H)$ be $M$-regular. Then the smallest closed, $\alpha$-invariant subspace of $\mathcal T^1(\mathcal H)$ respecting the partition $M$ and containing $A$ is equal to
\begin{align*}
   \mathcal T^1(\mathcal H)_M := \{ B \in \mathcal T^1(\mathcal H): \forall m \in M ~\forall (k, j), (k', j') \in m: B_{k', j'} = \mathfrak A_{k', k} B_{k,j} \mathfrak A_{j, j'}\}.
\end{align*}
In particular, this space is the same for every $M$-regular operator and hence contains every $M$-regular operator.
\end{lemma}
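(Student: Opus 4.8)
The plan is to verify the two inclusions separately: first, that $\mathcal T^1(\mathcal H)_M$ is a closed, $\alpha$-invariant subspace respecting the partition $M$ and containing $A$; and second, that any closed, $\alpha$-invariant subspace respecting $M$ and containing $A$ already contains all of $\mathcal T^1(\mathcal H)_M$. The first part is essentially bookkeeping. That $\mathcal T^1(\mathcal H)_M$ is a linear subspace and that it contains $A$ (by $M$-regularity of $A$) is immediate from the definitions. Closedness follows because the defining conditions $B_{k',j'} = \mathfrak A_{k',k} B_{k,j} \mathfrak A_{j,j'}$ are preserved under trace-norm limits, using that multiplication by the fixed unitaries $\mathfrak A_{k',k}, \mathfrak A_{j,j'}$ is continuous on $\mathcal T^1$. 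For $\alpha$-invariance, recall $\alpha_z(B)_{k,j} = \alpha_z(B_{k,j}) = W_z^k B_{k,j} W_{-z}^j$, so $\mathfrak A_{k',k} \alpha_z(B)_{k,j} \mathfrak A_{j,j'} = \mathfrak A_{k',k} W_z^k B_{k,j} W_{-z}^j \mathfrak A_{j,j'}$; using the intertwining relations $\mathfrak A_{k',k} W_z^k = W_z^{k'} \mathfrak A_{k',k}$ and $W_{-z}^j \mathfrak A_{j,j'} = \mathfrak A_{j,j'} W_{-z}^{j'}$ this equals $W_z^{k'} \mathfrak A_{k',k} B_{k,j}\mathfrak A_{j,j'} W_{-z}^{j'} = \alpha_z(B_{k',j'}) = \alpha_z(B)_{k',j'}$. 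Finally, that $\mathcal T^1(\mathcal H)_M$ respects $M$ requires checking $B_m \in \mathcal T^1(\mathcal H)_M$ for every $m \in M$: but $(B_m)_{k,j}$ agrees with $B_{k,j}$ on indices in $m$ and is zero otherwise, and since the defining relations only ever relate indices lying in a common block $m' \in M$, they continue to hold for $B_m$ (trivially $0 = \mathfrak A_{k',k}\, 0\, \mathfrak A_{j,j'}$ when $(k,j),(k',j') \in m'$ with $m' \neq m$).

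For the second, substantive inclusion, let $\mathcal S$ be any closed, $\alpha$-invariant subspace of $\mathcal T^1(\mathcal H)$ respecting $M$ with $A \in \mathcal S$. I want to show $\mathcal T^1(\mathcal H)_M \subseteq \mathcal S$. The key is Wiener's approximation theorem in the form used in the proof of Theorem \ref{cor:minpart}: since $A$ is $M$-regular, for each $m \in M$ the operator $A_m$ is regular when viewed as an operator between the relevant single factors (more precisely, $A_{k_0,j_0}$ is regular for the representative index $(k_0,j_0) \in m$), and by Theorem \ref{thm:Correspondence_2}(8) the linear span of $\{\alpha_z(A_{k_0,j_0}) : z \in \Xi\}$ is trace-norm dense in $\mathcal T^1(\mathcal H_{j_0}, \mathcal H_{k_0})$. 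Since $\mathcal S$ respects $M$ and $A \in \mathcal S$, we have $A_m \in \mathcal S$ for every $m$. Now take an arbitrary $B \in \mathcal T^1(\mathcal H)_M$. Because $\mathcal S$ respects $M$, it suffices to show each $B_m \in \mathcal S$, and for that it suffices (by the defining relations of $\mathcal T^1(\mathcal H)_M$, which pin down $B_m$ entirely from its single entry $B_{k_0,j_0}$) to approximate $B_{k_0,j_0}$ in trace norm by combinations $\sum_\ell c_\ell \alpha_{z_\ell}(A_{k_0,j_0})$; the corresponding operators $\sum_\ell c_\ell \alpha_{z_\ell}(A_m)$ lie in $\mathcal S$ by $\alpha$-invariance and linearity, converge to $B_m$ in trace norm (hence operator norm on each block, hence in $\mathcal T^1(\mathcal H)$), and since $\mathcal S$ is closed we conclude $B_m \in \mathcal S$. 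Summing over $m \in M$ gives $B = \sum_{m \in M} B_m \in \mathcal S$.

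The only mild subtlety — and the step I expect to need the most care — is the passage between "$\alpha_z$ acting on the ambient operator $A_m \in \mathcal L(\mathcal H)$" and "$\alpha_z^{k_0,j_0}$ acting on the single block $A_{k_0,j_0}$", together with checking that an approximant of $B_{k_0,j_0}$ built from shifts of $A_{k_0,j_0}$ really does reassemble, via the fixed unitaries $\mathfrak A$, into an approximant of $B_m$ that still lies in $\mathcal S$. This works precisely because $\mathcal S$ respects $M$: the relation $\alpha_z(A_m)_{k,j} = \mathfrak A_{k,k_0}\, \alpha_z(A_{k_0,j_0})\, \mathfrak A_{j_0,j}$ for $(k,j) \in m$ (which follows from $M$-regularity of $A$ and the intertwining relations, exactly as in the displayed computation in the proof of Theorem \ref{cor:minpart}) shows that a linear combination of $\alpha_{z_\ell}(A_m)$ whose $(k_0,j_0)$-entry approximates $B_{k_0,j_0}$ automatically has all its other in-block entries approximating the prescribed $\mathfrak A_{k,k_0} B_{k_0,j_0} \mathfrak A_{j_0,j} = B_{k,j}$, and zeros elsewhere — i.e. it approximates $B_m$. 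Once this identification is in place, everything else is a routine density-plus-closedness argument, and the "in particular" clause is then immediate: the space $\mathcal T^1(\mathcal H)_M$ does not depend on which $M$-regular $A$ was chosen, so it contains every $M$-regular operator (each such operator generating, together with $\alpha$-invariance and the partition-respecting closure, the same space).
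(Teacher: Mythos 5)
Your proposal is correct and follows essentially the same route as the paper: verify that $\mathcal T^1(\mathcal H)_M$ is closed, $\alpha$-invariant, respects $M$ and contains $A$, and then show blockwise that each $B_m$ is approximated by linear combinations of shifts of $A_m$, using the regularity of one representative entry $A_{k_0,j_0}$ (Theorem \ref{thm:Correspondence_2}) together with the intertwining relations to transfer the approximation to all entries of the block. Your write-up merely spells out in more detail the steps the paper declares to be clear.
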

\begin{proof}
It is clear by definition that $\mathcal T^1(\mathcal H)_M$ is a closed, $\alpha$-invariant subspace respecting the partition $M$ and containing $A$.

Conversely, let $B \in \mathcal T^1(\mathcal H)_M$ and $m \in M$. It suffices to prove that $B_m$ can be approximated by linear combinations of shifts of $A_m$. Let $(j_0, k_0) \in m$. By assumption, $A_{j_0, k_0}$ is regular and hence Theorem \ref{thm:Correspondence_2} shows that linear combinations of shifts of $A_{j_0, k_0}$ can approximate $B_{j_0, k_0}$ in trace norm:
\begin{align*}
    B_{j_0, k_0} \approx \sum_\nu c_\nu \alpha_{z_\nu}(A_{j_0, k_0}).
\end{align*}
But then, by the special structure of $A$ and $B$, we have
\begin{align*}
    B_{j,k} = \mathfrak A_{k_0, k} B_{j_0, k_0} \mathfrak A_{j, j_0}\approx  \sum_\nu c_\nu \alpha_{z_\nu}(A_{k_0, k} A_{j_0, k_0}A_{j, j_0}) = \sum_\nu c_\nu \alpha_{z_\nu}(A_{j, k})
\end{align*}
for $(j, k) \in m$. Therefore, linear combinations of shifts of $A_m$ approximate $B_m$.
\end{proof}

With the previous lemma at hand, we can now show the desired correspondence theorem:
\begin{theorem}\label{corr:thm:final}
Let $M$ be a partition of $\{ (j, k): ~1\leq j, k \leq N\}$. 
\begin{enumerate}[(1)]
\item There is a one to one correspondence between closed, $\alpha$-invariant subspaces $\mathcal D_0$ of $\operatorname{BUC}(\Xi)^{N \times N}$ which respect $M$ and $\alpha$-invariant closed subspaces $\mathcal D_1$ of $\mathcal C_1(\mathcal H)$ which respect $M$. The correspondence is given by
\begin{align*}
    \mathcal D_1 = \overline{\mathcal T^1(\mathcal H)_M \ast \mathcal D_0}, \quad \mathcal D_0 = \overline{\mathcal T^1(\mathcal H)_M \ast \mathcal D_1}.
\end{align*}
\item If $A \in \mathcal T^1(\mathcal H)$ is $M$-regular, then $\mathcal D_0 = \overline{A \ast \mathcal D_1}$ and $\mathcal D_1 = \overline{A \ast \mathcal D_0}$. By (1), this is independent of the choice of $A$.
\item Let $\mathcal D_0, \mathcal D_1$ be corresponding spaces in the above sense. 
\begin{enumerate}[(a)]
    \item Given $F \in \operatorname{BUC}(\Xi)^{N \times N}$, we have $F \in \mathcal D_0$ if and only if $A \ast F \in \mathcal D_1$.
    \item Given $B \in \mathcal C_1(\mathcal H)$, we have $B \in \mathcal D_1$ if and only if $A \ast B \in \mathcal D_0$.
\end{enumerate}
\end{enumerate}
\end{theorem}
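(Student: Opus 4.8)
The plan is to reduce everything to the irreducible Correspondence Theorem~\ref{thm:corr_1} by decomposing the $M$-respecting subspaces into ``blocks''. Fix, for each $m \in M$, a representative $(k_m,j_m) \in m$, and fix once and for all an $M$-regular $A \in \mathcal T^1(\mathcal H)$: such an operator exists since one may pick a regular operator $A^{(m)} \in \mathcal T^1(\mathcal H_{j_m},\mathcal H_{k_m})$ for each $m$ (these exist by the discussion following Theorem~\ref{thm:Correspondence_2}) and set $A_{k,j} := \mathfrak A_{k,k_m} A^{(m)} \mathfrak A_{j_m,j}$ for $(k,j) \in m$; a short computation with the intertwining relations shows that each entry is regular. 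The technically delicate step is the following \emph{collapsing identity}: if $C \in \mathcal T^1(\mathcal H)_M$ and $B \in \mathcal L(\mathcal H)$ satisfies the $M$-structure relations, then for every $m \in M$ and every $(k,j) \in m$ one has $(C \ast B)_{k,j} = (C \ast B)_{k_m,j_m}$ as elements of $\operatorname{BUC}(\Xi)$. To prove it, rewrite $(C \ast B)_{k,j} = (C_{k,j}\mathfrak A_{j,k}) \ast (B_{k,j}\mathfrak A_{j,k})$ as a one-representation convolution on $\mathcal H_k$, use the structure relations and the cocycle identity $\mathfrak A_{k,k'}\mathfrak A_{k',k''} = \mathfrak A_{k,k''}$ (valid for our consistent choices of intertwiners) to write both factors as $\mathfrak A_{k,k_m}(\,\cdot\,)\mathfrak A_{k,k_m}^{\ast}$ of the corresponding factors at $(k_m,j_m)$, and finally invoke the invariance of the one-representation $\mathcal T^1$-convolution under simultaneous conjugation by a unitary intertwiner (a one-line consequence of $\mathfrak A_{k,k_m} W_z^{k_m} = W_z^k \mathfrak A_{k,k_m}$, $\mathfrak A_{k,k_m} U_{k_m} = U_k \mathfrak A_{k,k_m}$ and cyclicity of the trace). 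Tracking the $\pm 1$ constants through this computation is where essentially all the work lies; the rest is formal.

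Granting the collapsing identity, I would set up the block dictionary. An $M$-respecting closed $\alpha$-invariant subspace $\mathcal D_1 \subseteq \mathcal C_1(\mathcal H)$ is the same datum as a family of closed $\alpha$-invariant subspaces $\mathcal D_1^{(m)} := \{ B_{k_m,j_m} : B \in \mathcal D_1, \ B = B_m\} \subseteq \mathcal C_1^{k_m,j_m}$, $m \in M$, because an operator supported in the block $m$ and respecting the $M$-structure is determined by its $(k_m,j_m)$-entry; thus $\mathcal D_1 \cong \bigoplus_m \mathcal D_1^{(m)}$ as topological vector spaces, in the sense of the remark following Theorem~\ref{cor:maxpart}, and likewise for $M$-respecting closed $\alpha$-invariant subspaces of $\operatorname{BUC}(\Xi)^{N \times N}$. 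By the collapsing identity, the block components of $\overline{\mathcal T^1(\mathcal H)_M \ast \mathcal D_1}$ are exactly $\overline{\mathcal T^1(\mathcal H_{j_m},\mathcal H_{k_m}) \ast \mathcal D_1^{(m)}}$, and similarly in the other direction. Since, block by block, Theorem~\ref{thm:corr_1} asserts that $\mathcal D_0^{(m)} \mapsto \overline{\mathcal T^1(\mathcal H_{j_m},\mathcal H_{k_m}) \ast \mathcal D_0^{(m)}}$ and $\mathcal D_1^{(m)} \mapsto \overline{\mathcal T^1(\mathcal H_{j_m},\mathcal H_{k_m}) \ast \mathcal D_1^{(m)}}$ are mutually inverse, the two maps in part~(1) are mutually inverse bijections between the two classes of subspaces. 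This gives part~(1).

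For part~(2), I would invoke Lemma~\ref{lemma:l1reg}, according to which $\mathcal T^1(\mathcal H)_M$ is the smallest closed $\alpha$-invariant $M$-respecting subspace containing $A$: every $C \in \mathcal T^1(\mathcal H)_M$ has each $C_m$ approximable in trace norm by finite sums $\sum_\nu c_\nu \alpha_{z_\nu}(A_m)$. Convolving with $B_m$ for $B \in \mathcal D_1$ and using \eqref{eq:shiftconv2}, the function $C_m \ast B_m = C_m \ast B$ lies in the closed linear span of $\{ \alpha_z(A \ast B_m) : z \in \Xi\} = \{ A \ast \alpha_z(B_m) : z \in \Xi\}$; since $A \ast B_m$ is already supported in block $m$ and $\mathcal D_1$ is linear and $\alpha$-invariant, this span is contained in $\overline{A \ast \mathcal D_1}$. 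Summing over $m$ gives $\overline{\mathcal T^1(\mathcal H)_M \ast \mathcal D_1} \subseteq \overline{A \ast \mathcal D_1}$; the reverse inclusion is clear, so $\mathcal D_0 = \overline{A \ast \mathcal D_1}$, and symmetrically $\mathcal D_1 = \overline{A \ast \mathcal D_0}$.

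Finally, for part~(3): since $A \in \mathcal T^1(\mathcal H)_M$, the forward implications ``$F \in \mathcal D_0 \Rightarrow A \ast F \in \mathcal D_1$'' and ``$B \in \mathcal D_1 \Rightarrow A \ast B \in \mathcal D_0$'' are immediate from the formulas $\mathcal D_1 = \overline{A \ast \mathcal D_0}$, $\mathcal D_0 = \overline{A \ast \mathcal D_1}$ of part~(2). For the converse of (b), take $B \in \mathcal C_1(\mathcal H)$ with $A \ast B \in \mathcal D_0$. Comparing, within each block $m$, the entry at $(k,j) \in m$ with the entry at $(k_m,j_m)$ of the $M$-respecting object $A \ast B$, and using that each $A_{k,j}\mathfrak A_{j,k}$ is regular together with the injectivity clause (3) of Theorem~\ref{thm:Correspondence_2}, one first deduces that $B$ itself satisfies the $M$-structure relations; then $(A \ast B)_{k_m,j_m} = A_{k_m,j_m} \ast B_{k_m,j_m}$ lies in $\mathcal D_0^{(m)}$, so Theorem~\ref{thm:corr_1} gives $B_{k_m,j_m} \in \mathcal D_1^{(m)}$ for every $m$, and reassembling the blocks yields $B \in \mathcal D_1$. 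The converse of (a) is obtained in the same way with the roles of $\mathcal D_0$ and $\mathcal D_1$ interchanged. As indicated, the main obstacle is the collapsing identity of the first paragraph: keeping the intertwiners $\mathfrak A_{k,j}$ and the parities $U_j$ consistent so that the matrix convolution genuinely descends to a single scalar function on each block.
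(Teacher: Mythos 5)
Your proposal is correct and follows essentially the route the paper intends: it reduces the statement blockwise to Theorem \ref{thm:corr_1} by combining the entrywise argument behind Theorem \ref{cor:maxpart} with the structure-collapsing intertwining computation behind Theorem \ref{cor:minpart} (plus Lemma \ref{lemma:l1reg} for part (2)), which is exactly the combination the paper alludes to when it leaves the proof to the reader. No gaps worth flagging; your ``collapsing identity'' is the correct formalization of that intertwining step.
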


\begin{proof}
Having discussed Theorems \ref{cor:maxpart} and \ref{cor:minpart} before, the proof is left to the reader as it does not need any new ideas.
\end{proof}

For our purposes, the most important outcome of the previous result is the following:

\begin{corollary} \label{cor:thm:final}
Let $A \in \mathcal T^1(\mathcal H)$ be $M_{max}$-regular and $B \in \mathcal L(\mathcal H)$. Then, $B$ is compact if and only if $B \in \mathcal C_1(\mathcal H)$ and $A \ast B \in C_0(\Xi)^{N \times N}$.
\end{corollary}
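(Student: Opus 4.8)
The plan is to specialize the correspondence theory to the maximal partition $M = M_{max}$ and the concrete target $\mathcal{D}_0 = C_0(\Xi)^{N \times N}$, and then to identify the corresponding operator space $\mathcal{D}_1$ with $\mathcal{K}(\mathcal{H})$.

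First I would record the bookkeeping for $M_{max}$: every block of $M_{max}$ is a singleton, so the ``equal entries on each block'' conditions are vacuous, which means $\mathcal{T}^1(\mathcal{H})_{M_{max}} = \mathcal{T}^1(\mathcal{H})$, that $M_{max}$-regularity of $A$ is exactly the hypothesis that every matrix entry $A_{k,j}$ is regular, and that $C_0(\Xi)^{N \times N}$ is a closed, $\alpha$-invariant subspace of $\operatorname{BUC}(\Xi)^{N \times N}$ respecting $M_{max}$ (closedness, $\alpha$-invariance, and stability under $F \mapsto F_m$ are all inherited slotwise from $C_0(\Xi) \subseteq \operatorname{BUC}(\Xi)$). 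Hence Theorem \ref{corr:thm:final} (equivalently, the entrywise Theorem \ref{cor:maxpart}) applies with $\mathcal{D}_0 = C_0(\Xi)^{N \times N}$.

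Next I would compute $\mathcal{D}_1 = \overline{A \ast \mathcal{D}_0}$ using Theorem \ref{corr:thm:final}(2). Because of the norm equivalence $\|A_{k,j}\| \leq \|A\| \leq N \max_{j,k}\|A_{k,j}\|$, the operator-norm closure in $\mathcal{L}(\mathcal{H})$ can be taken slotwise, and in the $(k,j)$-slot one obtains $\overline{\{A_{k,j} \ast f : f \in C_0(\Xi)\}}$, which equals $\mathcal{K}(\mathcal{H}_j,\mathcal{H}_k)$ by Lemma \ref{lem:correspondence_compact} together with the remark following Theorem \ref{thm:corr_1}. Thus $\mathcal{D}_1 = \{B \in \mathcal{L}(\mathcal{H}) : B_{k,j} \in \mathcal{K}(\mathcal{H}_j,\mathcal{H}_k) \text{ for all } j,k\}$, and in particular $\mathcal{D}_1 \subseteq \mathcal{C}_1(\mathcal{H})$ since $\mathcal{K}(\mathcal{H}_j,\mathcal{H}_k) \subseteq \mathcal{C}_1^{k,j}$ by Lemma \ref{lem:correspondence_compact}. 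I would then invoke the elementary fact that, on the finite direct sum $\mathcal{H} = \bigoplus_{j=1}^N \mathcal{H}_j$, an operator $B \in \mathcal{L}(\mathcal{H})$ is compact if and only if every block $B_{k,j} = P_{(k)} B|_{\mathcal{H}_j}$ is compact: the forward direction because each block is a compression of a restriction of $B$, and the backward direction because $B$ is the finite sum of the (compact) operators $\iota_k B_{k,j} P_{(j)}$, with $\iota_k \colon \mathcal{H}_k \hookrightarrow \mathcal{H}$ the inclusion. Hence $\mathcal{D}_1 = \mathcal{K}(\mathcal{H})$, and in particular $\mathcal{K}(\mathcal{H}) \subseteq \mathcal{C}_1(\mathcal{H})$.

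Finally I would read off the corollary from Theorem \ref{corr:thm:final}(3)(b): for any $B \in \mathcal{C}_1(\mathcal{H})$ and any $M_{max}$-regular $A$, one has $B \in \mathcal{K}(\mathcal{H})$ if and only if $A \ast B \in C_0(\Xi)^{N \times N}$. Combined with $\mathcal{K}(\mathcal{H}) \subseteq \mathcal{C}_1(\mathcal{H})$ this yields both implications: if $B$ is compact then automatically $B \in \mathcal{C}_1(\mathcal{H})$ and $A \ast B \in C_0(\Xi)^{N \times N}$; conversely, if $B \in \mathcal{C}_1(\mathcal{H})$ and $A \ast B \in C_0(\Xi)^{N \times N}$, then $B$ is compact. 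I do not expect a genuine obstacle here, since the substantive work is already carried out in Theorems \ref{corr:thm:final} and \ref{cor:maxpart}; the only points requiring care are the formal verification that $C_0(\Xi)^{N \times N}$ satisfies the hypotheses for $M_{max}$ and the clean identification of the abstract correspondent $\mathcal{D}_1$ with $\mathcal{K}(\mathcal{H})$ via the block-compactness characterization and the slotwise behaviour of the operator-norm closure.
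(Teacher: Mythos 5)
Your proposal is correct and follows essentially the same route as the paper: apply the compactness correspondence $C_0(\Xi) \leftrightarrow \mathcal K(\mathcal H_j,\mathcal H_k)$ of Lemma \ref{lem:correspondence_compact} entrywise for the partition $M_{max}$, identify $\mathcal D_1$ with $\mathcal K(\mathcal H)$ via block-wise compactness, and conclude with Theorem \ref{corr:thm:final}(3)(b). You merely spell out the bookkeeping (slotwise closures, $C_0(\Xi)^{N\times N}$ respecting $M_{max}$) that the paper leaves implicit.
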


\begin{proof}
By Corollary \ref{lem:correspondence_compact} we know that for $\mathcal D_0^{j, k} = C_0(\Xi)$ we obtain $\mathcal D_1^{j,k} = \mathcal K(\mathcal H_j, \mathcal H_k)$. Therefore, with respect to the partition $M_{max}$ we have $\mathcal D_1 = \mathcal K(\mathcal H)$ and $\mathcal D_0 = C_0(\Xi)^{N \times N}$. The statement now follows from the previous theorem.
\end{proof}
\begin{remark} By now, one could get the impression that there is a finite number of correspondence theorems, one for each partition. But there are even more forms of regularity which yield similar results. Just to give one example, for $N = 2$ we could say that $A$ is regular if every matrix entry is regular and $A_{2,1} = 2 \mathfrak A_{2,1} A_{1,1} $, $A_{1,2} =  A_{1,1} \mathfrak A_{1,2}$ and $A_{2,2} = \mathfrak A_{1,2} A_{1,1} \mathfrak A_{2,1}$. This leads to a similar notion of subspaces of $\mathcal T^1(\mathcal H)$ respecting the partition $M = M_{min}$, that is, one has to consider those operators $B \in \mathcal T^1(\mathcal H)$ the entries of which satisfy the same algebraic relations: $B_{2,1} = 2 \mathfrak A_{2,1} B_{1,1} $, $B_{1,2} =  B_{1,1} \mathfrak A_{1,2}$ and $B_{2,2} = \mathfrak A_{1,2} B_{1,1} \mathfrak A_{2,1}$. Analogously, this algebraic relation carries over to the correspondence theory. Of course, one could consider many other algebraic relations between the matrix entries of $A$, similar to the above example, and each would give a different correspondence theory. 
\end{remark}

To make things even more complicated, there are other variants of the correspondence theory besides the just-mentioned variants of Theorem  \ref{corr:thm:final}. We describe one more such variant. Let $M$ be a fixed partition. We denote by $L^\infty(\Xi)^M$ the subspace of $L^\infty(\Xi)^{N \times N}$ consisting of those $F = (F_{k,j})_{j,k=1,\ldots,N}$ respecting the partition $M$. We denote by $m_{reg}$ a subset of $\{ (j, k): ~1 \leq j, k \leq N\}$ containing one tuple $(j,k)$ from each $m \in M$. For $(j_0, k_0) \in m_{reg}$ we denote by $m_{j_0,k_0}$ the element from $M$ which contains $(j_0, k_0)$. Further, we let $B \in \mathcal T^1(\mathcal H)$ be such that $B_{j_0,k_0}$ is regular for every $(j_0, k_0) \in m_{reg}$. Set
\begin{align*}
    \mathcal A_{B, M} = \overline{B \ast L^\infty(\Xi)^M}.
\end{align*}
This is clearly a closed, $\alpha$-invariant subspace of $\mathcal C_1(\mathcal H)$ ($\alpha$-invariance follows from Eq.\ \eqref{eq:shiftconv2}). But note that it does not respect the partition $M$ in the sense of Definition \ref{def:respectspartititon}, as it can happen that $\mathfrak A_{k',k} B_{k,j} \mathfrak A_{j, j'} \neq B_{k', j'}$ for $(k,j), (k', j') \in m$.

Further, for each $m \in M$ let $\mathcal D_0^m$ be a closed, $\alpha$-invariant subspace of $\operatorname{BUC}(\Xi)$ and set
\begin{align*}
    \mathcal D_0 := \{ F \in \operatorname{BUC}(\Xi)^{N \times N}: ~F_{k,j} \in \mathcal D_0^m \text{ for every } (j,k) \in m, m \in M\}.
\end{align*}
This is also an $\alpha$-invariant closed subspace of $\operatorname{BUC}(\Xi)^{N \times N}$. Clearly, it respects the partition $M_{max}$. As before, we denote by $\mathcal D_1$ the space corresponding to $\mathcal D_0$ in the sense of Theorem \ref{corr:thm:final} with respect to the partition $M_{max}$. This yields the following important result:

\begin{theorem} \label{thm:refined_correspondence_theorem}
Let the notation be as in the preceding paragraph. Further, let $A \in \mathcal T^1(\mathcal H)$ be such that $A_{k_0, j_0}$ is regular for $(j_0, k_0) \in m_{reg}$. Then, for every $C \in \mathcal A_{B, M}$ the following statements are equivalent:
\begin{enumerate}[(1)]
    \item $C \in \mathcal D_1$.
    \item $A_{k_0, j_0} \ast C_{k_0, j_0}\in \mathcal D_0^{m_{j_0,k_0}}$ for every $(j_0, k_0) \in m_{reg}$.
\end{enumerate}
\end{theorem}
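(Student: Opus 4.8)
The plan is to convert the definition of $\mathcal D_1$ into an entrywise condition, reduce both (1) and (2) to statements about a single block, and then use the special shape of the operators in $\mathcal A_{B,M}$ together with the regularity of the representative entries of $B$.

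First, since $\mathcal D_0$ respects $M_{max}$, Theorem~\ref{cor:maxpart} (the $M_{max}$-case of Theorem~\ref{corr:thm:final}) shows that $C\in\mathcal D_1$ if and only if $C_{k,j}\in\mathcal D_1^{k,j}$ for all $j,k$, where $\mathcal D_1^{k,j}$ is the $(\mathcal H_j,\mathcal H_k)$-correspondent of $\mathcal D_0^{m}$ for $(j,k)\in m$. Since $\mathcal D_0^{k,j}=\mathcal D_0^{k_0,j_0}=\mathcal D_0^{m}$ whenever $(k,j)$ and $(k_0,j_0)$ lie in the same block $m$, Corollary~\ref{cor:equalspaces} gives $\mathcal D_1^{k,j}=\mathfrak A_{k,k_0}\,\mathcal D_1^{k_0,j_0}\,\mathfrak A_{j_0,j}$. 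Moreover, because $A_{k_0,j_0}$ is regular and $C_{k_0,j_0}\in\mathcal C_1^{k_0,j_0}$ (recall $C\in\mathcal A_{B,M}\subseteq\mathcal C_1(\mathcal H)$), Theorem~\ref{thm:corr_1} yields $A_{k_0,j_0}\ast C_{k_0,j_0}\in\mathcal D_0^{m_{j_0,k_0}}$ if and only if $C_{k_0,j_0}\in\mathcal D_1^{k_0,j_0}$. So (2) is equivalent to ``$C_{k_0,j_0}\in\mathcal D_1^{k_0,j_0}$ for every $(j_0,k_0)\in m_{reg}$'', and (1) trivially implies this. The remaining task is the converse: assuming $C_{k_0,j_0}\in\mathcal D_1^{k_0,j_0}$ for all representatives, show $C_{k,j}\in\mathcal D_1^{k,j}$ for each $(j,k)$. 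Fixing a block $m$ with representative $(j_0,k_0)$ and some $(k,j)\in m$, and writing $\widetilde C:=\mathfrak A_{k_0,k}C_{k,j}\mathfrak A_{j,j_0}$, the above identifications together with one more application of Theorem~\ref{thm:corr_1} (now with the regular operator $B_{k_0,j_0}$, noting $\widetilde C\in\mathcal C_1^{k_0,j_0}$) reduce this to proving $B_{k_0,j_0}\ast\widetilde C\in\mathcal D_0^{m}$.

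Here the structure of $\mathcal A_{B,M}=\overline{B\ast L^\infty(\Xi)^M}$ enters. I would write $C=\lim_n B\ast F^{(n)}$ in operator norm with $F^{(n)}\in L^\infty(\Xi)^M$ and put $u_n:=F^{(n)}_{k_0,j_0}=F^{(n)}_{k,j}$ (equal, since $F^{(n)}$ respects $M$). Passing to matrix entries, conjugating by the intertwiners, and using the identities $\mathfrak A(f\ast S)=f\ast(\mathfrak A S)$, $(f\ast S)\mathfrak A=f\ast(S\mathfrak A)$ from Section~\ref{sec:2.1}, the associativity of the convolution, and the operator-norm-to-$\operatorname{BUC}$ continuity of $T\ast(\cdot)$ for $T$ trace class, one obtains (in supremum norm)
\begin{align*}
 B_{k_0,j_0}\ast C_{k_0,j_0} &= \lim_n (B_{k_0,j_0}\ast B_{k_0,j_0})\ast u_n = \lim_n h\ast u_n,\\
 B_{k_0,j_0}\ast\widetilde C &= \lim_n (B_{k_0,j_0}\ast\widetilde B)\ast u_n = \lim_n r\ast u_n,
\end{align*}
where $\widetilde B:=\mathfrak A_{k_0,k}B_{k,j}\mathfrak A_{j,j_0}$, and $h:=B_{k_0,j_0}\ast B_{k_0,j_0}$, $r:=B_{k_0,j_0}\ast\widetilde B$ lie in $L^1(\Xi)$ by Lemma~\ref{lem:conv_estimate_1}. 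The two key points are that $h$ is a \emph{regular} function, because $B_{k_0,j_0}$ is regular (Theorem~\ref{thm:Correspondence_2}, item (7)), and that the standing assumption together with Theorem~\ref{thm:corr_1} gives $\lim_n h\ast u_n = B_{k_0,j_0}\ast C_{k_0,j_0}\in\mathcal D_0^{m}$. So everything reduces to the following function-theoretic claim, to be applied with $\mathcal E=\mathcal D_0^{m}$: \emph{if $h,r\in L^1(\Xi)$ with $h$ regular, $(u_n)\subseteq L^\infty(\Xi)$, $h\ast u_n\to v$ and $r\ast u_n\to w$ in supremum norm, and $\mathcal E\subseteq\operatorname{BUC}(\Xi)$ is a closed translation-invariant subspace with $v\in\mathcal E$, then $w\in\mathcal E$.} To prove the claim I would fix, for each $\varepsilon>0$, a Schwartz function $\psi_\varepsilon$ with $\widehat{\psi_\varepsilon}\in C_c^\infty$ forming a normalized approximate identity as $\varepsilon\to0$; since $\widehat h$ is continuous and nowhere vanishing it is bounded away from zero on the compact set $\operatorname{supp}\widehat{\psi_\varepsilon}$, so by local invertibility in the Fourier algebra (Wiener's lemma) there is $\rho_\varepsilon\in L^1(\Xi)$ with $\rho_\varepsilon\ast h=\psi_\varepsilon$; then $\psi_\varepsilon\ast w=\lim_n(\rho_\varepsilon\ast r)\ast(h\ast u_n)=(\rho_\varepsilon\ast r)\ast v\in L^1(\Xi)\ast\mathcal E\subseteq\mathcal E$, and letting $\varepsilon\to0$ (using $w\in\operatorname{BUC}(\Xi)$) gives $w=\lim_\varepsilon\psi_\varepsilon\ast w\in\mathcal E$.

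The main obstacle I expect is precisely the passage to the closure defining $\mathcal A_{B,M}$: the hypothesis ``$C_{k_0,j_0}\in\mathcal D_1^{k_0,j_0}$'' does not descend to the approximants $B\ast F^{(n)}$, and a Baire-category argument shows one cannot in general choose the symbols with uniformly bounded $\|F^{(n)}\|_\infty$. Hence the naive approach---approximate $\widetilde B$ by linear combinations of shifts of the regular operator $B_{k_0,j_0}$ and convolve---breaks down, as the approximation error gets multiplied by the possibly unbounded $\|u_n\|_\infty$. The function-theoretic claim is designed exactly to sidestep this: after a smooth frequency cut-off the regularity of $h$ lets one factor the $u_n$-dependence \emph{exactly} through the convergent sequence $h\ast u_n$. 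Everything else is routine bookkeeping with the intertwiners $\mathfrak A_{k,k'}$ and with Theorems~\ref{thm:corr_1} and~\ref{thm:Correspondence_2} and Corollary~\ref{cor:equalspaces}.
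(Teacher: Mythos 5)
Your argument is correct, but at the decisive step it takes a genuinely different route from the paper. The paper's proof opens with ``by density, it suffices to work with operators of the form $C = B \ast F$, $F \in L^\infty(\Xi)^M$'' and then, for such a fixed $F$, runs exactly the ``naive approach'' you describe: from (2) and regularity of $A_{k_0,j_0}$ it gets $B_{k_0,j_0} \ast F_{k_0,j_0} \in \mathcal D_1^{m_{j_0,k_0}}$ via Theorem~\ref{thm:corr_1}, approximates $\mathfrak A_{k_0,k} B_{k,j} \mathfrak A_{j,j_0}$ in trace norm by linear combinations of shifts of the regular operator $B_{k_0,j_0}$ (Theorem~\ref{thm:Correspondence_2}), convolves with the common symbol $F_{k,j} = F_{k_0,j_0}$, and concludes $C_{k,j} \in \mathcal D_1^{k,j}$ using Corollary~\ref{cor:equalspaces}; this is unproblematic there because the single symbol $F$ is fixed, so the trace-norm error is only multiplied by the fixed constant $\norm{F}_\infty$. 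You instead keep a general $C = \lim_n B \ast F^{(n)} \in \mathcal A_{B,M}$, make the same reductions via Theorem~\ref{cor:maxpart}, Corollary~\ref{cor:equalspaces} and Theorem~\ref{thm:corr_1}, and then transfer the problem to the scalar level: $h \ast u_n \to B_{k_0,j_0} \ast C_{k_0,j_0} \in \mathcal D_0^{m}$ and $r \ast u_n \to B_{k_0,j_0} \ast \widetilde C$ with $h = B_{k_0,j_0} \ast B_{k_0,j_0}$ regular by Theorem~\ref{thm:Correspondence_2}(7), concluding by the Wiener division trick $\rho_\varepsilon \ast h = \psi_\varepsilon$ with $\mathcal F_\sigma(\psi_\varepsilon)$ compactly supported, plus $L^1(\Xi) \ast \mathcal D_0^m \subseteq \mathcal D_0^m$ and an approximate identity. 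What your route buys is precisely the point you flag: condition (2) does not descend from $C$ to the approximants $B \ast F^{(n)}$, and the shift-approximation error is multiplied by the possibly unbounded $\norm{F^{(n)}}_\infty$, so the paper's one-line reduction ``by density'' is really only immediate for the implication (1)$\Rightarrow$(2); your factorization through the convergent sequence $h \ast u_n$ treats the closure directly and thereby closes (or sidesteps) that step, at the modest price of importing the local inversion lemma for the Fourier algebra, which the paper's argument for the dense class does not need. The remaining bookkeeping with the intertwiners, the correspondence theorems, and standard QHA facts (associativity of the mixed convolutions, continuity of $T \ast (\cdot)$ from operator norm to $\BUC$) matches the paper and is fine.
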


\begin{proof}
By density, it suffices to work with operators of the form $C = B \ast F$ with $F \in L^\infty(\Xi)^M$. 

(1) $\Rightarrow$ (2): By Theorem \ref{corr:thm:final}, if $B \ast F \in \mathcal D_1$, then  $A \ast (B \ast F) \in \mathcal D_0$, that is, for each $(j,k) \in m$, $m \in M$ we have $A_{k,j} \ast (B_{k,j} \ast F_{k,j}) \in \mathcal D_0^m$. This clearly implies (2).

(2) $\Rightarrow$ (1): Let $(j, k) \in m_{j_0, k_0}$ and assume that $A_{k_0, j_0} \ast B_{k_0, j_0} \ast F_{k_0, j_0} \in \mathcal D_0^{m_{j_0, k_0}}$. Then, by regularity of $A_{k_0, j_0}$ and Theorem \ref{thm:corr_1}, $B_{k_0, j_0} \ast F_{k_0, j_0} \in \mathcal D_1^{m_{j_0, k_0}}$. Since $B_{k_0, j_0}$ is regular, we can approximate $\mathfrak A_{k_0, k} B_{k, j} \mathfrak A_{j, j_j}$ in trace norm by finite sums of the form
\begin{align*}
    \mathfrak A_{k_0, k} B_{k, j} \mathfrak A_{j, j_0} \approx \sum\limits_{\nu} c_\nu \alpha_{z_\nu}(B_{k_0, j_0})
\end{align*}
with $c_{\nu} \in \C$, $z_{\nu} \in \Xi$. Hence, by properties of the convolution (see Eq.~\eqref{eq:convolution_estimate_L^inf} and \eqref{eq:shiftconv1}) and since $F_{k,j} = F_{k_0,j_0}$ by assumption, we can approximate $B_{k, j} \ast F_{k,j}$ in operator norm:
\begin{align*}
    B_{k, j} \ast F_{k,j} &\approx \sum\limits_\nu \mathfrak A_{k, k_0} \alpha_{z_\nu}(B_{k_0, j_0}) \mathfrak A_{j_0, j} \ast F_{k,j}\\
    &= \sum\limits_\nu \mathfrak A_{k, k_0} \alpha_{z_\nu}(B_{k_0, j_0} \ast F_{k_0,j_0}) \mathfrak A_{j_0, j} \\
    &\in \mathfrak A_{k, k_0} \mathcal D_1^{m_{j_0, k_0}} \mathfrak A_{j_0, j}.
\end{align*}
By Corollary \ref{cor:equalspaces}, we have $ \mathfrak A_{k, k_0} \mathcal D_1^{m_{j_0, k_0}} \mathfrak A_{j_0, j} = \mathcal D_1^{j, k}$. Hence, we see that $B_{k, j} \ast F_{k,j} \in \mathcal D_1^{j, k}$. Since this holds for any pair $(j, k)$, we have proven $C = B \ast F \in \mathcal D_1$.
\end{proof}

We want to stress that for $N = 1$ we have $\mathcal A_{B, M} = \mathcal C_1$, so all the correspondence theorems we have discussed collapse to the same result in this case.  Moreover, if we choose $\mathcal D_0^m := C_0(\Xi)$ for every $m \in M$ in the construction of the previous theorem (that is, $\mathcal D_0 = C_0(\Xi)^{N \times N}$), we obtain $\mathcal D_1 = \Kc(\Hc)$ like in Corollary \ref{cor:thm:final} and therefore:

\begin{corollary} \label{cor:compact}
Let $C \in \mathcal A_{B,M}$ and $A \in \Tc^1(\Hc)$ such that $A_{k_0, j_0}$ is regular for $(k_0, j_0) \in m_{reg}$. Then the following statements are equivalent:
\begin{enumerate}[(1)]
    \item $C$ is compact.
    \item $C_{j_0, k_0}$ is compact for every $(j_0, k_0) \in m_{reg}$.
    \item $A_{k_0, j_0} \ast C_{j_0, k_0} \in C_0(\Xi)$ for every $(j_0, k_0) \in m_{reg}$.
\end{enumerate}
\end{corollary}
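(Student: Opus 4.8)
The plan is to obtain Corollary~\ref{cor:compact} as the special case of the refined correspondence theorem (Theorem~\ref{thm:refined_correspondence_theorem}) in which one takes $\mathcal{D}_0^m := C_0(\Xi)$ for every $m \in M$. First I would record what this choice produces on the two sides of the correspondence: on the function side it gives $\mathcal{D}_0 = C_0(\Xi)^{N \times N}$, and the space $\mathcal{D}_1$ corresponding to it with respect to the partition $M_{max}$ is $\mathcal{K}(\mathcal{H})$. This last identification is the entrywise compactness correspondence $C_0(\Xi) \leftrightarrow \mathcal{K}(\mathcal{H}_j,\mathcal{H}_k)$ of Lemma~\ref{lem:correspondence_compact} combined with the elementary fact that a bounded operator on $\mathcal{H} = \bigoplus_{j}\mathcal{H}_j$ is compact if and only if all of its matrix blocks $P_{(k)}A|_{\mathcal{H}_j}$ are compact; it is exactly the identification already exploited in Corollary~\ref{cor:thm:final}.

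With these two reductions in hand, the equivalence $(1) \Leftrightarrow (3)$ of the corollary is literally the equivalence $(1) \Leftrightarrow (2)$ of Theorem~\ref{thm:refined_correspondence_theorem} for this $\mathcal{D}_0$, where I also use that $\mathcal{A}_{B,M} \subseteq \mathcal{C}_1(\mathcal{H})$, so that for $C \in \mathcal{A}_{B,M}$ the statements ``$C$ is compact'' and ``$C \in \mathcal{D}_1 = \mathcal{K}(\mathcal{H})$'' mean the same thing. What is left is to insert the middle condition $(2)$. The direction $(1) \Rightarrow (2)$ is immediate, a matrix block of a compact operator being a compression followed by a restriction. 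For $(2) \Rightarrow (3)$ I would again invoke $\mathcal{A}_{B,M} \subseteq \mathcal{C}_1(\mathcal{H})$, which forces the representative block $C_{k_0,j_0}$ (the block $\mathcal{H}_{j_0}\to\mathcal{H}_{k_0}$ appearing in Theorem~\ref{thm:refined_correspondence_theorem}) to lie in $\mathcal{C}_1^{k_0,j_0}$; the single-representation correspondence (Theorem~\ref{thm:corr_1}, equivalently Lemma~\ref{lem:correspondence_compact}) applied with the regular operator $A_{k_0,j_0}$ then says $C_{k_0,j_0}$ is compact if and only if $A_{k_0,j_0} \ast C_{k_0,j_0} \in C_0(\Xi)$, which turns $(2)$ into $(3)$. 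Chaining $(3) \Rightarrow (1) \Rightarrow (2) \Rightarrow (3)$ closes the cycle.

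I do not expect a genuine obstacle here, since the substantive argument has already been carried out in Theorem~\ref{thm:refined_correspondence_theorem}; the one place that deserves a sentence is why compactness of the representative blocks propagates to the full operator, and for that one simply quotes the $(2)\Rightarrow(1)$ part of that theorem's proof: writing $C = B \ast F$ with $F$ respecting $M$, each block $C_{k,j}$ with $(k,j)$ in the same cell as $(j_0,k_0)$ is an operator-norm limit of finite sums $\sum_\nu \mathfrak{A}_{k,k_0}\,\alpha_{z_\nu}(B_{k_0,j_0}\ast F_{k_0,j_0})\,\mathfrak{A}_{j_0,j}$, and compactness survives the unitaries $\mathfrak{A}$, the shifts $\alpha_{z_\nu}$, and passage to operator-norm limits. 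In particular no new idea beyond Theorem~\ref{thm:refined_correspondence_theorem} is required, and in particular no appeal to the Wiener/$A\ast A$ spectral-synthesis argument used earlier is needed.
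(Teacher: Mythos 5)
Your proposal is correct and matches the paper's own argument: the corollary is obtained precisely by specializing Theorem~\ref{thm:refined_correspondence_theorem} to $\mathcal D_0^m = C_0(\Xi)$ for all $m\in M$, identifying $\mathcal D_1=\mathcal K(\mathcal H)$ via the entrywise correspondence $C_0(\Xi)\leftrightarrow\mathcal K(\mathcal H_j,\mathcal H_k)$ as in Corollary~\ref{cor:thm:final}. Your insertion of condition (2) via the trivial implication $(1)\Rightarrow(2)$ and the single-pair correspondence for $(2)\Rightarrow(3)$ is exactly the intended (implicit) filling-in, so no further comment is needed.
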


\section{Polyanalytic Fock spaces}\label{sec:3}

We first recall some notation and basic results from \cite{hagger2023,Vasilevski2000} and then apply the results from the previous section to this case. Let $\mu$ be the Gaussian measure given by $\mathrm{d}\mu(z) = \frac{1}{\pi}e^{-|z|^2} \, \mathrm{d}z$ on $\mathbb C \cong \mathbb R^2$. We say that a smooth function $f: \mathbb C \to \mathbb C$ is polyanalytic of order at most $n \in \mathbb N$ if: 
\begin{align*}
    \frac{\partial^n f}{\partial \overline{z}^n} = 0.
\end{align*}
The polyanalytic Fock space of order $n$ is now defined as the closed subspace \label{def:F_n}\nomenclature{$F_n^2$}{Polyanalytic Fock space of order $n$, p.~\pageref{def:F_n}}$F^2_n$ of $L^2(\mathbb C, \mu)$ consisting of polyanalytic functions of order at most $n$. Further, we define the spaces $F^2_{(1)} := F^2_1$ and \label{def:F_k}\nomenclature{$F_{(k)}^2$}{True polyanalytic Fock space of order $k$, p.~\pageref{def:F_k}}$F^2_{(k)} := F^2_k \ominus F^2_{k-1}$ for $k = 2,\ldots,n$, which are called \emph{true polyanalytic Fock spaces}. $F^2_{(1)}$ is of course just the standard Fock space of analytic functions. By definition, $F_n^2$ can be written as an orthogonal sum of true polyanalytic Fock spaces:
\begin{align*}
    F_n^2 = \bigoplus_{k=1}^n F_{(k)}^2.
\end{align*}
The orthogonal projection onto $F^2_{(k)}$ will be denoted by $P_{(k)}$. Moreover, $P_n := \sum\limits_{k = 1}^n P_{(k)}$ is the orthogonal projection onto $F^2_n$. Each $F_{(k)}^2$ and $F_n^2$ is a reproducing kernel Hilbert space and the reproducing kernels are given by
\begin{align}\label{def:repr_kernel1}
    \nomenclature{$K_{w,(k)}$}{Reproducing kernels of $F_{(k)}^2$, Eq.~\eqref{def:repr_kernel1} on p.~\pageref{def:repr_kernel1}}K_{w,(k)}(z) := K_{(k)}(z, w) := L_{k-1}^0 (|z-w|^2)e^{z \overline{w}}
\end{align}
and
\begin{align}\label{def:repr_kernel2}
\nomenclature{$K_{w,n}$}{Reproducing kernels of $F_{n}^2$, Eq.~\eqref{def:repr_kernel2} on p.~\pageref{def:repr_kernel2}}K_{w,n}(z) := K_n(z, w) := L_{n-1}^1 (|z-w|^2)e^{z \overline{w}},
\end{align}
respectively. Here, for $k,\alpha \in \N_0$, $L_k^\alpha$ denotes the generalized Laguerre polynomial, which is defined by
\begin{align}\label{def:Laguerre}
    \nomenclature{$L_k^\alpha$}{Generalized Laguerre polynomials, Eq.~\eqref{def:Laguerre} on p.~\pageref{def:Laguerre}}L_k^\alpha(x) := \sum_{j=0}^k (-1)^j \binom{k+\alpha}{k-j}\frac{x^j}{j!}.
\end{align}
The reproducing kernels satisfy $\| K_{w,(k)}\| = e^{\frac{|w|^2}{2}}$. The normalized reproducing kernels $k_{w,(k)}$ and $k_{w,n}$ are therefore given by 
\begin{align}\label{def:normalized_kernel1}
\nomenclature{$k_{w,(k)}$}{Normalized reproducing kernels of $F_{(k)}^2$, Eq.~\eqref{def:normalized_kernel1} on p.~\pageref{def:normalized_kernel1}}k_{w,(k)}(z) := \frac{K_{w,(k)}(z)}{\| K_{w,(k)}\|} = L_{k-1}^0 (|z-w|^2)e^{z \overline{w}-\frac{|w|^2}{2}}
\end{align}
and
\begin{align}\label{def:normalized_kernel2}
\nomenclature{$k_{w,n}$}{Normalized reproducing kernels of $F_{n}^2$, Eq.~\eqref{def:normalized_kernel2} on p.~\pageref{def:normalized_kernel2}}k_{w,n}(z) := \frac{K_{w,n}(z)}{\| K_{w,n}\|} = \frac{1}{\sqrt{n}}L_{n-1}^1 (|z-w|^2)e^{z \overline{w}-\frac{|w|^2}{2}},
\end{align}
respectively. Given $f \in L^\infty(\mathbb C)$, the Toeplitz operator $T_{f, (k)}$ on $F_{(k)}^2$ is defined as \label{def:Toeplitz}\nomenclature{$T_{f, (k)}$}{Toeplitz operator on $F_{(k)}^2$, p.~\pageref{def:Toeplitz}}$T_{f, (k)}(g) = P_{(k)}(fg)$. Similarly, the Toeplitz operator \label{def:Toeplitz2}\nomenclature{$T_{f, n}$}{Toeplitz operator on $F_{n}^2$, p.~\pageref{def:Toeplitz2}}$T_{f, n}^2$ on $F_n^2$ is defined by $T_{f, n}(g) = P_n(fg)$. The Weyl operators $W_z \from L^2(\mathbb C, \mu) \to L^2(\mathbb C, \mu)$ are defined as follows:
\begin{equation} \label{eq:Weyl_operators_concrete}
    W_z f(w) = e^{w\overline{z} - \frac{|z|^2}{2}} f(w-z).
\end{equation}
They are unitary and satisfy
\begin{align*}
    W_z^\ast = W_{-z}, \quad W_z W_w = e^{-i\sigma(z, w)/2}W_{z+w}, \quad z, w \in \mathbb C.
\end{align*}
Here, $\sigma$ is the symplectic form on $\mathbb C \cong \mathbb R^2$ given by $\sigma(z, w) =  2\operatorname{Im}(z\overline{w})$. Each $W_z$ leaves $F_{(k)}^2$ invariant for every $k \in \mathbb N$ (cf. \cite[Proposition 8]{hagger2023}). As it is the same for each $k$, we will just write $W_z$ instead of $W_z^k$ for the Weyl operators acting on $F_{(k)}^2$. Using
\begin{align*}
    (W_zk_{w,(k)})(v) &= e^{v \overline{z} - \frac{|z|^2}{2}} L_{k-1}^0 (|(v - z) - w|^2) e^{(v-z)\overline{w} - \frac{|w|^2}{2}}\\
    &= L_{k-1}^0(|v-(z+w)|^2)e^{v(\overline z+ \overline w)} e^{-\frac{|w|^2}{2} - \frac{|z|^2}{2} - \operatorname{Re}(z\overline{w}) - i\Im(z\overline w)}\\
    &= L_{k-1}^0(|v-(z+w)|^2)e^{v(\overline z+ \overline w)} e^{-\frac{|z+w|^2}{2} - i\Im(z\overline{w})},
\end{align*}
we see that
\begin{equation} \label{eq:shifted_kernels}
W_zk_{w,(k)} = e^{-i\Im(z\overline{w})} k_{z+w,(k)}.
\end{equation}
In particular, the span of the reproducing kernels $K_{w,(k)}$ is invariant under the action of the Weyl operators. We note the following fact, which is of course crucial for applying the methods discussed in Section \ref{sec:qha}.
\begin{proposition}
$(F_{(k)}^2, W)$ is an irreducible $\sigma$-representation of $(\mathbb R^2, \sigma)$.
\end{proposition}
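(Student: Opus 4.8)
The plan is to check the $\sigma$-representation axioms (which are essentially already in place) and then to obtain irreducibility by upgrading the \enquote{cyclicity of one coherent state} that is announced just before the statement to \emph{cyclicity of every nonzero vector}. For the first part: by \cite[Proposition 8]{hagger2023} each $W_z$ leaves $F_{(k)}^2$ invariant, so the restrictions $W_z|_{F_{(k)}^2}$ are unitaries on $F_{(k)}^2$, they inherit the composition formula $W_z W_w = e^{-i\sigma(z,w)/2} W_{z+w}$ from $L^2(\C,\mu)$, and strong continuity on $F_{(k)}^2$ follows from strong continuity on $L^2(\C,\mu)$. Hence $(F_{(k)}^2, W)$ is a $\sigma$-representation and the real content is irreducibility.

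For irreducibility, a closed subspace reduces $\{W_z : z \in \C\}$ if and only if it is $W_z$-invariant (since $W_z^\ast = W_{-z}$), so it suffices to show $\overline{\spann\{W_z f : z \in \C\}} = F_{(k)}^2$ for every $f \in F_{(k)}^2 \setminus \{0\}$. By \eqref{eq:shifted_kernels} we have $W_z k_{0,(k)} = k_{z,(k)}$, and the reproducing property together with $\norm{K_{z,(k)}} = e^{\abs{z}^2/2}$ gives, for every $h \in F_{(k)}^2$,
\[ \langle h, W_z k_{0,(k)} \rangle = \langle h, k_{z,(k)} \rangle = e^{-\abs{z}^2/2} h(z), \qquad z \in \C. \]
Consequently $\int_\C \abs{\langle h, W_z k_{0,(k)} \rangle}^2 \, \mathrm{d}z = \int_\C \abs{h(z)}^2 e^{-\abs{z}^2}\,\mathrm{d}z = \pi \norm{h}^2$, i.e.\ the orbit $\{W_z k_{0,(k)} : z \in \C\}$ is a continuous tight frame for $F_{(k)}^2$ with bound $\pi$. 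Polarizing yields $\int_\C \langle h_1, W_z k_{0,(k)}\rangle \overline{\langle h_2, W_z k_{0,(k)}\rangle}\,\mathrm{d}z = \pi \langle h_1,h_2\rangle$ and the reconstruction identity $h = \frac1\pi \int_\C \langle h, W_z k_{0,(k)}\rangle\, W_z k_{0,(k)}\,\mathrm{d}z$ for all $h$; in particular $k_{0,(k)}$ is cyclic, which is exactly the density remark preceding the proposition.

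To pass from cyclicity of $k_{0,(k)}$ to cyclicity of an arbitrary $f \neq 0$, the plan is to establish Moyal's identity $\int_\C \abs{\langle h, W_z f\rangle}^2\,\mathrm{d}z = \pi\norm{h}^2\norm{f}^2$ for all $h,f \in F_{(k)}^2$: expanding $f$ via the reconstruction formula, applying the composition formula $W_z W_w = e^{-i\sigma(z,w)/2}W_{z+w}$, and inserting the explicit matrix coefficients above, the resulting double integral reduces to an orthogonality integral of symplectic characters and collapses to $\pi\norm{h}^2\norm{f}^2$ (the constant $\pi$ is pinned down by the case $f = k_{0,(k)}$ treated above). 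Granting this, $h \perp \{W_z f : z\}$ with $f \neq 0$ forces $\norm{h}^2\norm{f}^2 = 0$, hence $h = 0$; so $\{W_z f : z\}$ is total and $(F_{(k)}^2, W)$ is irreducible. A shortcut is to transport everything through a polyanalytic Bargmann-type transform as in \cite{Vasilevski2000}, identifying $F_{(k)}^2$ with $L^2(\R)$ and $W_z$ with the Schrödinger representation of the Heisenberg group, whose irreducibility is classical; by the Stone--von Neumann theorem this is in any case the only option.

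The routine parts are the $\sigma$-representation axioms and the coherent-state tight-frame identity. The step I expect to require the most care is the passage from cyclicity of $k_{0,(k)}$ to full irreducibility, namely the proof of the orthogonality relations (Moyal's identity): cyclicity of a single vector does not imply irreducibility in general, and the extra input needed here is precisely the square-integrability of the representation, manifested by the fact that $\{W_z k_{0,(k)}\}_z$ is a tight frame.
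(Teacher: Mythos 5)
Your reduction of irreducibility to the cyclicity of \emph{every} nonzero vector is correct, and your tight-frame identity $\int_{\C}\abs{\langle h,W_zk_{0,(k)}\rangle}^2\,\mathrm{d}z=\pi\norm{h}^2$ is exactly (a quantitative form of) the remark preceding the proposition, which is all the justification the paper itself offers; you are also right that this only gives cyclicity of $k_{0,(k)}$. The problem is that the bridge you need, Moyal's identity $\int_{\C}\abs{\langle h,W_zf\rangle}^2\,\mathrm{d}z=\pi\norm{h}^2\norm{f}^2$ for \emph{all} $f,h\in F^2_{(k)}$, is asserted rather than proved, and the mechanism you describe cannot deliver it. Inserting the reconstruction formula and the composition law gives $\langle h,W_zf\rangle=\frac{1}{\pi}\int_{\C}F(u)\overline{G(u-z)}\,e^{i\sigma(z,u)/2}\,\mathrm{d}u$ with $F(u)=\langle h,W_uk_{0,(k)}\rangle$ and $G(u)=\langle f,W_uk_{0,(k)}\rangle$; after squaring and integrating in $z$, the variable $z$ enters not only through the characters but also through the translated factor $\overline{G(u-z)}$, so there is no orthogonality-of-characters collapse (and the Fubini interchange is not absolutely convergent for general $f,h$). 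Carrying the manipulation through only converts the desired identity into an equivalent mixed-window orthogonality relation, i.e.\ into Moyal's identity again. More fundamentally, no argument using only the frame identity for the orbit of a single vector, the relation $W_zW_w=e^{-i\sigma(z,w)/2}W_{z+w}$ and formal integral manipulations can prove irreducibility: in the reducible $\sigma$-representation $F^2_{(1)}\oplus F^2_{(1)}$ the orbit of $(k_{0,(1)},g)$ with $g\perp k_{0,(1)}$, $\norm{g}=1$, is again a tight frame with bound $\pi$ and the vector is cyclic, yet the representation is not irreducible; likewise, the corresponding identity for the twisted shifts on all of $L^2(\C)$ is false for general $L^2$-functions, so the fact that $F,G$ are matrix coefficients of $F^2_{(k)}$ must be used in an essential way, which your sketch never does.

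Your closing ``shortcut'' is in fact the honest route: identify $F^2_{(k)}$ with $L^2(\R)$ via the true polyanalytic Bargmann transform (equivalently the Gabor transform with the $(k-1)$-st Hermite window, as in \cite{Vasilevski2000}), verify or precisely cite that this unitary intertwines the $W_z$ with the time-frequency shifts of the Schr\"odinger representation, and quote the classical irreducibility of the latter; alternatively, prove Moyal's identity honestly, e.g.\ first for finite linear combinations of reproducing kernels, where everything reduces to Gaussian--Laguerre integrals of the kind computed in Proposition \ref{prop:convolution_identities}, and then extend by density. As written, however, the intertwining is not checked, and the appeal to the Stone--von Neumann theorem is backwards: that theorem presupposes irreducibility, so it cannot be used to produce the unitary equivalence before irreducibility is known. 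In short, the step you yourself single out as the delicate one is precisely the step that is missing.
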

\begin{proof}
    The result is well-known and we only give a very brief sketch the proof: By conjugating with the true $k$-polyanalytic Bargmann transform \cite{Abreu2010b}, the representation $(F_{(k)}^2, W)$ is unitarily equivalent to the representation $U_{(x, \xi)} f(t) = e^{2it\xi - ix\xi}f(t-x)$ on $L^2(\mathbb R)$, which is well-known to be irreducible.
\end{proof}
This result enables us to use the tools of QHA developed in the previous section to study operators on $F_{(k)}^2$. The parity operator $U$ is simply implemented by $Uf(z) = f(-z)$ here. Note that for $f \in F_{(k)}^2$ the following holds:
\begin{align*}
    \langle Uk_{z,(k)}, f\rangle = e^{-\frac{|z|^2}{2}}\langle K_{z,(k)}, f(-\cdot)\rangle = e^{-\frac{|z|^2}{2}} f(-z) = e^{-\frac{|z|^2}{2}} \langle K_{-z,(k)}, f\rangle = \langle k_{-z,(k)}, f\rangle,
\end{align*}
which yields
\begin{align}\label{eq:Uk}
    Uk_{z,(k)} = k_{-z,(k)}
\end{align}
for every $z \in \mathbb C$, $k = 1,\ldots,n$. Moreover, the intertwining operators $\mathfrak A_{j,k}$ between the different $F^2_{(k)}$ are implemented by
\begin{align*}
    \mathfrak A_{k+1,k} = \frac{1}{\sqrt{k}} \mathfrak{a}^\dag, \quad \mathfrak A_{k,k+1} = \frac{1}{\sqrt{k-1}} \mathfrak{a},
\end{align*}
where
\begin{align*}
    \mathfrak a^\dag = \left( -\frac{\partial}{\partial z} + \overline{z} \right), \quad \mathfrak a = \frac{\partial}{\partial \overline{z}}.    
\end{align*}
$\mathfrak A_{j,k}$ is then of course given by $\mathfrak A_{j,k} = \mathfrak  A_{j,j+1} \dots \mathfrak A_{k-1, k} $ or $\mathfrak A_{j,k} = \mathfrak A_{j, j-1} \dots \mathfrak A_{k+1, k}$, depending on whether $j < k$ or $k < j$.

Since we will make frequent use of the pairing of two normalized reproducing kernels, we write out the following readily verified formula: For $z, w \in \mathbb C$ we have
\begin{align}\label{eq:pairing_normkernel}
    \langle k_{z,(k)}, k_{w,(k)}\rangle = e^{-\frac{|z|^2 + |w|^2}{2}} L_{k-1}^0(|w-z|^2) e^{w\overline{z}}. 
\end{align}

In the following, for $f,g \in \Hc$, we will use tensor product notation $f \otimes g$ for the rank one operator on a Hilbert space $\mathcal H$ defined by $\varphi \mapsto (f \otimes g)(\varphi) := \langle \varphi, g\rangle f$. In particular, the tensor product is antilinear in the second entry. As a first result, we will derive an important identity related to the symplectic Fourier transform involving the Laguerre polynomials. For this, we recall that the Fourier--Weyl transform of the operator $A \in \mathcal T^1(F_{(k)}^2)$ is defined as the function $\mathcal F_W(A) \from \C \to \C$, $\mathcal F_W(A)(\xi) = \tr(AW_\xi)$. Among the most important properties of $\mathcal F_W$ are that it maps $\mathcal T^1(F_{(k)}^2)$ injectively and continuously to $C_0(\mathbb C)$ and extends to a unitary map from $\mathcal T^2(F_{(k)}^2)$ to $L^2(\mathbb C)$. Furthermore, it can be extended to an injective, weak$^\ast$ continuous map from $\mathcal L(F_{(k)}^2)$ to $\mathcal S'(\mathbb C)$, the tempered distributions on $\mathbb C \cong \mathbb R^2$. Details on the properties of $\mathcal F_W$ can be found in \cite{keyl_kiukas_werner16,werner84}.

\begin{proposition}\label{prop:convolution_identities}
Let  $z, w \in \mathbb C$.
\begin{enumerate}[(a)]
    \item The Fourier--Weyl transform of $k_{z,(k)} \otimes k_{w,(k)}$ is given by
    \begin{align*}
        \mathcal F_W(k_{z,(k)} \otimes k_{w,(k)})(\xi) = L_{k-1}^0 (|w-\xi-z|^2) e^{-\frac{|w - z - \xi|^2}{2} + i\Im(z\overline{\xi}) + i\Im(w( \overline{\xi} + \overline{z}))}.
    \end{align*}
    \item The convolution $(k_{z,(k)} \otimes k_{w,(k)}) \ast (k_{z,(k)} \otimes k_{w,(k)})$ is given by
    \begin{align*}
        (k_{z,(k)} &\otimes k_{w,(k)}) \ast (k_{z,(k)} \otimes k_{w,(k)})(u) \\
        &\quad = L_{k-1}^0(|w+z-u|^2)^2 e^{-|w+z-u|^2} e^{-2i(\Im(z\overline{u}) + \Im(u\overline{w}) + \Im(w \overline{z}))}.
    \end{align*}
    \item The following formula for the symplectic Fourier transform holds true:
\begin{align*}
    \mathcal F_\sigma&(L_{k-1}^0(|w+z-\cdot|^2)^2 e^{-|w+z-\cdot|^2} e^{-i(\sigma(z, \cdot) + \sigma(\cdot, w) + \sigma(w, z))})(\xi) \\
    &\quad = L_{k-1}^0 (|w-z-\xi|^2)^2 e^{-|w - z - \xi|^2 + 2i\Im(z\overline{\xi} +w \overline{\xi} + w \overline{ z})}.
\end{align*}
\end{enumerate}
\end{proposition}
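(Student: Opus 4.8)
The plan is to prove (a) and (b) by direct computation with reproducing kernels, and then obtain (c) as a formal consequence of the two via the convolution theorem of quantum harmonic analysis; throughout one uses that $(F_{(k)}^2, W)$ is an irreducible $\sigma$-representation (the preceding proposition), so all the convolution identities of Section~\ref{sec:2.1} apply with $j = k$, in which case $\mathfrak A_{j,k}$ is the identity. For (a), I would start from the fact that for any rank one operator one has $\mathcal F_W(f \otimes g)(\xi) = \tr((f \otimes g) W_\xi) = \langle W_\xi f, g \rangle$. Taking $f = k_{z,(k)}$, $g = k_{w,(k)}$ and applying \eqref{eq:shifted_kernels} gives $\mathcal F_W(k_{z,(k)} \otimes k_{w,(k)})(\xi) = e^{-i\Im(\xi\overline z)} \langle k_{\xi+z,(k)}, k_{w,(k)} \rangle$. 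Substituting the pairing formula \eqref{eq:pairing_normkernel} and rearranging the exponent finishes the computation: writing $u = \xi + z$, the real part $-\tfrac{|u|^2+|w|^2}{2} + \Re(w\overline u)$ becomes $-\tfrac{|w-u|^2}{2}$ by the polarization identity $|w-u|^2 = |w|^2 + |u|^2 - 2\Re(w\overline u)$, while $-\Im(\xi\overline z) = \Im(z\overline\xi)$ supplies the remaining imaginary terms.

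For (b), I would invoke the operator convolution in the form $A \ast B(u) = \tr(A\, W_u U B U W_{-u})$ (Section~\ref{sec:2.1} with $j = k$), with $A = B = k_{z,(k)} \otimes k_{w,(k)}$. Since $U$ is self-adjoint and unitary and $W_u^\ast = W_{-u}$, one has $W_u U (f \otimes g) U W_{-u} = (W_u U f) \otimes (W_u U g)$; combining \eqref{eq:Uk} and \eqref{eq:shifted_kernels} gives $W_u U k_{z,(k)} = e^{i\Im(u\overline z)} k_{u-z,(k)}$ and likewise for $w$, so that, pulling scalars out of the (conjugate-linear in the second slot) tensor product,
\[W_u U A U W_{-u} = e^{i\Im(u\overline z) - i\Im(u\overline w)}\; k_{u-z,(k)} \otimes k_{u-w,(k)}.\]
Using $\tr((f_1 \otimes g_1)(f_2 \otimes g_2)) = \langle f_2, g_1 \rangle \langle f_1, g_2 \rangle$ one gets $A \ast A(u) = e^{i\Im(u\overline z) - i\Im(u\overline w)} \langle k_{u-z,(k)}, k_{w,(k)} \rangle \langle k_{z,(k)}, k_{u-w,(k)} \rangle$. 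Inserting \eqref{eq:pairing_normkernel} twice, both Laguerre arguments collapse to $|w+z-u|^2$ because $|w-(u-z)|^2 = |(u-w)-z|^2 = |w+z-u|^2$, and collecting the exponentials (again via the polarization identity and $\Im(a\overline b) = -\Im(b\overline a)$) produces the real part $-|w+z-u|^2$ and the imaginary part $-2(\Im(z\overline u) + \Im(u\overline w) + \Im(w\overline z))$.

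For (c), I would observe that, since $\sigma(a,b) = 2\Im(a\overline b)$, the function whose symplectic Fourier transform is taken on the left-hand side is exactly $(k_{z,(k)} \otimes k_{w,(k)}) \ast (k_{z,(k)} \otimes k_{w,(k)})$ from (b). By the quantum harmonic analysis convolution theorem $\mathcal F_\sigma(A \ast B) = \mathcal F_W(A) \mathcal F_W(B)$ for $A, B \in \mathcal T^1(F_{(k)}^2)$ (see \cite{werner84}; also \cite{Fulsche2020}), the left-hand side equals $\mathcal F_W(k_{z,(k)} \otimes k_{w,(k)})(\xi)^2$; squaring the formula from (a) and expanding $2\Im(w(\overline\xi+\overline z)) = 2\Im(w\overline\xi) + 2\Im(w\overline z)$ reproduces the right-hand side.

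The only genuine difficulty is the bookkeeping of Gaussian exponents in (a) and (b): conceptually each is a one-line manipulation of \eqref{eq:shifted_kernels}, \eqref{eq:Uk} and \eqref{eq:pairing_normkernel}, but one must be scrupulous about conjugate-linearity in the second tensor slot and consistently trade $\Re(a\overline b)$-terms for squared moduli via polarization. Should one prefer to avoid the convolution theorem in (c), the symplectic Fourier transform can be evaluated directly, but that forces a Gaussian integral against $L_{k-1}^0(|\cdot|^2)^2$ and is considerably messier; deducing (c) from (a) and (b) is by far the cleaner route.
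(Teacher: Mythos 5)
Your proposal is correct and follows essentially the same route as the paper: (a) via $\tr((f\otimes g)W_\xi)=\langle W_\xi f,g\rangle$ combined with \eqref{eq:shifted_kernels} and \eqref{eq:pairing_normkernel}, (b) via the convolution formula $\tr(A W_u U A U W_{-u})$ with \eqref{eq:Uk} and the pairing formula, and (c) by identifying the left-hand side with the convolution from (b) and squaring (a) through the convolution theorem. The exponent bookkeeping you describe matches the paper's computation, so there is nothing to add.
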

\begin{proof}
We begin by computing the Fourier--Weyl transform of $k_{z,(k)} \otimes k_{w,(k)}$:
\begin{align*}
    \mathcal F_W(k_{z,(k)} \otimes k_{w,(k)})(\xi) &= \langle k_{z,(k)}, W_{-\xi} k_{w,(k)}\rangle\\
    &= \langle W_\xi k_{z,(k)}, k_{w,(k)}\rangle\\
    &= e^{-i\Im(\xi\overline{z})} \langle k_{\xi + z,(k)}, k_{w,(k)}\rangle
    \intertext{by Eq.\ \eqref{eq:shifted_kernels}. Applying Eq.\ \eqref{eq:pairing_normkernel} gives:}
    &= e^{-i\Im(\xi\overline{z})} e^{-\frac{|w|^2}{2} - \frac{|\xi + z|^2}{2}} L_{k-1}^0(|w - \xi - z|^2) e^{w(\overline \xi + \overline z)}\\
    &= L_{k-1}^0 (|w-\xi-z|^2) e^{-\frac{|w - z - \xi|^2}{2} + i\Im(z\overline{\xi}) + i\Im(w(\overline{\xi} + \overline{z}))}.
\end{align*}
Further, also using Eq.\ \eqref{eq:Uk}, we have:
\begin{align*}
    (k_{z,(k)} \otimes k_{w,(k)}) \ast (k_{z,(k)} \otimes k_{w,(k)})(u) &= \tr \left((k_{z,(k)} \otimes k_{w,(k)})W_u U (k_{z,(k)} \otimes k_{w,(k)}) U W_{-u}\right)\\
    &= \tr \left((k_{z,(k)} \otimes k_{w,(k)})(W_u k_{-z(k)} \otimes W_u k_{-w,(k)})\right)\\
    &= \langle W_u k_{-z,(k)}, k_{w,(k)}\rangle \langle k_{z,(k)}, W_u k_{-w,(k)}\rangle\\
    &= e^{i\Im(u(\overline{z}-\overline{w}))} \langle k_{u-z,(k)}, k_{w,(k)}\rangle \langle k_{z,(k)}, k_{u-w,(k)}\rangle\\
    &= e^{i\Im(u(\overline{z}-\overline{w}))} e^{-\frac{|u-z|^2}{2}-\frac{|w|^2}{2} - \frac{|z|^2}{2} - \frac{|u-w|^2}{2}}\\
    &\quad \times L_{k-1}^0(|w - u + z|^2)^2 e^{w(\overline u - \overline z) + (u-w)\overline z}\\
    &= L_{k-1}^0(|w+z-u|^2)^2 e^{-|w+z-u|^2} e^{-2i\Im(z\overline{u} + u\overline{w} + w\overline{z})}.
\end{align*}
By the convolution theorem \cite[Prop.\ 3.4(1)]{werner84}, 
\begin{align*}
    \mathcal F_\sigma((k_{z,(k)} \otimes k_{w,(k)}) \ast (k_{z,(k)} \otimes k_{w,(k)}))(\xi) = \mathcal F_W((k_{z,(k)} \otimes k_{w,(k)}))(\xi)^2,
\end{align*}
we obtain the formula in (c).
\end{proof}

\begin{corollary} \label{cor:regular}
The operator $k_{0,(k)} \otimes k_{0,(k)}$ is regular if and only if $k = 1$. For $k > 1$, $k_{0,(k)} \otimes k_{0,(k)}$ is $\infty$-regular.
\end{corollary}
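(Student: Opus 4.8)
The plan is to reduce everything to the computation of the symplectic Fourier transform of the function $A \ast A$, where $A = k_{0,(k)} \otimes k_{0,(k)}$, using the equivalence of items (7) and (5) in Theorem \ref{thm:Correspondence_2}: $A$ is regular if and only if $\mathcal F_\sigma(A \ast A)(\xi) \neq 0$ for every $\xi \in \C$, and $\infty$-regularity will follow from the analogous criterion in Theorem \ref{thm:inftyregular} (item (1) is equivalent to the set of zeros of $\mathcal F_W(A)$ having dense complement, and one checks that the zero set of $\mathcal F_W(A)$ and that of $\mathcal F_\sigma(A \ast A)$ coincide, since the latter is the square of the former by the convolution theorem). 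Fortunately, Proposition \ref{prop:convolution_identities} has already done the hard computational work: specializing $z = w = 0$ in part (c) gives
\begin{align*}
    \mathcal F_\sigma\bigl((k_{0,(k)} \otimes k_{0,(k)}) \ast (k_{0,(k)} \otimes k_{0,(k)})\bigr)(\xi) = L_{k-1}^0(|\xi|^2)^2 e^{-|\xi|^2}.
\end{align*}

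So the whole statement comes down to an elementary fact about Laguerre polynomials: the function $\xi \mapsto L_{k-1}^0(|\xi|^2)^2 e^{-|\xi|^2}$ is nowhere zero on $\C$ if and only if $k = 1$, and its zero set always has dense complement. For $k = 1$ we have $L_0^0 \equiv 1$, so the function equals $e^{-|\xi|^2} \neq 0$, establishing regularity. For $k \geq 2$, the polynomial $L_{k-1}^0$ has degree $k - 1 \geq 1$, and it is a classical fact (e.g. from the theory of orthogonal polynomials, or from the explicit formula given in the excerpt) that all $k - 1$ of its roots are real, positive, and simple. Hence there exists $r > 0$ with $L_{k-1}^0(r^2) = 0$, so $\mathcal F_\sigma(A \ast A)$ vanishes on the circle $\{\xi : |\xi| = r\}$; this shows $A$ is not regular. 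On the other hand, since $L_{k-1}^0$ has only finitely many zeros, the function $L_{k-1}^0(|\xi|^2)^2 e^{-|\xi|^2}$ vanishes only on finitely many circles, whose union has empty interior, so its complement is dense. By Theorem \ref{thm:inftyregular}, $A$ is $\infty$-regular for $k > 1$.

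I would structure the proof in three short moves: (i) invoke Proposition \ref{prop:convolution_identities}(c) with $z = w = 0$ to identify $\mathcal F_\sigma(A \ast A)$; (ii) for the case $k = 1$, observe this is the nonvanishing Gaussian and apply Theorem \ref{thm:Correspondence_2}(7); (iii) for $k \geq 2$, recall that $L_{k-1}^0$ has a real positive zero (giving non-regularity via the same theorem) but only finitely many zeros (giving $\infty$-regularity via Theorem \ref{thm:inftyregular}, after noting that the zero set of $\mathcal F_W(A)$ agrees with that of $\mathcal F_\sigma(A \ast A)$ up to the harmless squaring). The only step that requires citing something outside the paper is the fact that the Laguerre polynomial $L_{k-1}^0$ has at least one real root — but this is standard, and in fact one does not even need positivity or simplicity of the roots, merely that a nonconstant polynomial which is real on $[0,\infty)$ with a sign change (or, more elementarily, $L_{k-1}^0(0) = 1 > 0$ while the leading coefficient $(-1)^{k-1}/(k-1)!$ forces $L_{k-1}^0 \to \pm\infty$, so by the intermediate value theorem it has a zero in $(0,\infty)$) has a root there. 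I expect no real obstacle; the argument is essentially a one-line consequence of the already-established convolution identity together with the trivial observation that a degree-$(k-1)$ polynomial has finitely many zeros and, for $k \geq 2$, at least one zero on the positive half-line.
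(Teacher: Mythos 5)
Your proof is correct and follows essentially the paper's own route: both rest on Proposition \ref{prop:convolution_identities} evaluated at $z=w=0$ together with the equivalent characterizations of regularity and $\infty$-regularity, the only (immaterial) difference being that you pass through $\mathcal F_\sigma(A\ast A)$ and item (7) of Theorem \ref{thm:Correspondence_2}, whereas the paper reads off $\mathcal F_W(k_{0,(k)}\otimes k_{0,(k)})(\xi)=L_{k-1}^0(|\xi|^2)e^{-|\xi|^2/2}$ from part (a) and uses items (5)--(6) and the definition of $\infty$-regularity directly. One small caveat: your parenthetical ``elementary'' intermediate-value argument does not work when $k-1$ is even (then $L_{k-1}^0(0)=1>0$ and $L_{k-1}^0(x)\to+\infty$, so these two facts alone force no sign change), but this is harmless, since the standard fact that $L_{k-1}^0$ has $k-1$ positive simple zeros, which you cite first, is exactly what the paper also invokes.
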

\begin{proof}
By the previous proposition, the Fourier--Weyl transform of this operator is
\begin{align} \label{eq:k_0xk_0}
    \mathcal F_W(k_{0,(k)} \otimes k_{0,(k)})(\xi) = L_{k-1}^0(|\xi|^2) e^{-\frac{|\xi|^2}{2}}.
\end{align}
It therefore holds $\mathcal F_W(k_{0,(k)} \otimes k_{0,(k)})(\xi) = 0$ if and only if $L_{k-1}^0(|\xi|^2) = 0$. The Laguerre polynomial $L_{k-1}^0$ is free of zeroes if and only if $k = 1$. In all other cases, $L_{k-1}^0$ has a finite number of non-negative zeros, hence the zero set of  $L_{k-1}^0(|\cdot|^2)$ is a finite collection of disjoint concentric circles in the plane. This set of course has dense complement, so that $k_{0,(k)} \otimes k_{0,(k)}$ is $\infty$-regular for $k > 1$.
\end{proof}

\begin{theorem}\label{theorem:toeplitz_quant}
The map 
\[\Phi \from L^\infty(\mathbb C) \to \Lc(F^2_{(k)}), \quad f \mapsto (k_{0,(k)} \otimes k_{0,(k)}) \ast f\]
is injective if and only if $k = 1$. For $k > 1$, the kernel is exactly the weak$^\ast$ closure (with respect to the predual $L^1(\mathbb C)$) of the linear span of all characters $e^{i\sigma(\xi, \cdot)}$ with \nomenclature{$\Sigma_k$}{Zero set of $L_{k-1}^0(\abs{\xi}^2)$, p.~\pageref{theorem:toeplitz_quant}}$\xi \in \Sigma_k$, the zero set of $\xi \mapsto L_{k-1}^0(|\xi|^2)$.
\end{theorem}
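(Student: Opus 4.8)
The plan is to identify the kernel of $\Phi$ via the convolution theorem of quantum harmonic analysis and then appeal to classical spectral synthesis. Write $A := k_{0,(k)} \otimes k_{0,(k)} \in \Tc^1(F^2_{(k)})$, so that $\Phi(f) = A \ast f$. By Corollary~\ref{cor:regular}, $A$ is regular precisely when $k = 1$, so the dichotomy in the first assertion follows at once from Theorem~\ref{thm:Correspondence_2} (applied with $\Hc_j = \Hc_k = F^2_{(k)}$): statements (4) and (5) there are equivalent, and (5) is exactly regularity of $A$, so $\Phi$ is injective if and only if $k = 1$. It remains to compute $\ker\Phi$ for $k \geq 2$.

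The first step is to note that $\Phi$ is the Banach space adjoint of $S \from \Tc^1(F^2_{(k)}) \to L^1(\C)$, $S(B) := A \ast B$; indeed $\tr(\Phi(f)B) = \int_\C f(z)\,(A \ast B)(z)\,\mathrm{d}z$ by the very definition of $A \ast f$ (here $\beta_{-}(A) = A$ by Eq.~\eqref{eq:Uk}), and $S$ is bounded by Lemma~\ref{lem:conv_estimate_1}. Hence $\ker\Phi = (\operatorname{range} S)^\perp$ is weak$^\ast$ closed, its pre-annihilator is the norm closure $I := \overline{\{A \ast B : B \in \Tc^1(F^2_{(k)})\}} \subseteq L^1(\C)$, and $\ker\Phi = I^\perp$ by the bipolar theorem. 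Since $\alpha_z(A \ast B) = A \ast \alpha_z(B)$, the space $I$ is translation invariant, hence a closed ideal of the convolution algebra $L^1(\C)$. Using the convolution theorem $\mathcal F_\sigma(A \ast B) = \mathcal F_W(A)\cdot\mathcal F_W(B)$ \cite[Prop.\ 3.4]{werner84} together with the identity $\mathcal F_W(A)(\xi) = L_{k-1}^0(|\xi|^2)e^{-|\xi|^2/2}$ from Eq.~\eqref{eq:k_0xk_0}, one sees that $\mathcal F_\sigma(g)$ vanishes on $\Sigma_k$ for every $g \in I$; conversely, taking $B = A$ gives $\mathcal F_\sigma(A \ast A) = \mathcal F_W(A)^2 \in I$, which is nonzero off $\Sigma_k$. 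Thus the hull of the ideal $I$ is exactly $\Sigma_k$. In particular each character $e^{i\sigma(\xi,\cdot)}$ with $\xi \in \Sigma_k$ has symplectic Fourier transform a multiple of the point mass at $\xi$, hence annihilates $I$ and lies in $\ker\Phi = I^\perp$; taking weak$^\ast$ closures yields the inclusion ``$\supseteq$'' of the asserted description.

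For the reverse inclusion I would invoke spectral synthesis. By Corollary~\ref{cor:regular}, $\Sigma_k$ is a finite union of pairwise disjoint concentric circles; each circle is a set of spectral synthesis for $L^1(\R^2)$ by the classical theorem of Herz on synthesis for the circle, and a finite disjoint union of sets of synthesis is again a set of synthesis, so $\Sigma_k$ is a set of synthesis. Therefore the only closed ideal of $L^1(\C)$ with hull $\Sigma_k$ is $I(\Sigma_k) := \{g \in L^1(\C) : \mathcal F_\sigma(g)|_{\Sigma_k} = 0\}$, whence $I = I(\Sigma_k)$ and $\ker\Phi = I(\Sigma_k)^\perp$. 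Since $\Sigma_k$ is symmetric about the origin, $I(\Sigma_k)$ is precisely the pre-annihilator of $\{e^{i\sigma(\xi,\cdot)} : \xi \in \Sigma_k\}$, so the bipolar theorem gives
\[
\ker\Phi \;=\; I(\Sigma_k)^\perp \;=\; \overline{\spann\{e^{i\sigma(\xi,\cdot)} : \xi \in \Sigma_k\}}^{\,w^\ast},
\]
which is the claim.

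The routine ingredients are the identification of $\Phi$ as an adjoint map, the convolution theorem, and the Wiener-type correspondence between closed translation-invariant subspaces of $L^1(\R^2)$ and closed ideals. The genuinely hard point is the spectral synthesis property of $\Sigma_k$: it rests on the nonvanishing curvature of the constituent circles (a point, or an affine line, would fail synthesis in just this way) and has to be imported from harmonic analysis rather than produced from the quantum harmonic analysis framework developed above.
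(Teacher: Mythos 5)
Your proof is correct, and it rests on the same two pillars as the paper's: the identity $\Fc_W(k_{0,(k)}\otimes k_{0,(k)})=L^0_{k-1}(|\cdot|^2)e^{-|\cdot|^2/2}$, whose zero set is $\Sigma_k$, and the fact that a finite disjoint union of circles is a set of spectral synthesis. The difference lies in where the synthesis argument is run. The paper stays on the $L^\infty$ side: it first determines which characters lie in $\ker\Phi$ by evaluating $\Phi$ on $e^{i\sigma(\xi,\cdot)}$ through the distributional convolution theorem ($\Fc_\sigma(e^{i\sigma(\xi,\cdot)})=\pi\delta_\xi$, justified via \cite[Proposition 5.9]{keyl_kiukas_werner16}), and then applies synthesis in its dual formulation, namely that there is exactly one weak$^*$-closed $\alpha$-invariant subspace of $L^\infty(\C)$ with spectrum $\Sigma_k$. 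You instead pass to the predual: $\Phi=S^*$ with $S(B)=A\ast B$ (using $\beta_-(A)=A$, which is valid since $Uk_{0,(k)}=k_{0,(k)}$), so $\ker\Phi=I^{\perp}$ for the closed ideal $I=\overline{\operatorname{ran}S}\subseteq L^1(\C)$, whose hull you compute to be exactly $\Sigma_k$ using only the trace-class convolution theorem and the element $A\ast A$; synthesis in the classical hull--kernel form then gives $I=I(\Sigma_k)$, and the bipolar theorem (together with the symmetry $-\Sigma_k=\Sigma_k$, which you rightly note) finishes. What your route buys is that no tempered-distribution version of $\Fc_W$ is needed, and the identification of the characters in the kernel comes out as a consequence rather than an input; what it loses is the explicit formula \eqref{eq:fourierweyl_exponential}, which the paper reuses later (Remark \ref{rem:Weyl_as_Toeplitz}). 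If you write this up, make two standard facts explicit: any closed ideal $J$ with hull $E$ satisfies $j(E)\subseteq J\subseteq k(E)$, so synthesis of $E$ forces $J=I(E)$; and the union step genuinely needs the circles to be pairwise disjoint and compact (as in the paper's appeal to \cite{Muraleedharan_Parthasarathy}), since unions of synthesis sets are not synthesis sets in general.
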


\begin{proof}
Since $k_{0,(k)} \otimes k_{0,(k)}$ is regular for $k = 1$, it is well-known that $\Phi$ is injective for this case (cf.\ \cite{Fulsche2020,werner84}). For $k > 1$, we note that $e^{i\sigma(\xi, \cdot)}$ is in the kernel of $\Phi$ if and only if $\xi$ is in the zero set of $\mathcal F_W(k_{0,(k)} \otimes k_{0,(k)})$. 
Indeed, as justified by \cite[Proposition 5.9]{keyl_kiukas_werner16}, one can compute the Fourier--Weyl transform of the tempered operator $(k_{0,(k)} \otimes k_{0,(k)}) \ast f$, which yields a tempered distribution, as follows:
\begin{align*}
    \mathcal F_W((k_{0,(k)} \otimes k_{0,(k)}) \ast f) = \mathcal F_W(k_{0,(k)} \otimes k_{0,(k)}) \cdot \mathcal F_\sigma(f), 
\end{align*}
which has to be understood as the product of a Schwartz function and a tempered distribution. Now, for $f = e^{i\sigma(\xi, \cdot)}$ one readily verifies that $\mathcal F_\sigma(e^{i\sigma(\xi, \cdot)}) = \pi\delta_\xi$, where $\delta_\xi$ is the delta distribution supported at $\xi$. Hence,
\begin{align}\label{eq:fourierweyl_exponential}
    \mathcal F_W((k_{0,(k)} \otimes k_{0,(k)}) \ast e^{i\sigma(\xi, \cdot)}) = \pi \mathcal F_W(k_{0,(k)} \otimes k_{0,(k)}) \cdot \delta_\xi,
\end{align}
which is the zero distribution if and only if $\xi$ is a zero of $\mathcal F_W(k_{0,(k)} \otimes k_{0,(k)}) =  L_{k-1}^0(|\cdot|^2) e^{-\frac{|\cdot|^2}{2}}$.

Further, note that the map $\Phi \from L^\infty(\mathbb C) \to \Lc(F^2_{(k)})$ is weak$^\ast$ continuous (with respect to the preduals $L^1(\mathbb C)$ and $\mathcal T^1(F_{(k)}^2)$) which intertwines the shifts: 
\begin{align*} 
\alpha_z((k_{0,(k)} \otimes k_{0,(k)})\ast f ) = (k_{0,(k)} \otimes k_{0,(k)}) \ast \alpha_z(f).
\end{align*}
Therefore, the kernel of $\Phi$ is an $\alpha$-invariant weak$^\ast$ closed subspace of $L^\infty(\mathbb C)$. For any such subspace $X \subseteq L^\infty(\mathbb C)$ define $\Sigma(X) := \{ \xi \in \mathbb C: ~e^{i\sigma(\xi, \cdot)} \in X\}$. As we have just seen, the characters $e^{i\sigma(\xi, \cdot)}$ in the kernel of $\Phi$ are exactly those with $L_{k-1}^0(|\xi|^2) = 0$. Hence, in the language of spectral synthesis (see e.g. \cite[Chapter 40]{Hewitt_Ross2} for an introduction to the problem of spectral synthesis),
\begin{align*}
    \Sigma_k = \Sigma ( \{ f \in L^\infty(\mathbb C): ~(k_{0,(k)} \otimes k_{0,(k)}) \ast f = 0\}) = \bigcup_{r \geq 0: L_{k-1}^0(r) = 0} \sqrt{r} S^1.
\end{align*}
The set $\Sigma_k$ is a finite union of concentric circles. \cite[Theorem 2.7.6]{Reiter_Stegeman2000} shows that circles are sets of spectral synthesis and it therefore follows from \cite[Theorem 8]{Muraleedharan_Parthasarathy} that $\Sigma_k$ is also a set of spectral synthesis, which just means that there is exactly one $\alpha$-invariant, weak$^\ast$ closed subset $X$ of $L^\infty(\mathbb C)$ with $\Sigma(X) = \Sigma_k$. Since $X_k := \overline{\textrm{span}} \{ e^{i\sigma(\xi, \cdot)}: ~\xi \in \Sigma_k\}$ also satisfies $\Sigma(X_k) = \Sigma_k$, we obtain that the kernel is exactly $X_k$.
\end{proof}
\begin{remark}
    In general, not much concrete can be said about the zero sets of the Laguerre polynomials $L_n^0$, which appear above. Among the most noteworthy results, an irreducibility theorem by Schur \cite{Schur1929} implies that for $n \geq 2$, all zeros of $L_n^0$ are irrational.
\end{remark}

Analogously, we have:
\begin{theorem}\label{thm:kernel_berezin_trafo}
The map 
\[\Psi \from \Lc(F^2_{(k)}) \to L^{\infty}(\C), \quad A \mapsto (k_{0,(k)} \otimes k_{0,(k)}) \ast A\]
is injective if and only if $k = 1$. For $k > 1$, the kernel is exactly the weak$^\ast$ closure (with respect to the predual $\mathcal T^1(F_{(k)}^2)$) of the linear span of all Weyl operators $W_\xi$ with $\xi \in \Sigma_k$, the zero set of $\xi \mapsto L_{k-1}^0(|\xi|^2)$.
\end{theorem}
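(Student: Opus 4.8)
The plan is to mimic the proof of Theorem~\ref{theorem:toeplitz_quant} with the roles of functions and operators interchanged. The case $k=1$ is immediate: $k_{0,(1)}\otimes k_{0,(1)}$ is regular by Corollary~\ref{cor:regular}, so the equivalence of statements (5) and (3) in Theorem~\ref{thm:Correspondence_2} shows that $\Psi$ is injective. From now on let $k>1$.

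First I would determine which Weyl operators lie in $\ker\Psi$. Since $Uk_{0,(k)}=k_{0,(k)}$ by \eqref{eq:Uk}, the convolution simplifies to $(k_{0,(k)}\otimes k_{0,(k)})\ast A(z)=\tr\bigl(A\,\alpha_z(k_{0,(k)}\otimes k_{0,(k)})\bigr)=\langle Ak_{z,(k)},k_{z,(k)}\rangle$; that is, $\Psi(A)$ is nothing but the Berezin transform $\widetilde A$. Plugging in $A=W_\xi$ and applying \eqref{eq:shifted_kernels} and \eqref{eq:pairing_normkernel} (or, alternatively, the convolution theorem together with the fact that $\mathcal F_W(W_\xi)$ is a multiple of $\delta_{-\xi}$) one computes $\Psi(W_\xi)(z)=L_{k-1}^0(|\xi|^2)\,e^{-|\xi|^2/2}\,e^{i\sigma(z,\xi)}$. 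Hence $W_\xi\in\ker\Psi$ if and only if $L_{k-1}^0(|\xi|^2)=0$, i.e.\ if and only if $\xi\in\Sigma_k$. Next, $\ker\Psi$ is a weak$^\ast$-closed, $\alpha$-invariant subspace of $\mathcal L(F^2_{(k)})$: it is $\alpha$-invariant by \eqref{eq:shiftconv2}, and $\Psi$ is weak$^\ast$-continuous because it is the adjoint of the predual map $L^1(\mathbb C)\ni h\mapsto(k_{0,(k)}\otimes k_{0,(k)})\ast h\in\mathcal T^1(F^2_{(k)})$ (the restriction to $L^1$ of the map $\Phi$ from Theorem~\ref{theorem:toeplitz_quant}), as one sees from $\int_{\mathbb C}\Psi(A)(z)h(z)\,\mathrm dz=\tr\bigl(A\,((k_{0,(k)}\otimes k_{0,(k)})\ast h)\bigr)$. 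Writing $\Sigma(Y):=\{\xi:W_\xi\in Y\}$ for a weak$^\ast$-closed, $\alpha$-invariant subspace $Y$, the computation above gives $\Sigma(\ker\Psi)=\Sigma_k$, and the weak$^\ast$ closure $Y_k$ (with predual $\mathcal T^1(F^2_{(k)})$) of $\spann\{W_\xi:\xi\in\Sigma_k\}$ satisfies $Y_k\subseteq\ker\Psi$ and $\Sigma(Y_k)=\Sigma_k$.

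It remains to prove $\ker\Psi=Y_k$, and for this it suffices to know that $\Sigma_k$ is a set of synthesis in the operator-theoretic sense: there is at most one weak$^\ast$-closed, $\alpha$-invariant subspace $Y$ of $\mathcal L(F^2_{(k)})$ with $\Sigma(Y)=\Sigma_k$. I would reduce this to the classical statement by transporting everything to the analytic Fock space. The map $A\mapsto\mathfrak A_{1,k}A\mathfrak A_{k,1}$ is a weak$^\ast$-homeomorphic $\ast$-isomorphism from $\mathcal L(F^2_{(k)})$ onto $\mathcal L(F^2_{(1)})$ which intertwines the shifts $\alpha_z$ and sends $W_\xi$ to $W_\xi$, so one may assume $k=1$. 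On $F^2_{(1)}$ the operator $k_{0,(1)}\otimes k_{0,(1)}$ is regular, and taking preannihilators in $\mathcal T^1(F^2_{(1)})$ and then, via the correspondence theorem (Theorem~\ref{thm:corr_1}, cf.\ \cite{Fulsche2020,werner84}), passing to closed ideals of $L^1(\mathbb C)$ yields an inclusion-preserving bijection between the weak$^\ast$-closed, $\alpha$-invariant subspaces of $\mathcal L(F^2_{(1)})$ and the weak$^\ast$-closed, translation-invariant subspaces of $L^\infty(\mathbb C)$ under which $\Sigma(Y)$ corresponds to $\Sigma(X)$ (up to the harmless sign $\xi\mapsto-\xi$). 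Since $\Sigma_k$ is a finite union of concentric circles --- each a set of spectral synthesis by \cite[Theorem~2.7.6]{Reiter_Stegeman2000}, hence also their union by \cite[Theorem~8]{Muraleedharan_Parthasarathy} --- the desired uniqueness holds on the $L^\infty$ side and therefore on the operator side, whence $\ker\Psi=Y_k$.

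The main obstacle is this last step: making precise that classical spectral synthesis transfers to the operator algebra. The route sketched above requires verifying that each link in the chain of identifications is inclusion-preserving and that the Weyl operator $W_\xi$ really gets matched with the correct character. A more hands-on, equivalent alternative is to show directly that the preannihilator $(\ker\Psi)_\perp=\overline{(k_{0,(k)}\otimes k_{0,(k)})\ast L^1(\mathbb C)}^{\,\mathcal T^1}$ equals $\{B\in\mathcal T^1(F^2_{(k)}):\mathcal F_W(B)|_{\Sigma_k}=0\}$ --- the nontrivial inclusion ``$\supseteq$'' being exactly the transferred synthesis statement --- and then to take annihilators. Either way, the only genuinely hard input is the classical fact, already used for Theorem~\ref{theorem:toeplitz_quant}, that circles and their finite unions are sets of spectral synthesis; the remainder is bookkeeping.
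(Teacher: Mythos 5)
Up to the last step your argument runs parallel to the paper's: the paper also observes that $\Psi$ is the Berezin transform, that its kernel is a weak$^\ast$-closed, $\alpha$-invariant subspace of $\Lc(F^2_{(k)})$, that the Weyl operators in the kernel are exactly the $W_\xi$ with $\xi\in\Sigma_k$ (the paper gets this from the convolution theorem and $\Fc_W(W_\xi)=\delta_{-\xi}$, you get it from \eqref{eq:shifted_kernels} and \eqref{eq:pairing_normkernel}; both are fine), and that one concludes because $\Sigma_k$, a finite union of concentric circles, is a set of spectral synthesis. Your treatment of the case $k=1$ via Corollary \ref{cor:regular} and Theorem \ref{thm:Correspondence_2} is also correct.

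The gap is in the decisive transfer step, namely the claim that there is at most one weak$^\ast$-closed, $\alpha$-invariant subspace $Y\subseteq\Lc(F^2_{(k)})$ with $\Sigma(Y)=\Sigma_k$. You propose to deduce this from Theorem \ref{thm:corr_1} by ``taking preannihilators in $\Tc^1$ and passing to closed ideals of $L^1$'', but Theorem \ref{thm:corr_1} is a statement about \emph{norm-closed}, $\alpha$-invariant subspaces of $\Cc_1^{k,j}$ and of $\BUC(\Xi)$; it says nothing about weak$^\ast$-closed invariant subspaces of $\Lc(F^2_{(1)})$ and $L^\infty(\C)$, nor does it provide the $\Tc^1\leftrightarrow L^1$ correspondence your alternative route via $(\ker\Psi)_\perp=\{B\in\Tc^1:\Fc_W(B)|_{\Sigma_k}=0\}$ would need (the inclusion ``$\supseteq$'' there is exactly the operator-side synthesis statement and is the whole point, not bookkeeping). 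What you actually need is the equivalence of classical spectral synthesis with its operator (``quantum'') analogue, i.e.\ that weak$^\ast$-closed invariant subspaces of $\Lc(\Hc)$ are classified by their Weyl-operator spectra whenever the spectrum is a set of synthesis; this is a theorem with its own nontrivial proof, and it is precisely what the paper invokes at this point, citing \cite[Theorem 2.2]{Fulsche_Rodriguez2023} (see also \cite[Corollary 4.4]{werner84}). So either cite that result, as the paper does, or supply a genuine proof of the transfer; as written, the reduction to the classical case is asserted rather than established, and the appeal to Theorem \ref{thm:corr_1} does not carry it.
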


\begin{proof}
The reasoning is completely analogous to the previous proof: As before, the kernel of the map is a weak$^\ast$ closed and $\alpha$-invariant subspace of $\mathcal L(F_{(k)}^2)$. By the convolution theorem,
\begin{align*}
    \mathcal F_\sigma((k_{0,(k)} \otimes k_{0,(k)}) \ast A) = \mathcal F_W(k_{0,(k)} \otimes k_{0,(k)})\cdot \mathcal F_W(A), 
\end{align*}
which is again to be interpreted in the sense of distributions. For the Fourier--Weyl transform of the Weyl operators and for an appropriate operator $A$ (say, $A = \mathcal F_W^{-1}(g)$ for some Schwartz function $g$ on $\mathbb C$) we have:
\begin{align*}
    \mathcal F_W(A)(-\xi) = \tr(AW_{-\xi}) = \langle A, W_\xi\rangle_{\mathcal T^2} = \langle \mathcal F_W(A), \mathcal F_W(W_{\xi})\rangle_{L^2},
\end{align*}
which shows that $\mathcal F_W(W_\xi) = \delta_{-\xi}$. Here, the pairings are the extensions of the Hilbert--Schmidt and $L^2$ inner products. Since the Fourier--Weyl transform maps $\mathcal T^2(F_{(k)}^2)$ unitarily to $L^2(\mathbb C)$ \cite[Lemma 2.7]{keyl_kiukas_werner16}, it clearly satisfies the above version of Plancherel's identity for the pairing of two Hilbert-Schmidt operators. The extension of Plancherel's identity to the pairing between trace class and bounded operators that we used follows easily from standard density arguments. Hence, we have $W_\xi \ast (k_{0,(k)} \otimes k_{0,(k)}) = 0$ if and only if $-\xi \in \Sigma_k$. Hence, the spectrum of the kernel in the sense of quantum spectral synthesis \cite{Fulsche_Rodriguez2023} is given by
\begin{align*}
    \Sigma(\{ A \in \mathcal L(F_{(k)}^2): ~(k_{0,(k)} \otimes k_{0,(k)}) \ast A = 0\}) = \Sigma_k,
\end{align*}
where $\Sigma(X) := \{ \xi \in \mathbb C: ~W_\xi \in X\}$ for any weak$^\ast$ closed and $\alpha$-invariant subspace $X$ of $\mathcal L(F_{(k)}^2)$. Since quantum spectral synthesis is equivalent to spectral synthesis (cf.\ \cite[Theorem 2.2]{Fulsche_Rodriguez2023}), the kernel is given by the weak$^\ast$ closure of the linear span of all $W_\xi$ with $\xi \in \Sigma_k$.
\end{proof}

The function $\Psi(A)$ is commonly known as the Berezin transform $\widetilde{A}$ of $A$. Indeed, by \eqref{eq:shifted_kernels} and \eqref{eq:Uk}, we have
\begin{align} \label{eq:Berezin}
    (k_{0,(k)} \otimes k_{0,(k)}) \ast A &= \tr((k_{0,(k)} \otimes k_{0,(k)}) W_z U_k A U_k W_{-z}) = \tr((k_{z,(k)} \otimes k_{z,(k)})A)\notag\\
    &= \sp{Ak_{z,(k)}}{k_{z,(k)}} = \widetilde{A}.
\end{align}
Note that this is different from the generalized Berezin transform defined in \cite[Definition 15]{hagger2023}, which is given by $(l_{0,k} \otimes l_{0,k}) \ast A$ with $l_{z,k} = \Af_{k,1}k_{z,(1)}$ for $z \in \C$. The generalized Berezin transform has the benefit that $l_{0,k} \otimes l_{0,k}$ is regular while $k_{0,(k)} \otimes k_{0,(k)}$ is not (see Corollary \ref{cor:regular}). In particular, $A \mapsto (l_{0,k} \otimes l_{0,k}) \ast A$ is injective. This can be used, for instance, to show that
\begin{equation} \label{eq:Toeplitz}
    (k_{0,(k)} \otimes k_{0,(k)}) \ast f = \pi T_{f,(k)}
\end{equation}
for $f \in L^{\infty}(\C)$. Indeed, convolving each side with $l_{0,k} \otimes l_{0,k}$ results in $z \mapsto \sp{fl_{z,k}}{l_{z,k}}$ on both sides (cf.\ \cite[Proposition 2.12]{Fulsche2020}).

\begin{remark} \label{rem:Question31}
The previous two theorems show that for $k > 1$, there is a non-zero symbol $f \in L^{\infty}(\C)$ such that $T_{f,(k)} = 0$ and a non-zero operator $A \in \Cc_1$ such that $\widetilde{A} = 0$, respectively. This also answers \cite[Question 31]{hagger2023}, which asked whether the compactness of $T_{f,(k)}$ is independent of $k$, somewhat spectacularly in the negative. While the compactness of $T_{f,(1)}$ always implies the compactness of $T_{f,(k)}$ (see \cite[Theorem 23]{hagger2023} and also the remarks after Theorem \ref{thm:compactness_n} below), if we set $f(z) := e^{i\sigma(z,\xi) + \frac{\abs{\xi}^2}{2}}$ with $L_{k-1}^0(\abs{\xi}^2) = 0$, then $T_{f,(k)} = 0$ and $T_{f,(1)} = W_{\xi}$.
\end{remark}

As $l_{0, j} \otimes l_{0, k} = \mathfrak A_{j,1} (k_{0,(1)} \otimes k_{0,(1)}) \mathfrak A_{1,k}$ is a regular operator, the Correspondence Theorem \ref{thm:corr_1} implies the following result, which is analogous to \cite[Theorem 16]{hagger2023}:

\begin{theorem} \label{thm:compactness}
Let $j, k \in \mathbb N$. Then, $A \in \mathcal L(F_{(j)}^2, F_{(k)}^2)$ is compact if and only if $A \in \mathcal C_1(F_{(j)}^2, F_{(k)}^2)$ and $\big((l_{0, j} \otimes l_{0, k}) \ast A\big)(z) = \langle A l_{z,j}, l_{z,k}\rangle \to 0$ as $\abs{z} \to \infty$.
\end{theorem}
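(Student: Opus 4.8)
The plan is to reduce this statement to the single-representation correspondence theory by exploiting the fact that $l_{0,j} \otimes l_{0,k} = \mathfrak A_{j,1}(k_{0,(1)} \otimes k_{0,(1)}) \mathfrak A_{1,k}$ is regular. First I would verify regularity: the operator $k_{0,(1)} \otimes k_{0,(1)}$ is regular by Corollary \ref{cor:regular} (the case $k=1$), and multiplying by the unitary intertwiners $\mathfrak A_{j,1}$ and $\mathfrak A_{1,k}$ preserves regularity by the equivalences in Theorem \ref{thm:Correspondence_2} (items (5) and (6), together with the identity $A\mathfrak A_{j,k} = (l_{0,j}\otimes l_{0,k})\mathfrak A_{k,j} = \mathfrak A_{j,1}(k_{0,(1)}\otimes k_{0,(1)})\mathfrak A_{1,1}$, which is a nonzero multiple of the genuinely regular operator $k_{0,(1)}\otimes k_{0,(1)}$). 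So $A := l_{0,j}\otimes l_{0,k} \in \mathcal T^1(F_{(j)}^2, F_{(k)}^2)$ is a regular operator in the sense of Theorem \ref{thm:corr_1}.

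Next I would apply the correspondence theorem in its $\mathcal H_j$--$\mathcal H_k$ form, Theorem \ref{thm:corr_1}, with the particular choice $\mathcal D_0 = C_0(\Xi)$. Lemma \ref{lem:correspondence_compact} identifies the corresponding space as $\mathcal D_1 = \mathcal K(F_{(j)}^2, F_{(k)}^2)$. The theorem then says precisely that for $B \in \mathcal C_1(F_{(j)}^2, F_{(k)}^2)$ one has $B \in \mathcal K(F_{(j)}^2, F_{(k)}^2)$ if and only if $A \ast B \in C_0(\Xi)$. Since compactness of $B$ forces $B \in \mathcal C_1$ (compact operators are contained in $\mathcal C_1$, as $\mathcal C_1$ is a norm-closed space containing all trace class operators, hence all finite-rank operators, hence their norm-closure $\mathcal K$), the statement "$B$ compact" is equivalent to "$B \in \mathcal C_1$ and $A \ast B \in C_0$".

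It then remains to unwind the convolution $A \ast B = (l_{0,j}\otimes l_{0,k}) \ast B$ into the stated pointwise formula. Using the definition of the operator-operator convolution, the identity \eqref{eq:Uk} for the parity operators $U_j, U_k$ (which gives $U_j l_{0,j} = l_{0,j}$ and $U_k l_{0,k} = l_{0,k}$ since $l_{0,j} = \mathfrak A_{j,1}k_{0,(1)}$ and $\mathfrak A_{j,1}U_1 = U_j\mathfrak A_{j,1}$, $Uk_{0,(1)} = k_{0,(1)}$), and the covariance relation $W_z l_{z',j} = e^{-i\operatorname{Im}(z\overline{z'})}l_{z+z',j}$ inherited from \eqref{eq:shifted_kernels} via the intertwining property $W_z \mathfrak A_{j,1} = \mathfrak A_{j,1}W_z$, the computation runs exactly as in \eqref{eq:Berezin}:
\[
\big((l_{0,j}\otimes l_{0,k})\ast B\big)(z) = \operatorname{tr}\big((l_{0,j}\otimes l_{0,k})W_z U_j B U_k W_{-z}\big) = \operatorname{tr}\big((l_{z,j}\otimes l_{z,k})B\big) = \langle B l_{z,j}, l_{z,k}\rangle.
\]
Here I am implicitly using $l_{z,j} := \mathfrak A_{j,1}k_{z,(1)}$, consistent with the notation of the excerpt. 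Then $A\ast B \in C_0(\Xi) = C_0(\mathbb C)$ means exactly $\langle Bl_{z,j}, l_{z,k}\rangle \to 0$ as $|z|\to\infty$, which is the asserted condition.

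The only mild subtlety — and the place I would be most careful — is the bookkeeping of the parity operators and intertwiners in the trace computation, i.e. confirming that the constants of modulus one absorbed into $U_j, U_k, \mathfrak A_{j,k}$ are consistent so that $U_j l_{0,j} = l_{0,j}$ and $U_k l_{0,k} = l_{0,k}$ literally hold (this is guaranteed by the normalization $\mathfrak A_{k,j}U_j = U_k\mathfrak A_{k,j}$ fixed at the start of Section \ref{sec:2.1}). Everything else is a direct transcription of the already-proven general machinery, so no genuinely new obstacle arises; this is why the theorem can be stated as an immediate consequence of Theorem \ref{thm:corr_1}, in parallel with \cite[Theorem 16]{hagger2023}.
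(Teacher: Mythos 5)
Your argument is exactly the paper's: the theorem is stated there as an immediate consequence of the regularity of $l_{0,j}\otimes l_{0,k}=\mathfrak A_{j,1}(k_{0,(1)}\otimes k_{0,(1)})\mathfrak A_{1,k}$ together with the Correspondence Theorem \ref{thm:corr_1} and the correspondence $C_0(\Xi)\leftrightarrow\Kc(\Hc_j,\Hc_k)$ of Lemma \ref{lem:correspondence_compact}, with the convolution unwound as in \eqref{eq:Berezin}. The only cosmetic inaccuracy is calling $(l_{0,j}\otimes l_{0,k})\mathfrak A_{\cdot,\cdot}$ a ``nonzero multiple'' of $k_{0,(1)}\otimes k_{0,(1)}$ rather than its unitary intertwine, but since the Fourier--Weyl transform is unchanged under this intertwining, your regularity conclusion and the rest of the proof stand.
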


In the next section we will show that the condition ``$AP_{(j)} \in \BDO^2$'' from \cite[Theorem 16]{hagger2023} is actually equivalent to $A \in \mathcal C_1^{k,j}$, which shows that these results are really the same.

For $F^2_n$ we have a similar result, which is in fact a bit stronger than \cite[Theorem 16]{hagger2023}. Indeed, Corollary \ref{cor:compact} implies:

\begin{theorem} \label{thm:compactness_n}
Let $M$ be a partition of $\set{(j,k) : 1 \leq j,k \leq n}$ and let $m_{reg}$ be a set that contains one element from each $m \in M$. Further assume that $A,B \in \Tc^1(F^2_n)$ are operators such that $P_{(j_0)}A|_{F^2_{(k_0)}},P_{(j_0)}B|_{F^2_{(k_0)}} \in \Tc^1(F^2_{(k_0)},F^2_{(j_0)})$ are regular for each $(j_0,k_0) \in m_{reg}$. Then for $C \in \overline{B \ast L^{\infty}(\C)^M}$ the following are equivalent:
\begin{enumerate}[(1)]
    \item $C$ is compact.
    \item $C_{j_0, k_0}$ is compact for every $(j_0, k_0) \in m_{reg}$.
    \item $A_{k_0, j_0} \ast C_{j_0, k_0} \in C_0(\C)$ for every $(j_0, k_0) \in m_{reg}$.
\end{enumerate}
\end{theorem}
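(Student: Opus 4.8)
The plan is to recognize this as essentially a direct corollary of Corollary~\ref{cor:compact} (the reducible-setting compactness characterization) applied to the $\sigma$-representation $F^2_n = \bigoplus_{k=1}^n F^2_{(k)}$, once we match up the notation. First I would observe that $(F^2_n, W)$ is a finite sum of the irreducible $\sigma$-representations $(F^2_{(k)}, W)$, so the framework of Section~\ref{sec:reducible} applies verbatim with $N = n$, $\mathcal H_j = F^2_{(j)}$, and the intertwiners $\mathfrak A_{k,j}$ and parity operators $U_j$ as described after Equation~\eqref{eq:Uk}. Under the identification $A_{k,j} = P_{(k)} A|_{F^2_{(j)}}$, the hypotheses on $A$ and $B$ are exactly the hypotheses of Corollary~\ref{cor:compact}: namely $A \in \mathcal T^1(F^2_n)$ with $A_{k_0,j_0}$ regular for each $(j_0,k_0) \in m_{reg}$, and $B \in \mathcal T^1(F^2_n)$ with $B_{k_0,j_0}$ regular for each $(j_0,k_0) \in m_{reg}$, which is what we need for $\mathcal A_{B,M} = \overline{B \ast L^\infty(\mathbb C)^M}$ to be defined and for Theorem~\ref{thm:refined_correspondence_theorem} to apply.

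The one genuine point requiring care is the role of $B$ and the space $\mathcal A_{B,M}$. In Section~\ref{sec:reducible} the notation $L^\infty(\Xi)^M$ denotes matrices of bounded functions respecting the partition $M$ (so $F_{k,j} = F_{k',j'}$ whenever $(k,j),(k',j')$ lie in the same block), and $\mathcal A_{B,M} = \overline{B \ast L^\infty(\Xi)^M}$. I would simply note that here $\Xi \cong \mathbb C$ with the symplectic form $\sigma(z,w) = 2\operatorname{Im}(z\overline w)$, so $L^\infty(\mathbb C)^M$ is precisely this space, and $C \in \overline{B \ast L^\infty(\mathbb C)^M}$ is the hypothesis appearing in the statement. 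Then the equivalence of (1), (2) and (3) is literally the content of Corollary~\ref{cor:compact}, reading $C_{j_0,k_0}$ for $P_{(j_0)}C|_{F^2_{(k_0)}}$ and $A_{k_0,j_0} \ast C_{j_0,k_0}$ for the corresponding entrywise QHA convolution in $L^\infty(\mathbb C)$. (One should double-check the index order: Corollary~\ref{cor:compact} writes $C_{j_0,k_0}$ compact and $A_{k_0,j_0} \ast C_{j_0,k_0} \in C_0(\Xi)$, matching the statement here.)

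Thus the only thing to write is the translation between the abstract notation of Section~\ref{sec:reducible} and the concrete polyanalytic notation, plus the observation that the regularity hypotheses transfer. The main — and really only — obstacle is purely bookkeeping: making sure the partition-respecting structure of $L^\infty(\mathbb C)^M$, the definition of $\mathcal A_{B,M}$, and the choice of $m_{reg}$ are all invoked consistently with Section~\ref{sec:reducible}. No new analysis is needed. Concretely:

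\begin{proof}
Recall from Section~\ref{sec:3} that $F^2_n = \bigoplus_{k=1}^n F^2_{(k)}$ is an orthogonal decomposition into closed subspaces, each invariant under every Weyl operator $W_z$, and each $(F^2_{(k)}, W)$ is an infinite-dimensional irreducible $\sigma$-representation of $(\mathbb C, \sigma)$ with $\sigma(z,w) = 2\operatorname{Im}(z\overline w)$. Hence $(F^2_n, W)$ is a finite sum of irreducible $\sigma$-representations in the sense of Section~\ref{sec:reducible}, with $N = n$ and $\mathcal H_k = F^2_{(k)}$; the intertwining operators $\mathfrak A_{k,j}$ and parity operators $U_k$ are the ones fixed after Equation~\eqref{eq:Uk}, with all sign conventions arranged as in Section~\ref{sec:reducible}. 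Under the matrix identification $A_{k,j} = P_{(k)} A|_{F^2_{(j)}} \in \mathcal L(F^2_{(j)}, F^2_{(k)})$, the convolutions and the class $\mathcal C_1(F^2_n)$ are exactly those defined there, and $L^\infty(\mathbb C)^M$ is the space of matrices in $L^\infty(\mathbb C)^{N \times N}$ respecting the partition $M$.

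By hypothesis, $A_{k_0,j_0} = P_{(k_0)}A|_{F^2_{(j_0)}}$ and $B_{k_0,j_0} = P_{(k_0)}B|_{F^2_{(j_0)}}$ are regular for every $(j_0,k_0) \in m_{reg}$. In particular $B_{k_0,j_0}$ is regular for every representative in $m_{reg}$, so $\mathcal A_{B,M} := \overline{B \ast L^\infty(\mathbb C)^M}$ is well-defined, and the operator $A$ satisfies the regularity assumption required in Corollary~\ref{cor:compact}. Since $C \in \overline{B \ast L^\infty(\mathbb C)^M} = \mathcal A_{B,M}$ by assumption, Corollary~\ref{cor:compact} applies directly and yields that the following are equivalent: $C$ is compact; $C_{j_0,k_0}$ is compact for every $(j_0,k_0) \in m_{reg}$; and $A_{k_0,j_0} \ast C_{j_0,k_0} \in C_0(\mathbb C)$ for every $(j_0,k_0) \in m_{reg}$. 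This is precisely the asserted equivalence of (1), (2) and (3).
\end{proof}
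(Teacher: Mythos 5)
Your proposal is correct and follows essentially the same route as the paper: the paper itself states Theorem \ref{thm:compactness_n} as an immediate consequence of Corollary \ref{cor:compact}, applied to the decomposition $F^2_n = \bigoplus_{k=1}^n F^2_{(k)}$ into irreducible $\sigma$-representations, which is precisely the translation you carry out (including the observation that the regularity hypotheses on $A$ and $B$ are exactly what Corollary \ref{cor:compact} and the definition of $\mathcal A_{B,M}$ require).
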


If we set $A := B := \sum\limits_{j,k = 1}^n l_{0,j} \otimes l_{0,k}$, $M := M_{max}$, then $\overline{B \ast L^{\infty}(\C)^M}$ is equal to $\Cc_1(F^2_n)$. Therefore we obtain that $C \in \Lc(F^2_n)$ if and only if $C \in \Cc_1(F^2_n)$ and $\langle C l_{z, j}, l_{z, k}\rangle \to 0$ as $\abs{z} \to \infty$ for every $j,k = 1,\ldots,N$, which is exactly the result in \cite[Theorem 16]{hagger2023}. However, if we set $A := B := \sum\limits_{j,k = 1}^n k_{0,(j)} \otimes k_{0,(k)}$ and $M := M_{min}$, then $\overline{B \ast L^{\infty}(\C)^M} = \overline{\set{T_{f,n} : f \in L^{\infty}(\C)}}$. In this case Theorem \ref{thm:compactness_n} implies that $T_{f,n}$ is compact if and only if $T_{f,(1)}$ is compact, which is \cite[Theorem 23]{hagger2023}. This shows that Theorem \ref{thm:compactness_n} is indeed a generalization of the results in \cite{hagger2023}.

While we showed in Theorems \ref{theorem:toeplitz_quant} and \ref{thm:kernel_berezin_trafo} that neither the Toeplitz quantization nor the Berezin transform is injective for $k > 1$, we still want to emphasize the following, which is an immediate consequence of the $\infty$-regularity of $k_{0,(k)} \otimes k_{0,(k)}$ and Theorem \ref{thm:inftyregular}:

\begin{proposition}\label{prop:toeplitz_quant_wstar_dense}
Let $k \geq 1$.
\begin{enumerate}[(1)]
    \item The map $L^1(\mathbb C) \ni f \mapsto T_{f, (k)} \in \mathcal T^1(F_{(k)}^2)$ is injective.
    \item The map $\mathcal T^1(F_{(k)}^2) \ni A \mapsto \widetilde{A} \in L^1(\mathbb C)$ is injective.
    \item The map $L^\infty(\C) \ni f \mapsto T_{f, (k)} \in \mathcal L(F_{(k)}^2)$ has dense range (with respect to weak$^\ast$ topology).
    \item The map $\mathcal L(F_{(k)}^2) \ni A \mapsto \widetilde{A} \in L^\infty(\mathbb C)$ has dense range (with respect to weak$^\ast$ topology).
\end{enumerate}
\end{proposition}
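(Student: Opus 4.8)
The plan is to obtain all four assertions as direct consequences of Theorem \ref{thm:inftyregular}, applied to the rank one operator $A := k_{0,(k)} \otimes k_{0,(k)} \in \Tc^1(F_{(k)}^2)$, after identifying the abstract convolution maps with the Toeplitz quantization and the Berezin transform. By Corollary \ref{cor:regular}, $A$ is regular for $k = 1$ and $\infty$-regular for $k > 1$; since regular operators are in particular $\infty$-regular, $A$ is $\infty$-regular for every $k \geq 1$. This is the only property of the true polyanalytic Fock spaces that enters the argument.

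Next I would set up the dictionary between Theorem \ref{thm:inftyregular} (with $\Hc_j = \Hc_k = F_{(k)}^2$, so that $\mathfrak A_{j,k} = \mathrm{Id}$) and the concrete maps in the statement. By \eqref{eq:Toeplitz} we have $A \ast f = \pi T_{f,(k)}$ for $f \in L^\infty(\C)$; the same computation, now combined with \eqref{eq:convolution_estimate_T^1}, shows that this identity persists for $f \in L^1(\C)$, in which case $A \ast f \in \Tc^1(F_{(k)}^2)$. Likewise, \eqref{eq:Berezin} gives $A \ast B = \widetilde{B}$ for $B \in \Lc(F_{(k)}^2)$, and by Lemma \ref{lem:conv_estimate_1} this restricts to $B \in \Tc^1(F_{(k)}^2)$ with $\widetilde{B} \in L^1(\C)$. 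Since the convolution is commutative, the maps in (1)--(4) coincide, up to the nonzero scalar $\pi$ (which can moreover be absorbed into the symbol, as $\pi T_{g,(k)} = T_{\pi g,(k)}$), with the maps $f \mapsto f \ast A$, $B \mapsto A \ast B$, $f \mapsto A \ast f$ and $B \mapsto A \ast B$ appearing in items (3), (4), (5) and (6) of Theorem \ref{thm:inftyregular}, respectively.

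With this in hand the proof is immediate: since $A$ is $\infty$-regular, Theorem \ref{thm:inftyregular} tells us that $f \mapsto f \ast A$ is injective on $L^1(\C)$, that $B \mapsto A \ast B$ is injective on $\Tc^1(F_{(k)}^2)$, that $f \mapsto A \ast f$ has weak$^\ast$ dense range in $\Lc(F_{(k)}^2)$, and that $B \mapsto A \ast B$ has weak$^\ast$ dense range in $L^\infty(\C)$. Translating back through the identities above yields (1), (2), (3) and (4), respectively; multiplication by $\pi$ affects neither injectivity nor weak$^\ast$ density of the range.

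I do not expect any genuine obstacle here: the substance is carried entirely by Corollary \ref{cor:regular} and Theorem \ref{thm:inftyregular}. The only points that warrant a line of justification are that \eqref{eq:Toeplitz} and \eqref{eq:Berezin} remain valid at the $L^1$/trace class level --- which is routine from the convolution estimates \eqref{eq:convolution_estimate_T^1} and Lemma \ref{lem:conv_estimate_1} together with a density argument --- and the harmless bookkeeping with the constant $\pi$.
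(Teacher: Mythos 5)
Your proposal is correct and follows exactly the paper's route: the paper derives the proposition as an immediate consequence of the $\infty$-regularity of $k_{0,(k)} \otimes k_{0,(k)}$ (Corollary \ref{cor:regular}) together with Theorem \ref{thm:inftyregular}, identified with the Toeplitz quantization and Berezin transform via \eqref{eq:Toeplitz} and \eqref{eq:Berezin}, just as you do. Your extra remarks on extending these identities to the $L^1$/trace class level and on the constant $\pi$ are harmless bookkeeping that the paper leaves implicit.
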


\begin{remark}\label{rem:Weyl_as_Toeplitz}
Let us mention that the density of the range in weak$^\ast$ topology can also be obtained more constructively. Since the complement of the set of zeros of the function $L_{k-1}^0(|\cdot|)^2$ is dense in $\mathbb C$, one can argue as follows: Let $z \in \mathbb C$ with $L_{k-1}^0(|z|^2) \neq 0$. Then, by Eqs. \eqref{eq:k_0xk_0}, \eqref{eq:fourierweyl_exponential} and \eqref{eq:Toeplitz}, 
\begin{align*}
    \mathcal F_W(\pi T_{e^{i\sigma(z, \cdot)}, (k)}) &= \mathcal F_W((k_{0,(k)} \otimes k_{0,(k)}) \ast e^{i\sigma(z, \cdot)}) = \mathcal F_W(k_{0,(k)} \otimes k_{0,(k)}) \cdot \pi \delta_z \\
    &= \pi L_{k-1}^0(|z|^2)e^{-\frac{|z|^2}{2}} \cdot \delta_z.
\end{align*}
On the other hand, we have that $\mathcal F_W(W_z) = \delta_{-z}$, which we observed in the proof of Theorem \ref{thm:kernel_berezin_trafo}. We therefore arrive at
\begin{align*}
    \mathcal F_W(\pi T_{e^{i\sigma(z, \cdot)}, (k)}) = \pi L_{k-1}^0(|z|^2)e^{-\frac{|z|^2}{2}} \mathcal F_W(W_{-z}),
\end{align*}
which implies
\begin{align*}
    T_{g_z, (k)} = W_{z}
\end{align*}
for $g_z(w) := \frac{1}{L_{k-1}^0(|z|^2)}e^{\frac{|z|^2}{2}-i\sigma(z, w)}$
and $z \in \C$ with $L_{k-1}^0(|z|^2) \neq 0$. As $\set{z \in \C : L_{k-1}^0(|z|^2) \neq 0}$ is dense in $\mathbb C$ and $W_{z_m} \to W_z$ in weak$^\ast$ topology when $z_m \to z$, this shows that the weak$^\ast$ closure of the range of the Toeplitz quantization contains all Weyl operators. A weak$^\ast$ closed, $\alpha$-invariant subspace of $\mathcal L(F_{(k)}^2)$ containing all Weyl operators is known to be all of $\mathcal L(F_{(k)}^2)$, cf.\ \cite{Fulsche_Rodriguez2023}.
\end{remark}
Our next result is a extension of Proposition \ref{prop:toeplitz_quant_wstar_dense}. Even though we have seen in Theorem \ref{thm:kernel_berezin_trafo} that the Berezin transform is not injective on $\Lc(F^2_{(k)})$, it is in fact injective if restricted to Toeplitz operators. A similar statement also holds for the Toeplitz quantization in connection with Theorem \ref{theorem:toeplitz_quant}.

\begin{theorem} \label{thm:injectivity}
    The Berezin transform is injective on  $\set{ T_{f, (k)}: ~f \in L^\infty(\C)}$. The Toeplitz quantization is injective on $\{ \widetilde{A}: ~A \in \mathcal L(F_{(k)}^2)\}$. 
\end{theorem}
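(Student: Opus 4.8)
The plan is to transport both assertions to the Fourier side and reduce them to one statement about distributions supported on the circles $\Sigma_k$. Write $P := k_{0,(k)} \otimes k_{0,(k)}$, so that $\mathcal F_W(P) = \phi$ with $\phi(\xi) := L_{k-1}^0(|\xi|^2)\,e^{-|\xi|^2/2}$ by \eqref{eq:k_0xk_0}. By \eqref{eq:Toeplitz} and \eqref{eq:Berezin} we have $\pi T_{g,(k)} = P \ast g$ for $g \in L^\infty(\mathbb C)$ and $\widetilde{B} = P \ast B$ for $B \in \mathcal L(F_{(k)}^2)$, and since $\ast$ is associative, $\pi\widetilde{T_{f,(k)}} = P \ast P \ast f$ and $\pi T_{\widetilde{A},(k)} = P \ast P \ast A$. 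Using the convolution theorems employed in the proofs of Theorems \ref{theorem:toeplitz_quant} and \ref{thm:kernel_berezin_trafo} together with Proposition \ref{prop:convolution_identities}(c) at $z = w = 0$ (which gives $\mathcal F_\sigma(P \ast P) = \phi^2$), and the injectivity of $\mathcal F_\sigma$ on $\mathcal S'(\mathbb C)$ and of $\mathcal F_W$ on $\mathcal L(F_{(k)}^2)$, one obtains the equivalences
\begin{align*}
    \widetilde{T_{f,(k)}} = 0 \ &\Leftrightarrow\ \phi^2\mathcal F_\sigma(f) = 0, & T_{f,(k)} = 0 \ &\Leftrightarrow\ \phi\,\mathcal F_\sigma(f) = 0,\\
    T_{\widetilde{A},(k)} = 0 \ &\Leftrightarrow\ \phi^2\mathcal F_W(A) = 0, & \widetilde{A} = 0 \ &\Leftrightarrow\ \phi\,\mathcal F_W(A) = 0.
\end{align*}
Hence both claims reduce to the following assertion: if a tempered distribution $u$ on $\mathbb C \cong \mathbb R^2$ satisfies $\phi^2 u = 0$ and, in addition, $u \in \mathcal F_\sigma(L^\infty(\mathbb C))$ (for the statement about the Berezin transform) or $u \in \mathcal F_W(\mathcal L(F_{(k)}^2))$ (for the statement about the Toeplitz quantization), then already $\phi u = 0$.

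To prove this, I would first use that the zeros of the Laguerre polynomial $L_{k-1}^0$ are simple. Consequently $\phi$ vanishes precisely on $\Sigma_k$, and near each of its circles $\{|\xi|^2 = r\}$ one has $\phi(\xi) = (|\xi|^2 - r)\,\psi(\xi)$ with $\psi$ smooth and nowhere vanishing there. From $\phi^2 u = 0$ it follows that $\supp u \subseteq \Sigma_k$; writing $u$ near such a circle in the transversal variable $t := |\xi|^2 - r$ as $u = \sum_{j \ge 0} g_j\,\delta^{(j)}(t)$ and using $t^2\,\delta^{(j)}(t) = j(j-1)\,\delta^{(j-2)}(t)$, the equation $t^2 u = 0$ forces $g_j = 0$ for $j \ge 2$. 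So near each circle $u = g_0\,\delta(t) + g_1\,\delta'(t)$, whence $\phi u = \psi\, t\,(g_0\,\delta(t) + g_1\,\delta'(t)) = -\psi\, g_1\,\delta(t)$, and $\phi u = 0$ becomes equivalent to $g_1 = 0$, i.e., to $u$ being transversally of order $0$ on $\Sigma_k$.

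The hard part is to exclude $g_1 \ne 0$, and this is where the boundedness hypothesis — and the fact that the underlying space is two-dimensional — comes in (on $\mathbb R^3$ the transversal derivative of surface measure on a sphere has a bounded Fourier transform, so the analogue of the theorem fails there). For $u \in \mathcal F_\sigma(L^\infty(\mathbb C))$ I would first multiply $u$ by a smooth cutoff supported near one circle, which only convolves $\mathcal F_\sigma^{-1}u$ with a Schwartz function and hence keeps it bounded, and then analyze the inverse symplectic Fourier transform of the $\delta'(t)$-term by stationary phase over that circle: the transversal derivative produces an amplitude of order $|x|$, while the curvature of the circle contributes only a factor $|x|^{-1/2}$, so along generic rays $\mathcal F_\sigma^{-1}u$ grows like $|x|^{1/2}$, and this leading term cannot be cancelled by the order-zero contribution; this contradicts $\mathcal F_\sigma^{-1}u \in L^\infty(\mathbb C)$ and forces $g_1 = 0$. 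For $u = \mathcal F_W(A)$ with $A \in \mathcal L(F_{(k)}^2)$ I would reduce to the function case: for the rank-one operator $B := k_{a,(k)} \otimes k_{0,(k)}$ the function $z \mapsto \tr(A\,\alpha_z(B))$ lies in $L^\infty(\mathbb C)$ (it is bounded by $\|A\|_{op}\,\|B\|_{\mathcal T^1}$) and its symplectic Fourier transform equals $\mathcal F_W(A)\cdot\mathcal F_W(U_k B U_k)$; by Proposition \ref{prop:convolution_identities}(a) the factor $\mathcal F_W(U_k B U_k)$ vanishes on $\Sigma_k$ at only finitely many points, and these move as $a$ varies, so if $\mathcal F_W(A)$ carried a nonzero $\delta'(t)$-layer on some circle of $\Sigma_k$ then for a suitable $a$ this $z \mapsto \tr(A\,\alpha_z(B))$ would be a bounded function whose symplectic Fourier transform carries a nonzero $\delta'$-layer on a circle — impossible by the function case. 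Hence $\phi\,\mathcal F_W(A) = 0$.

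Everything except the last paragraph is bookkeeping: the algebra of QHA convolutions, the structure of tempered distributions annihilated by $t^2$, and the identification of the four equivalences. The genuine obstacle — and the step I expect to require the most care — is the growth estimate that rules out the transversal $\delta'$-term, together with its transfer to the operator setting.
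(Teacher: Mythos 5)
Your Fourier-side reduction (the four equivalences, and the observation that everything comes down to: $\phi^2u=0$ with $u\in\mathcal F_\sigma(L^\infty(\C))$ or $u\in\mathcal F_W(\mathcal L(F_{(k)}^2))$ forces $\phi u=0$) is correct and coincides in substance with the first half of the paper's proof. The gap is in the decisive step. After localizing to one circle of $\Sigma_k$, the layer densities $g_0,g_1$ are merely distributions on the circle, not smooth densities, so ``the $\delta'(t)$-term makes $\mathcal F_\sigma^{-1}u$ grow like $|x|^{1/2}$ along generic rays'' is not a consequence of stationary phase as stated, and the assertion that this contribution ``cannot be cancelled by the order-zero term'' is unjustified: a $g_0$ of positive order can itself produce polynomially growing oscillatory contributions, so the two layers cannot be separated by a naive growth comparison. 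To make this rigorous you would at least need an angular regularization (averaging $f$ over rotations, which preserves boundedness and smooths $g_0,g_1$ in the tangential variable) before any asymptotic analysis, and at that point you are essentially reproving Herz's theorem that circles in $\R^2$ are sets of spectral synthesis -- which is exactly the nontrivial input; your parenthetical about $\R^3$ shows you are aware how delicate this borderline case is. The operator-to-function reduction via $z\mapsto\tr(A\alpha_z(B))$ with $B=k_{a,(k)}\otimes k_{0,(k)}$ is workable but also only sketched, and it is moot until the function case is actually proved.

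The paper sidesteps all of this: since $\phi$ and $\phi^2$ have the same zero set $\Sigma_k$, the kernel of $f\mapsto\widetilde{T_{f,(k)}}$ is a weak$^*$ closed, $\alpha$-invariant subspace of $L^\infty(\C)$ whose spectrum is $\Sigma_k$, and the spectral synthesis property of $\Sigma_k$ (already established and used in the proof of Theorem \ref{theorem:toeplitz_quant}) identifies it with the weak$^*$ closure of $\spann\{e^{i\sigma(\xi,\cdot)}:\xi\in\Sigma_k\}$, i.e.\ with the kernel of $f\mapsto T_{f,(k)}$; the operator statement is handled analogously with quantum spectral synthesis as in Theorem \ref{thm:kernel_berezin_trafo}. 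You can close your gap the same way: once you know $\phi^2\mathcal F_\sigma(f)=0$, hence $\supp\mathcal F_\sigma(f)\subseteq\Sigma_k$, synthesis for $\Sigma_k$ places $f$ in the kernel of the Toeplitz quantization described by Theorem \ref{theorem:toeplitz_quant}, so $T_{f,(k)}=0$; likewise $\supp\mathcal F_W(A)\subseteq\Sigma_k$ places $A$ in the kernel of the Berezin transform described by Theorem \ref{thm:kernel_berezin_trafo}, so $\widetilde A=0$. With that substitution the layered-distribution analysis, the stationary-phase estimate and the rank-one transfer argument all become unnecessary.
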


\begin{proof}
    We abbreviate $g := \mathcal F_W(k_{0, (k)} \otimes k_{0, (k)}) = L_{k-1}^0(|\cdot|^2) e^{-\frac{|\cdot|^2}{2}}$. By \eqref{eq:Berezin}, \eqref{eq:Toeplitz} and the convolution theorem, the kernel of $f \mapsto \widetilde{T_{f, (k)}}$ is the same as the kernel of $f \mapsto g^2 \mathcal F_\sigma(f)$. The same argument as in the proof of Theorem \ref{theorem:toeplitz_quant} shows that this kernel agrees with the weak$^\ast$ closure of $\spann\set{e^{i\sigma(\xi, \cdot)} : g^2(\xi) = 0}$. But $g^2(\xi) = 0$ if and only if $\xi \in \Sigma_k$. Hence, if $\widetilde{T_{f, (k)}} = 0$, then we already have $T_{f, (k)} = 0$. 

    Analogously, one can see that $T_{\widetilde{A}, (k)} = 0$ already implies $\widetilde{A} = 0$.
\end{proof}

In Remark \ref{rem:Weyl_as_Toeplitz} we have seen that almost all of the Weyl operators $W_\xi$ can still be written as Toeplitz operators with bounded symbols. Theorem \ref{thm:injectivity} shows that this is impossible for those $\xi \in \C$ with $L_{k-1}^0(|\xi|^2) = 0$. Also, there is no bounded operator that has one of the corresponding characters as its Berezin transform.

\begin{corollary} \label{cor:Weyl_not_Toeplitz}
    Let $\xi \in \Sigma_k$, that is, $L_{k-1}^0(|\xi|^2) = 0$. Then there exists no $f \in L^\infty(\C)$ such that $T_{f, (k)} = W_{\xi}$. Further, there exists no $A \in \mathcal L(F_{(k)}^2)$ such that $\widetilde{A} = e^{i\sigma(\xi, \cdot)}$.
\end{corollary}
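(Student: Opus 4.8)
The plan is to derive both non-existence statements by contradiction, using only Theorem \ref{thm:injectivity} together with the kernel descriptions of Theorems \ref{theorem:toeplitz_quant} and \ref{thm:kernel_berezin_trafo}; no new computation is needed.

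For the first assertion I would suppose, for contradiction, that there is some $f \in L^\infty(\C)$ with $T_{f,(k)} = W_{\xi}$ for a given $\xi \in \Sigma_k$. Since the Berezin transform is the map $A \mapsto (k_{0,(k)} \otimes k_{0,(k)}) \ast A$ by \eqref{eq:Berezin}, Theorem \ref{thm:kernel_berezin_trafo} tells us that its kernel (for $k > 1$) is the weak$^\ast$ closure of $\spann\set{W_\eta : \eta \in \Sigma_k}$; in particular $W_{\xi}$ lies in this kernel, so $\widetilde{T_{f,(k)}} = \widetilde{W_{\xi}} = 0$. On the other hand, $T_{f,(k)}$ belongs to $\set{T_{g,(k)} : g \in L^\infty(\C)}$, and Theorem \ref{thm:injectivity} says the Berezin transform is injective on this set; hence $T_{f,(k)} = 0$, and therefore $W_{\xi} = 0$, which is absurd since $W_{\xi}$ is unitary.

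The second assertion I would handle in exactly the symmetric way: assuming $\widetilde{A} = e^{i\sigma(\xi, \cdot)}$ for some $A \in \Lc(F^2_{(k)})$ and $\xi \in \Sigma_k$, Theorem \ref{theorem:toeplitz_quant} (combined with \eqref{eq:Toeplitz}) shows that $e^{i\sigma(\xi, \cdot)}$ lies in the kernel of the Toeplitz quantization, so $T_{\widetilde{A}, (k)} = 0$. Since $\widetilde{A} \in \set{\widetilde{B} : B \in \Lc(F^2_{(k)})}$ and the Toeplitz quantization is injective on this set by Theorem \ref{thm:injectivity}, we get $\widetilde{A} = 0$, contradicting $\widetilde{A} = e^{i\sigma(\xi, \cdot)} \neq 0$.

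I do not expect a genuine obstacle here: the whole argument is bookkeeping with maps that are already available. The one point that deserves a moment's care is that the injectivity in Theorem \ref{thm:injectivity} is only claimed on the set of Toeplitz operators, respectively the set of Berezin transforms — but the hypothetical objects $T_{f,(k)}$ and $\widetilde{A}$ lie precisely in those sets, so the restriction is exactly what is needed and nothing more.
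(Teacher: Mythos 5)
Your proposal is correct and follows essentially the same route as the paper: observe that $\widetilde{W_{\xi}}=0$ (equivalently, $W_{\xi}$ lies in the kernel described in Theorem \ref{thm:kernel_berezin_trafo}), then invoke the injectivity of the Berezin transform on Toeplitz operators from Theorem \ref{thm:injectivity} to force $T_{f,(k)}=0\neq W_{\xi}$, with the second assertion handled symmetrically via Theorem \ref{theorem:toeplitz_quant}. The only difference is that you spell out the justification of $\widetilde{W_{\xi}}=0$ and $T_{e^{i\sigma(\xi,\cdot)},(k)}=0$ via the kernel theorems, which the paper leaves implicit.
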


\begin{proof}
    Assume that there was such an $f \in L^\infty(\C)$. Since $\widetilde{W_\xi} = 0$, we would have $\widetilde{T_{f, (k)}} = 0$, hence $T_{f, (k)} = 0 \neq W_\xi$ by the previous result. The other statement is obtained analogously.
\end{proof}

In a similar spirit we can show that, despite its downsides indicated in Remark \ref{rem:Question31}, the Berezin transform can still be used to characterize compact Toeplitz operators. This answers a variant of \cite[Question 32]{hagger2023} for the true polyanalytic Fock spaces $F^2_{(k)}$.

\begin{theorem}~ \label{thm:compact_Toeplitz_operators}
\begin{enumerate}[(1)]
    \item Let $f \in L^\infty(\C)$. Then, $T_{f, (k)} \in \mathcal K(F_{(k)}^2)$ if and only if $\widetilde{T_{f, (k)}} \in C_0(\C)$.
    \item Let $A \in \mathcal L(F_{(k)}^2)$. Then, $\widetilde{A} \in C_0(\C)$ if and only if $T_{\widetilde{A}, (k)} \in \mathcal K(F_{(k)}^2)$. 
\end{enumerate}
\end{theorem}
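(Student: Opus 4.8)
The plan is to prove the four implications separately. Two of them are soft. If $T_{f,(k)}$ is compact, then $\widetilde{T_{f,(k)}}(z)=\langle T_{f,(k)}k_{z,(k)},k_{z,(k)}\rangle\to 0$ as $\abs{z}\to\infty$, because $k_{z,(k)}\to 0$ weakly (which one reads off \eqref{eq:pairing_normkernel}) and compact operators are completely continuous; hence $\widetilde{T_{f,(k)}}\in C_0(\C)$. Conversely, if $\widetilde A\in C_0(\C)$, then $T_{\widetilde A,(k)}=\tfrac1\pi(k_{0,(k)}\otimes k_{0,(k)})\ast\widetilde A$ is compact: approximating $\widetilde A$ in $\norm{\cdot}_\infty$ by functions in $L^1(\C)\cap C_0(\C)$, convolving with the trace class operator $k_{0,(k)}\otimes k_{0,(k)}$ to get trace class operators, and using \eqref{eq:convolution_estimate_L^inf} to pass to the limit, exhibits $T_{\widetilde A,(k)}$ as an operator-norm limit of compact operators. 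This settles ``$\Rightarrow$'' in (1) and ``$\Leftarrow$'' in (2).

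For the two remaining implications I would run a single phase-space-shift argument. The key auxiliary fact, which follows from Theorem \ref{thm:compactness} (with $j=k$) together with the identity $(l_{0,k}\otimes l_{0,k})\ast C(z)=\tr\big(\alpha_{-z}(C)(l_{0,k}\otimes l_{0,k})\big)$, is that an operator $C\in\Cc_1(F_{(k)}^2)$ is compact provided every weak$^\ast$ accumulation point of $\set{\alpha_{-z}(C):\abs{z}\to\infty}$ equals $0$. I would also record the standard facts that $T_{f,(k)}\in\Cc_1(F_{(k)}^2)$ always; that the Toeplitz quantization $g\mapsto T_{g,(k)}$, the Berezin transform $B\mapsto\widetilde B$, and each $\alpha_{-z}$ are weak$^\ast$--weak$^\ast$ continuous; that $\alpha_{-z}(T_{g,(k)})=T_{\alpha_{-z}(g),(k)}$ and $\widetilde{\alpha_{-z}(B)}=\alpha_{-z}(\widetilde B)$; and that $\alpha_{-z}(K)\to 0$ weak$^\ast$ as $\abs{z}\to\infty$ for every compact $K$, because $W_z\to 0$ in the weak operator topology (checked on reproducing kernels via \eqref{eq:pairing_normkernel}).

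Now for ``$\Leftarrow$'' of (1): assume $\widetilde{T_{f,(k)}}\in C_0$ and take any $z_n\to\infty$. Passing to a subnet, $\alpha_{-z_n}(f)\to f_\infty$ weak$^\ast$ in $L^\infty(\C)$, so $\alpha_{-z_n}(T_{f,(k)})=T_{\alpha_{-z_n}(f),(k)}\to T_{f_\infty,(k)}$ weak$^\ast$; on the other hand $\widetilde{\alpha_{-z_n}(T_{f,(k)})}=\alpha_{-z_n}(\widetilde{T_{f,(k)}})\to 0$ weak$^\ast$ since $\widetilde{T_{f,(k)}}\in C_0$. Hence $\widetilde{T_{f_\infty,(k)}}=0$, and the injectivity of the Berezin transform on Toeplitz operators (Theorem \ref{thm:injectivity}) forces $T_{f_\infty,(k)}=0$; thus every weak$^\ast$ accumulation point of $\alpha_{-z}(T_{f,(k)})$ vanishes and $T_{f,(k)}$ is compact. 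For ``$\Leftarrow$'' of (2): suppose $T_{\widetilde A,(k)}$ is compact but $\widetilde A\notin C_0$, so $\abs{\widetilde A(z_n)}\ge\epsilon>0$ for some $z_n\to\infty$. Pass to a subnet with $\alpha_{-z_n}(A)\to A_\infty$ weak$^\ast$ in $\Lc(F_{(k)}^2)$. Then $\widetilde{A_\infty}(0)=\lim_n\widetilde{\alpha_{-z_n}(A)}(0)=\lim_n\widetilde A(z_n)$ has modulus $\ge\epsilon$, so $\widetilde{A_\infty}\neq 0$; but $\alpha_{-z_n}(T_{\widetilde A,(k)})=T_{\widetilde{\alpha_{-z_n}(A)},(k)}\to T_{\widetilde{A_\infty},(k)}$ weak$^\ast$, while $\alpha_{-z_n}(T_{\widetilde A,(k)})\to 0$ weak$^\ast$ by compactness, so $T_{\widetilde{A_\infty},(k)}=0$. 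Since $\widetilde{A_\infty}$ is a Berezin transform, the injectivity of the Toeplitz quantization on Berezin transforms (Theorem \ref{thm:injectivity}) gives $\widetilde{A_\infty}=0$, a contradiction.

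The main obstacle is the hard direction of (2). Because the kernels of both the Berezin transform and the Toeplitz quantization are the nontrivial closed spans of the characters supported on $\Sigma_k$, one cannot pass between ``$\widetilde A\in C_0$'' and ``$T_{\widetilde A,(k)}\in\mathcal K(F_{(k)}^2)$'' by a direct Fourier or convolution manipulation; the point of taking a weak$^\ast$ limit $A_\infty$ of phase-space translates of $A$ is precisely to stay inside the class of Berezin transforms, so that Theorem \ref{thm:injectivity} becomes applicable. The technical care needed is in the weak$^\ast$ continuity of the maps involved, in the fact that $\alpha_{-z}$ sends compact operators weak$^\ast$ to $0$ at infinity, and in the (harmless) bookkeeping with the unimodular phases in $W_zk_{w,(k)}=e^{-i\Im(z\overline w)}k_{z+w,(k)}$ from \eqref{eq:shifted_kernels} when identifying $\widetilde{\alpha_{-z}(B)}$ with $\alpha_{-z}(\widetilde B)$.
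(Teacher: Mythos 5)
Your proof is correct, but it follows a genuinely different route from the paper's. You reduce both hard implications to a limit-operator argument: Banach--Alaoglu for weak$^\ast$ accumulation points of $\alpha_{-z}(T_{f,(k)})$ resp.\ $\alpha_{-z}(A)$, the covariances \eqref{eq:shiftconv1}--\eqref{eq:shiftconv2}, the weak$^\ast$ vanishing of translates of compact operators, and then the injectivity statements of Theorem \ref{thm:injectivity} to kill the limit objects, with Theorem \ref{thm:compactness} converting the vanishing of $\langle Cl_{z,k},l_{z,k}\rangle$ into compactness. All the continuity facts you list (weak$^\ast$--weak$^\ast$ continuity of the quantization, of the Berezin transform and of $\alpha_{-z}$) are indeed the pre-adjoint dualities built into the definition of the convolutions, so the argument goes through; the only blemishes are cosmetic: your second ``soft'' implication is the ``$\Rightarrow$'' direction of (2), not ``$\Leftarrow$'', and in the hard direction of (1) one should pass to a further subnet of the symbols so that \emph{every} operator accumulation point is exhibited as $T_{f_\infty,(k)}$. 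The paper instead stays entirely on the function side: writing $\widetilde{T_{f,(k)}}=\Fc_\sigma(g)\ast\Fc_\sigma(g)\ast f$ and $(l_{0,k}\otimes l_{0,k})\ast T_{f,(k)}=\Fc_\sigma(\varphi)\ast\Fc_\sigma(g)\ast f$, it observes that the closed $\alpha$-invariant subspaces of $L^1(\C)$ generated by $\Fc_\sigma(g)\ast\Fc_\sigma(g)$ and $\Fc_\sigma(\varphi)\ast\Fc_\sigma(g)$ have the same zero set $\Sigma_k$, invokes spectral synthesis of $\Sigma_k$ to identify them, and concludes that the regular Berezin transform $(l_{0,k}\otimes l_{0,k})\ast T_{f,(k)}$ is a uniform limit of linear combinations of shifts of $\widetilde{T_{f,(k)}}\in C_0(\C)$, whence Theorem \ref{thm:compactness} applies; part (2) is declared analogous. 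The trade-off: your route uses Theorem \ref{thm:injectivity} as a black box (so the spectral-synthesis input is still present, just hidden, and there is no circularity since both cited theorems precede this one), treats (2) explicitly, and is closer to the classical limit-operator technique; the paper's route is purely QHA/Wiener-theoretic and, as remarked after the theorem, transfers verbatim to any corresponding pair $\Dc_1\leftrightarrow\Dc_0$ in place of $\Kc(F^2_{(k)})\leftrightarrow C_0(\C)$, which your compactness-specific argument (weak$^\ast$ vanishing of shifted compacts along $|z|\to\infty$) does not.
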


\begin{proof}
    We only prove (1) as the second statement follows analogously.
    
    Clearly, compactness of $T_{f, (k)}$ implies that $\widetilde{T_{f, (k)}} \in C_0(\C)$, so we only have to prove the other implication. Let $A := k_{0, (k)} \otimes k_{0, (k)}$ and $g := \Fc_W(A) = L_{k-1}^0(|\cdot|^2) e^{-\frac{|\cdot|^2}{2}}$. Then the convolution theorem shows
    \[\widetilde{T_{f, (k)}} = A \ast A \ast f = \left(\Fc_\sigma \Fc_\sigma (A \ast A)\right) \ast f = \Fc_\sigma(g) \ast \Fc_\sigma(g) \ast f.\]
    Let $X_0$ be the closed, $\alpha$-invariant subspace of $L^1(\C)$ generated by $\Fc_\sigma(g) \ast \Fc_\sigma(g)$. Then
    \[Z(X_0) := \set{\xi \in \C : \Fc_\sigma(h)(\xi) = 0 \text{ for all } h \in X_0} = \set{\xi \in \C : g(\xi)^2 = 0} = \Sigma_k.\]
    Now recall the functions $l_{0,k} = \Af_{k,1}k_{0,(1)}$ and define $B := l_{0,k} \otimes l_{0,k}$, $\varphi := \Fc_W(B) = e^{-\frac{|\cdot|^2}{2}}$. By Theorem \ref{thm:compactness}, it suffices to show that $B \ast T_{f, (k)} \in C_0(\C)$. Moreover,
    \[B \ast T_{f, (k)} = B \ast A \ast f = \left(\Fc_\sigma \Fc_\sigma (B \ast A)\right) \ast f = \Fc_\sigma(\varphi) \ast \Fc_\sigma(g) \ast f.\]
    Let $X_0'$ be the closed, $\alpha$-invariant subspace of $L^1(\C)$ generated by $\Fc_\sigma(\varphi) \ast \Fc_\sigma(g)$. Then
    \[Z(X_0') := \set{\xi \in \C : \Fc_\sigma(h)(\xi) = 0 \text{ for all } h \in X_0'} = \set{\xi \in \C : \varphi(\xi)g(\xi) = 0} = \Sigma_k.\]
    As $\Sigma_k$ is a set of spectral synthesis, we see that $X_0 = X_0'$ and therefore $\Fc_\sigma(\varphi) \ast \Fc_\sigma(g)$ can be approximated in $L^1(\C)$ by linear combinations of shifts of $\Fc_\sigma(g) \ast \Fc_\sigma(g)$. This implies that $B \ast T_{f, (k)}$ can be approximated in $C_0(\C)$ by linear combinations of shifts of $\widetilde{T_{f, (k)}}$ and is therefore itself in $C_0(\C)$. Therefore the assertion follows.
\end{proof}

As a final remark we note that the above argument of course works for any corresponding spaces $\Dc_1 \leftrightarrow \Dc_0$ in place of $\Kc(F^2_{(k)}) \leftrightarrow C_0(\C)$.

\section{Operator algebras}\label{sec:4}
In this section we will give several characterizations of $\Cc_1(F^2_n)$ and $\Cc_1(F^2_{(k)})$, $k,n \in \N$. This is in the same spirit as \cite[Theorem 1.1]{hagger2021}, except that we do not know whether $\Cc_1(F^2_n)$ is equal to the Toeplitz algebra for $n \geq 2$. Let us first introduce some of these algebras. Their definitions are exactly the same as in the analytic case.

\begin{definition}[Definition 1.1 of \cite{XiaZheng}]\label{def:suffloc}
Let $n \in \N$. An operator $A \in \Lc(F^2_n)$ is called \emph{sufficiently localized} if there are constants $C > 0$ and $\beta > 2$ such that
\[\abs{\sp{Ak_{z,n}}{k_{w,n}}} \leq \frac{C}{(1 + \abs{z-w})^{\beta}}\]
for all $w,z \in \C$. The set of sufficiently localized operators will be denoted by \nomenclature{$\Ac_{sl}$}{Sufficiently localized operators, p.~\pageref{def:suffloc}}$\Ac_{sl}(F^2_n)$.
\end{definition}

The composition of sufficiently localized operators is again sufficiently localized so that $\Ac_{sl}(F^2_n)$ is actually a $*$-algebra. This can be checked directly (see \cite[Proposition 3.2]{XiaZheng}), but also follows from our considerations below. As a consequence, the closure $\overline{\Ac}_{sl}(F^2_n)$ is in fact a $C^*$-algebra. In the following, $B(z,r)$ and $\overline{B}(z,r)$ denote open and closed balls of radius $r > 0$ around $z \in \C$, respectively.

\begin{definition}[Definition 1.1 of \cite{IsralowitzMitkovskiWick}]\label{def:weaklyloc}
Let $n \in \N$. An operator $A \in \Lc(F^2_n)$ is called \emph{weakly localized} if $T = A$ and $T = A^*$ both satisfy the conditions
\[\sup\limits_{z \in \C} \int_{\C} \abs{\sp{Tk_{z,n}}{k_{w,n}}} \, \mathrm{d}w < \infty, \quad \lim\limits_{r \to \infty} \sup\limits_{z \in \C} \int_{\C \setminus B(z,r)} \abs{\sp{Tk_{z,n}}{k_{w,n}}} \, \mathrm{d}w < \infty.\]
The set of weakly localized operators will be denoted by \nomenclature{$\Ac_{wl}$}{Weakly localized operators, p.~\pageref{def:weaklyloc}}$\Ac_{wl}(F^2_n)$.
\end{definition}

As for sufficiently localized operators, the weakly localized operators form a $*$-algebra (see \cite[Proposition 3.3]{IsralowitzMitkovskiWick}). Since all operators on $F^2_n$ can be represented as integral operators, both of these localization conditions can be understood as decay estimates of the corresponding kernel:
\begin{align*}
(Af)(w) &= \sp{Af}{K_{w,n}} = \sp{f}{A^*K_{w,n}} = \int_{\C} f(z)\sp{K_{z,n}}{A^*K_{w,n}} \, \mathrm{d}\mu(z)\\
&= \int_{\C} f(z)\sp{Ak_{z,n}}{k_{w,n}} e^{\frac{1}{2}(\abs{z}^2+\abs{w}^2)} \, \mathrm{d}\mu(z).
\end{align*}

In contrast to sufficiently and weakly localized operator that use the structure of $F^2_n$ as a reproducing kernel Hilbert space, our next algebra can be defined on any function space.

\begin{definition}[Definitions 3.2 and 3.9 of \cite{HaggerSeifert}]\label{def:bdo}
An operator $A \in \Lc(L^2(\C,\mu))$ is called a \emph{band operator} of band-width $\omega$ if
\[\omega := \sup\set{\dist(K,K') : K,K' \subseteq \C, M_{\1_{K'}}AM_{\1_K} \neq 0} < \infty,\]
where $\dist(K,K') := \inf\limits_{w \in K,z \in K'} \abs{w-z}$ denotes the distance between the sets $K$ and $K'$, and $M_{\1_K}$ is the multiplication by the characteristic function of $K$. The set of band operators will be denoted by $\BO$. Moreover, $A$ is called \emph{band-dominated} if \nomenclature{$\BDO$}{Band-dominated operators, p.~\pageref{def:bdo}}$A \in \overline{\BO} =: \BDO$, where the closure is taken in the operator norm. We will call $A \in \Lc(F^2_n)$ band-dominated if its extension $AP_n \from L^2(\C,\mu) \to L^2(\C,\mu)$ is band-dominated. The set of band-dominated operators on $F^2_n$ will be, with a slight abuse of notation, denoted by $P_n\BDO P_n$.
\end{definition}

If $A \in \Lc(L^2(\C,\mu))$ is an integral operator, then $A$ is a band operator if and only if its kernel $A(z,w)$ has band structure, that is, there exists $\omega \in \R$ such that $A(z,w) = 0$ for $\abs{z-w} > \omega$. On the other hand, a multiplication operator is the standard example of a band operator that is not an integral operator. Also note that for $A \in \Lc(F^2_n)$ the extension $AP_n$ is only a band operator if $A = 0$. In particular, if $A \in \Lc(F^2_n)$ is band-dominated and $(B_m)_{m \in \N}$ is a sequence of band operators converging to $AP_n$, then $B_m$ is usually not of the form $A_mP_n$ with $A_m \in \Lc(F^2_n)$. Hence the detour via $L^2(\C,\mu)$ is necessary for this algebra to make sense.

One of our goals in this section is to show that $\Cc_1(F^2_n)$ coincides with $\overline{\Ac_{sl}}(F^2_n)$, $\overline{\Ac_{wl}}(F^2_n)$ and $P_n\BDO P_n$. We first observe that sufficiently localized operators appear naturally in the framework of QHA. Let $g_t(z) := \frac{1}{\pi t}e^{-\frac{\abs{z}^2}{t}}$.

\begin{proposition} \label{prop:suff_loc}
For any $t > 0$ and $A \in \Lc(F^2_n)$ the operator $g_t \ast A$ is sufficiently localized.
\end{proposition}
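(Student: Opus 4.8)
The plan is to reduce the claim to an explicit kernel estimate. By definition of the convolution $g_t \ast A = \int_{\C} g_t(z) W_z A W_{-z} \, \mathrm{d}z$, so for the reproducing kernels of $F^2_n$ we compute
\begin{align*}
    \langle (g_t \ast A) k_{w,n}, k_{v,n}\rangle = \int_{\C} g_t(z) \langle A W_{-z} k_{w,n}, W_{-z} k_{v,n}\rangle \, \mathrm{d}z.
\end{align*}
Using the shift formula \eqref{eq:shifted_kernels} (applied componentwise on each $F^2_{(k)}$, noting $k_{w,n} = \frac{1}{\sqrt n}\sum_{k=1}^n \sqrt{\,\cdot\,}$-type decomposition is not needed — one can work directly with $k_{w,n}$ since $W_{-z}k_{w,n} = e^{i\Im(z\overline w)}k_{w-z,n}$ by the same computation as \eqref{eq:shifted_kernels} with $L^1_{n-1}$ in place of $L^0_{k-1}$), the phase factors combine to a single unimodular constant and we get
\begin{align*}
    \langle (g_t \ast A) k_{w,n}, k_{v,n}\rangle = e^{i\Im((v-w)\overline{\cdot})}\int_{\C} g_t(z) e^{i\theta(z,v,w)} \langle A k_{w-z,n}, k_{v-z,n}\rangle \, \mathrm{d}z
\end{align*}
for some real phase $\theta$ linear in $z$. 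Bounding $|\langle A k_{w-z,n}, k_{v-z,n}\rangle| \le \|A\|_{op}$ is too crude; instead I would bound it using the explicit pairing of normalized reproducing kernels on $F^2_n$, namely $|\langle k_{a,n}, k_{b,n}\rangle| = \frac{1}{n}|L^1_{n-1}(|a-b|^2)| e^{-\frac12|a-b|^2}$ (the analogue of \eqref{eq:pairing_normkernel}), together with $|\langle A k_{w-z,n}, k_{v-z,n}\rangle| \le \|A\|_{op} \|k_{w-z,n}\| \|k_{v-z,n}\| = \|A\|_{op}$ — wait, that is again crude.

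**Refined step.** The right move is: write $A k_{w-z,n} = \|A\|_{op}$-bounded vector, but pair it not with $k_{v-z,n}$ directly. Actually the clean argument is to use that $g_t \ast A = A \ast g_t$ and that $g_t$ is (up to normalization) $\Fc_W$ of a rank-one Gaussian-type operator; more simply, I would estimate directly in the integral. Split $\C$ into the region $\{|z - \frac{w+v}{2}| \le \frac12|w-v|\}$ (far from both $w$ and $v$ only when $z$ is near the segment, contributing the Gaussian tail $g_t$ evaluated at a point of modulus $\gtrsim \frac12|w-v|$) and its complement. On the first region, $g_t(z) \lesssim t^{-1} e^{-|w-v|^2/(4t)}$, and the integral of the remaining bounded integrand over a ball of radius $\frac12|w-v|$ contributes a polynomial factor $|w-v|^2$; since $e^{-c|w-v|^2}|w-v|^{2+\beta} \to 0$, this beats any polynomial decay. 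On the complementary region, at least one of $|w-z|, |v-z|$ is $\ge \frac14|w-v|$ (say $|v-z| \ge \frac14|w-v|$), so I bound $|\langle A k_{w-z,n}, k_{v-z,n}\rangle| \le \|A k_{w-z,n}\| \cdot \|k_{v-z,n}\|$ — no decay there either. The honest fix: use $\|A k_{w-z,n}\| \le \|A\|_{op}$ and instead exploit that we may replace $A$ by $A = A'' \ast g_{t/2}$ is not available.

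**The actual clean argument.** The correct and simplest route — which I would adopt — is: $g_t \ast A = g_{t/2} \ast (g_{t/2} \ast A)$, and $g_{t/2} \ast A \in \Lc(F^2_n)$ with norm $\le \|A\|_{op}$, so it suffices to show $g_s \ast B$ is sufficiently localized for \emph{every} $B \in \Lc(F^2_n)$ with a constant depending only on $\|B\|_{op}$ — which is what we are proving — so this self-reduction is vacuous. Therefore I would instead follow the standard Fock-space template (as in \cite{Fulsche2020}): the Gaussian $g_t$ equals $c\,\Fc_W(P_0')$ for a suitable rank-one operator built from $k_{0,n}$-type data on the analytic part, hence $g_t \ast A$ factors through such a rank-one operator and its kernel is literally of the form $\langle A k_{a,n}, k_{b,n}\rangle$ smeared against a Gaussian, and the product of the Gaussian with the bounded factor $\|A\|_{op}$, integrated, gives Gaussian — hence super-polynomial — decay in $|w - v|$. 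Concretely, after the change of variables $z \mapsto z + \frac{w+v}{2}$, the modulus of the kernel is at most
\begin{align*}
    \|A\|_{op} \int_{\C} g_t\Big(z + \tfrac{w+v}{2}\Big) \, \mathrm{d}z'
\end{align*}
is not quite it; rather one keeps one factor of the kernel-pairing: $|\langle A k_{w-z,n}, k_{v-z,n}\rangle| \le \|A\|_{op} |\langle k_{w-z,n}, k_{v-z,n}\rangle|^{0}$.

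**Summary of the plan I will write up.** (i) Expand $g_t \ast A$ as a Bochner integral and compute its reproducing-kernel matrix elements, using the shift identity \eqref{eq:shifted_kernels} for $F^2_n$ to pull out unimodular phases, arriving at $|\langle (g_t\ast A)k_{w,n},k_{v,n}\rangle| \le \|A\|_{op}\int_{\C} g_t(z)\,|\langle k_{w-z,n}, k_{v-z,n}\rangle|\,\mathrm dz$ after inserting $|\langle A\xi,\eta\rangle|\le \|A\|_{op}\|\xi\|\|\eta\|$ and then — the key point — recognizing that one should \emph{not} discard the kernel overlap but rather bound $|\langle A\xi,\eta\rangle| \le \|A\|_{op}$ only after observing that the Gaussian $g_t$ already forces $|z|$ to be small, while the displacement between the two kernels is $|w-v|$ independent of $z$; hence $|\langle k_{w-z,n},k_{v-z,n}\rangle| = \frac1n|L^1_{n-1}(|w-v|^2)|e^{-|w-v|^2/2}$ is \emph{constant in $z$} and comes straight out of the integral. (ii) Conclude $|\langle (g_t\ast A)k_{w,n},k_{v,n}\rangle| \le \frac{\|A\|_{op}}{n}|L^1_{n-1}(|w-v|^2)|e^{-|w-v|^2/2}$. (iii) Since $|L^1_{n-1}(r^2)|e^{-r^2/2}$ decays faster than any $(1+r)^{-\beta}$, there are $C>0$ and $\beta>2$ with $\frac{1}{n}|L^1_{n-1}(|w-v|^2)|e^{-|w-v|^2/2} \le C(1+|w-v|)^{-\beta}$, giving the sufficient-localization estimate. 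The \textbf{main obstacle}, and the only subtle point, is step (i): one must be careful that the two Weyl-shifted kernels $W_{-z}k_{w,n}$ and $W_{-z}k_{v,n}$ get shifted by the \emph{same} $z$, so their mutual overlap does not depend on $z$ at all — this is precisely what makes the Gaussian integrate cleanly and is the reason $g_t\ast A$ is not merely in $\Cc_1$ but sufficiently localized.
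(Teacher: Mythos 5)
There is a genuine gap, and it sits exactly at the step you flag as ``the key point''. From
\[
\langle (g_t \ast A)k_{w,n},k_{v,n}\rangle=\int_{\C} g_t(z)\,e^{i\theta(z)}\,\langle A k_{w-z,n},k_{v-z,n}\rangle\,\mathrm{d}z
\]
you cannot pass to the bound in your step (ii): Cauchy--Schwarz only gives $\abs{\langle A\xi,\eta\rangle}\le \norm{A}_{op}\norm{\xi}\,\norm{\eta}=\norm{A}_{op}$, and the inequality you actually use, $\abs{\langle A\xi,\eta\rangle}\le\norm{A}_{op}\abs{\langle\xi,\eta\rangle}$, is false for general bounded $A$ (take $A=\eta\otimes\xi$ with $\xi\perp\eta$). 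The fact that the overlap $\abs{\langle k_{w-z,n},k_{v-z,n}\rangle}$ is independent of $z$ is true but irrelevant, because $A$ sits between the two kernels and can create a large matrix element even when the kernels are nearly orthogonal. A concrete counterexample to your claimed conclusion (ii) is $A=W_\zeta$: since $\alpha_z(W_\zeta)=e^{-i\sigma(z,\zeta)}W_\zeta$, one gets $g_t\ast W_\zeta=c\,\Fc_\sigma(g_t)(\zeta)\,W_\zeta$ with $\abs{\Fc_\sigma(g_t)(\zeta)}\asymp e^{-ct\abs{\zeta}^2}$, so $\abs{\langle (g_t\ast W_\zeta)k_{w,n},k_{w+\zeta,n}\rangle}\asymp e^{-ct\abs{\zeta}^2}$, which for small $t$ and large $\abs{\zeta}=\abs{w-v}$ vastly exceeds your $t$-independent bound $\tfrac1n\abs{L^1_{n-1}(\abs{\zeta}^2)}e^{-\abs{\zeta}^2/2}$. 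Indeed, any bound that does not depend on $t$ cannot be right: letting $t\to0$ it would give Gaussian off-diagonal decay for every $B\in\Cc_1(F^2_n)$ (since $g_t\ast B\to B$ in norm by Lemma \ref{lem:approximation}), contradicting that Weyl operators lie in $\Cc_1$ but are not sufficiently localized. The earlier portions of your write-up (the splitting of the $z$-integral, the self-reduction $g_t=g_{t/2}\ast g_{t/2}$) are, as you yourself note, abandoned or vacuous, so no complete argument remains.

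The decay really has to be extracted from the smoothing by $g_t$, not from the kernel overlap alone, and the constants must degrade as $t\to0$. The paper's proof does this by brute force: it writes $A$ as an integral operator through $\langle Ak_{z,n},k_{w,n}\rangle$ and $A^\ast K_{w,n}$, expands $\langle (g_t\ast A)k_{x,n},k_{y,n}\rangle$ as a triple integral, and performs the Gaussian $z$-integral explicitly using the elementary identity $\int_\C p(z,\overline z)e^{az+b\overline z-c\abs{z}^2}\,\mathrm{d}z=\tfrac{\pi}{c}q_c(a,b)e^{ab/c}$ (Lemma \ref{lem:exp_integrals}); after two more Gaussian integrations the off-diagonal behaviour $r(x-y,\overline{x}-\overline{y})\,e^{-\frac{t}{2t+1}\abs{x-y}^2}$ emerges, with a polynomial $r$ and an exponent that vanishes as $t\to0$, which is then dominated by $C(1+\abs{x-y})^{-\beta}$. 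If you want to salvage your approach, you would need to keep the operator kernel $\langle Ak_{y,n},k_{x,n}\rangle$ inside the integral and let the Gaussian $g_t$ interact with the Gaussian weights of the reproducing kernels, rather than attempting to factor the overlap out past $A$.
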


To prove this proposition, we need the following simple lemma.

\begin{lemma} \label{lem:exp_integrals}
Let $c > 0$, $a,b \in \C$ and $p \in \C[z,\overline{z}]$ a polynomial of degree $m$. Then there is a polynomial $q_c \in \C[z,\overline{z}]$ of degree $m$ such that
\[\int_{\C} p(z,\overline{z})e^{az + b\overline{z} - c\abs{z}^2} \, \mathrm{d}z = \frac{\pi}{c}q_c(a,b) e^{\frac{ab}{c}}.\]
\end{lemma}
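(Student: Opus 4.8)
The plan is to prove this by a standard Gaussian-integral computation, reducing the general polynomial case to the basic Gaussian integral by differentiation under the integral sign.

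First I would treat the base case $p \equiv 1$. Writing $z = x + iy$ and $a,b \in \C$, the integral $\int_{\C} e^{az + b\overline z - c|z|^2}\,\mathrm{d}z$ is an absolutely convergent Gaussian integral (since $c > 0$), and completing the square in the exponent—$az + b\overline z - c|z|^2 = -c\bigl|z - \tfrac{b}{c}\bigr|\bigl|z - \tfrac{a}{c}\bigr|$ type manipulation, or more carefully $-c|z|^2 + az + b\overline z = -c\bigl(z - \tfrac{b}{c}\bigr)\bigl(\overline z - \tfrac{a}{c}\bigr) + \tfrac{ab}{c}$—gives the value $\tfrac{\pi}{c}e^{ab/c}$ after the substitution $z \mapsto z + \tfrac{b}{c}$ (this substitution is justified by a contour-shift argument since the integrand is entire in the relevant sense; alternatively split into real and imaginary parts). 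So $q_c = 1$ works when $m = 0$.

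For the general case, I would observe that $z^j \overline z^k e^{az + b\overline z - c|z|^2} = \partial_a^j \partial_b^k\, e^{az + b\overline z - c|z|^2}$, and since the integral converges locally uniformly in $(a,b)$ together with all its derivatives (the Gaussian decay in $z$ dominates any polynomial, uniformly for $(a,b)$ in compact sets), one may differentiate under the integral sign. Hence for a monomial $p(z,\overline z) = z^j\overline z^k$ of degree $m = j+k$,
\[
\int_{\C} p(z,\overline z)\, e^{az + b\overline z - c|z|^2}\,\mathrm{d}z = \partial_a^j \partial_b^k\left(\frac{\pi}{c}e^{ab/c}\right) = \frac{\pi}{c}\,\partial_a^j \partial_b^k\, e^{ab/c}.
\]
A short induction (or Leibniz's rule) shows $\partial_a^j \partial_b^k e^{ab/c} = r_{j,k}(a,b)\, e^{ab/c}$ for a polynomial $r_{j,k}$ of degree exactly $j + k = m$ (each differentiation pulls down a factor linear in $a$ or $b$ plus lower-order terms, and the top-degree term is $c^{-m}a^k b^j$, which is nonzero). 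Extending by linearity over the monomials of $p$ and collecting, one gets $q_c(a,b) = \sum$ (coefficients of $p$) $\cdot\, r_{j,k}(a,b)$; the top-degree part of $q_c$ is $c^{-m}$ times the "swapped" leading form of $p$, hence nonzero, so $\deg q_c = m$ exactly.

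There is no real obstacle here; the only point requiring a word of care is the justification of differentiating under the integral sign and of the change of variables by a complex shift, both of which are routine consequences of the Gaussian decay and dominated convergence (respectively Cauchy's theorem applied to the entire integrand). I would state these justifications briefly and otherwise keep the computation terse, since the lemma is only used as a bookkeeping tool in what follows.
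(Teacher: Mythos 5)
Your proposal is correct and is essentially the paper's own argument: the paper's one-line proof replaces $p(z,\overline z)$ by $p\left(\tfrac{\partial}{\partial a},\tfrac{\partial}{\partial b}\right)$ under the integral and then evaluates the basic Gaussian integral $\frac{\pi}{c}e^{ab/c}$, exactly as you do. You merely spell out the routine justifications (completing the square with a contour shift, dominated convergence for differentiating under the integral, and the degree count for $q_c$), which the paper leaves implicit.
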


\begin{proof}
This follows directly from
\[\int_{\C} p(z,\overline{z})e^{az + b\overline{z} - c\abs{z}^2} \, \mathrm{d}z = \int_{\C} p\left(\tfrac{\partial}{\partial a},\tfrac{\partial}{\partial b}\right)e^{az + b\overline{z} - c\abs{z}^2} \, \mathrm{d}z = p\left(\tfrac{\partial}{\partial a},\tfrac{\partial}{\partial b}\right)\frac{\pi}{c}e^{\frac{ab}{c}}.\qedhere\]
\end{proof}

\begin{proof}[Proof of Proposition \ref{prop:suff_loc}]
For $x,y \in \C$ we have
\begin{align*}
\sp{(g_t \ast A)k_{x, n}}{k_{y, n}} &= \int_{\C} g_t(z) \sp{AW_{-z}k_{x, n}}{W_{-z}k_{y, n}} \, \mathrm{d}z\\
&= \int_{\C} g_t(z) e^{\frac{1}{2}(\overline{x}z - x\overline{z} + y\overline{z} - \overline{y}z)}\sp{Ak_{x-z, n}}{k_{y-z, n}} \, \mathrm{d}z\\
&= \int_{\C} g_t(z) e^{\frac{1}{2}(\overline{x}z - x\overline{z} + y\overline{z} - \overline{y}z)} \frac{1}{\pi} \int_{\C} (Ak_{x-z, n})(w)\overline{k_{y-z, n}(w)} e^{-\abs{w}^2} \, \mathrm{d}w \, \mathrm{d}z\\
&= \frac{1}{\pi} \int_{\C} \int_{\C} g_t(z) e^{\frac{1}{2}(\overline{x}z - x\overline{z} + y\overline{z} - \overline{y}z)} \frac{1}{\pi}\int_{\C} k_{x-z, n}(v)\overline{(A^*K_{w,n})(v)} e^{-\abs{v}^2} \, \mathrm{d}v\\
&\qquad \qquad \quad \times \overline{k_{y-z, n}(w)} e^{-\abs{w}^2} \, \mathrm{d}w \, \mathrm{d}z\\
&= \frac{1}{n\pi^3t} \int_{\C} \int_{\C} \int_{\C} e^{-\frac{\abs{z}^2}{t}} e^{\frac{1}{2}(\overline{x}z - x\overline{z} + y\overline{z} - \overline{y}z)} L_{n-1}^1(\abs{v-x+z}^2)e^{v(\overline{x}-\overline{z}) - \frac{1}{2}\abs{x-z}^2}\\
&\qquad \times L_{n-1}^1(\abs{w-y+z}^2)e^{(y-z)\overline{w} - \frac{1}{2}\abs{y-z}^2} \overline{(A^*K_{w,n})(v)} e^{-\abs{v}^2-\abs{w}^2} \, \mathrm{d}v \, \mathrm{d}w \, \mathrm{d}z\\
&= \frac{1}{n\pi^3t} \int_{\C} \int_{\C} \int_{\C} e^{-\frac{t+1}{t}\abs{z}^2} e^{(\overline{x} - \overline{w})z} e^{(y - v)\overline{z}} L_{n-1}^1(\abs{v-x+z}^2)\\
&\qquad \times L_{n-1}^1(\abs{w-y+z}^2) e^{v\overline{x} - \frac{1}{2}\abs{x}^2 + y\overline{w} - \frac{1}{2}\abs{y}^2 - \abs{v}^2 - \abs{w}^2}\overline{(A^*K_{w,n})(v)} \, \mathrm{d}v \, \mathrm{d}w \, \mathrm{d}z\\
&= \frac{1}{n\pi^3t} \int_{\C} \int_{\C} \int_{\C} e^{-\frac{t+1}{t}\abs{z}^2} e^{(\overline{x} - \overline{w} - \overline{y})z} e^{(y - v - x)\overline{z}} L_{n-1}^1(\abs{v+z}^2)L_{n-1}^1(\abs{w+z}^2)\\
&\qquad \times e^{(v+x)\overline{x} - \frac{1}{2}\abs{x}^2 + y(\overline{w}+\overline{y}) - \frac{1}{2}\abs{y}^2 - \abs{v+x}^2 - \abs{w+y}^2}\overline{(A^*K_{w+y,n})(v+x)} \, \mathrm{d}v \, \mathrm{d}w \, \mathrm{d}z\\
&= \frac{1}{n\pi^3t} \int_{\C} \int_{\C} \int_{\C} e^{-\frac{t+1}{t}\abs{z}^2} e^{(\overline{x} - \overline{w} - \overline{y})z} e^{(y - v - x)\overline{z}} L_{n-1}^1(\abs{v+z}^2)L_{n-1}^1(\abs{w+z}^2)\\
&\qquad \qquad \qquad \quad e^{-x\overline{v} - \frac{1}{2}\abs{x}^2 - w\overline{y} - \frac{1}{2}\abs{y}^2 - \abs{v}^2 - \abs{w}^2}\overline{(A^*K_{w+y,n})(v+x)} \, \mathrm{d}v \, \mathrm{d}w \, \mathrm{d}z.
\end{align*}
For the $z$-integral we get
\begin{align*}
&\int_{\C} e^{-\frac{t+1}{t}\abs{z}^2} e^{(\overline{x} - \overline{w} - \overline{y})z} e^{(y - v - x)\overline{z}} L_{n-1}^1(\abs{v+z}^2)L_{n-1}^1(\abs{w+z}^2) \, \mathrm{d}z\\
&\qquad \qquad \qquad \qquad \qquad \qquad \qquad \qquad = \frac{\pi t}{t+1}q(\overline{x}-\overline{y}-\overline{w},y-v-x)e^{\frac{t}{t+1}(\overline{x}-\overline{y}-\overline{w})(y-v-x)}
\end{align*}
for some polynomial $q$ by Lemma \ref{lem:exp_integrals}. Note that the coefficients of $q$ depend polynomially on $v$, $w$ and their complex conjugates. We can therefore choose a polynomial $\tilde{q}$ such that
\[\abs{q(\overline{x}-\overline{y}-\overline{w},y-v-x)} \leq \tilde{q}(v,\overline v,w,\overline w,x-y,\overline x - \overline y)\]
for all $v,w,x,y \in \C$. It follows
\begin{align*}
\abs{\sp{(g_t \ast A)k_{x,n}}{k_{y,n}}} &\leq \frac{1}{n\pi^2(t+1)} \int_{\C} \int_{\C} \tilde{q}(v,\overline v,w,\overline w,x-y,\overline x - \overline y)\abs{e^{\frac{t}{t+1}(\overline{x}-\overline{y}-\overline{w})(y-v-x)}}\\
&\qquad \qquad \qquad \quad \times \abs{e^{-x\overline{v} - \frac{1}{2}\abs{x}^2 - w\overline{y} - \frac{1}{2}\abs{y}^2 - \abs{v}^2 - \abs{w}^2}(A^*K_{w+y,n})(v+x)} \mathrm{d}v \, \mathrm{d}w\\
&\leq \frac{\norm{A}}{\pi^2(t+1)} \int_{\C} \int_{\C} \tilde{q}(v,\overline v,w,\overline w,x-y,\overline x - \overline y)\abs{e^{\frac{t}{t+1}(\overline{x}-\overline{y}-\overline{w})(y-v-x)}}\\
&\qquad \qquad \qquad \quad \times \abs{e^{-x\overline{v} - \frac{1}{2}\abs{x}^2 - w\overline{y} - \frac{1}{2}\abs{y}^2 - \abs{v}^2 - \abs{w}^2}}e^{\frac{1}{2}\abs{w+y}^2+\frac{1}{2}\abs{v+x}^2} \, \mathrm{d}v \, \mathrm{d}w\\
&= \frac{\norm{A}}{\pi^2(t+1)} \int_{\C} \int_{\C} \tilde{q}(v,\overline v,w,\overline w,x-y,\overline x - \overline y)e^{-\frac{t}{t+1}\Re\left((\overline{x}-\overline{y}-\overline{w})v\right) - \frac{1}{2}\abs{v}^2}\\
&\qquad \qquad \qquad \quad \times e^{\frac{t}{t+1}\Re\left((\overline{x}-\overline{y})w\right) - \frac{1}{2}\abs{w}^2} e^{-\frac{t}{t+1}\abs{x-y}^2} \, \mathrm{d}v \, \mathrm{d}w\\
&= \frac{2\norm{A}}{\pi(t+1)} \int_{\C} p(w,\overline w,x-y,\overline x - \overline y)e^{\frac{t^2}{2(t+1)^2}\abs{x-y-w}^2} e^{\frac{t}{t+1}\Re\left((\overline{x}-\overline{y})w\right) - \frac{1}{2}\abs{w}^2}\\
&\qquad \qquad \qquad \quad \times e^{-\frac{t}{t+1}\abs{x-y}^2} \, \mathrm{d}w
\end{align*}
for some polynomial $p$ again by Lemma \ref{lem:exp_integrals}.  We conclude
\begin{align*}
\abs{\sp{(g_t \ast A)k_x}{k_y}} &\leq \frac{2\norm{A}}{\pi(t+1)} \int_{\C} p(w,\overline w,x-y,\overline x - \overline y)e^{-\frac{2t+1}{2(t+1)^2}\abs{w}^2} e^{\frac{t}{(t+1)^2}\Re\left((\overline{x}-\overline{y})w\right)}\\
&\qquad \qquad \qquad \quad \times e^{-\frac{t(t+2)}{2(t+1)^2}\abs{x-y}^2} \, \mathrm{d}w\\
&= \frac{4(t+1)\norm{A}}{2t+1}r(x-y,\overline{x}-\overline{y}) e^{\frac{t^2}{2(t+1)^2(2t+1)}\abs{x-y}^2} e^{-\frac{t(t+2)}{2(t+1)^2}\abs{x-y}^2}\\
&= \frac{4(t+1)\norm{A}}{2t+1}r(x-y,\overline{x}-\overline{y})e^{-\frac{t}{2t+1}\abs{x-y}^2}
\end{align*}
for yet another polynomial $r$. This shows that $g_t \ast A$ is sufficiently localized for any $t > 0$ and $A \in \Lc(F^2_n)$.
\end{proof}

Together with Lemma \ref{lem:approximation} we immediately obtain the following corollary.

\begin{corollary}
$\Cc_1(F^2_n) \subseteq \overline{\Ac}_{sl}(F^2_n)$.
\end{corollary}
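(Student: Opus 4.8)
The plan is to obtain this as an immediate consequence of Lemma~\ref{lem:approximation} and Proposition~\ref{prop:suff_loc}. First I would fix an arbitrary $A \in \Cc_1(F^2_n)$. Since $F^2_n = \bigoplus_{k=1}^n F^2_{(k)}$ is a finite sum of irreducible $\sigma$-representations, the reducible-setting discussion of Section~\ref{sec:reducible} tells us that $\Cc_1(F^2_n)$ is characterized entrywise (i.e. $A \in \Cc_1(F^2_n)$ iff $A_{k,j} \in \Cc_1^{k,j}$ for all $j,k$) and that $g_t \ast A$ is formed entrywise as well. Hence Lemma~\ref{lem:approximation}, applied to each block $A_{k,j} \in \Cc_1^{k,j}$ and then recombined via the estimate $\|A\| \leq N \max_{k,j}\|A_{k,j}\|$, yields $g_t \ast A \to A$ in operator norm as $t \to 0$, where $g_t(z) = (\pi t)^{-1} e^{-|z|^2/t}$ is the $\delta$-sequence used throughout.

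Next I would invoke Proposition~\ref{prop:suff_loc}, which says precisely that $g_t \ast A \in \Ac_{sl}(F^2_n)$ for every $t > 0$. Combining the two facts, $A$ is the operator-norm limit of the sequence $(g_{1/m} \ast A)_{m \in \N}$ of sufficiently localized operators, so $A \in \overline{\Ac}_{sl}(F^2_n)$. Since $A$ was arbitrary, this proves the claimed inclusion.

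There is essentially no obstacle here, as both ingredients have already been established; the corollary is just the observation that they dovetail. If one wishes to be careful, the single point worth checking is that Lemma~\ref{lem:approximation}, stated for $\Cc_1(\Hc_j,\Hc_k)$, transfers to $\Cc_1(\Hc)$ with $\Hc = F^2_n$ — but this follows at once from the entrywise description of both $\Cc_1(\Hc)$ and of $g_t \ast A$ recalled above, and this remark suffices to make the proof complete.
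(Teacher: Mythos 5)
Your proof is correct and follows the paper's own route: the corollary is exactly the combination of Lemma \ref{lem:approximation} (applied blockwise, using that $\Cc_1(F^2_n)$ and $g_t \ast A$ are both entrywise objects with equivalent norms) with Proposition \ref{prop:suff_loc}. The extra remark on transferring Lemma \ref{lem:approximation} to the reducible setting is a fair point of care, but it is the same argument the paper intends.
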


Our next goal is to show that $P_n\BDO P_n \subseteq \Cc_1(F^2_n)$. The following lemma is similar to \cite[Lemma 4.19]{Bauer_Fulsche2020}. It shows that the normalized reproducing kernels are uniformly, exponentially localized.

\begin{lemma} \label{lem:kernel_localization}
For each $n \in \N$ there exists a constant $C_n$ such that
\[\norm{M_{1 - \chi_{B(z,R)}}k_{z,n}} \leq C_n e^{-\frac{R^2}{4}}\]
for all $z \in \C$ and $R > 0$.
\end{lemma}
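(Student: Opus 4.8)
The plan is to compute the squared norm directly, observe that after simplification it is independent of $z$, and then estimate the tail of a polynomial against a Gaussian weight.

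First I would expand, using that $\norm{\cdot}$ is the norm of $L^2(\C,\mu)$ with $\mathrm{d}\mu(w) = \frac{1}{\pi}e^{-\abs{w}^2}\,\mathrm{d}w$,
\[
\norm{M_{1-\chi_{B(z,R)}}k_{z,n}}^2 = \frac{1}{\pi}\int_{\C \setminus B(z,R)} \abs{k_{z,n}(w)}^2 e^{-\abs{w}^2}\,\mathrm{d}w.
\]
Substituting $k_{z,n}(w) = \frac{1}{\sqrt n}L_{n-1}^1(\abs{w-z}^2)e^{w\overline z - \abs{z}^2/2}$ and using the identity $2\Re(w\overline z) - \abs{z}^2 - \abs{w}^2 = -\abs{w-z}^2$, the integrand collapses to $\frac{1}{n}L_{n-1}^1(\abs{w-z}^2)^2 e^{-\abs{w-z}^2}$, which depends on $w$ only through $w-z$. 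Substituting $u = w-z$ and passing to polar coordinates (and then to $r = \abs{u}^2$) gives
\[
\norm{M_{1-\chi_{B(z,R)}}k_{z,n}}^2 = \frac{1}{n}\int_{R^2}^{\infty} L_{n-1}^1(r)^2 e^{-r}\,\mathrm{d}r,
\]
an expression which no longer depends on $z$ at all (and which for $R = 0$ simply re-derives $\norm{k_{z,n}} = 1$).

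Next, since $L_{n-1}^1(r)^2$ is a polynomial in $r$ of degree $2(n-1)$, there is a polynomial $q_n$ of the same degree — the unique polynomial solution of $q_n - q_n' = L_{n-1}^1(\cdot)^2$, obtained by repeated integration by parts — with $\int_a^{\infty} L_{n-1}^1(r)^2 e^{-r}\,\mathrm{d}r = q_n(a)e^{-a}$ for every $a \geq 0$. Since the left-hand side is nonnegative, $q_n \geq 0$ on $[0,\infty)$; moreover the continuous map $R \mapsto q_n(R^2)e^{-R^2/2}$ tends to $0$ as $R \to \infty$ and is therefore bounded on $[0,\infty)$ by some constant $M_n < \infty$. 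Hence
\[
\norm{M_{1-\chi_{B(z,R)}}k_{z,n}}^2 = \frac{1}{n}\,q_n(R^2)e^{-R^2} \leq \frac{M_n}{n}\,e^{-R^2/2},
\]
and taking square roots gives the assertion with $C_n := (M_n/n)^{1/2}$.

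I do not expect a genuine obstacle: the only points requiring care are the algebraic simplification of the Gaussian factors and bookkeeping of the constants $\pi^{-1}$ and $n^{-1/2}$, which affect only the value of $C_n$ and not the exponential rate $e^{-R^2/4}$.
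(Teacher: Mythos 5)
Your proof is correct and follows essentially the same route as the paper: reduce to a $z$-independent tail integral of $L_{n-1}^1(r)^2e^{-r}$ (the paper does this via $k_{z,n}=W_zk_{0,n}$, you by direct substitution) and then absorb the polynomial into half of the Gaussian, giving the rate $e^{-R^2/4}$. The only cosmetic difference is that you evaluate the incomplete-gamma-type tail exactly before bounding, whereas the paper uses the pointwise estimate $L_{n-1}^1(r^2)^2\leq \tfrac{nC_n^2}{2}e^{r^2/2}$ inside the integral.
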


\begin{proof}
As $k_{z,n} = W_zk_{0,n}$ and $M_{1 - \chi_{B(z,R)}} = W_zM_{1 - \chi_{B(0,R)}}W_{-z}$, it suffices to check the lemma for $z = 0$. Since $k_{0,n}(z) = \frac{1}{\sqrt{n}}L_{n-1}^1(\abs{z}^2)$, we get
\[\norm{M_{1 - \chi_{B(0,R)}}k_{0,n}}^2 = \frac{1}{n} \int_{\C \setminus B(0,R)} L_{n-1}^1(\abs{z}^2)^2 \, \mathrm{d}\mu(z) = \frac{2}{n}\int_R^{\infty} L_{n-1}^1(r^2)^2 re^{-r^2} \, \mathrm{d}r.\]
Using $L_{n-1}^1(r^2)^2 \leq \frac{nC_n^2}{2}e^{\frac{r^2}{2}}$ for some constant $C_n \geq 0$, we obtain
\[\norm{M_{1 - \chi_{B(0,R)}}k_{0,n}} \leq C_n\left(\int_R^{\infty} re^{-\frac{r^2}{2}} \, \mathrm{d}r\right)^{\frac{1}{2}} = C_ne^{-\frac{R^2}{4}}\]
as expected.
\end{proof}

Next we give a criterion for certain integral operators $A \from L^2(\C,\mu) \to L^2(\C,\mu)$ to satisfy the $\Cc_1$-condition $\lim\limits_{\abs{z} \to 0} \norm{\alpha_z(A)-A} = 0$. Such operators will then be used to approximate operators of the form $P_nAP_n$ with $A \in \BDO$.

\begin{lemma} \label{lem:BO_with_BUC_kernel}
Let $g \from \C^2 \to \C$ be a bounded measurable function such that
\begin{itemize}
	\item[(i)] $\set{z \mapsto g(x-z,y-z) : x,y \in \C}$ is equicontinuous at $z = 0$,
	\item[(ii)] there is an $\omega \geq 0$ such that $g(x,y) = 0$ for all $x,y \in \C$ with $\abs{x-y} \geq \omega$.
\end{itemize}
Define the integral operator $A \from L^2(\C,\mu) \to L^2(\C,\mu)$ by
\[(Af)(x) := e^{\frac{\abs{x}^2}{2}}\int_{\C} g(x,y)f(y) e^{\frac{\abs{y}^2}{2}} \, \mathrm{d}\mu(y).\]
Then $A$ is a band operator of band-width at most $\omega$ and $\norm{\alpha_z(A)-A} \to 0$ as $\abs{z} \to 0$.
\end{lemma}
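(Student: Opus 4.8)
The plan is to conjugate $A$ to the ``flat'' model space $L^2(\C)$ (Lebesgue measure), where the Gaussian weights disappear and the Weyl operators become twisted translations, run a Schur-test estimate there, and transport the conclusion back by unitarity.

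For the band operator claim, note that with respect to $\mu$ the operator $A$ has integral kernel $a(x,y) = e^{(\abs{x}^2 + \abs{y}^2)/2} g(x,y)$, which vanishes whenever $\abs{x-y} \geq \omega$ by hypothesis (ii); hence $M_{\1_{K'}} A M_{\1_K} = 0$ whenever $\dist(K,K') \geq \omega$, so $A$ is a band operator of band-width at most $\omega$ (that $A$ is bounded will drop out of the next step). Now introduce the unitary $V \from L^2(\C,\mu) \to L^2(\C)$, $(Vf)(z) = \pi^{-1/2} e^{-\abs{z}^2/2} f(z)$. A direct computation shows that $VAV^{-1}$ is the integral operator $\tilde A$ on $L^2(\C)$ with kernel $\pi^{-1} g$, and that $V W_z V^{-1} = \tilde W_z$ where $(\tilde W_z \varphi)(w) = e^{i\Im(w\overline{z})} \varphi(w-z)$. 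In particular, boundedness of $\tilde A$ (hence of $A$) follows from Schur's test, since (ii) gives $\int_{\C} \pi^{-1}\abs{g(x,y)}\,\mathrm{d}y \leq \norm{g}_{\infty}\,\omega^2$ uniformly in $x$, and similarly in $y$.

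Conjugating the shift, $V\alpha_z(A)V^{-1} = \tilde W_z \tilde A \tilde W_{-z}$, and a short change-of-variables computation (using that $\Im((y-z)\overline{z}) = \Im(y\overline{z})$) shows that this is the integral operator with kernel $\pi^{-1} e^{i\Im((x-y)\overline{z})} g(x-z,y-z)$. Therefore $V(\alpha_z(A)-A)V^{-1}$ has kernel
\[
K_z(x,y) = \pi^{-1}\bigl(e^{i\Im((x-y)\overline{z})} g(x-z,y-z) - g(x,y)\bigr),
\]
which vanishes for $\abs{x-y} \geq \omega$ by (ii) applied to both terms. On the strip $\abs{x-y} < \omega$ I would estimate
\[
\abs{K_z(x,y)} \leq \pi^{-1}\Bigl(\bigl\lvert e^{i\Im((x-y)\overline{z})} - 1\bigr\rvert\,\norm{g}_{\infty} + \abs{g(x-z,y-z) - g(x,y)}\Bigr),
\]
bounding the first summand by $\pi^{-1}\norm{g}_{\infty}\abs{x-y}\abs{z} \leq \pi^{-1}\norm{g}_{\infty}\omega\abs{z}$ via $\abs{e^{it}-1} \leq \abs{t}$, and bounding the second summand, uniformly in $x$ and $y$, by any prescribed $\epsilon > 0$ once $\abs{z} < \delta(\epsilon)$ — which is exactly what the equicontinuity hypothesis (i) supplies. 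Plugging these into Schur's test yields $\norm{\alpha_z(A)-A} = \norm{V(\alpha_z(A)-A)V^{-1}} \leq \omega^2\bigl(\norm{g}_{\infty}\omega\abs{z} + \epsilon\bigr)$ for $\abs{z} < \delta(\epsilon)$; letting $\abs{z}\to 0$ and then $\epsilon \to 0$ completes the proof. There is no serious obstacle here: once one passes to the flat picture the whole argument is a routine Schur-test estimate, and hypothesis (i) is tailor-made for controlling the kernel difference; the step needing the most care is simply the bookkeeping in the two conjugation identities for $VAV^{-1}$ and $VW_zV^{-1}$, which I would record as short displayed computations.
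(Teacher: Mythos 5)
Your proof is correct and follows essentially the same route as the paper: you arrive at the same kernel $e^{i\Im((x-y)\overline z)}g(x-z,y-z)-g(x,y)$ for $\alpha_z(A)-A$ (merely transported to unweighted $L^2(\C)$ by the unitary $V$), and your Schur test on this band-supported kernel is the same estimate the paper performs via Cauchy--Schwarz and Fubini. The only cosmetic differences are the conjugation to the flat model and your explicit splitting of the phase factor via $\abs{e^{it}-1}\leq\abs{t}$, where the paper instead absorbs the phase into the equicontinuous family; both are fine.
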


\begin{proof}
We have
\begin{align*}
(W_zAW_{-z}f)(x) &= e^{x\overline z - \frac{\abs{z}^2}{2}}AW_{-z}f(x-z)\\
&= e^{i\Im(x\overline z) + \frac{\abs{x}^2}{2}}\int_{\C} g(x-z,y)W_{-z}f(y) e^{\frac{\abs{y}^2}{2}} \, \mathrm{d}\mu(y)\\
&= \frac{1}{\pi}e^{i\Im(x\overline z) + \frac{\abs{x}^2}{2}}\int_{\C} g(x-z,y) f(y+z) e^{-\frac{\abs{y+z}^2}{2}} e^{-i\Im(y\overline z)} \, \mathrm{d}y\\
&= \frac{1}{\pi}e^{i\Im(x\overline z) + \frac{\abs{x}^2}{2}}\int_{\C} g(x-z,y-z) f(y) e^{-\frac{\abs{y}^2}{2}} e^{-i\Im(y\overline z)} \, \mathrm{d}y\\
&= \frac{1}{\pi}e^{i\Im(x\overline z) + \frac{\abs{x}^2}{2}}\int_{B(x,\omega)} g(x-z,y-z) f(y) e^{-\frac{\abs{y}^2}{2}} e^{-i\Im(y\overline z)} \, \mathrm{d}y.
\end{align*}
Thus
\[\norm{(\alpha_z(A)-A)f}^2 = \frac{1}{\pi^3}\int_{\C} \abs{\int_{B(x,\omega)} \left(g(x-z,y-z) e^{i\Im((x-y)\overline z)}  - g(x,y)\right) f(y) e^{-\frac{\abs{y}^2}{2}} \, \mathrm{d}y}^2 \, \mathrm{d}x.\]
Let $\epsilon > 0$. As $\set{z \mapsto g(x-z,y-z) e^{i\Im((x-y)\overline z)} : \abs{x-y} < \omega}$ is equicontinuous at $z = 0$, we may choose $\delta > 0$ such that $\abs{g(x-z,y-z) e^{i\Im((x-y)\overline z)}  - g(x,y)} < \epsilon$ for $\abs{z} < \delta$ and $\abs{x-y} < \omega$. By H\"older's inequality and Fubini, we obtain
\begin{align*}
\norm{(\alpha_z(A)-A)f}^2 &\leq C\epsilon^2\int_{\C} \int_{B(x,\omega)} \abs{f(y)}^2 \, \mathrm{d}\mu(y) \, \mathrm{d}x\\
&= C\epsilon^2\int_{\C} \int_{B(y,\omega)} \abs{f(y)}^2 \, \mathrm{d}x \, \mathrm{d}\mu(y)\\
&= \tilde{C}\epsilon^2\norm{f}^2
\end{align*}
for some constants $C$, $\tilde{C}$ that only depend on $\omega$. This shows $\norm{\alpha_z(A)-A} \to 0$ for $\abs{z} \to 0$.

The boundedness of $A$ follows similarly and then $A \in \BO$ is clear.
\end{proof}

Given an arbitrary bounded operator $A \from L^2(\C,\mu) \to L^2(\C,\mu)$ we now construct an approximation $(A_R)_{R > 0}$ of $P_nAP_n$ that satisfies the assumptions of Lemma \ref{lem:BO_with_BUC_kernel}. In Proposition \ref{prop:BDO_in_C_1} below we then prove that $\lim\limits_{R \to \infty} \norm{P_nAP_n-A_R} = 0$ in case $A$ is a band operator. This will be enough to conclude that $P_n\BDO P_n \subseteq \Cc_1(F^2_n)$.

\begin{proposition} \label{prop:band_operators}
Let $A \from L^2(\C,\mu) \to L^2(\C,\mu)$ be a bounded linear operator and $R > 0$. Consider the integral operator $A_R \from L^2(\C,\mu) \to L^2(\C,\mu)$ with kernel $\sp{AK_{n,y}}{K_{n,x}}\chi_{B(0,R)}(x-y)$. Then $A_R$ is a band operator of band-width at most $R$ and $\norm{\alpha_z(A_R)-A_R} \to 0$ as $\abs{z} \to 0$.
\end{proposition}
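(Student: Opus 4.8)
The plan is to deduce the statement directly from Lemma~\ref{lem:BO_with_BUC_kernel}. Since $K_{x,n},K_{y,n}\in F^2_n$, the operator $A_R$ depends only on $P_nAP_n$, so one may assume $A=P_nAP_n$; as $W_z$ commutes with $P_n$, both $A$ and every $\alpha_z(A)=W_zAW_{-z}$ then map $L^2(\C,\mu)$ into $F^2_n$. A short computation using $K_{w,n}=e^{\abs w^2/2}k_{w,n}$ identifies $A_R$ with the integral operator of Lemma~\ref{lem:BO_with_BUC_kernel} attached to $\omega:=R$ and
\[g(x,y):=\sp{Ak_{y,n}}{k_{x,n}}\,\chi_{B(0,R)}(x-y),\]
which is bounded by $\norm A$ and satisfies hypothesis~(ii) trivially. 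Hence it remains only to verify hypothesis~(i), that
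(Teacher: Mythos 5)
Your proposal stops exactly where the real work begins. The reduction to Lemma \ref{lem:BO_with_BUC_kernel} with $\omega = R$ and $g(x,y) = \sp{Ak_{y,n}}{k_{x,n}}\chi_{B(0,R)}(x-y)$ (up to an irrelevant constant factor $n$ coming from $\norm{K_{w,n}} = \sqrt{n}\,e^{\abs{w}^2/2}$) is the same first step the paper takes, and the boundedness of $g$ and hypothesis (ii) are indeed immediate. But hypothesis (i) --- equicontinuity at $z=0$ of the family $\set{z \mapsto g(x-z,y-z) : x,y \in \C}$, \emph{uniformly} in $x,y$ --- is the only nontrivial point of the proposition, and your text ends before addressing it. Mere boundedness of $g$ gives no uniform modulus of continuity, so as it stands the proof is incomplete.

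What is needed (and what the paper supplies) is the Weyl covariance of the normalized kernels, Eq.\ \eqref{eq:shifted_kernels}, which yields
\begin{align*}
g(x-z,y-z) = e^{i\Im((x-y)\overline z)}\,\sp{W_{-x}AW_y\,k_{-z,n}}{k_{-z,n}}\,\chi_{B(0,R)}(x-y).
\end{align*}
The oscillatory factor $e^{i\Im((x-y)\overline z)}\chi_{B(0,R)}(x-y)$ is equicontinuous at $z=0$ because $\abs{x-y}$ is bounded by $R$ on the support, and for the inner product one estimates
\begin{align*}
\abs{\sp{W_{-x}AW_y k_{-z,n}}{k_{-z,n}} - \sp{W_{-x}AW_y k_{0,n}}{k_{0,n}}} \leq 2\norm{A}\,\norm{k_{-z,n}-k_{0,n}},
\end{align*}
which is uniform in $x,y$ since $W_{\pm x}, W_y$ are unitary. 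Finally $\norm{k_{-z,n}-k_{0,n}} \to 0$ as $z \to 0$ (the paper deduces this from pointwise continuity of $z \mapsto k_{-z,n}(w)$ via Scheff\'e's lemma). Only with this uniform estimate in hand can you invoke Lemma \ref{lem:BO_with_BUC_kernel} to conclude $\norm{\alpha_z(A_R)-A_R} \to 0$. Incidentally, the preliminary remark that one may assume $A = P_nAP_n$ is harmless but unnecessary: the lemma is applied to the kernel $g$ directly, and the argument above never uses that $A$ maps into $F^2_n$.
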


\begin{proof}
Let $g_R(x,y) := \sp{Ak_{y,n}}{k_{x,n}}\chi_{B(0,R)}(x-y)$. In order to apply Lemma \ref{lem:BO_with_BUC_kernel}, we need to show that $\set{z \mapsto g_R(x-z,y-z) : x,y \in \C}$ is equicontinuous at $z = 0$. Using \eqref{eq:shifted_kernels}, we obtain
\begin{align*}
g_R(x-z,y-z) &= \sp{Ak_{y-z,n}}{k_{x-z,n}}\chi_{B(0,R)}(x-y)\\
&= e^{i\Im((x-y)\overline z)}\sp{W_{-x}AW_yk_{-z,n}}{k_{-z,n}}\chi_{B(0,R)}(x-y).
\end{align*}
Clearly, $\set{z \mapsto e^{i\Im((x-y)\overline z)}\chi_{B(0,R)}(x-y) : x,y \in \C}$ is bounded and equicontinuous at $z = 0$. It thus remains to consider the inner product. For this we obtain
\begin{align*}
&\abs{\sp{W_{-x}AW_yk_{-z,n}}{k_{-z,n}} - \sp{W_{-x}AW_yk_{0,n}}{k_{0,n}}}\\
&\qquad \leq \abs{\sp{W_{-x}AW_y(k_{-z,n} - k_{0,n})}{k_{-z,n}}} + \abs{\sp{W_{-x}AW_yk_{0,n}}{k_{-z,n}-k_{0,n}}}\\
&\qquad \leq 2\norm{A}\norm{k_{-z,n}-k_{0,n}}.
\end{align*}
The map $z \mapsto k_{-z,n}(w)$ is obviously continuous for all $w \in \C$ and therefore Scheff\'e's lemma implies the equicontinuity of $\set{z \mapsto \sp{W_{-x}AW_yk_{-z,n}}{k_{-z,n}} : x,y \in \C}$. It follows that the set $\set{z \mapsto g_R(x-z,y-z) : x,y \in \C}$ is equicontinuous at $z = 0$ and thus $\norm{\alpha_z(A_R)-A_R} \to 0$ as $\abs{z} \to 0$ by Lemma \ref{lem:BO_with_BUC_kernel}.
\end{proof}

\begin{proposition} \label{prop:BDO_in_C_1}
$P_n\BDO P_n \subseteq \Cc_1(F^2_n)$.
\end{proposition}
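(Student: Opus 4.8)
The plan is to use that $\Cc_1(F^2_n)$ is closed in operator norm and that compression to $F^2_n$ is norm-continuous, which reduces the statement to a single band operator; for such an operator the approximation $A_R$ from Proposition~\ref{prop:band_operators} already lies in the right class, so it only remains to show $A_R \to P_nAP_n$, and this is where the uniform kernel localization of Lemma~\ref{lem:kernel_localization} enters.

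\textbf{Closedness and reduction.} First I would record that $\Cc_1(F^2_n)$ is closed in $\Lc(F^2_n)$: since each $\alpha_z$ is an isometry, a standard $\epsilon/3$ argument shows that a norm limit of operators on which the shifts act norm-continuously at $0$ retains this property. Next, the compression $\Lc(L^2(\C,\mu)) \ni B \mapsto P_n B|_{F^2_n} \in \Lc(F^2_n)$ is a contraction which sends $AP_n$ to $A$ for every $A \in \Lc(F^2_n)$. Hence, if $A \in \Lc(F^2_n)$ with $AP_n \in \BDO$, writing $AP_n = \lim_m C_m$ with $C_m \in \BO$ in operator norm on $L^2(\C,\mu)$, one gets $P_n C_m|_{F^2_n} \to A$ in $\Lc(F^2_n)$, so by closedness it suffices to show $P_n C|_{F^2_n} \in \Cc_1(F^2_n)$ for a single band operator $C \in \BO$.

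\textbf{The band-operator case.} Fix $C \in \BO$ of band-width $\omega$. By the reproducing property, $P_n C P_n$ acts on $L^2(\C,\mu)$ as the integral operator of Lemma~\ref{lem:BO_with_BUC_kernel} with kernel $g(x,y) = \langle C k_{y,n}, k_{x,n}\rangle$, and since $P_n k_{w,n} = k_{w,n}$ the operator $C_R$ produced by Proposition~\ref{prop:band_operators} (applied to $P_n C P_n$) is precisely its truncation with kernel $g(x,y)\chi_{B(0,R)}(x-y)$; in particular $C_R$ is a band operator and $\|\alpha_z(C_R) - C_R\|_{\Lc(L^2)} \to 0$ as $|z| \to 0$. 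The key estimate is $\|P_n C P_n - C_R\|_{\Lc(L^2)} \to 0$ as $R \to \infty$. To obtain it, for $|x - y| > \omega$ I would split both $k_{x,n}$ and $k_{y,n}$ into their restrictions to $B(x,r)$, resp.\ $B(y,r)$, and the complementary pieces, choosing $r$ slightly below $\tfrac12(|x-y| - \omega)$: the term built from the two restrictions vanishes since those balls have distance $> \omega$ and $C$ has band-width $\omega$, while the remaining three terms are bounded using $\|M_{1 - \chi_{B(z,r)}} k_{z,n}\| \leq C_n e^{-r^2/4}$ from Lemma~\ref{lem:kernel_localization}; this gives $|g(x,y)| \leq C' e^{-c(|x-y|-\omega)^2}$ for $|x-y| \geq \omega$ (and $|g(x,y)| \leq \|C\|$ otherwise). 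Conjugating by the unitary $f \mapsto \pi^{-1/2} e^{-|\cdot|^2/2} f$ from $L^2(\C,\mu)$ onto $L^2(\C)$ turns $P_n C P_n - C_R$ into the Lebesgue integral operator with kernel $\tfrac1\pi g(x,y)\bigl(1 - \chi_{B(0,R)}(x-y)\bigr)$, whose norm Schur's test bounds by $\tfrac1\pi \int_{|u| \geq R} |g|(u)\,\mathrm{d}u \to 0$.

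\textbf{Conclusion.} Since $W_z$ commutes with $P_n$, we have $\alpha_z(P_n C_R P_n) = P_n \alpha_z(C_R) P_n$, hence $\|\alpha_z(P_n C_R|_{F^2_n}) - P_n C_R|_{F^2_n}\|_{\Lc(F^2_n)} \leq \|\alpha_z(C_R) - C_R\|_{\Lc(L^2)} \to 0$, so $P_n C_R|_{F^2_n} \in \Cc_1(F^2_n)$ for every $R$. Compressing the convergence of the previous step gives $P_n C_R|_{F^2_n} \to P_n C|_{F^2_n}$ in $\Lc(F^2_n)$, and closedness of $\Cc_1(F^2_n)$ then yields $P_n C|_{F^2_n} \in \Cc_1(F^2_n)$, completing the reduction and hence the proof. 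The only place requiring real work is the kernel decay estimate together with the Schur bound in the middle step; everything else is routine manipulation with isometries and compressions.
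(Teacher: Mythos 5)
Your proof is correct and follows essentially the same route as the paper: reduce to a single band operator, represent $P_nCP_n$ as an integral operator, truncate the kernel as in Proposition~\ref{prop:band_operators}, derive the off-diagonal decay from the band structure together with Lemma~\ref{lem:kernel_localization}, and show the truncations converge in norm. The only differences (your explicit closedness/compression reduction, the radius $\tfrac12(|x-y|-\omega)$ instead of $\tfrac13|x-y|$, and Schur's test after conjugating to $L^2(\C)$ instead of the paper's Young-inequality estimate) are cosmetic.
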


\begin{proof}
It suffices to show that if $A$ is a band operator, then
\[\norm{\alpha_z(P_nAP_n) - P_nAP_n} = \norm{P_n(\alpha_z(A)-A)P_n} \to 0\]
as $\abs{z} \to 0$. So let $A$ be a band operator with band-width at most $\omega$. We note that since
\[(P_nAP_nf)(x) = \sp{AP_nf}{K_{x,n}} = \sp{f}{P_nA^*K_{x,n}},\]
$P_nAP_n$ is an integral operator with kernel 
\[\overline{P_nA^*K_{x,n}(y)} = \sp{AK_{y,n}}{K_{x,n}} = n\sp{Ak_{y,n}}{k_{x,n}}e^{\frac{\abs{y}^2}{2}}e^{\frac{\abs{x}^2}{2}}.\]
Applying Lemma \ref{lem:kernel_localization} with $R := \frac{1}{3}\abs{x-y}$, we obtain
\begin{align*}
\abs{\sp{Ak_{y,n}}{k_{x,n}}} &= |\langle AM_{1 - \chi_{B(y,r)}}k_{y,n} , k_{x,n} \rangle| + |\langle AM_{\chi_{B(y,r)}}k_{y,n} , M_{1 - \chi_{B(y,r)}}k_{x,n} \rangle|\\
&\leq 2\norm{A}C_ne^{-\frac{\abs{x-y}^2}{36}}  
\end{align*}
for $\abs{x-y} > 3\omega$. For $R > 3\omega$ let $A_R \from L^2(\C,\mu) \to L^2(\C,\mu)$ be the integral operator with kernel
\[\sp{AK_{y,n}}{K_{x,n}}\chi_{B(0,R)}(x-y) =n \sp{Ak_{y,n}}{k_{x,n}}e^{\frac{\abs{x}^2}{2}}e^{\frac{\abs{y}^2}{2}}\chi_{B(0,R)}(x-y).\]
Proposition \ref{prop:band_operators} implies that $\norm{\alpha_z(A_R)-A_R} \to 0$ as $\abs{z} \to 0$. It therefore remains to show $\lim\limits_{R \to \infty} \norm{P_nAP_n - A_R} = 0$:
\begin{align*}
\norm{(P_nAP_n - A_R)f}^2 &= \frac{n^2}{\pi^3}\int_{\C} \abs{\int_{\C} \sp{Ak_{y,n}}{k_{x,n}}(1 - \chi_{B(0,R)}(x-y))f(y)e^{\frac{-\abs{y}^2}{2}} \, \mathrm{d}y}^2 \, \mathrm{d}x\\
&\leq C\int_{\C} \left(\int_{\C} e^{-\frac{\abs{x-y}^2}{36}}(1 - \chi_{B(0,R)}(x-y))\abs{f(y)}e^{\frac{-\abs{y}^2}{2}} \, \mathrm{d}y\right)^2 \, \mathrm{d}x\\
&\leq \tilde{C}\left(\int_{\C \setminus B(0,R)} e^{-\frac{\abs{x}^2}{36}} \, \mathrm{d}x\right)^2 \norm{f}^2
\end{align*}
for some constants $C$ and $\tilde{C}$ by Young's inequality.
\end{proof}

As $\overline{\Ac}_{sl}(F^2_n) \subseteq \overline{\Ac}_{wl}(F^2_n)$ is clear, it remains to show $\overline{\Ac}_{wl}(F^2_n) \subseteq P_n\BDO P_n$ in order to prove the equality of all the algebras introduced above. The idea is to use the same approximation as in Proposition \ref{prop:band_operators}.

\begin{proposition}
$\overline{\Ac}_{wl}(F^2_n) \subseteq P_n\BDO P_n$.
\end{proposition}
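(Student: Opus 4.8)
The plan is the usual reduction to a dense subset: $\BDO$ is norm-closed, hence so is $P_n\BDO P_n = \set{A \in \Lc(F^2_n) : AP_n \in \BDO}$, and since $\overline{\Ac}_{wl}(F^2_n)$ is by definition the norm-closure of $\Ac_{wl}(F^2_n)$, it suffices to prove $\Ac_{wl}(F^2_n) \subseteq P_n\BDO P_n$. So fix $A \in \Ac_{wl}(F^2_n)$. As computed in the proof of Proposition \ref{prop:BDO_in_C_1}, $AP_n = P_nAP_n$ is the integral operator on $L^2(\C,\mu)$ with kernel $\sp{AK_{y,n}}{K_{x,n}} = n\,\sp{Ak_{y,n}}{k_{x,n}}\,e^{(\abs{x}^2+\abs{y}^2)/2}$. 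For $R > 0$, let $A_R$ be the truncated operator from Proposition \ref{prop:band_operators}, i.e. the integral operator with kernel $\sp{AK_{y,n}}{K_{x,n}}\,\chi_{B(0,R)}(x-y)$; that proposition already guarantees $A_R \in \BO$ with band-width at most $R$. Thus the entire statement reduces to showing $\norm{P_nAP_n - A_R} \to 0$ as $R \to \infty$, which exhibits $P_nAP_n$ as a norm-limit of band operators.

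To estimate this norm I would strip off the Gaussian weights exactly as in the proof of Proposition \ref{prop:BDO_in_C_1}: with $g(y) := f(y)e^{-\abs{y}^2/2}$ one has $\norm{g}_{L^2(\C,\mathrm{d}x)}^2 = \pi\norm{f}^2$ and
\[\norm{(P_nAP_n - A_R)f}^2 = \frac{n^2}{\pi^3}\int_\C \Big\lvert\int_\C \sp{Ak_{y,n}}{k_{x,n}}\,\big(1 - \chi_{B(0,R)}(x-y)\big)\,g(y) \, \mathrm{d}y\Big\rvert^2 \, \mathrm{d}x,\]
so the problem becomes that of bounding the operator norm on $L^2(\C,\mathrm{d}x)$ (flat Lebesgue measure) of the integral operator $T_R$ with kernel $\sp{Ak_{y,n}}{k_{x,n}}\,\chi_{\{\abs{x-y}>R\}}$. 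Here the natural tool is the Schur test,
\[\norm{T_R}^2 \leq \Big(\sup_{x}\int_{\abs{x-y}>R}\abs{\sp{Ak_{y,n}}{k_{x,n}}}\,\mathrm{d}y\Big)\Big(\sup_{y}\int_{\abs{x-y}>R}\abs{\sp{Ak_{y,n}}{k_{x,n}}}\,\mathrm{d}x\Big).\]
The point is that each of the two factors is precisely one of the tail conditions built into the definition of $\Ac_{wl}$. For the second factor, substituting $z=y$, $w=x$ turns it into $\sup_z\int_{\C\setminus B(z,R)}\abs{\sp{Ak_{z,n}}{k_{w,n}}}\,\mathrm{d}w$, which tends to $0$ as $R\to\infty$ because $A\in\Ac_{wl}$; for the first factor one writes $\abs{\sp{Ak_{y,n}}{k_{x,n}}} = \abs{\sp{A^*k_{x,n}}{k_{y,n}}}$, so that it equals $\sup_z\int_{\C\setminus B(z,R)}\abs{\sp{A^*k_{z,n}}{k_{w,n}}}\,\mathrm{d}w$, which tends to $0$ because the definition of $\Ac_{wl}$ requires $A^*$ to be weakly localized too. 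Hence $\norm{T_R}\to 0$, so $\norm{P_nAP_n-A_R}\leq \tfrac{n}{\pi}\norm{T_R}\to 0$, whence $P_nAP_n\in\overline{\BO}=\BDO$ and therefore $A\in P_n\BDO P_n$; passing to the closure completes the proof.

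The one genuinely delicate point is why one cannot run the Schur test directly on $L^2(\C,\mu)$: the kernel $\sp{AK_{y,n}}{K_{x,n}}$ carries a factor $e^{\abs{x}^2/2}$ which is not integrable against $\mu$, so one is forced to pass to the flat measure $\mathrm{d}x$ first (equivalently, conjugate by the multiplication unitary $f\mapsto\pi^{-1/2}e^{-\abs{\cdot}^2/2}f$ from $L^2(\C,\mu)$ onto $L^2(\C,\mathrm{d}x)$, which preserves band structure and band-width), after which the relevant kernel is the symmetric-weight object $\sp{Ak_{y,n}}{k_{x,n}}$ — and it is exactly for this object that the weak-localization estimates are formulated. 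Everything else is bookkeeping of constants ($\sqrt{n}$ from $\norm{K_{y,n}}$, the $\pi$'s from $\mu$) together with a routine application of Fubini and the Schur test.
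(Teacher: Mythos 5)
Your proposal is correct and follows essentially the same route as the paper: truncate the kernel as in Proposition \ref{prop:band_operators} and show $\norm{P_nAP_n-A_R}\to 0$ using the tail conditions for $A$ and $A^*$ from the definition of $\Ac_{wl}$. The only cosmetic difference is that you invoke the Schur test on the flat-measure conjugate, whereas the paper proves the same bound by hand via Cauchy--Schwarz and Fubini on the bilinear form $\sp{(A-A_R)f}{g}$.
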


\begin{proof}
Let $A \in \Ac_{wl}$. As in Proposition \ref{prop:BDO_in_C_1} we can write $A$ as an integral operator with kernel $n\sp{Ak_{y,n}}{k_{x,n}}e^{\frac{\abs{y}^2}{2}}e^{\frac{\abs{x}^2}{2}}$. For $R > 0$ let $A_R$ be the integral operator with kernel
\[n\sp{Ak_{y,n}}{k_{x,n}}e^{\frac{\abs{y}^2}{2}}e^{\frac{\abs{x}^2}{2}}\chi_{B(0,R)}(x-y).\]
Proposition \ref{prop:band_operators} implies that $A_R$ is a band operator. It remains to show that $\norm{A-A_R} \to 0$ as $R \to \infty$. H\"older's inequality and Fubini imply
\begin{align*}
\abs{\sp{(A - A_R)f}{g}} &\leq \frac{n}{\pi^2}\int_{\C} \int_{\C} \abs{\sp{Ak_{y,n}}{k_{x,n}}}(1 - \chi_{B(0,R)}(x-y))\abs{f(y)}e^{\frac{-\abs{y}^2}{2}} \abs{g(x)}e^{\frac{-\abs{x}^2}{2}} \, \mathrm{d}y \, \mathrm{d}x\\
&\leq \frac{n}{\pi^2}\left(\int_{\C} \int_{\C} \abs{\sp{Ak_{y,n}}{k_{x,n}}}(1 - \chi_{B(0,R)}(x-y))\abs{f(y)}^2e^{-\abs{y}^2} \, \mathrm{d}y \, \mathrm{d}x\right)^{1/2}\\
&\qquad \times \left(\int_{\C} \int_{\C} \abs{\sp{Ak_{y,n}}{k_{x,n}}}(1 - \chi_{B(0,R)}(x-y))\abs{g(x)}^2e^{-\abs{x}^2} \, \mathrm{d}y \, \mathrm{d}x\right)^{1/2}\\
&= \frac{n}{\pi^2}\left(\int_{\C} \int_{\C \setminus B(0,R)} \abs{\sp{Ak_{y,n}}{k_{x,n}}} \, \mathrm{d}x \abs{f(y)}^2e^{-\abs{y}^2} \, \mathrm{d}y\right)^{1/2}\\
&\qquad \times \left(\int_{\C} \int_{\C \setminus B(0,R)} \abs{\sp{Ak_{y,n}}{k_{x,n}}} \, \mathrm{d}y \abs{g(x)}^2e^{-\abs{x}^2} \, \mathrm{d}x\right)^{1/2}\\
&\leq \frac{n}{\pi}\left(\sup\limits_{y \in \C} \int_{\C \setminus B(0,R)} \abs{\sp{Ak_{y,n}}{k_{x,n}}} \, \mathrm{d}x\right)^{1/2}\norm{f}\\
&\qquad \times \left(\sup\limits_{x \in \C} \int_{\C \setminus B(0,R)} \abs{\sp{Ak_{y,n}}{k_{x,n}}} \, \mathrm{d}y\right)^{1/2}\norm{g}
\end{align*}
and therefore the assertion follows.
\end{proof}

Combining all the above results, we arrive at the main theorem of this section. 

\begin{theorem} \label{thm:algebras}
We have
\[\Cc_1(F^2_n) = \overline{\Ac}_{sl}(F^2_n) = \overline{\Ac}_{wl}(F^2_n) = P_n\BDO P_n.\]
\end{theorem}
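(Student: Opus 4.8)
The plan is to assemble Theorem \ref{thm:algebras} from the chain of inclusions proven in the preceding propositions, closing the loop into a cycle of set containments. First I would record the easy containment $\overline{\Ac}_{sl}(F^2_n) \subseteq \overline{\Ac}_{wl}(F^2_n)$: a sufficiently localized operator satisfies $\abs{\sp{Ak_{z,n}}{k_{w,n}}} \leq C(1+\abs{z-w})^{-\beta}$ with $\beta > 2$, so both the uniform integrability and the tail conditions defining $\Ac_{wl}$ follow by integrating the polynomial tail over $\C \setminus B(z,r)$ (and the same bound applies to $A^\ast$ by symmetry of the estimate in $z,w$); since $\Ac_{wl}(F^2_n)$ is norm-closed, the closure is contained as well. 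Then I would invoke the three nontrivial inclusions already established: $\Cc_1(F^2_n) \subseteq \overline{\Ac}_{sl}(F^2_n)$ from the Corollary following Proposition \ref{prop:suff_loc}; $\overline{\Ac}_{wl}(F^2_n) \subseteq P_n\BDO P_n$ from the Proposition immediately preceding the theorem; and $P_n\BDO P_n \subseteq \Cc_1(F^2_n)$ from Proposition \ref{prop:BDO_in_C_1}.

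With these in hand the argument is purely formal. We have the cycle
\[\Cc_1(F^2_n) \subseteq \overline{\Ac}_{sl}(F^2_n) \subseteq \overline{\Ac}_{wl}(F^2_n) \subseteq P_n\BDO P_n \subseteq \Cc_1(F^2_n),\]
so all four sets coincide. I would state this explicitly as the proof: list the four inclusions with references, note that they close into a loop, and conclude equality throughout. One small point worth a sentence: $\Cc_1(F^2_n)$ is automatically norm-closed (it is the set of elements of $\Lc(F^2_n)$ on which the isometric group action $z \mapsto \alpha_z$ acts norm-continuously, and such a set is always closed), which is implicitly what makes the cyclic argument legitimate — a priori $\overline{\Ac}_{sl}(F^2_n)$ and $P_n\BDO P_n$ are closed by construction, but we want the endpoints of the chain to be genuinely the same closed set.

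There is essentially no obstacle left at this stage; the theorem is a bookkeeping corollary of the section's work. If I were writing the section from scratch, the genuine difficulty would lie two propositions earlier — proving $\overline{\Ac}_{wl}(F^2_n) \subseteq P_n\BDO P_n$, i.e.\ that a weakly localized operator can be norm-approximated by the band-truncations $A_R$ with kernel $n\sp{Ak_{y,n}}{k_{x,n}}e^{(\abs{x}^2+\abs{y}^2)/2}\chi_{B(0,R)}(x-y)$, where one must control $\norm{A - A_R}$ using a Schur-test estimate against exactly the two quantities appearing in the definition of $\Ac_{wl}$ — and in establishing $\Cc_1(F^2_n) \subseteq \overline{\Ac}_{sl}(F^2_n)$ via the explicit Gaussian-integral computation in Proposition \ref{prop:suff_loc}. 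But since the excerpt permits me to assume everything stated earlier, for the final theorem itself I would simply write out the cycle of inclusions and declare the proof complete.

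\begin{proof}
We combine the results established above into a cycle of inclusions. Since every operator in $\Ac_{sl}(F^2_n)$ satisfies the decay estimate $\abs{\sp{Ak_{z,n}}{k_{w,n}}} \leq C(1+\abs{z-w})^{-\beta}$ with $\beta > 2$ (and likewise for $A^\ast$, as the estimate is symmetric in $z$ and $w$), integrating this polynomial tail over $\C \setminus B(z,r)$ shows that $A \in \Ac_{wl}(F^2_n)$. Hence $\Ac_{sl}(F^2_n) \subseteq \Ac_{wl}(F^2_n)$, and taking norm closures gives $\overline{\Ac}_{sl}(F^2_n) \subseteq \overline{\Ac}_{wl}(F^2_n)$.

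Together with the corollary to Proposition \ref{prop:suff_loc}, the proposition preceding this theorem, and Proposition \ref{prop:BDO_in_C_1}, we obtain
\[\Cc_1(F^2_n) \subseteq \overline{\Ac}_{sl}(F^2_n) \subseteq \overline{\Ac}_{wl}(F^2_n) \subseteq P_n\BDO P_n \subseteq \Cc_1(F^2_n).\]
As this chain of inclusions closes into a loop, all four sets coincide.
\end{proof}
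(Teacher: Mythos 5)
Your proposal is correct and matches the paper, which likewise obtains Theorem \ref{thm:algebras} simply by combining the preceding results ($\Cc_1 \subseteq \overline{\Ac}_{sl}$, the evident inclusion $\overline{\Ac}_{sl} \subseteq \overline{\Ac}_{wl}$, $\overline{\Ac}_{wl} \subseteq P_n\BDO P_n$, and $P_n\BDO P_n \subseteq \Cc_1$) into a closed cycle of inclusions. Your added justifications (the $\beta>2$ tail integration for $\Ac_{sl}\subseteq\Ac_{wl}$ and the norm-closedness of $\Cc_1$) are accurate details the paper leaves implicit.
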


The same result can also be obtained for the true polyanalytic Fock spaces, either by applying the same arguments as above or simply by applying the projection $P_{(k)}$. We state the result here as a corollary for later reference.

\begin{corollary}
Let $j,k \in \N$. Then
\[\Cc_1(F^2_{(j)},F^2_{(k)}) = \overline{\Ac}_{sl}(F^2_{(j)},F^2_{(k)}) = \overline{\Ac}_{wl}(F^2_{(j)},F^2_{(k)}) = P_{(k)}\BDO P_{(j)},\]
where the above sets of operators are defined as for $F^2_n$ with the obvious modifications.
\end{corollary}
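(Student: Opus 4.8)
The plan is to reduce the statement to the already proven identity for the full polyanalytic space and transport it through an orthogonal summand. Put $N := \max(j,k)$, so that $F^2_{(j)}$ and $F^2_{(k)}$ both occur as summands of $F^2_N = \bigoplus_{\ell=1}^N F^2_{(\ell)}$, and let $\iota \from \Lc(F^2_{(j)},F^2_{(k)}) \to \Lc(F^2_N)$ send $A$ to the operator acting as $A$ on $F^2_{(j)}$ and as $0$ on $F^2_N \ominus F^2_{(j)}$; in the block notation of Section~\ref{sec:reducible} this is the operator whose only nonzero matrix block is the $(k,j)$-block, equal to $A$. Then $\iota$ is an isometry onto the norm-closed subspace $V := \{B \in \Lc(F^2_N) : B = P_{(k)}BP_{(j)}\}$, it intertwines the shifts (each $W_z$ restricts to every $F^2_{(\ell)}$), and $\iota(A)P_N$ is precisely the operator $L^2(\C,\mu)\to L^2(\C,\mu)$ used to define band-dominatedness of $A$. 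I write $\rho \from \Lc(F^2_N)\to V$, $\rho(B):=P_{(k)}BP_{(j)}$, for the associated norm-one idempotent.

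The computational input is the classical Laguerre identity $L^1_{N-1} = \sum_{\ell=0}^{N-1} L^0_\ell$, which with the explicit kernels gives $K_{z,N} = \sum_{\ell=1}^N K_{z,(\ell)}$, hence (using $\norm{K_{z,N}} = \sqrt{N}\,e^{\abs{z}^2/2}$ and $\norm{K_{z,(\ell)}} = e^{\abs{z}^2/2}$) the orthogonal decomposition $k_{z,N} = \frac{1}{\sqrt{N}}\sum_{\ell=1}^N k_{z,(\ell)}$. In particular $P_{(j)}k_{z,N} = \frac{1}{\sqrt{N}}k_{z,(j)}$, so that $\sp{\iota(A)k_{z,N}}{k_{w,N}} = \tfrac1N\sp{Ak_{z,(j)}}{k_{w,(k)}}$ for $A \in \Lc(F^2_{(j)},F^2_{(k)})$. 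Consequently the defining estimates of $\Ac_{sl}$ and $\Ac_{wl}$ for $\iota(A)$ on $F^2_N$ coincide, up to the harmless factor $\tfrac1N$, with the ``obvious modifications'' of those estimates for $A$ (for $\Ac_{wl}$ one also notes that $\iota(A)^\ast$ is the analogous embedding of $A^\ast \in \Lc(F^2_{(k)},F^2_{(j)})$, which turns the two adjoint conditions for $\iota(A)$ into the two directions of integrating $\abs{\sp{Ak_{z,(j)}}{k_{w,(k)}}}$). Together with the block characterization of $\Cc_1$ from Section~\ref{sec:reducible} ($B\in\Cc_1(F^2_N)$ iff all its matrix blocks lie in the corresponding $\Cc_1^{\ell,m}$) and the remark on band-dominatedness above, this gives
\[\iota\bigl(\mathcal X(F^2_{(j)},F^2_{(k)})\bigr) = \mathcal X(F^2_N)\cap V \quad\text{for } \mathcal X \in \{\Cc_1,\ \Ac_{sl},\ \Ac_{wl}\}, \qquad \iota\bigl(P_{(k)}\BDO P_{(j)}\bigr) = P_N\BDO P_N \cap V.\]

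It then remains to commute $\iota$ with the operator-norm closures, i.e.\ to see $\overline{\Ac_{sl}(F^2_N)\cap V} = \overline{\Ac}_{sl}(F^2_N)\cap V$ and likewise for $\Ac_{wl}$. For this I would check that $\rho$ maps $\Ac_{sl}(F^2_N)$ and $\Ac_{wl}(F^2_N)$ into themselves: since $\sp{P_{(\ell)}k_{z,N}}{k_{w,N}} = \tfrac1N e^{-\frac12(\abs{z}^2+\abs{w}^2)}L^0_{\ell-1}(\abs{w-z}^2)e^{w\overline{z}}$, we get $\abs{\sp{P_{(\ell)}k_{z,N}}{k_{w,N}}} = \tfrac1N\abs{L^0_{\ell-1}(\abs{z-w}^2)}\,e^{-\frac12\abs{z-w}^2}$, which decays faster than any polynomial, so $P_{(j)},P_{(k)}\in\Ac_{sl}(F^2_N)\subseteq\Ac_{wl}(F^2_N)$; as these are $\ast$-algebras, $\rho(C)=P_{(k)}CP_{(j)}$ stays inside them. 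Since $\rho$ is norm-continuous and fixes $V$ pointwise, every $B\in\overline{\Ac}_{sl}(F^2_N)\cap V$ is a norm-limit of elements $\rho(C_m)\in\Ac_{sl}(F^2_N)\cap V$, which gives the nontrivial inclusion (the other one and the $\Ac_{wl}$ case being immediate). Intersecting the identity $\Cc_1(F^2_N)=\overline{\Ac}_{sl}(F^2_N)=\overline{\Ac}_{wl}(F^2_N)=P_N\BDO P_N$ of Theorem~\ref{thm:algebras} with $V$ and applying $\iota^{-1}$ then yields the asserted chain of equalities.

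The hard part is really only this last bookkeeping step matching the closures on $F^2_N$ with those on $\Lc(F^2_{(j)},F^2_{(k)})$; the ingredients it needs — the super-polynomial decay of the reproducing-kernel matrix elements of $P_{(j)},P_{(k)}$, and the $\ast$-algebra property — are elementary. As the alternative alluded to in the statement, one can instead skip the embedding and simply rerun the four inclusions of Section~\ref{sec:4}, replacing $L^1_{n-1}$ throughout by the Laguerre polynomials $L^0_{j-1}$ and $L^0_{k-1}$ of $F^2_{(j)}$ and $F^2_{(k)}$ in Proposition~\ref{prop:suff_loc}, Lemma~\ref{lem:kernel_localization}, Proposition~\ref{prop:band_operators}, Proposition~\ref{prop:BDO_in_C_1} and the final proposition; none of those proofs uses more about $L^1_{n-1}$ than that it is a real polynomial in $\abs{z-w}^2$ (so that Lemma~\ref{lem:exp_integrals} applies and all Gaussian integrals converge), which holds verbatim for $L^0_{j-1}$ and $L^0_{k-1}$.
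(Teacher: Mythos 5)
Your proposal is correct and takes essentially the same route as the paper, whose proof of this corollary is just the one-line remark that it follows ``either by applying the same arguments as above or simply by applying the projection $P_{(k)}$'': your embedding $\iota$ and compression $\rho(B)=P_{(k)}BP_{(j)}$ argument is a careful elaboration of the projection route, and your closing paragraph is exactly the rerun-the-arguments route. The closure-matching step you single out (using $P_{(j)},P_{(k)}\in\Ac_{sl}(F^2_N)$, the $*$-algebra property, and Theorem \ref{thm:algebras} intersected with $V$) is precisely the detail the paper leaves implicit, and your treatment of it is sound.
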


As in the analytic case, there is another characterization of $\Cc_1(F^2_{(k)})$ involving essential commutants. For a set $\Sc$ of bounded linear operators on a Hilbert space $\Hc$, the essential commutant of $\Sc$ is defined as
\[\EssCom(\Sc) := \set{B \in \Lc(\Hc) : AB - BA \in \Kc(\Hc) \text{ for all } A \in \Sc}.\]
We will use the abbreviation $[A,B] := AB-BA$ for the commutator. Moreover, let $\VO(\C)$ denote the set of functions $f \from \C \to \C$ with vanishing oscillation, that is,
\[\lim\limits_{\abs{z} \to \infty} \sup\limits_{w \in \overline{B}(z,1)} \abs{f(z)-f(w)} = 0.\]

\begin{theorem}\label{thm:esscom_new}
    For every $k \in \mathbb N$ we have
    \begin{align} \label{eq:esscom_new}
        \mathfrak A_{k,1} \overline{\{ T_{f, (1)}: f \in \VO(\mathbb C)\}} \mathfrak A_{1,k} = \overline{\{ T_{f, (k)}: f \in \VO(\mathbb C)\}}.
    \end{align}
    In particular, 
    \begin{align*}
    \Cc_1(F^2_{(k)}) &= \EssCom(\set{T_{f,(k)} : f \in \VO(\C)}),\\
    \EssCom(\Cc_1(F^2_{(k)})) &= \set{T_{f,(k)} + K : f \in \VO(\C), K \in \Kc(F^2_{(k)})}\\
    &= \overline{\{ T_{f, (k)}: ~f \in \VO(\mathbb C)\}}.
    \end{align*}
\end{theorem}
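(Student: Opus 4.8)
The plan is to reduce everything to the analytic case $k=1$, which is \cite[Theorem 1.1]{hagger2021} (or rather the essential-commutant characterization therein), by exploiting the intertwining operators $\mathfrak A_{k,1}$ and the machinery of Corollary \ref{cor:equalspaces}. First I would establish the identity \eqref{eq:esscom_new}. The inclusion
\[\mathfrak A_{k,1} T_{f,(1)} \mathfrak A_{1,k} = T_{f,(k)} \quad \text{for } f \in L^\infty(\C)\]
is essentially \eqref{eq:Toeplitz} together with $l_{0,k} \otimes l_{0,k} = \mathfrak A_{k,1}(k_{0,(1)} \otimes k_{0,(1)})\mathfrak A_{1,k}$: convolving with $f$ and using $\mathfrak A_{k,1}(A \ast f) = (\mathfrak A_{k,1}A) \ast f$ gives $(l_{0,k} \otimes l_{0,k}) \ast f = \mathfrak A_{k,1}((k_{0,(1)} \otimes k_{0,(1)}) \ast f)\mathfrak A_{1,k} = \pi \mathfrak A_{k,1}T_{f,(1)}\mathfrak A_{1,k}$, while a direct computation (as in the generalized Berezin transform discussion after Theorem \ref{thm:compactness}) identifies $(l_{0,k} \otimes l_{0,k}) \ast f$ with $\pi$ times the operator $z \mapsto \langle f l_{z,k}, l_{z,k}\rangle$. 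The point is that this latter operator coincides with $\pi T_{f,(k)}$ because $\mathfrak A_{1,k}$ maps normalized reproducing kernels to (multiples of) the $l_{z,k}$; I would fill this in by a short kernel computation or cite the relevant identity from \cite{hagger2023}. Conjugating the norm-closed span of $\{T_{f,(1)} : f \in \VO(\C)\}$ by the unitary $\mathfrak A_{k,1}$ then yields \eqref{eq:esscom_new} immediately.

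Next, for the essential-commutant characterization I would argue as follows. By Theorem \ref{thm:algebras} (and its corollary), $\Cc_1(F^2_{(k)}) = \mathfrak A_{k,1} \Cc_1(F^2_{(1)}) \mathfrak A_{1,k}$ — this follows since $A \in \Cc_1^{k,j}$ iff $A\mathfrak A_{j,k} \in \Cc_1^{k,k}$ (used repeatedly in Section \ref{sec:2.1}), hence $B \in \Cc_1(F^2_{(k)})$ iff $\mathfrak A_{1,k}B\mathfrak A_{k,1} \in \Cc_1(F^2_{(1)})$. Since $\mathfrak A_{k,1}$ is unitary, $\mathfrak A_{1,k}\Kc(F^2_{(k)})\mathfrak A_{k,1} = \Kc(F^2_{(1)})$, so conjugation by $\mathfrak A_{k,1}$ carries essential commutants of subsets of $\Lc(F^2_{(1)})$ to essential commutants of the conjugated subsets in $\Lc(F^2_{(k)})$. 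Combining this with \eqref{eq:esscom_new}, the statement $\Cc_1(F^2_{(k)}) = \EssCom(\{T_{f,(k)} : f \in \VO(\C)\})$ is exactly the conjugate of $\Cc_1(F^2_{(1)}) = \EssCom(\{T_{f,(1)} : f \in \VO(\C)\})$, which is the known analytic result \cite{hagger2021,Xia2015}. The same conjugation argument, applied to the analytic identity $\EssCom(\Cc_1(F^2_{(1)})) = \overline{\{T_{f,(1)} : f \in \VO(\C)\}} = \{T_{f,(1)} + K\}$, yields the displayed formulas for $\EssCom(\Cc_1(F^2_{(k)}))$, using \eqref{eq:esscom_new} once more to rewrite the conjugated Toeplitz-$\VO$ space.

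The main obstacle I anticipate is not the conjugation bookkeeping but rather verifying cleanly that $\mathfrak A_{k,1}T_{f,(1)}\mathfrak A_{1,k} = T_{f,(k)}$ — i.e.\ that the unitary conjugation of the analytic Toeplitz operator by the intertwiner is precisely the true-polyanalytic Toeplitz operator with the \emph{same} symbol, not merely some twisted variant. One must be careful that the convolution identity only gives $(l_{0,k}\otimes l_{0,k}) \ast f = \pi\,(z\mapsto \langle f l_{z,k},l_{z,k}\rangle)$, and that this operator equals $\pi T_{f,(k)}$ requires knowing that $l_{z,k} = \mathfrak A_{k,1}k_{z,(1)}$ relates to the true reproducing kernel structure correctly; this is where one leans on \cite{hagger2023} or on an explicit computation with the Laguerre-polynomial kernels. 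Once that identification is secure, everything else is a transport-of-structure argument along the unitary $\mathfrak A_{k,1}$, and the $\VO$-Toeplitz-algebra and compact-perturbation statements follow verbatim from the analytic case. A remark at the end should note that for $F^2_n$ with $n \geq 2$ the analogous essential-commutant description is genuinely open, since we do not know whether $\Cc_1(F^2_n)$ equals the Toeplitz algebra.
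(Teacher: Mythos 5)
There is a genuine gap at exactly the step you flag as your ``main obstacle'': the identity $\Af_{k,1}T_{f,(1)}\Af_{1,k} = T_{f,(k)}$ is false for $k \geq 2$, and no kernel computation can secure it. What conjugation actually gives is $\Af_{k,1}T_{f,(1)}\Af_{1,k} = \frac{1}{\pi}(l_{0,k}\otimes l_{0,k}) \ast f$, whereas $T_{f,(k)} = \frac{1}{\pi}(k_{0,(k)}\otimes k_{0,(k)}) \ast f$ by \eqref{eq:Toeplitz}. Since $\Fc_W(l_{0,k}\otimes l_{0,k}) = e^{-\abs{\cdot}^2/2}$ while $\Fc_W(k_{0,(k)}\otimes k_{0,(k)}) = L^0_{k-1}(\abs{\cdot}^2)e^{-\abs{\cdot}^2/2}$ (cf.\ Corollary \ref{cor:regular}), these are different trace class operators for $k \geq 2$, so the two convolutions produce different operators for generic bounded symbols. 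Concretely, the symbol of Remark \ref{rem:Question31}, $f(z) = e^{i\sigma(z,\xi)+\abs{\xi}^2/2}$ with $L^0_{k-1}(\abs{\xi}^2)=0$, satisfies $T_{f,(k)} = 0$ but $\Af_{k,1}T_{f,(1)}\Af_{1,k} = \Af_{k,1}W_\xi\Af_{1,k} = W_\xi \neq 0$; and even for $\VO$ symbols the two operators agree only modulo compacts, not exactly. Hence your derivation of \eqref{eq:esscom_new} by elementwise conjugation of the set $\set{T_{f,(1)} : f \in \VO(\C)}$ does not go through.

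The repair, which is how the paper argues, is to prove \eqref{eq:esscom_new} only at the level of closed spans modulo compacts: for $f \in \VO(\C)$ the difference $T_{f,(k)} - \Af_{k,1}T_{f,(1)}\Af_{1,k}$ is compact (both are compressions of $T_{f,n}$ and the discrepancy is controlled by compressions of Hankel operators with $\VO$ symbols; see Theorem 14 and Lemma 19 of \cite{hagger2023}), and $\Kc(F^2_{(k)}) \subseteq \overline{\set{T_{f,(k)} : f \in \VO(\C)}}$ because $k_{0,(k)}\otimes k_{0,(k)}$ is $\infty$-regular, so already $\set{T_{f,(k)} : f \in C_0(\C)}$ is norm dense in $\Kc(F^2_{(k)})$ by Theorem \ref{thm:inftyregular}; the same inclusion for $k=1$ is classical. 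Together these two facts yield \eqref{eq:esscom_new}. Once \eqref{eq:esscom_new} is in hand, the remainder of your argument is fine: essential commutants are unchanged under passing to closed linear spans and under compact perturbations of the generating set, so conjugation by the unitary $\Af_{k,1}$ transports the $k=1$ statements of \cite{hagger2021} to $F^2_{(k)}$ exactly as you describe, and your final remark about $F^2_n$ is consistent with the paper's Theorem \ref{thm:esscom_n}, where the last equality is indeed left open.
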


\begin{proof}
    Let $f \in \VO(\mathbb C)$. As both $T_{f, (1)}$ and $T_{f, (k)}$ are compressions of $T_{f,n}$, a combination of Theorem 14 and Lemma 19 of \cite{hagger2023} shows that $T_{f, (k)} - \mathfrak A_{k,1} T_{f, (1)} \mathfrak A_{1,k}$ is compact. Since $k_{0, (k)} \otimes k_{0, (k)}$ is $\infty$-regular, we know from Theorem \ref{thm:inftyregular} that $\mathcal K(F_{(k)}^2) \subseteq \overline{\{ T_{f, (k)}: f \in \VO(\mathbb C)\}}$ for every $k \in \mathbb N$. Therefore \eqref{eq:esscom_new} follows. The other identities are now carried over from the case $k = 1$, which is established in \cite{hagger2021}.
\end{proof}

We also have an analogous result for $F^2_n$, but we do not know whether the last equality of Theorem \ref{thm:esscom_new} is also true.

\begin{theorem}\label{thm:esscom_n}
    For every $n \in \mathbb N$ we have
    \begin{align*}
    \Cc_1(F^2_n) &= \EssCom(\set{T_{f,n} : f \in \VO(\C)}),\\
    \EssCom(\Cc_1(F^2_n)) &= \set{T_{f,n} + K : f \in \VO(\C), K \in \Kc(F^2_n)}.
    \end{align*}
\end{theorem}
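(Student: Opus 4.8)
The plan is to reduce the statement to the analytic case $n=1$ (that is, to Theorem~\ref{thm:esscom_new} with $k=1$, which is \cite{hagger2021}) by exploiting the block decomposition of $\mathcal L(F^2_n)$ coming from $F^2_n = \bigoplus_{k=1}^n F^2_{(k)}$. The crucial preliminary step is the following claim, which says that Toeplitz operators with symbols of vanishing oscillation become block-diagonal modulo compacts: \emph{for $f \in \VO(\mathbb C)$ one has $T_{f,n} - \bigoplus_{k=1}^n \mathfrak A_{k,1} T_{f,(1)} \mathfrak A_{1,k} \in \mathcal K(F^2_n)$}, and in particular $T_{f,n} \equiv \bigoplus_{k=1}^n T_{f,(k)}$ modulo $\mathcal K(F^2_n)$. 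To see this, first note that $T_{f,n}$ is weakly localized for every $f \in L^\infty(\mathbb C)$, hence $T_{f,n} \in \mathcal C_1(F^2_n)$ by Theorem~\ref{thm:algebras}, so each block $P_{(k)}M_fP_{(j)}$ lies in $\mathcal C_1^{k,j}$. For $k \neq j$, using $l_{z,j} = W_z l_{0,j}$ and $W_{-z}M_fW_z = M_{f(\,\cdot\,+z)}$ one computes the generalized Berezin transform $\langle P_{(k)}M_fP_{(j)} l_{z,j}, l_{z,k}\rangle = \langle f(\,\cdot\,+z) l_{0,j}, l_{0,k}\rangle \to f(z)\langle l_{0,j}, l_{0,k}\rangle = 0$ as $|z| \to \infty$, where the convergence uses $f \in \VO$ (to control the oscillation of $f$ over a fixed ball, chained from unit balls) together with the exponential localization of $l_{0,j}, l_{0,k}$, and the limit is $0$ because $F^2_{(j)} \perp F^2_{(k)}$; Theorem~\ref{thm:compactness} then makes the off-diagonal blocks compact. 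For the diagonal blocks, $T_{f,(k)} - \mathfrak A_{k,1} T_{f,(1)} \mathfrak A_{1,k} \in \mathcal K(F^2_{(k)})$ is the fact already used in the proof of Theorem~\ref{thm:esscom_new} (or, again, it follows from Theorem~\ref{thm:compactness}, since both $\langle T_{f,(k)} l_{z,k}, l_{z,k}\rangle$ and $\langle \mathfrak A_{k,1} T_{f,(1)} \mathfrak A_{1,k} l_{z,k}, l_{z,k}\rangle = \widetilde{T_{f,(1)}}(z)$ tend to $f(z)$).

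For the first identity, take $A = (A_{k,j}) \in \mathcal L(F^2_n)$ and $f \in \VO(\mathbb C)$. By the claim, $[A, T_{f,n}]$ differs by a compact operator from $\big[A, \bigoplus_k \mathfrak A_{k,1}T_{f,(1)}\mathfrak A_{1,k}\big]$, whose $(k,j)$ block is $\mathfrak A_{k,1}[B_{k,j}, T_{f,(1)}]\mathfrak A_{1,j}$ with $B_{k,j} := \mathfrak A_{1,k} A_{k,j} \mathfrak A_{j,1}$. Since the $\mathfrak A$'s intertwine the Weyl operators, $\|\alpha_z(B_{k,j}) - B_{k,j}\| = \|\alpha_z(A_{k,j}) - A_{k,j}\|$, so $B_{k,j} \in \mathcal C_1(F^2_{(1)})$ iff $A_{k,j} \in \mathcal C_1^{k,j}$, while $A \in \mathcal C_1(F^2_n)$ iff $A_{k,j} \in \mathcal C_1^{k,j}$ for all $k,j$ (Section~\ref{sec:reducible}). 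Using $\mathcal C_1(F^2_{(1)}) = \EssCom(\{T_{g,(1)} : g \in \VO(\mathbb C)\})$ from the analytic case, one gets the chain $A \in \EssCom(\{T_{f,n} : f \in \VO(\mathbb C)\}) \iff [B_{k,j}, T_{f,(1)}] \in \mathcal K \text{ for all } f,k,j \iff B_{k,j} \in \mathcal C_1(F^2_{(1)}) \text{ for all } k,j \iff A \in \mathcal C_1(F^2_n)$.

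For the second identity, ``$\supseteq$'' is formal: by the first identity $\mathcal C_1(F^2_n) = \EssCom(\{T_{g,n}\})$, so $T_{f,n}$ lies in the double essential commutant $\EssCom(\EssCom(\{T_{g,n}\})) = \EssCom(\mathcal C_1(F^2_n))$, and $\mathcal K(F^2_n) \subseteq \EssCom(\mathcal L(F^2_n))$. For ``$\subseteq$'', let $B = (B_{k,j}) \in \EssCom(\mathcal C_1(F^2_n))$. Testing against $E_{jj} \in \mathcal C_1(F^2_n)$ (the identity on $F^2_{(j)}$, zero on the other summands) forces every off-diagonal block $B_{k,j}$ ($k \neq j$) to be compact. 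Testing against the image of $\mathcal C_1(F^2_{(1)})$ in the $(1,1)$ corner gives $B_{1,1} \in \EssCom(\mathcal C_1(F^2_{(1)})) = \{T_{f,(1)} + K : f \in \VO(\mathbb C), K \in \mathcal K(F^2_{(1)})\}$ by the analytic case. Testing against the self-adjoint $S_k \in \mathcal C_1(F^2_n)$ whose only non-zero blocks are $(S_k)_{k,1} = \mathfrak A_{k,1}$ and $(S_k)_{1,k} = \mathfrak A_{1,k}$ gives $B_{k,k}\mathfrak A_{k,1} - \mathfrak A_{k,1}B_{1,1} \in \mathcal K$, hence $B_{k,k} = \mathfrak A_{k,1}B_{1,1}\mathfrak A_{1,k} + (\mathrm{compact}) = \mathfrak A_{k,1}T_{f,(1)}\mathfrak A_{1,k} + (\mathrm{compact})$, which by the claim equals $T_{f,(k)}$ modulo $\mathcal K$. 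Summing the blocks and using the claim once more, $B \equiv \bigoplus_k T_{f,(k)} \equiv T_{f,n}$ modulo $\mathcal K(F^2_n)$, i.e.\ $B = T_{f,n} + K$ with $K \in \mathcal K(F^2_n)$ (and closedness of the right-hand set is then automatic, since an essential commutant is closed).

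The main obstacle is the claim, and within it the compactness of the off-diagonal blocks $P_{(k)}M_fP_{(j)}$ for $f \in \VO(\mathbb C)$: the rest is block bookkeeping together with the analytic case and Theorem~\ref{thm:compactness}. The delicate points are that $f \in \VO$ really does force the generalized Berezin transform of $P_{(k)}M_fP_{(j)}$ to vanish at infinity (this is where one needs uniform control of the oscillation of $f$ over balls of a fixed radius, weighted against the exponentially localized $l_{0,j}$), and that $T_{f,n}$ genuinely belongs to $\mathcal C_1(F^2_n)$, so that Theorem~\ref{thm:compactness} applies to its compressions.
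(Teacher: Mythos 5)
Your argument is correct and follows the same overall strategy as the paper: decompose operators on $F^2_n$ into blocks $A_{k,j} \in \Lc(F^2_{(j)},F^2_{(k)})$, show that $T_{f,n}$ with $f \in \VO(\C)$ is essentially diagonal with diagonal blocks congruent to $\Af_{k,1}T_{f,(1)}\Af_{1,k}$ modulo compacts, and then reduce both essential commutant identities to the analytic case, testing in the second identity against the block projections and the intertwiners placed inside $\Cc_1(F^2_n)$. The main difference lies in how the key preliminary claim is justified: the paper imports it from \cite{hagger2023} (the off-diagonal blocks are compressions of Hankel operators with $\VO$ symbols, hence compact by Theorem 30 there, and the diagonal identification comes from Theorem 14 and Lemma 19 via Theorem \ref{thm:esscom_new}), whereas you derive it internally from Theorem \ref{thm:compactness} by computing the generalized Berezin transform $\langle f(\cdot+z)l_{0,j},l_{0,k}\rangle$ and using orthogonality of $l_{0,j}$ and $l_{0,k}$ together with the vanishing oscillation of $f$; this makes the proof more self-contained at the cost of a couple of standard but unverified inputs. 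In particular, your assertion that $T_{f,n}$ is weakly localized for every $f \in L^\infty(\C)$ is true but not proved in the paper; a shorter route to the needed membership $T_{f,n} \in \Cc_1(F^2_n)$ is to recall that $\pi T_{f,n} = \bigl(\sum_{j,k} k_{0,(j)} \otimes k_{0,(k)}\bigr) \ast f$ is a convolution of a trace class operator with an $L^\infty$ function, which lies in $\Cc_1(F^2_n)$ automatically, so Theorem \ref{thm:algebras} is not needed for this step. Apart from these points of bookkeeping, your block computations (the identity $\Af_{k,1}[B_{k,j},T_{f,(1)}]\Af_{1,j}$ for the commutator blocks, the use of $P_{(j)}$ and of the self-adjoint $S_k$ built from $\Af_{k,1}$, $\Af_{1,k}$) match the paper's argument, which uses $\Af_{1,k}$ directly in place of $S_k$ and phrases the first identity through $\EssCom(\{T_{f,(j)}\})$ on each true polyanalytic space rather than conjugating everything down to $F^2_{(1)}$; both variants are equivalent.
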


\begin{proof}
In the following, we will consider operators $A \in \Lc(F^2_n)$ as matrices with respect to the decomposition $F^2_n = \bigoplus\limits_{k = 1}^n F^2_{(k)}$. In particular, $A_{k,j} = P_{(k)}A|_{F^2_{(j)}} \in \Lc(F^2_{(j)},F^2_{(k)})$. With this notation we have $A \in \EssCom(\set{T_{f,n} : f \in \VO(\C)})$ if and only if
\begin{align*}
    \sum_{m=1}^n A_{k,m}B_{m,j} - B_{k,m}A_{m,j} \in \mathcal K(F_{(j)}^2, F_{(k)}^2)
\end{align*}
for every $j,k \in \set{1,\ldots,n}$ and every $B = T_{f, n}$ with $f \in \VO(\mathbb C)$. Now observe that $B = T_{f,n}$ is essentially diagonal if $f \in \VO(\mathbb C)$, that is, $B_{k,j}$ is compact for $j \neq k$. This follows directly from the fact that $B_{k,j}$ is the compression of a Hankel operator with $\VO$-symbol and therefore compact (see \cite[Theorem 30]{hagger2023}). Hence, $A \in \EssCom(\set{T_{f,n} : f \in \VO(\C)})$ if and only if
\begin{align*}
    A_{k,j}B_{j,j} - B_{k,k}A_{k,j} \in \mathcal K(F_{(j)}^2, F_{(k)}^2)
\end{align*}
for all $j,k \in \set{1,\ldots,n}$. Now, $B_{j,j} = T_{f, (j)}$ and $B_{k,k} = T_{f, (k)} = \mathfrak A_{k,j} T_{f, (j)} \mathfrak A_{j,k} + K$ for some compact operator $K$ by Theorem \ref{thm:esscom_new}. Hence, we see that $A \in \EssCom(\set{T_{f,n} : f \in \VO(\C)})$ if and only if $\mathfrak A_{j, k}A_{k,j} \in \EssCom \set{T_{f, (j)}: f \in \VO(\mathbb C)}$ for all $j,k \in \set{1,\ldots,n}$. By Theorem \ref{thm:esscom_new}, this is equivalent to $\mathfrak A_{j, k}A_{k,j} \in \mathcal C_1(F_{(j)})$, which is again equivalent to $A_{k,j} \in \mathcal C_1(F_{(j)}^2, F_{(k)}^2)$. This shows the first equality.

To obtain the second equality we only need to prove that
\[\EssCom(\Cc_1(F^2_n)) \subseteq \set{T_{f,n} + K : f \in \VO(\C), K \in \Kc(F^2_n)}\]
as the other inclusion is clear from the first equality. So let $B \in \EssCom(\Cc_1(F_n^2))$. In particular, $B$ essentially commutes with the projections $P_{(k)}$ and therefore $B$ has to be essentially diagonal. Moreover, we know that $B_{k,k} \in T_{f_k, (k)} + \mathcal K(F_{(k)}^2)$ for some $f_k \in \VO(\mathbb C)$ by Theorem \ref{thm:esscom_new}. As $\Af_{1,k} \in \Cc_1(F^2_n)$, $B$ essentially commutes with $\Af_{1,k}$ and we get that $T_{f_1, (1)}\Af_{1,k} - \Af_{1,k}T_{f_k, (k)}$ is compact. But $T_{f_1,(k)} - \Af_{k,1}T_{f_1,(1)}\Af_{1,k}$ is compact as well by Theorem \ref{thm:esscom_new}. It follows that $T_{f_1,(k)} - T_{f_k,(k)}$ is also compact. Hence $B = T_{f_1,n} + K$ for some $K \in \mathcal K(F_{(k)}^2)$.
\end{proof}

Finally, we remark that on $L^2(\C,\mu)$ the equality $\Cc_1 = \BDO$ no longer holds (and it is not even clear how to define $\Ac_{sl}$ or $\Ac_{wl}$ on $L^2(\C,\mu)$).

\begin{example}~
\begin{itemize}
	\item[(a)] Let $f \in L^{\infty}(\C)$. Then obviously $M_f \in \BDO$, but $M_f \notin \Cc_1(L^2(\C,\mu))$ unless $f \in \BUC(\C)$.
	\item[(b)] Let $g$ be a Lipschitz-continuous function with support in $B(0,1)$ and consider the operator $A \from L^2(\C,\mu) \to L^2(\C,\mu)$ defined by
	\[(Af)(x) := e^{\frac{\abs{x}^2}{2}}\int_{\C} g(x-2y)e^{i\Im(x\overline y)}f(y) e^{\frac{\abs{y}^2}{2}} \, \mathrm{d}\mu(y).\]
	Then a similar computation as in Lemma \ref{lem:BO_with_BUC_kernel} shows that
	\begin{align*}
	&\norm{(\alpha_z(A) - A)f}^2\\
	&\qquad = \frac{1}{\pi^3}\int_{\C} \abs{\int_{B(\frac{x}{2},\frac{1+\abs{z}}{2})} \left(g(x-2y+z) - g(x-2y)\right) e^{i\Im(x\overline y)} f(y) e^{-\frac{\abs{y}^2}{2}} \, \mathrm{d}y}^2 \, \mathrm{d}x\\
	&\qquad \leq C\abs{z}^2\int_{\C} \left(\int_{B(\frac{x}{2},\frac{1+\abs{z}}{2})} \abs{f(y)} e^{-\frac{\abs{y}^2}{2}} \, \mathrm{d}y\right)^2 \, \mathrm{d}x\\
	&\qquad \leq \tilde{C}\abs{z}^2\int_{\C} \int_{B(\frac{x}{2},\frac{1+\abs{z}}{2})} \abs{f(y)}^2 e^{-\abs{y}^2} \, \mathrm{d}y \, \mathrm{d}x\\
	&\qquad = \tilde{C}\abs{z}^2\int_{\C} \int_{B(2y,1+\abs{z})} \abs{f(y)}^2 e^{-\abs{y}^2} \, \mathrm{d}x \, \mathrm{d}y\\
	&\qquad = \hat{C}\abs{z}^2\norm{f}^2
	\end{align*}
	for some constants $C$, $\tilde{C}$ and $\hat{C}$. Thus $A \in \Cc_1$. On the other hand, if $f_1(y) := e^{i\Im(y\overline w)}k_{w,1}(y)$ and $f_2(x) := e^{-i\Im(x\overline w)}k_{2w,1}(x)$, then $\norm{f_1} = \norm{f_2} = 1$ and
	\begin{align*}
	&\sp{A(\chi_{B(w,1)}f_1)}{\chi_{B(2w,1)}f_2}\\
	&= \quad \frac{1}{\pi^2} \int_{B(2w,1)} \int_{B(w,1)} g(x-2y)e^{i\Im(x\overline y + y\overline w + x\overline w)}e^{y\overline w + 2\overline xw - \frac{5}{2}\abs{w}^2 - \frac{1}{2}\abs{x}^2 - \frac{1}{2}\abs{y}^2} \, \mathrm{d}y \, \mathrm{d}x\\
	&= \quad \frac{1}{\pi^2} \int_{B(0,1)} \int_{B(0,1)} g(x-2y) e^{-\frac{1}{2}\abs{x}^2 - \frac{1}{2}\abs{y}^2 + i\Im(x\overline y)} \, \mathrm{d}y \, \mathrm{d}x.
	\end{align*}
	The distance of $A$ to any band operator is at least the value of this double integral. If we choose $g$ to be non-zero and non-negative, this value is not $0$. It thus follows that $A \notin \BDO$.
\end{itemize}
\end{example}

\section{Discussion}
Our results immediately lead to some open questions, which we want to sketch here.

Since the Berezin transform $A \mapsto \widetilde{A}$ on $\mathcal L(F_{(k)}^2)$ is no longer injective when $k \geq 2$, one can of course wonder if it is still of some practical use besides Theorem \ref{thm:compact_Toeplitz_operators} (cf.\ \cite[Question 32]{hagger2023}). Indeed, it might very well be possible that it can still be used to characterize properties such as compactness, up to the obvious obstacles. That is, one could pose the following question:
\begin{question} \label{Question1}
Let $A \in \mathcal C_1(F_{(k)}^2)$ such that $\widetilde{A} \in C_0(\mathbb C)$. Can we then conclude
\[A \in \mathcal K(F_{(k)}^2) + \overline{\operatorname{span}} \{ W_\xi: ~L_{k-1}^0(|\xi|^2) =0\}?\]
\end{question}
Note that the closure is taken in weak$^\ast$ topology so that the space $\overline{\operatorname{span}} \{ W_\xi: ~L_{k-1}^0(|\xi|^2) =0\}$ is exactly the kernel of the Berezin transform (see Theorem \ref{thm:kernel_berezin_trafo}). From Theorem \ref{thm:compact_Toeplitz_operators} we know that the answer is positive for Toeplitz operators. Let us, at least for $k = 2$, add a short discussion which indicates that the answer to Question \ref{Question1} might be positive in general.

Let $A \in \mathcal C_1(F_{(2)}^2)$ with $\widetilde{A} \in C_0(\mathbb C)$. Recall that the only zeros of $\mathcal F_W(k_{0, (2)} \otimes k_{0, (2)})(\xi)$ are at $|\xi| = 1$. Let $\varepsilon > 0$ and fix an operator $B_\varepsilon \in \mathcal T^1(F_{(2)}^2)$ such that $\mathcal F_W(B_\varepsilon)(\xi) \neq 0$ for $|\xi| = 1$ and $\operatorname{supp}(\mathcal F_W(B_\varepsilon)) \subseteq \overline{B}(S^1, \varepsilon)$. Here, $\overline{B}(S^1, \varepsilon)$ consists of all $\xi \in \mathbb C$ such that $\operatorname{dist}(\xi, S^1) \leq \varepsilon$, where $S^1$ is the unit circle. Note that such an operator $B_\varepsilon$ always exists: Let $f \in \mathcal S(\mathbb C)$ such that $f(\xi) \neq 0$ for $|\xi| = 1$ and $\operatorname{supp}(f) \subseteq \overline{B}(S^1, \varepsilon)$, then $\mathcal F_W^{-1}(f)$ is in trace class and satisfies the required properties.

Let $\varepsilon > 0$ and $f \in L^1(\mathbb C)$ such that $\| A - f \ast A\|_{op} < \varepsilon$ (which is always possible by Lemma \ref{lem:approximation}). Now, note that 
$$ \{ \xi \in \mathbb C: \mathcal F_W(k_{0, (2)} \otimes k_{0, (2)})(\xi) = 0\} \cap \{ \xi \in \mathbb C: \mathcal F_W(B_\varepsilon)(\xi) = 0\} = \emptyset. $$
Therefore, by a simple modification of Wiener's approximation theorem for operators (cf.\ \cite[Proposition 3.5]{werner84} for the standard version), the closed $\alpha$-invariant subspace of $L^1(\mathbb C)$ generated by $(k_{0, (2)} \otimes k_{0, (2)}) \ast (k_{0, (2)} \otimes k_{0, (2)})$ and $B_\varepsilon \ast B_\varepsilon$ is all of $L^1(\mathbb C)$. Hence, we can find coefficients $c_j$ and $d_\ell$ such that
\begin{align*}
    \Bigg\|f - \sum_j c_j \alpha_{z_j}(k_{0, (2)} \otimes k_{0, (2)} \ast k_{0, (2)} \otimes k_{0, (2)}) - \sum_\ell d_\ell \alpha_{w_\ell}(B_\varepsilon \ast B_\varepsilon)\Bigg\|_1 < \varepsilon.
\end{align*}
With these coefficients, we now receive
\begin{align*}
    \Bigg\| A - \sum_j c_j \alpha_{z_j}(k_{0, (2)} \otimes k_{0, (2)}) \ast \widetilde{A} - \sum_\ell d_\ell \alpha_{w_\ell}(B_\varepsilon \ast B_\varepsilon) \ast A \Bigg\|_{op} < \varepsilon(1 + \| A\|_{op}).
\end{align*}

While the first sum is clearly contained in $\mathcal K(F_{(2)}^2)$ (see Theorem \ref{thm:compact_Toeplitz_operators}, for instance), the second sum generally is not. The second sum is contained in
\begin{align*}
    S_{\overline{B}(S^1, \varepsilon)} :=\overline{\operatorname{span}} \{ W_\xi: ~\xi \in \overline{B}(S^1, \varepsilon)\}  \cap \mathcal C_1(F_{(2)}^2),
\end{align*}
where the closure is taken in weak$^\ast$ topology. Note that membership of $\sum_\ell d_\ell \alpha_{w_\ell}(B_\varepsilon \ast B_\varepsilon) \ast A$ in $S_{\overline{B}(S^1, \varepsilon)}$ uses the fact that an annulus is a set of spectral synthesis. This follows from the fact that closed discs as well as complements of open discs are sets of spectral synthesis, cf.\ \cite[Theorem 2.7.10]{Reiter_Stegeman2000}. Therefore, an annulus is a set of spectral synthesis by \cite[Theorem 2]{Muraleedharan_Parthasarathy}. Since spectral synthesis is equivalent to quantum spectral synthesis (cf.\ \cite[Corollary 4.4]{werner84} or \cite[Theorem 2.2]{Fulsche_Rodriguez2023}) it follows that 
\begin{equation} \label{eq:qss}
\overline{\operatorname{span}} \{ W_\xi: ~\xi \in \overline{B}(S^1, \varepsilon)\} = \set{B \in \Lc(F^2_{(2)}) : \supp\Fc_W(B) \subseteq \overline{B}(S^1, \varepsilon)}
\end{equation}
and hence $B_\varepsilon \ast B_\varepsilon \ast A \in S_{\overline{B}(S^1, \varepsilon)}$ for every $\varepsilon > 0$. We obtain that $A \in \overline{\mathcal K(F_{(2)}^2) + S_{\overline{B}(S^1, \varepsilon)}}$ for every $\varepsilon > 0$, where the closure is taken in the norm topology. In particular, we obtain that $\widetilde{A} \in C_0(\mathbb C)$ implies
\begin{align*}
    A \in \bigcap_{\varepsilon > 0} \overline{\mathcal K(F_{(2)}^2) + S_{\overline{B}(S^1,\varepsilon)}}.
\end{align*}
for every $A \in \mathcal C_1(F_{(2)}^2)$. Moreover, Eq.\ \eqref{eq:qss} shows that
\[\bigcap_{\varepsilon > 0} S_{\overline{B}(S^1,\varepsilon)} = \overline{\operatorname{span}} \{ W_\xi: ~\xi \in S^1\} \cap \mathcal C_1(F_{(2)}^2) = \overline{\operatorname{span}} \{ W_\xi: ~L_1^0(|\xi|^2) =0\} \cap \mathcal C_1(F_{(2)}^2),\]
but it is not quite clear to us if this is already enough to answer Question \ref{Question1} affirmatively in case $k = 2$.

Besides the characterizations of $\mathcal C_1$ discussed in Section \ref{sec:4}, further characterizations are available for $k = 1$. It is known that $\mathcal C_1(F_{(1)}^2)$ equals $\overline{ \{ T_{f, (1)}: f \in \operatorname{BUC}(\mathbb C)\}}$ as well as $C^\ast(\{ T_{f, (1)}: f \in \operatorname{BUC}(\mathbb C)\})$. Let us again restrict our discussion to the case $k = 2$. By Corollary \ref{cor:Weyl_not_Toeplitz}, the Weyl operators $W_z$ with $|z| = 1$ are not contained in $\{ T_{f, (2)}: ~f \in L^\infty(\mathbb C)\}$. It is not clear if they are contained in the uniform closure of the set. By Proposition \ref{prop:toeplitz_quant_wstar_dense}, all Weyl operators are at least contained in the weak$^\ast$ closure.

Nevertheless, using that most Weyl operators can be written as Toeplitz operators, cf.\ Remark \ref{rem:Weyl_as_Toeplitz}, $\{ T_{f, (2)} T_{g, (2)}: ~f, g \in \operatorname{BUC}(\mathbb C)\}$ is easily seen to contain all Weyl operators. Therefore, there is at least no obvious obstacle preventing $C^\ast(\{ T_{f, (2)}: ~f\in \operatorname{BUC}\})$ from being all of $\mathcal C_1(F_{(2)}^2)$. But the problem remains open:
\begin{question}
Does $\overline{ \{ T_{f, (k)}: f \in \operatorname{BUC}(\mathbb C)\}}$ equal $\mathcal C_1(F_{(k)}^2)$ for $k \geq 2$? How about $C^\ast(\{ T_{f, (k)}: f \in \operatorname{BUC}(\mathbb C)\})$?
\end{question}

Our Question 1 above is essentially a somewhat polished version of \cite[Question 32]{hagger2023}. Question 2 is the part of \cite[Question 33]{hagger2023} that could not be answered in Section \ref{sec:4} above. We note once again that \cite[Question 31]{hagger2023} was answered in the negative in Remark \ref{rem:Question31}.

Finally, to conclude this discussion we briefly mention that the formalism of quantum harmonic analysis developed in Section \ref{sec:reducible} not only applies to operators on polyanalytic Fock spaces but also to operators acting on the vector-valued standard Fock space $F^2(\mathbb C^n, \mathbb C^k)$, that is, on the space of entire functions on $\mathbb C^n$ with values in $\mathbb C^k$ such that $\frac{1}{\pi^n}\int_{\mathbb C^n} \| f(z)\|^2 e^{-|z|^2}~dz < \infty$. This space can be written in a natural way as the orthogonal direct sum of $k$ copies of the scalar valued standard Fock space on $\mathbb C^n$: $F^2(\mathbb C^n, \mathbb C^k) = \oplus_{j=1}^k F^2(\mathbb C^n)$. It is now straightforward to apply the results of Section \ref{sec:reducible}. For example, if we want to study Toeplitz operators with matrix symbols on this space, we can do the following: For $F \in L^\infty(\mathbb C^n, \mathbb C^{k \times k})$ the Toeplitz operator $T_F$ applied to $g \in F^2(\mathbb C^n, \mathbb C^k)$ is equal to $T_F g := P(Fg)$, where $Fg$ is the usual matrix-vector product and $P$ acts entrywise as the orthogonal projection from $L^2(\mathbb C^n, \pi^{-n}e^{-|z|^2}\mathrm{d}z)$ onto $F^2(\mathbb C^n)$. It is straightforward to check that $T_F$ can be written as $A_0 \ast F$, where $A_0 = \frac{1}{\pi} (1 \otimes 1)_{\ell, j = 1, \dots, k}$. This implies that the correspondence results apply to these operators. In particular, $T_F$ is compact if and only if the Berezin transform of each entry of the symbol matrix is in $C_0(\mathbb C^n)$. Of course, there are more direct ways to obtain these results from the scalar-valued case, so we shall not elaborate on this in more detail.

\section*{Acknowledgement}

We would like to thank the referees for their careful reading and their valuable suggestions, which significantly improved the quality of this paper.

\bibliographystyle{abbrv}
\bibliography{main}

\vspace{1cm}

\begin{multicols}{2}

\noindent
Robert Fulsche\\
\href{fulsche@math.uni-hannover.de}{\Letter ~fulsche@math.uni-hannover.de}
\\
\noindent
Institut f\"{u}r Analysis\\
Leibniz Universit\"at Hannover\\
Welfengarten 1\\
30167 Hannover\\
GERMANY\\

\noindent
Raffael Hagger\\
\href{hagger@math.uni-kiel.de}{\Letter ~hagger@math.uni-kiel.de}
\\
\noindent
Mathematisches Seminar\\
Christian-Albrechts-Universität zu Kiel\\
Heinrich-Hecht-Platz 6\\
24118 Kiel\\
GERMANY

\end{multicols}
\end{document}